\title[Causal propagators for the Klein-Gordon equation]{The Klein-Gordon equation on asymptotically Minkowski
  spacetimes: causal propagators}
\author[D. Baskin]{Dean Baskin}
\email{dbaskin@math.tamu.edu}
\address{Department of Mathematics, Texas A\&M University \\ College Station, TX 77843, USA}
\author[M. Doll]{Moritz Doll}
\email{moritz.doll@unimelb.edu.au}
\address{School of Mathematics and Statistics, University of Melbourne \\ VIC 3010 \\ Australia}
\author[J. Gell-Redman]{Jesse Gell-Redman}
\email{jgell@unimelb.edu.au}
\address{School of Mathematics and Statistics, University of Melbourne \\ VIC 3010 \\ Australia}
\newtheorem{theorem}{Theorem}
\newtheorem{lemma}[theorem]{Lemma}
\newtheorem{proposition}[theorem]{Proposition}
\newtheorem{corollary}[theorem]{Corollary}
\theoremstyle{remark}
\newtheorem{definition}[theorem]{Definition}
\newtheorem{remark}[theorem]{Remark}
\numberwithin{equation}{section}
\numberwithin{theorem}{section}
\newcommand\NP{\mathrm{NP}}
\newcommand\SP{\mathrm{SP}}
\newcommand{\Rad}{\mathcal{R}}
\newcommand{\fibeq}{\operatorname{fibeq}}
\newcommand{\scrI}{\mathscr{I}}
\newcommand{\PtV}{P_{\widetilde{V}}}
\newcommand{\pa}{\partial}
\DeclarePairedDelimiter\abs{\lvert}{\rvert}
\DeclarePairedDelimiter\norm{\lVert}{\rVert}
\DeclarePairedDelimiter\ang{\langle}{\rangle}
\DeclarePairedDelimiter\curl\{\}
\newcommand{\eps}{\varepsilon}
\DeclarePairedDelimiter\set{\{}{\}}
\DeclarePairedDelimiter\ico{[}{)}
\DeclarePairedDelimiter\ioo{(}{)}
\newcommand{\cB}{\mathcal{B}}
\newcommand{\cF}{\mathcal{F}}
\newcommand{\cV}{\mathcal{V}}
\newcommand{\cX}{\mathcal{X}}
\newcommand{\cY}{\mathcal{Y}}
\newcommand{\ellvar}{\ensuremath{\boldsymbol{\ell}}}
\DeclareMathOperator{\forw}{for}
\DeclareMathOperator{\sources}{src}
\DeclareMathOperator{\sinks}{snk}
\newcommand{\CI}{C^\infty}
\newcommand{\CmI}{C^{-\infty}}
\newcommand{\CcI}{C_c^\infty}
\newcommand{\CdI}{\dot{C}^\infty}
\newcommand{\schwartz}{\mathcal{S}}
\DeclareMathOperator{\Diff}{Diff}
\DeclareMathOperator{\Sym}{Sym}
\newcommand{\LinOp}{\cB}
\newcommand{\cl}{\mathrm{cl}}
\DeclareMathOperator{\Op}{Op}
\DeclareMathOperator{\supp}{supp}
\DeclareMathOperator{\esssupp}{ess-supp}
\DeclareMathOperator{\Ell}{Ell}
\DeclareMathOperator{\WF}{WF}
\DeclareMathOperator{\Char}{Char}
\newcommand{\spec}{\sigma}
\newcommand{\scl}{\mathrm{scl}}
\newcommand\Psisclsc{\Psi_{\scl, \sc}}
\newcommand\Sobscl{H_{\scl}}
\newcommand\sclsymb[1]{\sigma_{\scl,#1}}
\renewcommand{\sc}{\mathrm{sc}}
\newcommand\poles{C}
\newcommand{\tsc}{3\sc}
\newcommand\Psitsc{{}^{\tsc}\Psi}
\newcommand\Psisc{{}^{\sc}\Psi}
\newcommand\Sobsc{H_{\sc}}
\newcommand\Sobres{\Sobsc^{-N, -M}}
\newcommand\normres[1]{\norm{#1}_{-N, -M}}
\newcommand\Ssc{{}^{\sc}S}
\newcommand\Tsc{{}^{\sc}T}
\newcommand\Lamsc{{}^{\sc}\Lambda}
\newcommand\Ttsc{{}^{\tsc}T}
\newcommand\Stsc{{}^{\tsc}\!S}
\newcommand\WFtsc{{}^{\tsc}\!\WF}
\newcommand\WFsc{{}^{\sc}\WF}
\newcommand\Elltsc{{}^{\tsc}\!\Ell}
\newcommand\Chartsc{{}^{\tsc}\Char}
\newcommand\esssupptsc{{}^{\tsc}\esssupp}
\newcommand\Wperpo{\overline{W^\perp}}
\newcommand\Tsco{{}^{\sc}\overline{T}}
\newcommand\Ttsco{{}^{\tsc}\overline{T}}
\newcommand\Diffsc{\Diff_{\sc}}
\newcommand\Difftsc{\Diff_{\tsc}}
\newcommand{\Norop}{\Psi_{\scl, \sc, \pm 1/\tau}}
\newcommand{\Tdot}{{}^{\sc}\dot{T}}
\newcommand{\Tdoto}{{}^{\sc}\dot{\overline{T}}}
\newcommand\Radtsc{{}^{\tsc}\Rad}
\newcommand{\Ctsc}{C_{\tsc}[X; C]}
\newcommand{\Ctscd}{\dot{C}_{\tsc}[X; C]}
\newcommand\prinsymb[1]{j_{\tsc,#1}}        
\newcommand\fibsymb[1]{\sigma_{\tsc,#1}}    
\newcommand\mfsymb[1]{\hat{N}_{\mf,#1}}     
\newcommand\ffsymb[1]{\hat{N}_{\ff,#1}}     
\newcommand\prinsymbz{j_{\tsc}}
\newcommand\fibsymbz{\sigma_{\tsc}}
\newcommand\mfsymbz{\hat{N}_{\mf}}
\newcommand\ffsymbz{\hat{N}_{\ff}}
\newcommand\ffsymbpmz{\hat{N}_{\ff,\pm}}
\newcommand\ffsymbpz{\hat{N}_{\ff,+}}
\newcommand\ffsymbmz{\hat{N}_{\ff,-}}
\newcommand\scprinsymb[1]{j_{\sc,#1}}        
\newcommand\scfibsymb[1]{\sigma_{\sc,#1}}    
\newcommand\scnormsymb[1]{\hat{N}_{\sc,#1}}     
\newcommand\scprinsymbz{j_{\sc}}
\newcommand\scfibsymbz{\sigma_{\sc}}
\newcommand\scnormsymbz{\hat{N}_{\sc}}
\newcommand{\mf}{\operatorname{mf}}
\newcommand{\ff}{\operatorname{ff}}
\newcommand{\fib}{\operatorname{fib}}
\newcommand{\CC}{\mathbb{C}}
\newcommand{\NN}{\mathbb{N}}
\newcommand{\RR}{\mathbb{R}}
\newcommand{\Sph}{\mathbb{S}}
\DeclareMathOperator{\id}{Id}
\DeclareMathOperator{\Id}{Id}
\renewcommand{\Im}{\operatorname{Im}}
\newcommand{\Hamf}{H_p}
\newcommand{\Hamsc}{{}^{\sc}\!H}
\newcommand{\Hamscp}{{}^{\sc}\!H_p}
\newcommand\lra{\longrightarrow}
\newcommand{\modspan}{\operatorname{span}}
\newcommand\BSspm{\mathcal{B}_{V, \pm}}
\newcommand\BSsp{\mathcal{B}_{V, +}}
\newcommand\aff{a_{\ff}}
\begin{document}

\begin{abstract}
We construct the causal (forward/backward) propagators for the
massive Klein-Gordon equation perturbed by a first order operator
which decays in space but not necessarily in time.  In particular,
we obtain global estimates for forward/backward solutions to the
inhomogeneous, perturbed Klein-Gordon equation, including in the
presence of bound states of the limiting spatial Hamiltonians.

To this end, we prove propagation of singularities estimates in
all regions of infinity (spatial, null, and causal) and use the estimates to
prove that the Klein-Gordon operator is an invertible mapping between
adapted weighted Sobolev spaces. This builds off work of
Vasy in which inverses of hyperbolic PDEs are obtained via
construction of a Fredholm mapping problem using radial points
propagation estimates. To deal with the presence of a perturbation
which persists in time, we employ a class of pseudodifferential operators first
explored in Vasy's many-body work.
\end{abstract}

\maketitle

\tableofcontents

\section{Introduction}

We consider the inhomogeneous Klein--Gordon equation on $\RR^{n +
  1} = \RR_{t} \times \RR^{n}_{z}$ with coordinates $(t, z)$:
\begin{align}\label{eq:KG inhomo intro static}
\left[  D_t^2 - (\Delta + m^2 + V) \right] u(t, z) = f(t, z)
\end{align}
where $D_t = - i \pa_t, D_{z_{j}} = - i \pa_{z_{j}}$,
$\Delta = D_z \cdot D_z$ is the positive Laplacian,
$m \in \RR, m > 0$, and $V = V(t, z)$ is a smooth, real-valued
potential function with spatial decay.  In this paper we give quantitative
estimates in phase space for the solution $u$ to \eqref{eq:KG inhomo
  intro static} in terms of the forcing $f$ in all regions of
spacetime infinity.  We use these phase space estimates to provide a
novel construction of the causal (forward/backward) propagators
$G_{+/-}$.

To give a simplified version of our results, consider
$V = V(z) \in S^{-2}(\RR^{n}_{z}; \RR)$, meaning $V$ is real-valued and decays like
$1/|z|^{2}$ with corresponding derivative estimates. If the spatial
Hamiltonian in equation~\eqref{eq:KG inhomo intro static} is positive and $\Delta + V$
has no eigenvalue or resonance at $0$, then,
letting $\chi_{< 0}(t)$ be a smooth cutoff to $t < 0$, we have, for
any $\epsilon > 0$, 
\begin{equation*}
    \norm{ \left( \ang{t, z}^{-1/2 - \epsilon} +
    \chi_{< 0}(t) \ang{t, z}^{-1/2 + \epsilon} \right) G_+ f(t, z)}_{H^1(\RR^{n + 1})}
    \le C\norm{\ang{t, z}^{1/2 + \epsilon} f(t, z)}_{L^2(\RR^{n + 1})}.
\end{equation*}
Such an estimate gives a global
spacetime weighted $H^{1}(\RR^{n + 1})$ bound for the
forward solution $u_{+} = G_{+} f$ in terms of a corresponding weighted
$L^{2}(\RR^{n + 1})$ norm of the source $f$, with weight
depending on the time direction; faster growth is allowed in the $t \to
+ \infty$ region, with slower growth as $t \to - \infty$,
relative to a $\ang{t, z}^{1/2}$ threshold.  This estimate is
refined substantially in Theorem \ref{thm:basic propagator result}, 
below, in which we prove finer mapping properties for $G_{+/-}$
in a wider class of spaces with
variable order spacetime weights and arbitrary differential
orders.  The theorem applies to a large class of Klein-Gordon
operators which are asymptotically static and non-trapping.

By a ``global'' solution to \eqref{eq:KG inhomo intro static}, we mean one for which both the forcing $f(t,z)$
and the solution $u(t,z)$ are defined on the whole of the spacetime
$\RR^{n + 1}_{t,z}$.  In this work, we study primarily the two
special solutions operators (i.e.\ propagators) to \eqref{eq:KG inhomo
  intro static} which exhibit temporal causality, namely the forward
and backward propagators $G_{+/-}$.  These define global
solutions $u_{+/-}  = G_{+/-} f$ with the property that they propagate
solutions in the forward ($+$) or backward ($-$) time directions,
meaning, for example for $G_{+}$, that
\begin{equation}\label{eq:forward solution}
  \supp f  \subset \left\{ t \ge T \right \} \implies  \supp G_{+} f  \subset \left\{ t \ge T \right \}.
\end{equation}

Our main results on the causal propagators are Theorem
\ref{thm:fredholm causal 3sc}, Theorem \ref{thm:result with bound
  states}, and Theorem~\ref{thm:result for static} below.  We give a
simplified version of these theorems now, assuming that
\[
V = V_{0}(z) + V_{1}(t, z)
\]
where $V_{0} \in S^{-1}(\RR^{n}; \RR)$ and $V_{1} \in \CI(\RR^{n+1}; \RR)$ a perturbation
which is Schwartz in space and decaying in time:
\begin{equation*}
    \abs*{\pa_{z}^{\alpha} \pa_{t}^{j} V_{1}(t, z)} \le C \ang{t}^{-1-j} \ang{z}^{-N} 
\end{equation*}
for any $j \in \mathbb{N}_{0}$, multiindex $\alpha \in \NN^n_0$ and $N \in
\RR$ and some $C$ depending on $j$ and $N$.  The assumptions in the theorem pertain to
the spectral properties of the limiting spatial Hamiltonian
\begin{equation*}
H_{V_0} = \Delta +  m^2 + V_{0}(z).
\end{equation*}
Namely, if $V \equiv V_0(z)$ is static, we assume only that $0$ is not
an eigenvalue of $H_{V_0}$ (see Theorem~\ref{thm:result for static}).
Otherwise if $V_1$ is non-zero, we
assume that $H_{V_0} \ge c > 0$, i.e.\ the limiting Hamiltonian
$H_{V_0}$ is strictly positive.  We assume in all cases that the
spectrum of $H_{V_{0}}$ is strictly absolutely continuous near $[m^2, \infty)$.
If $V_0 \in S^{-2}$, then this is implied by the absence of eigenvalues or resonances
for $\Delta +V_{0}$ at $0$.
\begin{theorem}\label{thm:basic propagator result}
  With $V$ as above
  the forward propagator exists as a mapping
  between weighted $L^2$-based Sobolev spaces: for any $\epsilon > 0$,
  $s \in \RR$,
  \[
  G_+ \colon \ang{t, z}^{- 1/2 - \epsilon} H^{s-1}(\RR^{n +
  1}_{t, z}) \lra \ang{t, z}^{1/2 + \epsilon} H^s(\RR^{n +
  1}_{t, z}).
  \]
Moreover, if a spacetime function (see
Section~\ref{sec:fred for scat} for details)
\begin{align*}
    \ellvar_+(t, z) \in S^{0}_\cl(\RR^{n + 1}_{t, z}; \RR)
\end{align*}
satisfies
\begin{enumerate}
    \item $\ellvar_+ < -1/2 $ in a neighborhood of future causal (timelike) infinity
(i.e.\ in $t \gg 0 , |z/t| < 1 + \epsilon$ for small $\epsilon > 0$),
    \item $\ellvar_+ > -1/2 $ in a neighborhood of past causal (timelike) infinity
(i.e.\ in $t \ll 0 , |z/t| < 1 + \epsilon$ for small $\epsilon > 0$),
    \item $\ellvar_+$ is monotone decreasing along future-directed null rays,
    \item \label{it:ellvar_const} $\ellvar_+$ is constant in a neighborhood of causal infinity,
\end{enumerate}
then $G_+$ extends to a bounded operator
\begin{align*}
    G_+ : \ang{t, z}^{\ellvar_+(t,z) + 1} H^{s-1}(\RR^{n+1}_{t,z}) \lra \ang{t, z}^{\ellvar_+(t,z)} H^s(\RR^{n+1}_{t,z})\,.
\end{align*}
In particular, the weight $\ellvar_+$ can be taken arbitrarily high away
from future causal infinity, yielding additional decay if $f$
has additional decay.

The analogous statements are true for the backward propagator $G_-$, where now $\ellvar_- < -1/2$ in a neighborhood of past causal infinity,
$\ellvar_- > -1/2$ in a neighborhood of future causal infinity, and monotone decreasing on past-directed null rays.
\end{theorem}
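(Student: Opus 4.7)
The plan is to apply Vasy's Fredholm framework for hyperbolic PDEs, realized in the three-scales scattering (3sc) calculus on the radial compactification of $\RR^{n+1}$. In this setting, $P := D_t^2 - (\Delta + m^2 + V)$ is a second-order 3sc pseudodifferential operator. Its characteristic set over the interior is the null cone, and the closure of that cone meets the boundary at infinity in invariant submanifolds; over past and future causal infinity these are, respectively, a radial source and a radial sink for the rescaled Hamilton flow. I would reduce the existence and boundedness of $G_+$ to the invertibility of $P$ between suitable weighted 3sc Sobolev spaces, with the forward support property \eqref{eq:forward solution} encoded in the choice of weights at the two radial sets.

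First I would obtain the constant-weight assertion ($\ellvar_+ \equiv -1/2 - \epsilon$) by invoking Theorem~\ref{thm:fredholm causal 3sc}. The ingredients there are: (a) 3sc elliptic regularity off the characteristic set; (b) real-principal-type propagation of the 3sc wavefront set along bicharacteristics, including those lying on the boundary at spatial and null infinity, where the hypothesis of absolutely continuous spectrum of $H_{V_0}$ near $[m^2, \infty)$ is used to rule out propagation modes that would obstruct the estimates; and (c) standard radial-point estimates at the source and sink, which require the weight to be above the threshold $-1/2$ at the source and below it at the sink. This sign convention is exactly what selects the forward propagator rather than the backward one. Since $V_1$ is Schwartz in space and integrable in time, it is of lower 3sc order than the principal part and is absorbed at each step as a perturbation; the static long-range piece $V_0 \in S^{-1}$ is handled by the many-body style refinement of the 3sc calculus referenced in the abstract.

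I would then upgrade the constant-weight assertion to arbitrary $\ellvar_+$ satisfying (1)--(4) by replacing the radial-point estimates by their variable-order versions. Hypotheses (1) and (2) put $\ellvar_+$ on the correct side of the threshold at the sink and source respectively; hypothesis (3), monotonicity along future-directed null bicharacteristics, is exactly what gives the commutator term in the positive-commutator argument a definite sign between the two radial sets; hypothesis (4), constancy of $\ellvar_+$ near the radial sets, allows the estimate there to be reduced to the constant-order case. Elliptic regularity and real-principal-type propagation work unchanged with variable-order symbols. Combining these pieces with the dual estimate on the adjoint $P^*$ and the vanishing of the forward kernel (which follows from the support property encoded by the weights, together with the propagation estimates themselves), one obtains invertibility of $P$ between the variable-weight spaces; the statement for $G_-$ then follows by time reversal.

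The main obstacle is verifying that the positive-commutator argument at the radial sets still closes when the order is a nontrivial function of $(t,z)$. Hypothesis (4) is designed precisely to sidestep this difficulty by forcing $\ellvar_+$ to be constant in a neighborhood of the source and sink, so that the standard radial point estimate applies there verbatim, with the variable order only playing a role in the propagation region where no threshold crossing occurs; modulo the careful setup of variable-order 3sc symbol classes, the rest is essentially bookkeeping.
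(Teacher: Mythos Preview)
Your overall strategy matches the paper's: establish the Fredholm property via microlocal estimates in the $\tsc$-calculus (elliptic regularity, principal-type propagation, and above/below-threshold radial point estimates over the poles $\poles$), then upgrade to invertibility; the variable-order treatment you outline is also what the paper does, with hypothesis~(4) used exactly as you describe to reduce the radial-point step to the constant-order case.

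There is one genuine gap. You write that kernel vanishing ``follows from the support property encoded by the weights, together with the propagation estimates themselves.'' This is not sufficient: the above-threshold radial point estimate at the past radial set shows that a kernel element $u$ is Schwartz toward $t \to -\infty$, but Schwartz decay alone does not force $u \equiv 0$. The paper closes this with an energy/Gr\"onwall argument (see the proof of Theorem~\ref{thm:fredholm causal 3sc}): one uses the energy $E_u(t) = \tfrac12 \int |\partial_t u|^2 + |\nabla_z u|^2 + (m^2 + V)|u|^2\, dz$, and \emph{positivity of the limiting Hamiltonian} $H_{V_-}$ is what makes $E_u$ control $\|u(t,\cdot)\|_{L^2}$ for $t \ll 0$, after which Gr\"onwall and Cauchy uniqueness give $u \equiv 0$. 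This is precisely where the hypothesis $H_{V_0} \ge c > 0$ enters; in the purely static case the weaker hypothesis that $0$ is not an eigenvalue of $H_{V_0}$ suffices, handled by separation of variables in Theorem~\ref{thm:result for static}. Relatedly, Theorem~\ref{thm:fredholm causal 3sc}, which you invoke, assumes \emph{no} discrete spectrum; under the stated hypotheses of Theorem~\ref{thm:basic propagator result} (which allow bound states) you should be invoking Theorem~\ref{thm:result with bound states} or Theorem~\ref{thm:result for static}, and the additional estimates near bound-state $\tau$-levels in Section~\ref{sec:3sc_bound} are needed already for the Fredholm step when $H_{V_\pm}$ has eigenvalues in $(0,m^2)$.
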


One of the main features of our approach is that it applies to
perturbations of free Klein-Gordon within a large class of operators
which limit to static potentials $V_\pm = V_\pm(z)$ as
$t \to \pm \infty$. In particular, there is no need for the limiting
potentials $V_{\pm}$ to be equal to each other as they are in Theorem
\ref{thm:basic propagator result}.  Though we allow more general $V$
below, to fix the idea we begin by assuming that $V = V(t, z)$ is a
smooth potential function with rapid spatial decay and smooth time
dependence, approaching a static potential as $t \to \pm \infty$:
\[
V(t, z) \in \CI(\RR_t; \schwartz(\RR^n_z)),
\mbox{ and }  V(t, z) - V_\pm(z)  = O( |t|^{-1})  \mbox{ as } t \to \pm \infty,
\]
and corresponding derivative estimates.
The influence
of the perturbation $V$ on the analysis being the main object of
study, we define the operator with explicit dependence on $V$:
\begin{align}\label{eq:def of PV}
    P_V \coloneqq D_t^2 - H_V = D_t^2 - (\Delta + m^2 + V)\,.
\end{align}
In fact, we can allow $V$ to be a differential operator of order $1$
with symbolic behavior in $z$.  Below in Section \ref{sec:V assump}, we
construct a compactification of spacetime on which $V$ is smooth, and this global
smoothness condition is more general than the assumptions above.  We can
also allow $\square_{g_{\mathrm{mink}}} = D_t^2 - \Delta$ to be
replaced by the d'Alembertian $\square_{g}$ of a non-trapping,
globally hyperbolic, asymptotically Minkowski metric $g$ as described
in prior work on the massless wave equation~\cite{BVW15}.

Thanks to a famous result of Klainerman~\cite{Kla1993}, the forward
solution $u_{+} = G_{+} f$ to $P_{V} u_{+} = f$ with a sufficiently
smooth and decaying source $f$ is known to lie in
$t^{-n/2} L^\infty(\RR^{n + 1}) \subset \ang{t, z}^{1/2 + 0}L^2(\RR^{n
  + 1})$ near future causal infinity.  Our results below refine the
$L^{2}$ statement, in that they allow us also to localize in
frequency space in a neighborhood of the future radial set, i.e.\ the
limit locus of bicharacteristic rays, a phase space subset over future
null infinity defined below.   One upshot is that the
  frequency-localization of the solution to the radial set effectively
  carries all of the asymptotic data of $G_{+}f$ as $t\to +\infty$.
  In other words, there are frequency localizers $Q_{rad}$ which cut
  off in phase space to the radial set, such that, for
  $f \in \CcI(\RR^{n + 1})$, 
\begin{equation}\label{eq:Qrad}
(\Id - Q_{rad}) G_{+} f \in \schwartz(\RR^{n + 1}),\ \mbox{ while
  }  Q_{rad}
G_{+} f \in \langle t, z \rangle^{1/2 + \epsilon} H^{s}(\RR^{n
  + 1})
\end{equation}
for any $s \in \RR$ and any $\epsilon > 0$.  We prove our result using
propagation of singularities and radial points estimates, as we
describe in detail shortly.

Propagator estimates of this type, i.e.\ global spacetime estimates
using $L^{2}$-based, weighted Sobolev spaces, were established by Vasy
for a large class of ``scattering'' operators, including the free
Klein-Gordon operator \cite{V18}.  They are a combination of principal
type propagation of singularities estimates, radial points estimates,
and microlocal elliptic estimates proven on non-compact regions of
phase space.  More accurately, they are proven up to and including
infinity, meaning in phase space regions on a compactified spacetime.
Propagation estimates in the scattering setting were first established
by Melrose~\cite{Melrose94} in the case of the Helmholtz equation,
generalizing H\"ormander's propagation of singularities
theorem~\cite{Hormander71} to scattering operators on non-compact
scattering spaces.  Melrose's paper~\cite{Melrose94} further
introduced the radial point\footnote{As far as the authors know,
  radial points as such were first studied in \cite{GuilSchae}}
propagation estimates (also in the scattering setting).  For the free
Klein-Gordon equation (and scattering perturbations thereof) one can
use the phase space picture of Melrose/Vasy, based on the radial
compactification $X = \overline{\RR^{n + 1}_{t, z}}$, a compact space,
diffeomorphic to a closed ball, whose boundary points are the limits
of geodesic rays.  This compactification, illustrated in
Figure~\ref{fig:radialset}, includes spatial and causal infinities as
open subsets of the boundary (the shaded piece of the boundary in
Figure~\ref{fig:radialset} also represents causal infinity), while
future null infinity and past null infinity are compressed to
codimension one submanifolds of the boundary.  The relevant phase
space for the free Klein-Gordon, the scattering cotangent space
$\Tsc^* X$, has $X$ as the underlying spacetime and the standard
$\tau, \zeta$ (dual to $t, z$) momenta.  This is described in detail
in Section \ref{sec:model-case}.

To understand the phase space nature of the estimates, consider
the family of solutions to the free Klein-Gordon equation $P_0 u = 0$ obtained
directly by Fourier transformation in the spatial variable, namely, for
$g_\pm \in \CcI(\RR^n)$, with
\[
\hat{u}(t, \zeta) = \sum_{\pm} e^{\pm i t \sqrt{|\zeta|^2 + m^2}} g_\pm(\zeta),
\]
let $u(t, z) = \cF_{z \to \zeta}^{-1} \hat u (t, \zeta)$.  The
infinite dimensional space of such $u$, in regions of the form $|z| /t
\le c < 1$, behave asymptotically as
\[
u = t^{-n/2} \left( a_+(z/t) e^{- i m \sqrt{t^2 - |z|^2}} + b_+(z/t) e^{ i m \sqrt{t^2 - |z|^2}} \right) ( 1 + O(1/t))
\]
as $t \to + \infty$ with $a_+, b_+$ smooth functions easily computed
in terms of the inverse Fourier transforms of the coefficients $g_{\pm}$.  Here $y = z/t$
parametrizes future causal infinity.  A similar
expression holds as $t \to - \infty$.  If one denotes the phase function
$\phi_\pm(t, z) =\pm  m \sqrt{t^2 - |z|^2}$, then
the radial set over future causal infinity is the subset of
phase space defined by
\begin{equation*}
  \tau = D_t \phi_\pm = \frac{\pm m t}{\sqrt{t^2 - |z|^2}}, \quad \zeta =
   D_z \phi_\pm = \frac{\mp m z}{\sqrt{t^2 - |z|^2}}.
\end{equation*}
This future radial set is the union of two smooth manifolds in
$\Tsc^*_{\iota^+} X$ which extend smoothly out to the compactified
boundary of the fibers in $\Tsco^* X$.  Here $\iota^+$ denotes future
causal (timelike) infinity.  The $Q_{rad}$ in \eqref{eq:Qrad} can be
taken to be the quantization of a cutoff function supported near this
radial set, and, for these simple solutions, the conclusion of
\eqref{eq:Qrad} follows from stationary phase.

The principal novelty of the current work lies in its treatment,
via microlocal methods, of perturbations $V$ that persist in time, in
particular when there is asymptotic time dependence and separation of
variables between time and space cannot be employed directly.  If one thinks of the solutions
of \eqref{eq:KG inhomo intro static} in terms of their behavior along
classical trajectories, then a geodesic
\begin{align*}
(t_0 + s\tau, z_0 - s \zeta) \mbox{ with } \zeta \not = 0
\end{align*}
exits every compact spatial set, and thus as
$s \to \pm \infty$, one expects solutions to behave asymptotically
like those for free Klein-Gordon.  However, on timelike trajectories
with zero 
spatial momentum, the nature of the spatial Hamiltonians
$\Delta  + m^2+ V_\pm$ influences the asymptotic behavior
substantially.  The limits of such rays form two special points at
causal infinity, one in the future and one in the past, which we call
the ``north pole'', $\NP$, and the ``south pole'', $\SP$, and the
lion's share of our work below is in proving estimates near $\NP$ and $\SP$.

In particular, for non-zero $V_0$, the operator $P_V$ does not lie
in the class of scattering operators $\Diffsc(\RR^{n + 1})$
defined by Melrose \cite{Melrose94}.  However, it does lie in a class
of differential operators studied earlier by Vasy~\cite{V2000,Vasy13}
in his treatment of many-body Hamiltonians, called ``3-body'' or
``many-body'' or, as we write more frequently below,
$\tsc$-operators. This family of operators on $\RR^N$ includes not
only many-body Hamiltonians, but also operators which decompose
analogously near a family of collision planes, modeled as submanifolds
$C$ of infinity.  In our case, the picture is comparatively simple; we
have just one analogue of a collision plane and it is the straight line $z = 0$ in
$\RR^{n + 1}$. For $P_V$, the relevant $C$ that arises is simply the
two poles,
\[
C = \NP \cup \SP,
\]
Those two points lying in causal infinity are precisely the points
where $P_V$ fails to be a scattering operator.  For a description of
how these operators, which we follow Vasy in denoting by
$\Difftsc(\RR^{n+1})$, arise in our setting, see Section
\ref{sec:operator-p_v}.  Correspondingly, we work
  on the resolved space $[X; \poles]$ obtained by blowing up the two
  poles.  This blow-up introduces two new boundary
  hypersurfaces, each of which is essentially a copy of $\RR^n_z$,
and is in fact the limit locus of geodesics with zero spatial
momentum, as depicted in Figure \ref{fig:X and XC}.

The result of this $\tsc$-analysis is a family of non-elliptic Fredholm
problems for $P_V$, analogous to the Fredholm problems on
asymptotically hyperbolic spaces established by Vasy \cite{Vasy13}.
We prove global estimates on
a priori spaces of distributions which satisfy prescribed asymptotics, above threshold
near past causal infinity and below threshold
near future causal infinity.  We let
\begin{align*}
    \Sobsc^{s,\ellvar}(\RR^{n + 1}) \coloneqq \{ u \in \schwartz'(\RR^{n + 1}) :
  \ang{t, z}^{\ellvar} u \in H^s(\RR^{n + 1})\}\,, \quad \norm{u}_{\Sobsc^{s,\ellvar}} =
  \norm{u}_{s,\ellvar} \coloneqq \norm{\ang{t, z}^{\ellvar} u}_{H^s}\,.
\end{align*}
denote the relevant weighted Sobolev spaces, where $\ellvar(t, z)$ is a smooth spacetime weight and
$\Sobsc^{s,0}(\RR^{n + 1}) = H^s(\RR^{n + 1})$ is the standard Sobolev space of order
$s$ on $\RR^{n + 1}$.
The a priori spaces are defined as
\begin{gather}\label{eq:first X spaces}
    \cX^{s,\ellvar} \coloneqq \{u \in \Sobsc^{s,\ellvar}(\RR^{n + 1}) \colon P_V u \in \Sobsc^{s-1, \ellvar+1}(\RR^{n + 1})\}\,,\quad
    \cY^{s,\ellvar} \coloneqq \Sobsc^{s,\ellvar}(\RR^{n + 1})
    \intertext{with}
    \norm{u}_{\cX^{s,\ellvar}}^2 = \norm{u}_{s,\ellvar}^2 + \norm{P_V u}_{s-1, \ellvar+1}^2\,.
\end{gather}
In general, the weight $\ellvar$ can have phase space dependence,
although for time-persistent $V$ it must be constant in a neighborhood
of $\poles$.  Genuine phase space dependence is permitted but not needed in the
present paper, though it will play a role in future work.  Our construction of the propagators works by first
establishing a Fredholm mapping property, and then proving
invertibility of this mapping by proving that the kernel and cokernel are trivial.
The Fredholm property holds more
generally than the invertibility, and in that case there are still
causal propagators in the sense that there is a generalized inverse
for any Fredholm map.

Given $V$, to obtain a Fredholm result (from which invertibility may
or may not be concluded) we do \emph{not} need to assume that the
limiting Hamiltonians $H_{V_\pm}$ are positive.  Instead, we assume
only that
\begin{enumerate}
    \item $H_{V_\pm} = \Delta + m^2 + V_\pm$
        has purely absolutely continuous spectrum near $[m^2, \infty)$, i.e.,\ the 
        absolutely continuous spectrum $[m^2, \infty)$ of $H_{V_\pm}$ is disjoint from the singular
        and point spectra, and
    \item $H_{V_\pm}$ has no eigenvalue at $0$.
\end{enumerate}
The second assumption avoids the presence of linearly growing modes.
In this case we have that $P_V$ is a Fredholm operator between
appropriate spaces.
\begin{theorem}\label{thm:fredholm}
  Let $V$ be any of the asymptotically static potentials described in
  Section~\ref{sec:V assump} satisfying the assumptions just
  described.
  If $\ellvar_{\pm}$ is a spacetime weight as
  in Theorem \ref{thm:basic propagator result}, then
    \begin{align*}
        P_V \colon \cX^{s,\ellvar_{\pm}} \lra \cY^{s-1,\ellvar_{\pm}+1}
    \end{align*}
    is a Fredholm operator.
\end{theorem}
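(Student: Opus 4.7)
We would prove Theorem~\ref{thm:fredholm} by following the standard microlocal Fredholm package for hyperbolic operators developed by Vasy, adapted to the $\tsc$-calculus $\Psitsc$ on the resolved spacetime $[X;\poles]$. The goal is to establish two estimates of the form
\begin{align*}
    \|u\|_{\cX^{s,\ellvar_\pm}} \le C\bigl(\|P_V u\|_{\Sobsc^{s-1,\ellvar_\pm+1}} + \|u\|_{\Sobsc^{s',\ellvar_\pm-\delta}}\bigr)
\end{align*}
for some $s' < s$ and $\delta > 0$, together with its analogue for the formal adjoint $P_V^*$ between the dual spaces. Since the inclusion $\Sobsc^{s,\ellvar_\pm} \hookrightarrow \Sobsc^{s',\ellvar_\pm-\delta}$ is compact (strict decrease in both order and weight), these two estimates imply that $P_V$ has finite-dimensional kernel and closed range of finite codimension, hence is Fredholm.

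The first estimate is built by patching microlocal $\tsc$-estimates on $[X;\poles]$. Microlocal elliptic regularity in $\Psitsc$ controls $u$ away from the characteristic set of $P_V$; real principal type propagation transports regularity along the Hamilton flow inside the characteristic set, at fiber infinity, and over null infinity. At the radial components over causal infinity (excluding the poles), the radial-point propagation estimate applies: hypotheses (1) and (2) on $\ellvar_\pm$ are exactly the threshold conditions needed for the estimate to run in the available direction—above threshold at the source component, so we propagate out, and below threshold at the sink component, so we propagate in. The monotonicity condition (3) together with the constancy (4) of $\ellvar_\pm$ near causal infinity allow us to join the two components of the radial set through the null directions and to treat each pole with a single scalar threshold value.

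The main obstacle, and the principal novelty relative to the purely scattering setting, is the estimate near the poles $\NP$ and $\SP$, where $P_V$ lies in $\Difftsc$ but not in $\Diffsc$. After blowing up $\poles$, the new boundary hypersurfaces carry normal operators that are essentially spectral families parameterized by the dual variable $\sigma$ to the radial coordinate,
\begin{align*}
    \widehat{N}_\pm(\sigma) \;\cong\; \sigma^2 - H_{V_\pm} \quad \text{on } \RR^n_z.
\end{align*}
Invertibility of $\widehat{N}_\pm(\sigma)$ on appropriate weighted spaces, uniformly in the relevant range of real $\sigma$, is precisely the content of our spectral assumptions on $H_{V_\pm}$: pure absolute continuity of the spectrum near $[m^2,\infty)$ yields limiting absorption bounds on $(H_{V_\pm}-\sigma^2)^{-1}$ as $\sigma^2$ crosses the spectrum, while the absence of a zero eigenvalue excludes a resonance in the $\sigma \to 0$ limit and so rules out the linearly growing modes alluded to in the introduction. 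Combined with the standard normal operator estimate in $\Psitsc$, this invertibility yields a microlocal $\tsc$-estimate on a full neighborhood of each pole, completing the patching and the a priori bound.

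The dual estimate for $P_V^*$ is produced by the same scheme with the time direction reversed: sources and sinks of the Hamilton flow interchange, and the threshold inequality induced on the dual weight (via the $L^2$-pairing with the appropriate weight shift) reverses the sign of $\ellvar_\pm \gtrless -1/2$, so by construction the roles of hypotheses (1) and (2) are exactly the ones needed for the adjoint problem. The normal operators at the poles are self-adjoint up to conjugation, so the same spectral assumptions on $H_{V_\pm}$ again deliver invertibility. The hardest step is expected to be the normal operator analysis at $\NP$ and $\SP$, which requires both the calculus $\Psitsc$ to be developed in sufficient detail on $[X;\poles]$ and careful uniform spectral estimates on $H_{V_\pm}$ up to and including the endpoint $\sigma^2 = m^2$.
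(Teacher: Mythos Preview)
Your overall architecture is right: one proves Fredholm estimates for $P_V$ and $P_V^*$ by patching elliptic, principal-type, and radial-point estimates, with the threshold conditions on $\ellvar_\pm$ governing which radial estimate applies at each component. Away from $\poles$ this is exactly the scattering picture of Section~\ref{sec:model-case}.

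The gap is in your treatment of the poles. You propose to invert the indicial family $\ffsymbz(P_V)(\tau) = \tau^2 - H_{V_\pm}$ uniformly in $\tau$ via limiting absorption, but for $|\tau| \ge m$ this operator is \emph{not} invertible on any weighted space in a way that would feed into a standard normal-operator Fredholm argument: $\tau^2$ lies in the continuous spectrum of $H_{V_\pm}$, and the indicial operator has nonempty characteristic set in the scattering sense on $\ff$. The paper does \emph{not} invert the indicial family there. Instead it proves three genuinely different estimates over $\poles$, parametrized by $\tau \in \Wperpo$: (i) for $|\tau| > m$ (including $\tau = \pm\infty$), a principal-type propagation estimate (Proposition~\ref{prop:propagation_localized}) of the form ``if nothing comes in at this $\tau$-level, nothing goes out''; (ii) for $\tau = \pm m$, above/below threshold radial-point estimates (Propositions~\ref{prop:tsc_radial_above}--\ref{prop:tsc_radial_below}); (iii) for $|\tau| < m$, genuine $\tsc$-ellipticity (Lemma~\ref{thm:elliptic PV ff}), which is the only regime where your inversion picture is correct. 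The estimates in (i) and (ii) are proved by positive commutator arguments with commutants of the form $\Op_L(q)G_\psi$, where $G_\psi = \psi\bigl((D_t^2 + H_{V_0} + E)^{-1}P_{V_0}\bigr)$ is a functional-calculus localizer to the characteristic set (Section~\ref{sec:full func calc sec}); this device is needed precisely because $\zeta$-dependent symbols do not commute well with $P_V$ over $\ff$. The absolute continuity hypothesis is used not for a limiting absorption principle but in the ``shrinking window'' Lemma~\ref{lem:shrinking_window}, which makes the operator norm of $\ffsymbz(G_\psi)$ small enough to absorb error terms coming from the difference between $[P_V, QG_\psi]$ and $[P_0, QG_{\psi,0}]$. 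Without this commutant construction and the three-regime analysis, the pole estimates do not close.
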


Analyzing $P_V$ as a $\tsc$-operator allows us to prove estimates
for $P_V u = f$ near $\poles$.  In accordance with Vasy's
results for many-body Hamiltonians, estimates at $C$ are proven at each
level of the temporal momentum component $\tau$.  This is accomplished
in particular by analysis of the so-called \emph{indicial operator} $\ffsymbz(P_V)$.
Near $\NP$,
the indicial operator is the Fourier transform in time of the limiting operator:
\begin{align*}
    \ffsymbz(P_V)(\tau) = \tau^2 - \Delta_z - m^2 - V_+ = \tau^2 - H_{V_+}\,,
\end{align*}
with an analogous indicial operator at $\SP$.
This operator's behavior has three distinct types:
\begin{enumerate}
    \item if $|\tau| > m$ then $\tau^2$ lies in the continuous spectrum of $H_{V_+}$,
    \item if $|\tau| < m$ then the operator $\ffsymbz(P_V)(\tau)$ is elliptic on $\RR^n$ in the scattering sense, and
    \item the borderline cases $\tau = \pm m$, which is where the radial set lies over $\NP$.
\end{enumerate}
In each of these three cases, we prove a different type of
estimate.  The three estimates are analogues of (1) principal
type-propagation, (2) elliptic estimates, and (3) radial points
estimates.
The
difference between our estimates and standard scattering propagation
estimates
is that over $\poles$, frequency localization is local in $\tau$
but global in the spatial momentum $\zeta$.  As a result, the
principal type propagation and radial points estimates always have assumptions on the
flow on all bicharacteristics passing over $C$ at the relevant
$\tau$-level.

In the case $|\tau_0| > m$, we prove a principal type propagation
estimate of the form ``if nothing comes in then nothing goes out'' 
that appears in Vasy's original work, described there using broken
geodesics.  In other words, if $P_V u$ lies in some
$\Sobsc^{s - 1, \ell + 1}(\RR^{n + 1})$ when microlocalized near
$\tau = \tau_0$, and $u$ lies in $\Sobsc^{s, \ell}(\RR^{n + 1})$ on all
bicharacteristics which flow into $\NP$ at $\tau = \tau_0$, then $u$
itself lies in $\Sobsc^{s, \ell}(\RR^{n + 1})$ at $\tau = \tau_0$ over
$\NP$ and satisfies a corresponding estimate.  This estimate is the
content of Proposition \ref{prop:propagation_localized}.

At $\tau = \pm m$, we prove above and below threshold radial point estimates.
Below the threshold decay rate $\ell < - 1/2$, the estimates mimic principal type estimates,
namely if $P_V u \in \Sobsc^{s - 1, \ell + 1}$ microlocally near $\NP$ and $\tau = m$ and $u \in \Sobsc^{s, \ell}$ 
on all bicharacteristics that flow into $\NP$ at $\tau = m$, then $u$ is microlocally in $\Sobsc^{s,\ell}$ at
$\NP$ and $\tau = m$.
On the other hand for $\ell > -1/2$, the above threshold estimate is similar to an elliptic estimate;
if $P_V u \in \Sobsc^{s - 1, \ell + 1}$ microlocally near $\NP$ and $\tau = m$ and $u \in \Sobsc^{s,\ell'}$
for $\ell' \in (-1/2,\ell)$, then $u \in \Sobsc^{s,\ell}$ near $\NP$ and $\tau = m$.
We refer to Proposition~\ref{prop:tsc_radial_above} and Proposition~\ref{prop:tsc_radial_below} for details.



The scattering calculus provides a mechanism for frequency localization
in $\tau$ and $\zeta$ uniformly up to infinity.
In particular, the localizer to the radial set $Q_{rad}$ can be taken to be a scattering operator of order $(0,0)$.
However, in the $\tsc$-calculus, operators obtained by quantization of $\zeta$-dependent symbols do not
have good commutation relations with $P_V$ near $\poles$.
Therefore, in Vasy's
treatment, frequency localizers include functions of the many-body
Hamiltonian $H$, i.e.,\ $\phi(H)$, where $\phi \in \CcI(\RR)$.  In that case, because $H$ is
elliptic, $\phi(H)$ is a smoothing pseudodifferential operator.
However, in our case, functions $\phi(P_V)$ are not well-behaved
pseudodifferential operators, and we must first compose $P_{V}$ with
an invertible, globally elliptic $\tsc$-operator and then apply the
functional calculus to obtain a pseudodifferential localizer to the characteristic set.
We use, for $E \ge 0$ sufficiently large,
\begin{equation*}
    G_{\psi} \coloneqq \psi\left( (D_t^2 + H_{V_0} + E)^{-1} P_{V_0} \right)\,.
\end{equation*}
which we show lies in $\Psitsc^{0,0}(\RR^{n + 1})$.
The fact that $G_\psi$ is not smoothing corresponds to the non-compactness of the characteristic set of $P_0$.
We have the commutation relation
\[
[G_\psi, P_{V_{0}}] = 0
\]
in our commutator construction and for $P_V$ this commutation
holds to leading order.
Frequency localizers over $\poles$
will be of the form $Q G_\psi$ where $Q$ localizes to a $\tau$-level
over $C$ (see Section \ref{sec:full func calc sec}).

Another novelty of our work is the treatment of large $\tau$ at $\poles$.
For many-body Hamiltonians, non-compact regions of the non-interacting
dual variable lie in the elliptic region, and are therefore not of
direct interest.  (It should be noted that a global Fredholm framework
for many-body operators would still need to address these elliptic
regions.)  In contrast, for the $\tsc$ formulation of Klein-Gordon,
large $\tau$ lie in the non-radial part of the characteristic set and
therefore exhibit principal type propagation.  To establish
propagation of singularities estimates for $\tau = \pm \infty$, we
recognize the indicial family as a semiclassical scattering
differential operator of the form introduced by Vasy--Zworski~\cite{VaZw2000},
\begin{equation*}
  \ffsymbz(P_V)(\tau) \in \Psisclsc^{2,0,2}(\RR^n),
\end{equation*}
where the semiclassical parameter is $h = \pm 1/ \tau$, say as
$\tau \to \pm \infty$.  An attractive picture emerges, in which the
semiclassical principal symbol of $\ffsymbz(P_V)(\tau)$ is exactly the
restriction of the scattering principal symbol of $P_V$ restricted to
an appropriate hypersurface of a fiber compactified phase space
(see Section~\ref{sec:norm fam}).

Similar to Melrose~\cite{Melrose94} and Vasy~\cite{V18}, we exploit the global structure
of the Hamiltonian flow in which all bicharacteristics flow from two
components of the radial set to the other two; the former act as
global sources for the flow and the latter act as sinks.  On the
characteristic set away from the radial set, we prove real principal
type propagation estimates along bicharacteristic rays form the
Hamiltonian flow.  For the free Klein-Gordon operator, the Hamilton
vector field $H_{p} = \tau \partial_{t} - \zeta \cdot \partial_{z}$
rescales to a smooth vector field on a the fiber compactification
$\Tsco^{*} X$ of the scattering cotangent bundle.  There it induces a
smooth extension of the bicharacteristic flow which preserves the
characteristic set.
The propagator is the inverse of a Fredholm problem
constructed using radial points estimates, as detailed below in
Section \ref{sec:fred for scat} and \ref{sec:scat perturb}.

As in Vasy \cite{V2000}, the flow we use to analyze $P_{V}$ is the
same flow as that for $P_{0}$.  Away from $\poles$, this makes sense
immediately since the assumptions on $V$ make it so that $P_{V}$ has
the same scattering principal and subprincipal symbol as $P_{0}$.  At
$\poles$, as in Vasy~\cite{V2000}, we use that the functional calculus
localizers to the characteristic set can be approximated by those same
functions of the corresponding localizers for the free operator.
Specifically, we use that, under the given assumptions, with
$G_{\psi}$ the localizer for $P_{V}$ and $G_{\psi,0}$ the free
localizer, that
\[
\ffsymbz(G_{\psi})(\tau) \mbox{ and } \ffsymbz(G_{\psi,0})(\tau) \mbox{ have the same semiclassical
  principal symbol,}
\]
which fits nicely below thanks to the semiclassical formulation of the
indicial family.  This is discussed in Section~\ref{sec:full func
  calc sec}, together with the basic construction of the commutants 
that go into the positive commutator argument.

The mathematical study of the long-time behavior of solutions to
massive wave equations is extensive and dates back at least to the
pioneering work of Morawetz--Strauss~\cite{MoSt1972}, in which the
authors found one of the first decay results for the Klein--Gordon
equation.  Quite a bit more work on the spectral and scattering theory
of Klein--Gordon equations ensued, including the works of
Lundberg~\cite{Lundberg1973} and Weder~\cite{Weder1978}.  Klainerman's
use of energy techniques~\cite{Kla1993}, discussed above, is perhaps
the result most directly related to the current work.  More recent
work has focused on energy decay (such as the works by
Kopylova~\cite{Kopylova2018} or by 
Komech and Kopylova~\cite{KoKo2010b,KoKo2010a} or those described in the survey
article of Kopylova~\cite{Kopylova-survey}), on Strichartz estimates
(such as those by Kubo--Lucente~\cite{KuLu2007}), or on asymptotics
for related equations (such as the work of
Bejenaru--Herr~\cite{BeHe2015} for the Dirac equation).

The Fredholm approach to the construction of resolvents and
propagators for non-elliptic operators using radial points estimates
and anisotropic spaces is due to Vasy \cite{HaVa2015,Vasy13,Vasy_asymp_hyper_high-energy}, while
the radial points estimates used in his construction are due to
Melrose \cite{HMV2008, Melrose94}.  Adaptation of the method to more general non-elliptic
scattering operators is due to Vasy \cite{V18}.  G{\'e}rard--Wrochna~\cite{GeWr19} used the method to
construct the Feynman propagator for the Klein--Gordon equation on
asymptotically Minkowski spaces.

There is closely related work on the wave equation on Lorentzian
scattering manifolds due to Baskin--Vasy--Wunsch~\cite{BVW15, BVW18} and Hintz--Vasy
\cite{HintzVasy2015}.  These papers prove not only linear
invertibility properties for the wave equation, but also semilinear
results using weighted global spacetime estimates akin to those above,
only there the relevant estimates are $\mathrm{b}$-spaces.
Subsequent work of Hintz and Vasy, which use in particular related microlocal methods
including radial points estimates on non-compact spacetimes,
establishes stability properties in mathematical GR
\cite{HaefnerHintzVasy, HintzVasy2018}.  Use of the radial points
estimates in semiclassical analysis to study resonances on
asymptotically hyperbolic manifolds and
Pollicot-Ruelle resonances,
\cite{DatchevDyatlov, DatchevDyatlovZworski}.  Many of these approaches use anisotropic
spaces, as they allow for threshold conditions to vary between components
of the radial set \cite{FaureSjostrand}.

Ours is not the only recent work in microlocal analysis on hyperbolic
PDE which uses a many-body approach; Hintz has used $\tsc$-operators,
and a related class of $3\mathrm{b}$-pseudo\-differential operators
\cite{Hintz2023Gluing,Hintz2023Flat,Hintz2023,Hintz2024,HintzVasy2023}.
Also recently, Sussman~\cite{Sussman2023} has analyzed the asymptotics of solutions to
Klein-Gordon near null infinity. This analysis includes a substantially
more detailed study of solutions near null infinity, where we use only
propagation of singularities estimates.

\subsection*{Outline of the paper}
The paper is organized as follows.  In Section \ref{sec:model-case},
we analyze the free Klein-Gordon operator $P_0$, which serves as an
instructive model.
In doing so, we
recall the relevant features of the scattering calculus, including the
global structure of the bicharacteristic flow of $P_0$ on the
scattering cotangent bundle.  We then prove Theorem \ref{thm:basic
  propagator result} for $P_0$ and for scattering perturbations of
$P_0$.  This provides both an outline for our general approach to
analysis of $P_V$ and the actual estimates that will be used for
$P_{V}$ away from $\poles$.  In
Section \ref{sec:operator-p_v} we discuss how $P_V$ is not a
scattering operator in general; it is instead a $\tsc$-differential
operator, a class which we define and discuss the basic properties of,
including the indicial operator of $P_V$.  In Section \ref{sec:3body
  klein} we display the basic properties of $\tsc$-pseudodifferential
operators, including their symbol mappings and quantization,
commutators, wavefront sets and elliptic sets.  In Section
\ref{sec:reduced elliptic} in particular we develop an elliptic
theorem of $\tsc$-operators, including fiber infinity.  Section
\ref{sec:full func calc sec} extends the work of Section
\ref{sec:3body klein} to prove functional calculus statements for
non-elliptic $\tsc$-operators and we compute the symbols of some
commutators which are used in the propagation proofs.
In Section~\ref{sec:3sc_propagation} and Section~\ref{sec:3sc_radial}
we prove the
propagation estimates over the poles $C$
(Section~\ref{sec:3sc_propagation} proves the analogue of
H{\"o}rmander's theorem and Section~\ref{sec:3sc_radial} establishes
the radial point estimates).  Finally, in Section
\ref{sec:3sc_propagators}, we put the foregoing work together to prove
the existence of propagators using the Fredholm framework discussed
above.  In particular the main theorems are proven in this final numbered
section.  We finally include an index of notation after the main sections of
the paper.

\subsection*{Acknowledgements}
This research was supported in part by the Australian Research Council
grant DP210103242 (JGR, MD) and National Science Foundation grant
DMS-1654056 (DB).  We acknowledge the support of MATRIX through the
program ``Hyperbolic PDEs and Nonlinear Evolution Problems'' September
18-29, 2023 and DB and MD were supported by the ESI through the
program ``Spectral Theory and Mathematical Relativity'' June 5-July
28, 2023.

Moreover, we are grateful to Andrew Hassell, Peter Hintz, Yilin Ma, Ethan Sussman, and Andr\'as Vasy for valuable discussions.

\section{The model case}
\label{sec:model-case}

In this section we prove Theorem~\ref{thm:basic propagator result} for
the model operator
\begin{align}\label{def:P0}
    P_0 \coloneqq D_t^2 - (\Delta + m^2).
\end{align}
The majority of this work was described by Vasy~\cite{V18}; we follow
that development and then at the end of the section we describe
related settings in which the same proof applies with minimal
modifications.

In the case of $P_0$, the adapted Sobolev spaces are
\begin{align}\label{eq:P0_Sobolev_spaces}
  \cX^{s,\ellvar} = \curl{ u \in \Sobsc^{s,\ellvar} : P_{0} u \in \Sobsc^{s-1,\ellvar+1}}\,,\quad
  \cY^{s,\ellvar} = \Sobsc^{s,\ellvar}\,,
\end{align}
for smooth function $\ellvar = \ellvar(t, z) \in S_{\cl}^{0}(\mathbb{R}^{n + 1})$.
We prove the invertibility of $P_0$ as a bounded operator between $\cX^{s,\ellvar}$ and $\cY^{s-1, \ellvar+1}$
for suitable $\ellvar$.
\begin{theorem}
  \label{thm:sec2thm}
  If $s \in \RR$ and $\ellvar_{+}$ a forward weight,
  \begin{equation}
    \label{eq:model mapping}
    P_{0} : \cX^{s, \ellvar_{+}} \lra \cY^{s-1, \ellvar_{+}+1}
  \end{equation}
  is an isomorphism. Its inverse is the forward propagator.
  The same is true if $\ellvar_{+}$ is replaced by a backward weight $\ellvar_{-}$, in
  which case the inverse is the backward propagator.
\end{theorem}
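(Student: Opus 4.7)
The plan is to follow the Vasy radial-points Fredholm scheme for $P_0 \in \Diffsc^{2,0}(X)$ on the radially compactified spacetime $X = \overline{\RR^{n+1}}$. The scattering principal symbol $p = \tau^2 - |\zeta|^2 - m^2$ cuts out a smooth characteristic set $\Char(P_0) \subset \Tsco^* X$, and the critical features of the rescaled Hamilton vector field $\Hamscp$ occur at causal infinity: at future causal infinity $\iota^+$ the radial set (two components, one per sign of $\tau$) is a sink for the flow, at past causal infinity $\iota^-$ the radial set is a source, and all remaining bicharacteristics in $\Char(P_0)$ flow from source to sink (with the sink/source roles reversed for a backward weight $\ellvar_-$).

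Given this geometry, I would combine three microlocal estimates on $\Tsco^* X$: elliptic estimates away from $\Char(P_0)$, uniform up to $\Sbstar X$; real principal-type propagation along $\Hamscp$-bicharacteristics inside the characteristic set; and radial-point estimates at each radial component, with variable-order threshold $-1/2$. The hypotheses on a forward weight $\ellvar_+$ are exactly the dichotomy needed: $\ellvar_+ < -1/2$ at $\iota^+$ lets us apply the below-threshold estimate at the sinks (which trades regularity for control in a punctured neighborhood and is absorbed by the propagation step), while $\ellvar_+ > -1/2$ at $\iota^-$ lets us apply the above-threshold estimate at the sources (whose input is strictly above-threshold a priori decay). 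Patched through the null-ray monotonicity and the constancy of $\ellvar_+$ near causal infinity (item~(\ref{it:ellvar_const}) of Theorem~\ref{thm:basic propagator result}), these combine into a semi-Fredholm estimate
\begin{equation*}
\norm{u}_{\cX^{s,\ellvar_+}} \le C\bigl(\norm{P_0 u}_{\cY^{s-1,\ellvar_+ + 1}} + \norm{u}_{\Sobsc^{s_0,\ellvar_0}}\bigr),
\end{equation*}
whose error term is compact by Rellich. Running the same scheme for $P_0^* = P_0$ on the dual space (whose weight $-\ellvar_+ - 1$ is a backward weight) yields the dual estimate, and hence the map in \eqref{eq:model mapping} is Fredholm.

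To upgrade Fredholm to invertibility I would show that both kernel and cokernel vanish. Any $u \in \cX^{s,\ellvar_+}$ with $P_0 u = 0$ is above threshold at $\iota^-$; iterating the above-threshold radial estimate yields $u$ Schwartz near $\iota^-$, so $u$ has vanishing Cauchy data on every sufficiently negative time slice and uniqueness for Klein-Gordon forces $u \equiv 0$. The cokernel is identified with the kernel of $P_0$ on the dual backward space, which vanishes by the symmetric argument. Causality \eqref{eq:forward solution} follows in the same way: if $\supp f \subset \{t \ge T\}$, then $P_0(G_+ f) = 0$ on $\{t < T\}$ and $G_+ f$ inherits above-threshold decay at $\iota^-$ there, so $G_+ f$ vanishes on $\{t < T\}$ by the same argument. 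The construction for $\ellvar_-$ and the backward propagator is identical up to time reversal.

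The main obstacle is the radial-point estimates: they require a positive-commutator argument with a commutant built from a localizer to the radial set times a weight factor whose $\Hamscp$-derivative has a definite sign, with the sign of the principal commutator term flipping across the threshold $\ellvar_+ = -1/2$. Constancy of $\ellvar_+$ near the radial set is what keeps that commutant's symbol regular, and the iteration argument used to obtain Schwartz decay from the kernel analysis requires that the above-threshold radial estimate be applicable at each level. Beyond that, patching the three local estimates into a global semi-Fredholm bound requires a partition of unity adapted to $\Hamscp$ and a check that the forward-weight asymmetry is self-consistent with monotonicity along null rays; this is where the global source-to-sink structure of $\Hamscp$ on $\Char(P_0)$ is essential.
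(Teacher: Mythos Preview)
Your overall scheme matches the paper's proof: elliptic estimates, principal-type propagation, above/below-threshold radial estimates, glued via a partition of unity into the Fredholm estimate~\eqref{eq:global scat fredholm}, then the dual estimate for $P_0^* = P_0$ on the weight $-1-\ellvar_+$, and finally kernel/cokernel triviality plus the forward-support property.

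There is one genuine gap in your kernel argument. From iterating the above-threshold radial estimate at $\iota^-$ you correctly obtain that $u$ is Schwartz toward past causal infinity, but this does \emph{not} give ``vanishing Cauchy data on every sufficiently negative time slice'': Schwartz decay means the data are small as $t\to -\infty$, not identically zero on any fixed slice. The missing step is an energy argument. For $P_0$ the conserved energy
\[
E_u(t)=\tfrac12\int_{\RR^n}|\partial_t u|^2+|\nabla_z u|^2+m^2|u|^2\,dz
\]
is positive definite (since $m>0$) and $t$-independent; Schwartz decay at $\iota^-$ forces $E_u(t)\to 0$ as $t\to -\infty$, hence $E_u\equiv 0$, hence $u\equiv 0$. (The paper phrases this as an energy/Gr\"onwall argument, which is what one needs for the perturbed operator $P_V$ where exact conservation fails.) Only after this step do you actually have vanishing Cauchy data on some slice, at which point your appeal to uniqueness for the Cauchy problem becomes redundant. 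The same fix applies to your causality argument for $G_+ f$ in $\{t<T\}$.
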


\subsection{Outline}
\label{sec:scattering-outline}

As described in the introduction, the main step in the proof of
Theorem~\ref{thm:sec2thm} is to show that $P_{0}$ and $P_{0}^{*}$ are
Fredholm operators between $\cX^{s,\ellvar}$ and $\cY^{s-1,\ellvar+1}$.
In particular, $P_{0} : \cX^{s,\ellvar} \lra \cY^{s-1,\ellvar+1}$
is Fredholm provided we can establish the following
two global estimates:
\begin{align*}
  \norm{u}_{s,\ellvar} &\leq C\left( \norm{P_{0}u}_{s-1,\ellvar+1} + \normres{u}\right), \\
  \norm{u}_{1-s,-1-\ellvar} &\leq C\left( \norm{P_{0}^{*}u}_{-s, -\ellvar} + \normres{u}\right),
\end{align*}
where $M,N$ are sufficiently large that $\Sobres \hookrightarrow
\Sobsc^{s,\ellvar}$ and $\Sobres \hookrightarrow
\Sobsc^{1-s,-1-\ellvar}$ are compact.  These will hold only for
suitable $\ellvar$, and the outline of the estimates here motivates
the choice of the forward and backward weights $\ellvar_{\pm}$ in
Section \ref{sec:fred for scat}. Although for the
free case we have $P_{0}^{*} = P_{0}$, it is useful to distinguish
these as the second estimate above is used to analyze the cokernel of
the Fredholm maps for $P_{0}$.

The global estimates for $P_{0}$ and $P_{0}^{*}$ follow from
several types of microlocal estimates. 
Indeed, via a microlocal
partition of unity, we decompose phase space (the scattering cotangent
bundle, described below) into open sets $U$ where
\begin{enumerate}
\item $U$ localizes to the elliptic set of $P_{0}$.  
\item $U$ is a neighborhood of a point in the characteristic set of
  $P_{0}$ at which the Hamilton vector field is non-radial.  Here we bound $u$ by $P_0 u$ and $E u$, where $E$
  microlocalizes to a neighborhood that is in the past of the
  Hamiltonian flow, see
  Proposition~\ref{prop:scattering-propagation-variable}.  This is the
  typical microlocal propagation of singularities estimate.  
\item $U$ is a neighborhood of a point in the characteristic set of
  $P_{0}$ at which the Hamilton vector field is radial.
\end{enumerate}

We prove estimates in the corresponding regions. 
\begin{enumerate}
\item   We have microlocal elliptic estimates in Propositions \ref{prop:scat
    elliptic} and \ref{prop:ellip-specific-scattering-variable}

\item   In a neighborhood of a point in the characteristic set and away from
  the radial set, we bound $u$ by $P_0 u$ and $E u$, where $E$
  microlocalizes to a neighborhood that is in the past of the
  Hamiltonian flow, see
  Proposition~\ref{prop:scattering-propagation-variable}.  This is the
  typical microlocal propagation of singularities estimate.

\item   Near the radial set, we have two different types of estimates: below
  the decay rate $\ellvar < -1/2$, we have an estimate that is similar to
  the propagation of singularities estimate, see
  Proposition~\ref{prop:localized below thresh}. Whereas for
  $\ellvar > -1/2$, we have an estimate that mimics the elliptic estimate, but
  we need to assume that $u$ is a priori above the $-1/2$ threshold,
  see Proposition~\ref{prop:localized above thresh}.
\end{enumerate}

Given these four estimates -- elliptic, principal type, and above and
below threshold radial points estimates -- we obtain the desired Fredholm estimate
for $P_{0}$ (and, by working with adjoints, for $P_{0}^{*}$) in the
space $\Sobsc^{s,\ellvar}$ provided that $\ellvar > -1/2$ (to apply the above
threshold estimate) and $\ellvar < -1/2$ (to apply the below threshold
estimate).  This motivates the use of a variable weight described
below in Section~\ref{sec:scat prop} and Section~\ref{sec:fred for scat}.

In microlocal regions near the radial set we always assume that
$\ellvar$ is constant, and emphasize this by denoting it by $\ell$. 

The rest of this section is devoted stating the estimates together with a sketch of the proofs and then the proof of Theorem~\ref{thm:sec2thm}.

\subsection{The scattering calculus}
\label{sec:scattering-calc}

The operator $P_{0}$ is an element of Melrose's scattering
calculus~\cite{Melrose94}, which quantizes functions of the standard
translation-invariant vector fields on $\RR^{n+1}$.  Indeed, the
\emph{scattering differential operators} on a vector space
$\RR^{N}$ are given by
\begin{equation}
  \label{eq:sample-scattering-diff-op}
  L \in \Diffsc^{m}(\RR^{N}_{w}) \iff L = \sum_{\abs{\alpha}\leq m}
  a_{\alpha}(w) D_{w}^{\alpha}, \quad a_{\alpha} \in S^{0}_{\cl}(\RR^{N}),
\end{equation}
where $S^{0}_{\cl}(\RR^{N})$ is the space of classical symbols of
order zero on $\RR^{N}$.  Observe that
$P_{0} = D_{t}^{2} - \left( \Delta + m^{2}\right) \in
\Diffsc^{2}(\RR^{n+1})$.  We now describe some relevant features of
the scattering calculus.

Scattering operators are most easily understood as operators 
on a compactified space.  Here we use the notation
$X = \overline{\RR^{n+1}}$ for this space, which is the
\textit{simultaneous radial compactification of spacetime}.  In
particular, points on the boundary of $X$ (below simply called
``infinity'') represent the loci of endpoints of geodesics of
arbitrary type (timelike/spacelike/null), where asymptotically parallel geodesics
limit to the same point at infinity.
\begin{align}\label{eq:def of X}
    X \coloneqq \overline{\RR^{n+1}_{t, z}} = \Sph^{n+1}_+\,.
\end{align}
Here $X$ is simply a hemisphere of the unit sphere, $\Sph^{n+1}_+$;
it is a compactification of $\RR^{n + 1}$ explicitly via the inclusion
\begin{align*}
    (t, z) \mapsto \frac{(1, t, z)}{\ang{t, z}}, \qquad
    \ang{t, z} = (1 + t^2 + |z|^2)^{1/2}
\end{align*}
The boundary of $\Sph^{n+1}_{+}$ is diffeomorphic to $\Sph^{n}$ and is
given by
\begin{align}\label{eq:paX}
    \pa \Sph^{n+1}_+ = \curl{(0,t, z) \colon t^2 + \abs{z}^2 = 1}\,.
\end{align}
In particular, a global boundary defining function on $X$ is given by
$\left(1 + t^2 + \abs{z}^{2}\right)^{-1/2}$.  In the region
where $y = z/t$ is bounded, we may use $x = 1/t$ and $y$ as local
coordinates.  

The smooth structure on $X$ (as a manifold with boundary) distinguishes
the classical symbols of order zero on $\RR^{n+1}$ as smooth, i.e., 
\[
\CI(X) = S^0_{\cl}(\RR^{n + 1}),
\]
and thus one can rephrase the definition of $\Diffsc^{m}(\RR^{n+1})$ by
demanding coefficients $a_{\alpha}$ in
\eqref{eq:sample-scattering-diff-op} satisfy
\[
a_{\alpha} \in \CI(X).
\]
We also write
\[
\Diffsc^{m}(X) = \Diffsc^{m}(\RR^{n + 1}),
\]
and
\begin{equation*}
    \Diffsc^{m,r}(X) = \ang{t,z}^{r}\Diffsc^{m}(X).
\end{equation*}

The space $\Diffsc^{m}(X)$ is the universal enveloping algebra of
the space of \emph{scattering vector fields}
$\cV_{\sc}(X) \coloneqq x\cV_b(X)$, where $x$ is a total
boundary defining function for $X$ and $\cV_b(X)$ is the
space of vector fields tangent to $\pa X$ \cite{Melrose:APS, Melrose95}.  The space of scattering
vector fields is independent of the specific choice of boundary
defining function and forms a Lie algebra.  The scattering tangent
bundle $\Tsc X$ is the vector bundle whose sections are scattering
vector fields.  If $(x,y) \in \RR_+ \times \RR^n$ are local
coordinates on $X$ with $x$ a boundary defining function, then
$\Tsc X$ is locally spanned over $C^{\infty}(X)$ by
\begin{align*}
    \curl{ x^2 \partial_x, x \partial_y}\,.
\end{align*}
The dual bundle is the \emph{scattering cotangent bundle} $\Tsc^* X$ and it is locally given by
\begin{align*}
    \curl*{ \frac{dx}{x^2}\,, \frac{dy}{x}}\,.
\end{align*}

The (total) symbol of a scattering differential operator is best
understood as a function on the doubly compactified phase space
\begin{equation}
  \label{eq:scattering phase space}
  \Tsco^* X = X \times \overline{\RR^{n + 1}_{\tau, \zeta}}
\end{equation}
where $\tau, \zeta$ are dual to
$t, z$ respectively, and $\overline{\RR^{n + 1}_{\tau, \zeta}}$
is the radial compactification of the momentum factor, or, as we
will refer to it, the ``fiber'', as it is the fiber of the fibration
$\Tsco^* X \longrightarrow X$.

We define, for $m, r \in \RR$,
\begin{equation}
    \Ssc^{m,r}(X) = \ang{t,z}^r \ang{\tau, \zeta}^m \CI(X \times
\overline{\RR^{n + 1}}) \label{eq:scattering symbols}
\end{equation}
to be the space of classical scattering symbols of order
$m,r$, and
\[
\Psisc^{m,r}(\RR^{n + 1})= \Op_L(\Ssc^{m,r}(\RR^{n + 1})) 
\]
be the (classical) scattering pseudodifferential operators as defined
by Melrose~\cite{Melrose94}.  The 
principal symbol map, sending $\Op_L(a) = A$ to the equivalence class
of $a$ in
\begin{equation}
  \label{eq:principal}
  \begin{gathered}
      \scprinsymb{m,r} \colon \Psisc^{m,r} \lra
    \Ssc^{m,r}(X)/ \Ssc^{m- 1,r - 1}(X),
  \end{gathered}
\end{equation}
is equivalent to the mapping taking $a$ to its restriction to the
boundary of $X \times
\overline{\RR^{n + 1}}$:
\begin{align*}
    \scprinsymb{m,r}(A) = \ang{t, z}^{-r} \ang{\tau, \zeta}^{-m} a \rvert_{\partial (X \times \overline{\RR^{n + 1}})}.
\end{align*}
Here $\partial (X \times \overline{\RR^{n+1}})$ is a union of two
boundary hypersurfaces (bhs's), which we denote
\begin{equation}
    C_\sc(X) \coloneqq (\pa X \times \overline{\RR_{\tau, \zeta}^{n + 1}}) \cup
    (X \times \partial \overline{\RR^{n + 1}_{\tau, \zeta}})
    =  \partial \Tsco^* X. \label{eq:square}
\end{equation}
The boundary hypersurface $\partial X \times \overline{\RR_{\tau, \zeta}^{n + 1}}$ is ``spacetime infinity''
while $X \times \partial \overline{\RR^{n + 1}_{\tau, \zeta}}$ is ``momentum''
or ``fiber infinity''.  The functions
\begin{equation}
  \label{eq:bhss}
  \rho_{\mathrm{base}} \coloneqq \ang{t, z}^{-1} \mbox{ and }
  \rho_{\mathrm{fib}} \coloneqq \ang{\tau, \zeta}^{-1},
\end{equation}
are boundary defining functions for spacetime and fiber infinity
respectively.

We define
\begin{align}
    \label{eq:scnormsymb}
    \scnormsymb{m,r}(A) \coloneqq \ang{t, z}^{-r} \ang{\tau, \zeta}^{-m} a \rvert_{\pa X \times \overline{\RR_{\tau, \zeta}^{n+1}}}\,,\\
    \label{eq:scfibsymb}
    \scfibsymb{m,r}(A) \coloneqq \ang{t, z}^{-r} \ang{\tau, \zeta}^{-m} a \rvert_{X \times \pa \overline{\RR_{\tau,\zeta}^{n+1}}}\,.
\end{align}
We have that for all $p \in \pa X \times \pa \overline{\RR_{\tau, \zeta}^{n+1}}$,
\begin{align*}
    \scnormsymb{m,r}(A)(p) = \scfibsymb{m,r}(A)(p)\,.
\end{align*}

Note in particular that the symbol of $L$ in
equation~\eqref{eq:sample-scattering-diff-op}, obtained by replacing
$D_t^j D_z^\alpha$ by $\tau^j \zeta^\alpha$, is smooth on $\Tsco^* X$
after it is multiplied by $\ang{\tau, \zeta}^{-m}$, and
more generally the symbol of $L \in \Diffsc^{m,r}$ is smooth on
$\Tsco^* X$ after it is multiplied by
$\ang{t,z}^{-r} \ang{\tau,\zeta}^{-m}$.

\subsection{Elliptic estimates}
\label{sec:elliptic-estimates-scattering}

We now recall the standard notions of operator wavefront set,
elliptic set, and characteristic set for operators on the scattering
calculus, which are useful in this section as well as in comparison
with the corresponding $\tsc$ notions below.

The \textbf{(scattering) operator wavefront set} $\WF'(A) =
\WF'_{\sc}(A)\subset C_{\sc}(X)$ is the essential support of the symbol $a$, where we
recall that $\alpha \not \in \esssupp(a)$ if an only if there is an
open set $U \subset X \times
\overline{\RR^{n + 1}}$ with $\alpha \in U$ such that $a$ is
Schwartz in $U$.

For $A \in \Psisc^{0,0}(X)$, the \textbf{(scattering) elliptic set}
$\Ell(A)$ is defined by $\alpha \in \Ell(A)$ if and only if
$\sigma_{\sc, 0,0}(A)(\alpha) \neq 0$.  For $A \in \Psisc^{m,\ell}$,
$\alpha \in \Ell(A)$ if and only if $\sigma_{\sc,m,\ell}(A)
\neq 0$.  The \textbf{(scattering) characteristic set} is the
complement of the elliptic set:
\[
\Char(A) = C_{\sc}(X) \setminus \Ell(A).
\]

Note that this definition of ellipticity is equivalent to the standard
one in which we demand that, in a neighborhood of $\alpha$ in $\Tsco^{*}X$, the
principal symbol $\scprinsymb{m,r}(A)$ is bounded below by 
\begin{equation*}
    \abs{\scprinsymb{m, r}(A)} \geq c > 0.
\end{equation*}

Given $\alpha \in \Ell(A)$, the
standard  elliptic parametrix construction holds in the scattering
calculus, and there is $B \in \Psisc^{-m, -r}(X)$ such that
\[
\alpha \not \in \WF'(\Id - BA) \cup \WF'(\Id - AB)
\]
and thus we obtain the (scattering) elliptic estimates:
\begin{proposition}[Corollary 5.5, \cite{V18}]\label{prop:scat elliptic}
  Let $A \in \Psisc^{m,r}(X)$.  Let $B, G \in \Psisc^{0,0}$, and
  assume $\WF'(G) \subset \Ell(A)$ and $\WF'(B) \subset
  \Ell(G)$.  Then for any $M, N \in \RR$, there is a $C> 0$
  such that
\begin{align*}
    \norm{B u}_{s, \ell} \leq C \left( \norm{G A u}_{s - m, \ell - r} + \normres{u}\right).
\end{align*}
\end{proposition}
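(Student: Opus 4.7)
The strategy is the standard microlocal elliptic parametrix construction, adapted so that the parametrix ``ends at'' $B$ rather than at the identity. The key preliminary observation is that $\Ell(Q) \subset \WF'(Q)$ for any $Q$, since a nonvanishing principal symbol at $\alpha$ precludes Schwartz decay of the full symbol near $\alpha$. Combining this with the two nesting hypotheses gives the chain
\begin{equation*}
\WF'(B) \subset \Ell(G) \subset \WF'(G) \subset \Ell(A).
\end{equation*}
In particular, both $G$ and $A$ are elliptic on $\WF'(B)$, so the composition $GA \in \Psisc^{m,r}(X)$ is elliptic on $\WF'(B)$ with principal symbol $\scprinsymbz(GA) = \scprinsymbz(G)\,\scprinsymbz(A)$ bounded below in some open neighborhood of $\WF'(B)$.

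My plan is then to construct a microlocal right parametrix $E \in \Psisc^{-m,-r}(X)$ for $GA$ that recovers $B$, i.e.,
\begin{equation*}
E\, GA = B + R_0, \qquad R_0 \in \Psisc^{-\infty,-\infty}(X).
\end{equation*}
This is the standard symbolic Neumann iteration of Melrose~\cite{Melrose94}: choose $\chi \in \CI(C_\sc(X))$ with $\chi \equiv 1$ on a neighborhood of $\WF'(B)$ and $\supp \chi \subset \Ell(GA)$, start the iteration from the order $(-m,-r)$ symbol $\chi\,\scprinsymbz(B)/(\scprinsymbz(G)\scprinsymbz(A))$, and correct inductively by symbols of lower order so as to match the full symbol of $B$ modulo a doubly residual error. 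This yields $E \in \Psisc^{-m,-r}(X)$ with $\WF'(E)$ contained in any prescribed neighborhood of $\WF'(B)$ inside $\Ell(GA)$.

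Applying the resulting operator identity to $u$ gives $Bu = E\, GA u - R_0 u$. Since $E$ has order $(-m,-r)$, it is bounded from $\Sobsc^{s-m,\ell-r}$ to $\Sobsc^{s,\ell}$, yielding $\norm{E\, GAu}_{s,\ell} \leq C\norm{GAu}_{s-m,\ell-r}$; since $R_0 \in \Psisc^{-\infty,-\infty}$, it maps $\Sobres$ continuously into $\Sobsc^{s,\ell}$ for any $M, N \in \RR$, yielding $\norm{R_0 u}_{s,\ell} \leq C\normres{u}$. Summing the two bounds gives the claimed estimate. The only point requiring genuine care is that the symbolic construction of $E$ must be performed uniformly up to both boundary hypersurfaces of $\Tsco^* X$ (base and fiber infinity) and through their common corner, but this is precisely the uniformity that the scattering calculus delivers, so the iteration requires only that we track orders in the spacetime and momentum directions in parallel.
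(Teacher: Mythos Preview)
Your proof is correct and follows the same standard elliptic parametrix argument the paper invokes (the paper simply cites \cite{V18} and sketches the existence of a microlocal parametrix for $A$ rather than spelling out the iteration). The only cosmetic difference is that you invert $GA$ to hit $B$ directly, whereas the paper's sketch (and its later $\tsc$ analogue, Proposition~\ref{thm:elliptic regularity theorem}) builds a parametrix $P$ for $A$ with $\Id - PA$ residual on $\WF'(B)$ and then inserts $G$ via the containment $\WF'(B)\subset \Ell(G)$; both routes are equivalent and standard.
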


\subsection{Variable weight spaces}
\label{sec:vari-weight-spac}

As discussed briefly at the end of
Section~\ref{sec:scattering-outline}, our Fredholm estimates require
Sobolev spaces with variable growth/decay order $\ellvar$, which we
review briefly, referring to Vasy~\cite[Sect.\ 3]{V18} for further
properties and details.  In our discussion of causal propagators, we
require only that $\ellvar$ depend on spacetime variables, but we
discuss the general case for completeness.

Suppose $\ellvar \in \CI(\Tsco^* X; \RR)$, and $0 < \delta < 1/2$.  We define
$a \in S^{m,  \ellvar}_\delta(\RR^{N}_{w})$ if $a \in
\CI(\RR^{N}_{{w}} \times
\RR^{N}_{\theta})$ and
\[
|D_{w}^\alpha D_\theta^\beta a | \le C_{\alpha\beta} \ang{w}^{\ellvar - |\alpha| + \delta |(\alpha, \beta)|} \ang{\theta}^{m - |\beta| + \delta |(\alpha, \beta)|}.
\]
(Vasy uses two distinct $\delta, \delta'$, but this is not needed
here.)  Then
\[
\Psisc^{m, \ellvar}_\delta(\RR^N) = \Op_L(S^{m,  \ellvar}_\delta(\RR^{N})),
\]
Standard symbolic constructions still work in the variable order
setting, but one must work with equivalence classes of symbols rather
than with restrictions.  As an example, the principal symbol of
$\Op_{L}(a)\in \Psisc^{m, \ellvar}(\RR^{N})$ is the equivalence
class $[a]$ of $a$ in $S^{m, \ellvar}_{\delta} / S^{m-1+2\delta, \ellvar-1+2\delta}_{\delta}$.

A paradigmatic example of such a variable order symbol is the product
\begin{align}\label{eq:def_asl}
    a_{m, \ellvar}(w, \theta) \coloneqq \ang{w}^{\ellvar} \ang{\theta}^m,
\end{align}
with the $\delta$ loses incurred by differentiation because the exponent
$\ell$ is a function.  Note that $A_{m, \ellvar} = \Op_L(a_{m, \ellvar})$ is
\emph{not} a classical symbol but is still globally
scattering elliptic in the sense that, for some $\epsilon > 0$,
\begin{equation*}
  \abs{a_{m,\ellvar}(t, z, \tau, \zeta)} \geq \epsilon \ang{t,z}^{\ellvar}\ang{\tau, \zeta}^{m}.
\end{equation*}
We further note that $A_{m, \ellvar}$ is invertible as a map on
$\schwartz (X) = \schwartz (\RR^{n+1})$  and hence as a map on
$\schwartz' (X) = \schwartz' (\RR^{n+1})$.
We may therefore define the variable order Sobolev space $\Sobsc^{m, \ellvar}$ by
\begin{align*}
    \Sobsc ^{m, \ellvar} = \curl{ u \in \schwartz' (X): A_{m, \ellvar} u \in L^2 }.
\end{align*}
If $\ellvar = \ellvar(w)$, then
$\Op_L(a_{m, \ellvar}) = \ang{w}^{\ellvar} \Op_L(\ang{\theta}^{m})$ and
$\Sobsc^{m, \ellvar} = \ang{w}^{\ellvar} \Sobsc^m$ where
$ \ang{w}^{\ellvar}$ simply acts as a spacetime-dependent weight.  It
follows essentially from Arzela-Ascoli that, for any $m > m'$ and
$\ellvar > \ellvar'$ (the latter interpreted pointwise) then
\begin{equation*}
  \Sobsc ^{m, \ellvar} \hookrightarrow \Sobsc ^{m', \ellvar'}
\end{equation*}
is a compact inclusion.

Operators in $\Psisc^{0,0}(X)$ are bounded on $\Sobsc^{m,\ellvar}$ and
$A \in \Psisc^{m,\ellvar}$ maps $\Sobsc^{m, \ellvar}\to \Sobsc^{0,0}$, so
the standard elliptic estimates still apply.  In particular, we have
the following generalization of
Proposition~\ref{prop:scat elliptic} for variable order spaces.
\begin{proposition}
  \label{prop:ellip-specific-scattering-variable}
  Suppose $A \in \Psisc^{m,r}$ $G, B \in \Psisc^{0,0}$ satisfy
  \begin{equation*}
    \WF'(B) \subset \Ell(G)\subset \WF'(G) \subset \Ell(A).
  \end{equation*}
  For each $s, M, N \in \RR$ and $\ellvar \in
  C^{\infty}(\Tsco^{*}X)$, there is a constant $C$ so that
  \begin{equation*}
    \norm{Bu}_{s,\ellvar} \leq C \left( \norm{GAu}_{s-m,\ellvar-r} + \normres{u}\right).
  \end{equation*}
\end{proposition}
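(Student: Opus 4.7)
The plan is to reduce this to the constant-order version in Proposition~\ref{prop:scat elliptic} by noting that the operators $A$, $B$, $G$ all have constant orders, so the microlocal elliptic parametrix can be constructed within the constant-order scattering calculus, and the variable weight $\ellvar$ intervenes only through mapping properties on Sobolev spaces. In particular, I would not try to construct a variable-order parametrix; instead I would recycle the parametrix already available and simply measure the resulting identity in the variable-order norms.

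First, I would construct $E \in \Psisc^{-m,-r}$ with $\WF'(E) \subset \Ell(G)$ (which sits inside $\Ell(A)$ by hypothesis) such that
\begin{equation*}
    B = E \cdot G \cdot A + R, \qquad R \in \Psisc^{-\infty,-\infty}.
\end{equation*}
This is the standard asymptotic inversion: the leading symbol of $E$ is taken to be $\scprinsymb{0,0}(B)/(\scprinsymb{0,0}(G)\,\scprinsymb{m,r}(A))$ on a neighborhood of $\WF'(B)$ contained in $\Ell(G)$, smoothly cut off outside; the subprincipal errors are then corrected iteratively and asymptotically summed in $\Psisc^{-m,-r}$. Since both $G$ and $A$ are elliptic on $\WF'(E)$ and $E$ is supported in operator wavefront close to $\WF'(B)$, the error $B - EGA$ has vanishing symbols at every order and is therefore globally residual.

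Applying this to $u$ and taking the $\Sobsc^{s,\ellvar}$-norm yields
\begin{equation*}
    \norm{Bu}_{s,\ellvar} \leq \norm{E\,GAu}_{s,\ellvar} + \norm{Ru}_{s,\ellvar}.
\end{equation*}
For the first term I would invoke the mapping property $E : \Sobsc^{s-m,\ellvar-r} \to \Sobsc^{s,\ellvar}$, which reduces (by sandwiching between the variable-order isomorphisms $A_{s,\ellvar}$ and $A_{s-m,\ellvar-r}^{-1}$) to $L^2$-boundedness of an operator in $\Psisc^{0,0}_\delta$; this is exactly where the variable-order symbol calculus of Vasy~\cite{V18} enters. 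For the residual term I would use that $R \in \Psisc^{-\infty,-\infty}$ maps $\Sobres \to \Sobsc^{s,\ellvar}$ continuously for $M, N$ large enough, producing the desired $\normres{u}$ error.

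The only genuinely new step compared to the constant-order proof is the $L^2$-boundedness of the conjugated operator $A_{s,\ellvar}\,E\,A_{s-m,\ellvar-r}^{-1}$, and the main point to verify is that the $\delta$-losses inherent to variable-order symbols (those appearing in the definition of $S^{m,\ellvar}_\delta$) remain within $\Psisc^{0,0}_\delta$ under composition of a constant-order operator with two variable-order pseudo-inverses. Once that is in hand, the rest of the argument is bookkeeping in the style of the parametrix proof of Proposition~\ref{prop:scat elliptic}, with no further input from the geometry of $\ellvar$.
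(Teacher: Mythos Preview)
Your proposal is correct and follows essentially the same approach as the paper, which does not give a detailed proof but simply remarks that operators in $\Psisc^{0,0}$ are bounded on $\Sobsc^{m,\ellvar}$ so the standard elliptic estimates still apply. You have spelled out precisely that standard argument: construct the parametrix $E$ entirely in the constant-order calculus and invoke the variable-order mapping properties (via conjugation by $A_{s,\ellvar}$ and boundedness in $\Psisc^{0,0}_\delta$) only at the last step.
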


\subsection{Hamiltonian flow and radial sets}

Having obtained estimates on the elliptic set, we now analyze the
operator $P_{0}$ near its characteristic set.  The estimates
obtained by Vasy~\cite{V18} include both standard propagation
estimates and estimates near the radial points, i.e., the submanifold
of points where the rescaled Hamilton vector field vanishes on
$\partial \Tsco^{*}X$.  More precisely, the global structure of the
characteristic set is used in the compilation of the global estimates
we obtain below.  In this section, we prove that the characteristic
set $\Char(P_{0})$ is comprised of two connected components, each of
which admits a source-sink structure.  The sources/sinks are the
radial sets, i.e.\ vanishing loci of the rescaled Hamilton vector field
Therefore we must analyze the characteristic
set and the Hamiltonian flow of $P_{0}$, which we proceed to do now.

The full symbol of $P_0$ is
\begin{align*}
    p(t, z, \tau, \zeta) = \tau^2 - (\abs{\zeta}^2 + m^2)\,.
\end{align*}
The standard coordinates on phase space we
write as $(t, z, \tau, \zeta)$, so that the characteristic set is given by
\begin{equation}
  \label{eq:char set}
  \Char(P_0) = \curl{ (t, z, \tau, \zeta) : \tau^2 - \abs{\zeta}^2 - m^2 = 0}.
\end{equation}\label{eq:can one form}
In open sets of the form $|z| / t < C$ (including for large $C$), we may use coordinates
\[
x = 1/t , \quad y = z / t, \quad \xi, \quad \eta,
\]
on $\Tsco^*X$.  We write the canonical one-form on $\Tsc^{*}X$ as
\begin{equation}\label{eq:scattering coords cotan}
\tau dt + \zeta \cdot dz = \xi \frac{dx}{x^2} + \eta \cdot
\frac{dy}{x},\qquad \text{i.e.\ we write } \quad
\zeta = \eta, \quad \tau = - \xi - \eta \cdot y.
\end{equation} 
The Hamilton vector field is defined by
\begin{align*}
    H_p \coloneqq \frac{\pa p}{\pa \tau} \pa_t + \frac{\pa p}{\pa \zeta} \pa_z
    - \frac{\pa p}{\pa t} \pa_\tau - \frac{\pa p}{\pa z} \pa_\zeta\,.
\end{align*}
In the above coordinates we see that the Hamilton vector field is
\begin{equation}
  \begin{split}\label{eq:hammy 1}
    \frac 12 \Hamf &=  \tau \partial_t - \zeta \cdot \partial_z  \\
&= x \left( \tau (-x \partial_x - y \cdot \partial_y +(\eta \cdot y)
  \partial_\xi)  - \zeta \cdot  (\partial_y - \eta \partial_\xi)
\right) \\
&= x \left(  (\xi + \eta \cdot y) x \partial_x - (\eta - (\xi + \eta
  \cdot y) y) \cdot \partial_y + \eta \cdot (\eta -(\xi + \eta \cdot
  y) y) \partial_\xi \right)
  \end{split}
\end{equation}
It is a general fact that for $a \in \Ssc^{m,r}(X)$ with
$A = \Op_L(a)$, a classical scattering symbol, one can rescale the
Hamilton vector field to obtain a new vector field $\Hamsc_a$ that
is tangent to the boundary of $\Tsco^*X$.  One can take, for example,
\begin{equation}
    \Hamsc_a = \ang{t, z}^{-l + 1} \ang{\tau, \zeta}^{-m + 1} H_a \in \cV_b(\Tsco^* X),\label{eq:scattering hammy def}
\end{equation}
where the latter containment means exactly that $\Hamsc_a$ extends to a smooth vector field on $\Tsco^* X$
and is tangent to the boundary; in particular the flow on $\Tsco^* X$ restricts to
the boundary $C_\sc(X)$ and defines a flow on there.  The flow of
$\Hamsc_a$ on $\Char(A)$ is what we refer to as the
bicharacteristic or Hamiltonian flow; over the spacetime interior this is the
standard formulation of the bicharacteristic flow as a homogeneous
degree zero restriction to the sphere bundle.

\begin{remark}\label{rem:positive prefactor}
  This tangency to the boundary in \eqref{eq:scattering hammy def} is
  important in that it allows us to extend the Hamiltonian flow to the
  whole of $\Tsco^* X$ and thus to extend the propagation of
  singularities estimates to infinite spacetime and/or large
  momenta regions.  Since this tangency is unchanged under multiplication by a
  positive non-vanishing prefactor, there is some ambiguity in the
  definition of $\Hamsc_a$.  Moreover, multiplication by such a
  prefactor does not affect the propagation estimates.  We use that in
  regions
  \begin{equation}
  0 \le x \le C, |y| < C, \label{eq:region space}
\end{equation}
that $x \sim \ang{t, z}^{-1}$, meaning
\[
0 < c < x \cdot \ang{t, z} < 1/c,
\]
and in such regions $\ang{t, z}^{-1}$ can be replaced by $x$ in
\eqref{eq:scattering hammy def}.  A similar remark holds for the
fiber variable $(\tau, \zeta)$.  That is, if we define $\rho =
1/\tau, \mu = \zeta / \tau$, then in regions
\begin{equation}
0 \le \rho \le C, |\mu| \le
C, \label{eq:region fib}
\end{equation} we have
\[
0 < c < \rho \cdot \ang{\tau, \zeta} < 1/c,
\]
and in such regions $\ang{\tau, \zeta}^{-1}$ can be replaced by $\rho$ in
\eqref{eq:scattering hammy def}.  It will be useful below that
$|\rho|, |\mu| < C $ in a neighborhood of the characteristic set of
$\tau^2 - |\zeta|^2 - m^2 = 0$.

One can obtain a global definition of $\Hamsc_a$ which uses $x^s$
in place of $\ang{t, z}^{-s}$ and $\rho^s$ in place of
$\ang{\tau, \zeta}^{-s}$ in regions \eqref{eq:region space} and
\eqref{eq:region fib} by choosing global boundary defining
functions which are equal to $x$ and $\rho$ respectively in those
regions, but we do not make this process formal here; we simply use
powers of $x$ and $\rho$ to rescale in regions where to do so is valid.
\end{remark}

The relationship between the commutator $[A, B] \coloneqq AB - BA$
and the Hamiltonian flow
underpins the propagation estimates in the next section.  Indeed, if
$A = \Op_{L}(a) \in \Psisc^{m_{1},r_1} (X)$ and $B =
\Op_{L}(b)\in \Psisc^{m_{2},r_2}(X)$, then
\begin{equation}
  \label{eq:scattering-commutator-hammy}
\begin{aligned}
  [A,B] &\in \Psisc^{m_{1}+m_{2}-1, r_1+r_2-1}(X)\,, \\
  \scprinsymb{m_{1}+m_{2}-1, r_{1}+r_{2}-1}\left( i [A,B]\right)
        &= \ang{t,z}^{-r_{2}}\ang{\tau, \zeta}^{-m_{2}}\cdot \Hamsc_{a}(b)\,.
\end{aligned}
\end{equation}
It is of course possible to phrase this relationship in terms of the
scattering principal symbols of $A$ and $B$, our normalization of the
principal symbol would then make the symbol's dependence on the orders
explicit.  The same relationship also holds if one of $A$ or $B$ lies
in a variable order space; the only change in this case is that the
principal symbol must be interpreted as an equivalence class of
(variable order) symbols.

The radial set of $\Hamf$ is given by
\begin{align}\label{eq:radial set first def}
    \Rad \coloneqq \Char(P_0) \cap \Hamscp^{-1}(0) \subset \partial \Tsco^* X\,.
\end{align}
The rescaled vector field $\Hamscp$ is non-vanishing on the portion of
$C_{\sc}X$ lying over the interior of $X$; to locate the radial set
we therefore consider it over $\partial X \times \overline{\RR^{n+1}}$.

Despite the fact that $\xi, \eta$ are dual (in the rescaled scattering
sense) to $x, y$, it is sometimes useful to have different coordinates
on the cotangent bundle.  
For example in the coordinates $(x, y, \tau,
\zeta)$ we have 
\begin{equation}\label{eq:hammy 3} 
    \begin{split}
        \Hamf &= 2\tau (-x^2 \partial_x - x y \cdot \partial_y) - x \zeta \cdot \partial_y \\
        &= -2x \left( \tau x \partial_x + (\zeta + \tau y) \cdot \partial_y \right)
    \end{split}
\end{equation}
Thus,  in regions with $x \le C, |y| \le C$ and $0 \le \rho = 1 / \tau \le
C, |\mu|  = |\zeta / \tau| < C$, by Remark \ref{rem:positive
  prefactor}, we have
\begin{equation}
(1/2) \Hamscp = - x \partial_x -  (\mu +  y)\cdot \partial_y,\label{eq:hammy scat}
\end{equation}
This latter expression indicates how we can realize the zero locus of
$\Hamscp$ as a family of radial sinks and sources; namely, as long as $t, \tau > 0$, we can work in the following coordinates 
\begin{equation}
x = \frac{1}{t}, \quad w = \frac{\zeta}{\tau} + y, \quad \rho = \frac{1}{\tau}, \quad \mu =
\frac{\zeta}{\tau} ,\label{eq:great coords}
\end{equation}
to obtain
\begin{align}\label{eq:hamf_simple}
    (1/2) \Hamf  = (x/\rho) \left( - x \partial_x - w \cdot \partial_w\right),
\end{align}

This expression indeed shows that, in $\curl{t>0} \cap \curl{\tau > 0}$,
the radial set is $x = 0, w = 0$ in the characteristic set, that is,
in these coordinates it is exactly
\[
\Rad^f_+ = \curl{ (x, w, \rho, \mu) \in \RR_+ \times \RR^n
\times \RR_+ \times \RR^n \colon   x = 0, w = 0,  1 - |\mu|^2 - m^2 \rho^2 = 0 }
\]
which is a smooth submanifold of $\Tsco^* X$ intersecting the boundary
$\rho = 0$ normally \emph{and is a radial sink of the flow.}

However, only the region $t > 0, \tau
> 0$ is preserved by the flow, and in that region $x = 1/t$ and $\rho
= 1 / \tau$ are valid boundary defining functions for spacetime and
fiber infinity respectively near the radial set; in the other three
regions, i.e.\ the other combinations of signs in $\pm  t > 0, \pm \tau >
0$, one can use analogous coordinates.  For example, in $t < 0 , \tau
> 0$, using $-1/t$ and $1/\tau$ as bdf's, only the overall sign of the
expression in \eqref{eq:hamf_simple} changes and in that region the
radial set is a radial source.
As for the set $\{ t = 0 \} \cup \{ \tau = 0 \}$ where no such
decomposition holds, the latter set, $\{ \tau = 0 \}$, lying
in the elliptic set of $P_0$, is irrelevant, whereas the former lies
away from the radial sets but intersects the characteristic set; we analyze that part only in Proposition
\ref{thm:global char set scattering} when we look at the global
properties of $\Char(P_0)$.

First, though, we require some basic definitions.  We define future and past causal infinity as
    \begin{align*}
        \iota^+ &\coloneqq \overline{\{(t,z) \colon  t \geq \abs{z}\}} \cap \pa X\,,\\
        \iota^- &\coloneqq \overline{\{(t,z) \colon -t \geq \abs{z}\}} \cap \pa X\,,
    \end{align*}
    where the closure is taken in $X$.  Its boundary is a compressed null infinity,
    \begin{align*}
        S^+ \coloneqq \pa\iota^+\,,\quad 
        S^- \coloneqq \pa\iota^-\,.
    \end{align*}
It is ``compressed'' in the sense that it is lower dimensional than
null infinity $\scrI^\pm$, which is typically $n-$dimensional.  Null infinity can
be shown to be naturally identified with the faces introduced by blow
up of $S^\pm$, a formalism not used here but very useful in the study
of radiation fields \cite{BVW15, BVW18}.
    
We then define the future and past radial sets as
\begin{align*}
    \Rad^f \coloneqq \Rad \cap \Tsco^*_{\iota^+} X\,,\\
    \Rad^p \coloneqq \Rad \cap \Tsco^*_{\iota^-} X\,.
\end{align*}
and, moreover, using that $\tau = 0$ does not intersect the
characteristic set, we have the further decomposition
\begin{align*}
    \Rad^f = \Rad^f_+ \sqcup \Rad^f_-\,,\quad
    \Rad^p = \Rad^p_+ \sqcup \Rad^p_-\,,
\end{align*}
where
\begin{align*}
    \Rad^\bullet_+ &\coloneqq \Rad^\bullet \cap \{\tau \geq m\}\,,\\
    \Rad^\bullet_- &\coloneqq \Rad^\bullet \cap \{\tau \leq -m\}\,.
\end{align*}

From \eqref{eq:hamf_simple} we can already see that away from fiber
infinity, the radial set has the form:
\begin{align}\label{eq:rad_future}
    \Rad^f \cap \{ \abs{(\tau, \zeta)} < \infty \} = \{ x = 0, \zeta = - \tau y, \tau^2= |\zeta|^2 + m^2 \}\,.
\end{align}

\begin{proposition}\label{thm:global char set scattering}
  The characteristic set $\Char(P_0)$ consists of two connected
  components, $\Char(P_0)_{\pm} = \Char(P_0) \cap \overline{ \{\pm \tau >
    m - \epsilon \}}$.  On the component $\Char(P_0)_+$, the radial set
    $\Rad^f_+$ is a global sink, and $\Rad^p_+$ a global source, for
    the Hamiltonian flow, while on $\Char(P_0)_-$, $\Rad^f_-$ is a
    global source, and $\Rad^p_-$ a global sink.  These sinks /
    sources are radial in the sense that the Hamilton vector field is
    of the form \eqref{eq:hamf_simple} in neighborhoods of each
    sink component, and satisfies the analogue of
    \eqref{eq:hamf_simple} with positive sign near the source components.

  The projection of the radial set to the base variables is causal infinity,
    \begin{align*}
       \pi(\Rad^f)  &= \iota^+, \pi(\Rad^p) = \iota^-\,,
    \end{align*}
    and this projection is a diffeomorphism on each component of
    $\Rad$ away from fiber infinity.
\end{proposition}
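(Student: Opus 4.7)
The plan is to establish each claim through explicit coordinate computations, leveraging that for the free operator the bicharacteristics in the interior are straight lines whose limits at infinity can be computed directly.

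First, $\Char(P_0)$ splits into two components: the defining equation $\tau^2 - |\zeta|^2 - m^2 = 0$ with $m > 0$ forces $|\tau| \geq m$, so the sheets $\pm \tau > 0$ are disjoint in the finite part of phase space. The sign of $\tau$ extends continuously to fiber infinity; in the fiber coordinates $\rho = 1/|\tau|$, $\mu = \zeta/\tau$, each sheet closes up smoothly to a distinct boundary component cut out by $\rho = 0$, $|\mu| = 1$. Each sheet is connected, being the graph of $\tau = \pm\sqrt{|\zeta|^2 + m^2}$ over the connected base $X \times \RR^n_\zeta$ including its smooth fiber-infinity closure.

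Second, I will establish the radial source/sink structure. Near $\Rad^f_+$, the computation in \eqref{eq:hamf_simple} already gives $(1/2)\Hamf = (x/\rho)(-x\pa_x - w\cdot\pa_w)$; the positive prefactor $x/\rho$ is absorbed by the scattering rescaling from Remark~\ref{rem:positive prefactor}, leaving the linear vector field $-x\pa_x - w\cdot\pa_w$, whose only critical point on $\{x = 0\}$ lies at $w = 0$ with strictly negative linearization eigenvalues, yielding the radial sink structure. The three other regions (arising from the other sign combinations of $t$ and $\tau$) follow from the symmetries $t \leftrightarrow -t$ and $\tau \leftrightarrow -\tau$ of $P_0$, which swap sinks and sources as appropriate. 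To upgrade this local picture to a global structure on each component $\Char(P_0)_\pm$, I will use that the interior bicharacteristics are straight lines $(t_0 + 2s\tau,\, z_0 - 2s\zeta)$ with constant momenta on the characteristic set: for $\tau > 0$, as $s \to +\infty$ the trajectory limits to $(x = 0,\, y = -\zeta/\tau,\, \tau,\, \zeta)$, which lies in $\Rad^f_+$ by \eqref{eq:rad_future}, while $s \to -\infty$ lands in $\Rad^p_+$. The analogous statement for bicharacteristics at fiber infinity reduces to the same formula \eqref{eq:hamf_simple} extended smoothly across $\rho = 0$.

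Finally, for the projection claim, \eqref{eq:rad_future} shows that $\Rad^f$ away from fiber infinity is parameterized by $x = 0$, $\tau = \pm m/\sqrt{1 - |y|^2}$, $\zeta = -\tau y$ with $|y| < 1$, which maps diffeomorphically onto $\iota^+ \setminus S^+$ on each of $\Rad^f_\pm$ with manifestly smooth inverse. At fiber infinity ($\rho = 0$, $|\mu| = 1$), the defining relation $w = \mu + y = 0$ forces $|y| = 1$, so the projection hits precisely the compressed null infinity $S^+$, completing $\pi(\Rad^f) = \iota^+$; the identity $\pi(\Rad^p) = \iota^-$ follows by the $t \leftrightarrow -t$ symmetry. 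The main obstacle will be verifying that the source/sink structure holds uniformly all the way up to fiber infinity, where the straight-line parameterization of bicharacteristics degenerates; this is handled by computing the rescaled vector field $\Hamscp$ in the fiber-compactified coordinates $(\rho, \mu)$ and checking that the linearization at the fiber-infinity stratum of the radial set retains the same sign structure as in the finite-$\tau$ regime.
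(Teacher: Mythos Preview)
Your handling of the two-component structure, the local source/sink analysis via \eqref{eq:hamf_simple}, and the projection statement are correct and match the paper. The gap is in the step upgrading the local picture to the \emph{global} source/sink structure.

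You argue via interior bicharacteristics: the straight lines $(t_0 + 2s\tau,\, z_0 - 2s\zeta)$ limit to points of the radial set as $s \to \pm\infty$. But $\Char(P_0)$ lives entirely on $\partial\Tsco^*X$, so what must be analyzed is the \emph{boundary} flow of $\Hamscp$ restricted to $\partial\Tsco^*X$, not the interior flow. An interior trajectory approaches the boundary exponentially (since $\dot x \sim -x$) and its $\omega$-limit is a single boundary point; this does not by itself describe the boundary trajectory through a nearby point of $\Char(P_0)$. In particular, your argument says nothing about boundary bicharacteristics that pass through spacelike infinity, where the coordinates $x = 1/t$, $y = z/t$ break down and where a boundary trajectory spends a nontrivial portion of its life before entering a neighborhood of $\Rad$.

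The paper closes this gap differently: it exhibits $\phi_t = t/\ang{t,z}$ as a strictly monotone quantity along the flow on each component of $\Char(P_0)$ in the region $|\phi_t| < 1/\sqrt{2}$, via a direct lower bound on $\Hamscp\phi_t$ on the characteristic set. This forces every boundary bicharacteristic to eventually enter the region $|\phi_t| > 1/\sqrt{2} - \delta$, where the local form \eqref{eq:hamf_simple} (and its sign-flipped analogues) takes over and drives the trajectory into the appropriate component of $\Rad$. Your straight-line intuition can be made rigorous --- the boundary flow on $\partial X \cong S^n$ at fixed $(\tau,\zeta)$ is the tangential projection of the constant vector $(\tau,-\zeta)$, a gradient-like flow with exactly one source and one sink on the sphere --- but that is a separate computation from the one you wrote and it, too, must be checked through spacelike infinity.
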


\begin{proof}
We wish to use \eqref{eq:hamf_simple}, and the three other expressions
obtained by using the other combinations of $\rho = \pm 1/\tau, x =
\pm 1/t$.    First off, defining
\begin{equation}
    \phi_t = t / \ang{t, z}, \quad \phi_z = z / \ang{t, z},\label{eq:time angle}
\end{equation}
we note that for $\delta_0  > 0$, on the region
\[
\Char(P_0) \cap \{ |\phi_t| \ge \delta_0 \} \mbox{ we have } \ang{\tau, \zeta} \sim |\tau| \mbox{ and } \ang{t, z} \sim t,
\]
where $\ang{\tau, \zeta} \sim \abs{\tau}$ means there is a constant
$c> 0$ so that
$c \leq \frac{\abs{\tau}}{\ang{\tau,\zeta}} \leq c^{-1}$.  Thus in
this region, if $t , \tau > 0$: (1) \eqref{eq:hamf_simple} holds and
(2) $\Hamscp = a \Hamf$ where $0 < c \le a \le C$ is a smooth, bounded,
positive function.  All this is to say that \eqref{eq:hamf_simple} and
the analogous expressions for the other sign choices
$\rho = \pm 1/\tau, x = \pm 1/t$ completely describe the behavior of
$\Hamscp$ on $\Char(P_0)$ in regions where $|\phi_t| \ge \delta_0$.  In
particular, for example where $t > 0, \tau > 0$, $\Rad$ is a smooth
radial sink given by $w = 0 = x$ and intersects fiber infinity (in
this region where $\rho = 0$) normally.  This region contains only
$\Rad^f_+,$ and the expressions in neighborhoods of the other
components are easily derived.

It remains only to show that in the set $\Char(P_0) \cap \{ | \phi_t| \le \delta_0 \}$
all trajectories of the Hamiltonian flow leave this
region.  As it is useful below in our definition of spacetime weights
for the causal propagators, we do this by showing that $\phi_t$ is a
monotone quantity on the flow in regions $\phi_t \in (- (1 / \sqrt{2})
+ \delta , (1 / \sqrt{2}) - \delta)$.  (Note that on the spacetime
boundary,  $|\phi_t| = 1 /
\sqrt{2}$ is null infinity.) Indeed,
\[
(1/2) H_p \phi_t = \frac{1}{\ang{t, z}} \left( \tau (1 -
  \phi_t^2) + \phi_t \zeta \cdot \phi_z \right)
\]
Thus on $\Char(P_0)$, in the
$\tau > 0$ region, where $\tau = \sqrt{|\zeta|^2 + m^2}$, using
\begin{equation*}
  \begin{split}
    (1/2) H_p \phi_t &\ge \sqrt{|\zeta|^2 + m^2} (1 -
    \phi_t^2) - |\phi_t||\zeta ||  \phi_z | \\
    &\ge \sqrt{|\zeta|^2 + m^2} \sqrt{1 -
  \phi_t^2} \left( \sqrt{1 -
  \phi_t^2} - |\phi_t| \right).
  \end{split}
\end{equation*}
Using that $\sqrt{1 -
  \phi_t^2} - |\phi_t| \ge (1 / \sqrt{2}) - |\phi_t|$ where $|\phi_t|
\le 1 / \sqrt{2}$ we have, for some $c > 0$, that
\[
 c \Hamscp \phi_t \ge (1/ \sqrt{2} ) - |\phi_t|,
 \]
 when $|\phi_t| \le 1 / \sqrt{2}$, so $\phi_t$ is monotone increasing
 on the $\tau > 0$ component of $\Char(P_0)$, and a similar argument
 shows it is decreasing on the $\tau < 0$ component.

Thus every trajectory eventually leaves a neighborhood of $\phi_t = 0$, and the
global structure of $\Char(P_0)$ is as stated.
\end{proof}

\begin{figure}
    \centering
    \begin{tikzpicture}[scale=2]
        \draw (0,0) circle (1cm);
        \draw[thick] (0.707, 0.707) arc (45:135:1cm); 
        \draw[thick] (0.707, -0.707) arc (315:225:1cm); 
        \filldraw (0.707,0.707) circle (0.3pt);
        \draw (0.8, 0.7) node [right] {$\abs{y} = 1$};
        \filldraw (-0.707,0.707) circle (0.3pt);
        \filldraw (-0.707,-0.707) circle (0.3pt);
        \filldraw (0.707,-0.707) circle (0.3pt); 
        \draw (0,0.66) node {$\iota^{+} = \pi(\Rad^f)$};
        \draw (0,-0.66) node {$\iota^{-} = \pi(\Rad^p)$};
        \draw[densely dotted] (0.707, 0.707) -- (-0.707, -0.707);
        \draw[densely dotted] (0.707, -0.707) -- (-0.707, 0.707);
    \end{tikzpicture}
    \caption{The projection of the radial set $\Rad \subset \Tsco^* X$
      to $X$. The dotted lines denote the light cone, $\{\abs{t} =
      \abs{z}\}$. The forward weights satisfy $\ellvar_{+} < -1/2$ in a neighborhood $\iota_{+}$
      and $\ellvar_{+}>-1/2$ in a neighborhood of $\iota_{-}$.}
    \label{fig:radialset}
\end{figure}
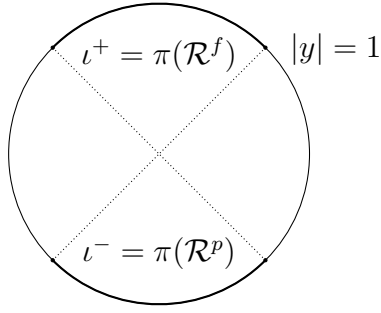

\subsection{Propagation of singularities in the scattering calculus}\label{sec:scat prop}

We now discuss the propagation estimate away from the radial sets.
Specifically, we have the standard real principal type propagation
estimates for $P_0$,
following Vasy~\cite[Theorem 4.4]{V18}.

To make the statement cleaner, we define the following notion of
control.  Given sets $U_{1}, U_{2},U_{3}\subset \Tsco^{*}X$ open, we say
that $U_{1}$ is controlled along $\Hamscp$ by $U_{2}$ through $U_{3}$
if for every $\alpha \in U_{1}\cap \Char(P_{0})$, there is a $\sigma < 0$ so that the
bicharacteristic $\gamma$ of $\Hamscp$ with $\gamma(0) = \alpha$ has
$\gamma (\sigma) \in U_{2}$ and $\gamma ([\sigma, 0])\subset U_{3}$.
In other words, each point in $U_{1}$ lies on a bicharacteristic
segment contained in $U_{3}$ that originates in $U_{2}$.

\begin{proposition}
  \label{prop:scattering-propagation-variable}
  Suppose $B, E, G \in \Psisc^{0,0}$ and that $\WF'(B)$ is controlled
  along $\Hamscp$ by $\Ell (E)$ through $\Ell (G)$.  Assume further
  that $\ellvar \in \CI (\Tsco^{*}X)$ is decreasing monotonically
  along the Hamiltonian flow.

  For any $M, N$ there is $C > 0$ such that if
  $Eu \in \Sobsc^{s,\ellvar}$ and $GP_{0} u \in \Sobsc^{s - 1, \ellvar + 1}$ ,
  then $Bu \in \Sobsc^{s, \ellvar}$ and
\begin{align*}
  \norm{B u}_{s, \ellvar} \leq C\left( \norm{E u}_{s, \ellvar} + \norm{G P_{0} u}_{s - 1, \ellvar + 1} + \normres{u}\right).
\end{align*}
The same is true if $\WF'(B)$ is controlled along $\Hamsc_{-p}= -\Hamscp$ by $\Ell
(E)$ through $\Ell (G)$ and $\ellvar$ is monotone increasing along the $\Hamscp$-flow.
\end{proposition}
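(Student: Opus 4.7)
The plan is to prove the estimate by a positive commutator argument, following the standard microlocal template but carefully accounting for the variable order weight. I would pick a commutant $A = \Op_L(a)$ with $a \in S^{s-1/2,\,\ellvar-1/2}_\delta$ (for some $0 < \delta < 1/2$) whose essential support lies in $\Ell(G)$, whose $|a|^2$ is elliptic on $\WF'(B)$, and whose flow-derivative has a favorable sign near $\WF'(B)$.

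More concretely, using control along $\Hamscp$, for each $\alpha \in \WF'(B) \cap \Char(P_0)$ pick a parametrization of the bicharacteristic segment from $\Ell(E)$ to $\alpha$ by a flow parameter $\phi$ with $\Hamscp \phi > 0$, together with defining functions for transverse coordinates. Take
\begin{equation*}
  a = \chi_1(\phi)\,\chi_2(\text{transverse})\,\chi_3(p/\langle\tau,\zeta\rangle^{2})\, \langle t,z\rangle^{\ellvar-1/2}\langle\tau,\zeta\rangle^{s-1/2},
\end{equation*}
where $\chi_1 \geq 0$ satisfies $\chi_1' = -\chi_1/\sigma + \eta$ with $\eta$ supported in $\Ell(E)$ (the usual square-root trick), $\chi_2$ cuts off the bicharacteristic tube, and $\chi_3$ localizes to $\Char(P_0)$. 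By \eqref{eq:scattering-commutator-hammy}, the principal symbol of $\tfrac{i}{2}[A^*A, P_0]$ is (the equivalence class of)
\begin{equation*}
  \Hamscp(|a|^2) = -\bigl(\tfrac{2}{\sigma}\chi_1^2 + 2|a|^2\,(\Hamscp \ellvar)\log\langle t,z\rangle\bigr) \cdot (\text{positive factor}) + e^2 + r,
\end{equation*}
modulo terms absorbed in $\Ell(G)$ and lower order contributions; here the two negative terms come from the $\chi_1'$ factor and from the variable order factor via the crucial monotonicity hypothesis $\Hamscp \ellvar \leq 0$ (so $-\Hamscp \ellvar \log\langle t,z\rangle \geq 0$ in the relevant region), $e^2$ is supported in $\Ell(E)$, and $r \in S^{2s-2,2\ellvar-2}_\delta$ is supported in $\Ell(G)$. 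Writing the negative part as $-B'^*B'$ with $\sigma_{\sc}(B')$ elliptic on $\WF'(B)$, pairing $\langle i[P_0,A^*A]u,u\rangle = 2\Im\langle A^*A u, P_0 u\rangle$ and using the sharp G{\aa}rding inequality (for the variable order $\delta$-classes) plus Cauchy--Schwarz with weights $(s-1,\ellvar+1)$ on $P_0 u$ and $(s,\ellvar)$ on $A u$, yields
\begin{equation*}
  \|B' u\|_{0,0}^2 \leq C\bigl(\|E' u\|_{0,0}^2 + \|G P_0 u\|_{s-1,\ellvar+1}^2 + \|u\|_{-N,-M}^2\bigr),
\end{equation*}
from which the stated estimate follows after choosing $B',E'$ with the right essential supports.

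To make the pairing legitimate when only $Eu \in \Sobsc^{s,\ellvar}$ is assumed, I would run the standard regularization: replace $a$ by $a_\varepsilon = a\,\phi(\varepsilon\langle\tau,\zeta\rangle)^{-1}\phi(\varepsilon\langle t,z\rangle)^{-1}$ with $\phi \in \CI_c$, $\phi(0)=1$, so that $A_\varepsilon \in \Psisc^{-\infty,-\infty}$ uniformly and $A_\varepsilon \to A$ as $\varepsilon \downarrow 0$ as a bounded family in the relevant variable order class. The regularizer contributions to the commutator have signs that can be absorbed on the right-hand side (as in Vasy~\cite{V18}), and taking $\varepsilon \to 0$ gives both $Bu \in \Sobsc^{s,\ellvar}$ and the quantitative bound.

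The main obstacle, and the only step that requires any real care, is the handling of the variable order symbol class: the principal symbol map is only well-defined modulo $S^{m-1+2\delta,\ellvar-1+2\delta}_\delta$, so the commutator identity \eqref{eq:scattering-commutator-hammy} holds only up to such residual terms, and one must choose the geometry of the commutant so that the $-\tfrac{2}{\sigma}\chi_1^2$ and $-(\Hamscp \ellvar)\log\langle t,z\rangle$ terms dominate these $\delta$-losses. This is possible precisely because the log-factor beats any polynomial $\delta$-loss in the spatial weight. The second half of the proposition, with $\Hamsc_{-p}$ replacing $\Hamscp$ and $\ellvar$ monotone increasing, follows by applying the first part to $-P_0$, since $\Hamsc_{-p} = -\Hamscp$ and the monotonicity reverses accordingly.
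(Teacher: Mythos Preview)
Your approach is essentially the same positive commutator argument as the paper's. A few points of comparison and one correction:

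\textbf{Sign error in the weight.} The commutant should have spatial order $\ellvar+\tfrac12$, not $\ellvar-\tfrac12$. With $Q\in\Psisc^{s-1/2,\ellvar+1/2}$ and $P_0\in\Psisc^{2,0}$ one gets $[P_0,Q^*Q]\in\Psisc^{2s,2\ellvar}$, which is what controls $\|u\|_{s,\ellvar}^2$; with your choice the commutator lands in $\Psisc^{2s,2\ellvar-2}$ and the argument loses a full order in the weight. This is a slip, not a conceptual issue.

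\textbf{G{\aa}rding.} The paper deliberately avoids sharp G{\aa}rding: the commutant is chosen so that the principal symbol of the commutator is a smooth sum of squares, and then only the ``easy'' G{\aa}rding inequality (Lemma~\ref{lemma:scattering-easy-garding}) is invoked. Your use of sharp G{\aa}rding is also fine, but it is a different route.

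\textbf{Iteration.} The paper first proves a weaker estimate (Lemma~\ref{lemma:weaker-scattering-prop-estimate-schwartz}) with an extra $\|Gu\|_{s-1/2,\ellvar-1/2}$ on the right, then removes it by induction in half-steps and a regularization argument. You skip directly to the final estimate; this works if the regularization is set up carefully, but the two-step structure is cleaner.

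\textbf{On the log term versus $\delta$-losses.} Your last paragraph conflates two separate things. The $\delta$-class residues from the variable-order calculus are of genuinely lower order $(2s-1+2\delta,\,2\ellvar-1+2\delta)$ and are simply absorbed as error terms; they need no special domination. The $-(H_p\ellvar)\log\langle t,z\rangle$ contribution, on the other hand, is borderline in order but has a \emph{favorable sign} by the monotonicity hypothesis, so it is dropped (or kept as additional positivity), not ``beaten'' by anything. The paper handles this exactly as you do: the monotonicity of $\ellvar$ makes that term help rather than hurt, while the remaining first-order term $bH_pb$ is controlled by taking the parameter $\kappa$ (your $1/\sigma$) large.
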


\begin{remark}
  The differential order $s$ may also be taken variable in this
  proposition, monotone along the flow.  This is irrelevant for our
  purposes and thus omitted.  The operator $P_{0}$ may also be taken to
  be an element $P \in \Psisc^{m, r}$ in which case the $s-1$
  on the RHS is replaced by $s - m + 1$ and the $\ellvar + 1$ replaced by
  $\ellvar - r + 1$; again this is irrelevant for our purposes.
\end{remark}

As it informs the proof of the estimates in
Section~\ref{sec:3sc_propagation} below, we provide a sketch of the
proof of Proposition~\ref{prop:scattering-propagation-variable}.  The
proof proceeds in several steps.  We first establish the following
weaker estimate by a positive commutator argument:
\begin{lemma}
  \label{lemma:weaker-scattering-prop-estimate-schwartz}
  Suppose $B, E, G \in \Psisc^{0,0}(X)$, and
  that $\WF'(B)$ is controlled along $\Hamscp$ by $\Ell(E)$ through
  $\Ell(G)$, and $\ellvar\in \CI(\Tsco ^{*}X)$ is decreasing
  monotonically along the Hamiltonian flow.  For any $M, N$, there is
  a  $C> 0$ so that for all $u\in \Sobsc^{s, \ellvar}$ with $P_{0}u
  \in \Sobsc^{s-1, \ellvar+1}$,
  \begin{equation*}
    \norm{Bu}_{s,\ellvar} \leq C \left( \norm{Eu}_{s,\ellvar} + \norm{GP_{0}u}_{s-1,\ellvar+1} + \norm{Gu}_{s-1/2, \ellvar - 1/2} + \normres{u}\right).
  \end{equation*}
  The same is true if $\WF'(B)$ is controlled along $- \Hamscp$ by
  $\Ell(E)$ through $\Ell(G)$ and $\ellvar$ is monotone increasing.
\end{lemma}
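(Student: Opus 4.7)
The natural approach is a \emph{positive commutator argument} carried out in the variable-order scattering calculus. The control hypothesis is exactly that every $\alpha\in \WF'(B)\cap \Char(P_{0})$ lies at the downstream end of an $\Hamscp$-segment originating in $\Ell(E)$ and contained in $\Ell(G)$. I would therefore construct a non-negative cut-off $\phi\in \CI(\Tsco^{*}X)$ with $\esssupp(\phi)\subset \Ell(G)$ and
\[
   \Hamscp(\phi^{2}) \;=\; -b_{0}^{2} + e_{0},
\]
where $b_{0}$ is elliptic on $\WF'(B)\cap \Char(P_{0})$ and $\esssupp(e_{0})\subset \Ell(E)$; $\phi$ is built by flowing a bump function forward out of a small tube around the starting region in $\Ell(E)$. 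The commutant is then the variable-order symbol
\[
   a \;=\; \phi\,\rho_{\base}^{-\ellvar-1/2}\,\rho_{\fib}^{-s+1/2} \in S^{s-1/2,\,\ellvar+1/2}_{\delta},\qquad A = \Op_{L}(a),
\]
chosen so that $A^{*}A$ has scattering order $(2s-1,2\ellvar+1)$ and hence $i[P_{0},A^{*}A]\in \Psisc^{2s,2\ellvar}_{\delta}$, matching the orders needed to control $\norm{Bu}_{s,\ellvar}^{2}$.

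\textbf{The commutator identity.} By \eqref{eq:scattering-commutator-hammy}, the principal symbol of $i[P_{0},A^{*}A]$ is $\Hamscp(a^{2})$. Applied to $\phi^{2}$ this reproduces $-b_{0}^{2}$ (times the weight squared) plus an error microsupported in $\Ell(E)$; applied to the weight $\rho_{\base}^{-2\ellvar-1}\rho_{\fib}^{-2s+1}$ it produces the ordinary differentiation pieces of the same scattering order as $a^{2}$ together with the non-classical variable-order contribution
\[
   (\Hamscp \ellvar)\,\log\ang{t,z}\cdot a^{2}.
\]
The hypothesis $\Hamscp \ellvar\le 0$ ensures this contribution has the same sign as $-b_{0}^{2}$, so positivity is preserved. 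Packaging the result,
\[
   i[P_{0},A^{*}A] \;=\; -B^{*}B + E_{1} + R,
\]
with $B\in \Psisc^{s,\ellvar}_{\delta}$ elliptic on $\WF'(B)$, $\WF'(E_{1})\subset \Ell(E)$, and $R\in \Psisc^{2s-1+2\delta,\,2\ellvar-1+2\delta}$ microsupported in $\WF'(G)$.

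\textbf{Pairing, closure, and the main obstacle.} Pairing against Schwartz $u$ and using self-adjointness of $P_{0}$,
\[
   \langle i[P_{0},A^{*}A]u,u\rangle \;=\; 2\,\Im\langle A^{*}A u, P_{0}u\rangle.
\]
Cauchy--Schwarz on the right-hand side (with small-$\varepsilon$ absorption of $\norm{Bu}_{s,\ellvar}^{2}$) and $L^{2}$-boundedness of $\Psisc^{0,0}_{\delta}$ on the variable-order Sobolev spaces yield
\[
   \norm{Bu}_{s,\ellvar}^{2} \;\le\; C\bigl(\norm{Eu}_{s,\ellvar}^{2} + \norm{GP_{0}u}_{s-1,\ellvar+1}^{2} + \norm{Gu}_{s-1/2,\ellvar-1/2}^{2} + \normres{u}^{2}\bigr),
\]
where $\langle Ru,u\rangle$ is controlled by $\norm{Gu}_{s-1/2,\ellvar-1/2}^{2}$ after absorbing the $2\delta$-loss into $\normres{u}$. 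Extension from Schwartz $u$ to the stated a priori class proceeds via a standard mollifier regularization $\Op_{L}(\chi_{\varepsilon})\to \Id$ in $\Psisc^{-\infty,-\infty}$, and the reverse-flow case is identical with $\Hamscp$ replaced by $-\Hamscp$. The principal technical obstacle is the variable-order bookkeeping: principal symbols live in the quotient $S^{\cdot,\cdot}_{\delta}/S^{\cdot-1+2\delta,\,\cdot-1+2\delta}_{\delta}$, and the non-classical term $(\Hamscp\ellvar)\log\ang{t,z}\cdot a^{2}$ has no analogue in the constant-order calculus. It is precisely the monotonicity of $\ellvar$ along $\Hamscp$ that makes this contribution work with, rather than against, the positive commutator; without that sign, the argument would not close.
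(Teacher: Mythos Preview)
Your proposal is correct and follows essentially the same approach as the paper: a positive commutator argument with a commutant of order $(s-1/2,\ellvar+1/2)$, the decomposition of $\Hamscp(a^{2})$ into a main negative piece, an $\Ell(E)$-supported error, and the variable-order $(\Hamscp\ellvar)\log\rho_{\base}$ term whose sign is controlled by the monotonicity hypothesis, followed by pairing and Cauchy--Schwarz. The paper is slightly more explicit (straightening $\Hamscp$ to $\partial_{q_{1}}$ in local coordinates, writing down the concrete escape function $\varphi(s)=\exp(-\kappa s+(s-\epsilon)^{-1}-(s-\sigma+\epsilon)^{-1})$, and invoking a sum-of-squares ``easy G\aa{}rding'' lemma rather than sharp G\aa{}rding), but the logical skeleton is identical.
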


\begin{proof}[Proof of Lemma~\ref{lemma:weaker-scattering-prop-estimate-schwartz}]
  We first prove the lemma for $u \in \schwartz (X)$; a standard
  approximation argument
  extends the result to $u$ as in the statement of the lemma.

  Our aim is to exploit the
  relationship~\eqref{eq:scattering-commutator-hammy} between the
  commutator and the Hamiltonian flow.  Indeed, given
  $\alpha\in \WF'(B)\cap \Char(P_{0})$, we construct an operator
  \begin{equation*}
    Q = A_{s-\frac{1}{2}, \ellvar+\frac{1}{2}}Q_{0}\in
    \Psisc^{s-\frac{1}{2}, \ellvar + \frac{1}{2}}(X), \quad Q_{0} \in \Psisc^{0,0}(X),
  \end{equation*}
  where $A_{s,\ellvar}=\Op_{L}(a_{s, \ellvar})$ with $a_{s,\ellvar}$ the symbol defined in \eqref{eq:def_asl}.
  We then consider
  \begin{equation*}
      \frac{i}{2}\ang{ [P_{0}, Q^{*}Q]u, u} = \Im \ang{Qu, QP_{0}u}.
  \end{equation*}
  On the one hand, for each $\epsilon > 0$, this quantity bounds the
  following from above:
  \begin{align*}
     \ang{ A_{\frac{1}{2}, -\frac{1}{2}}A_{s-\frac{1}{2},
    \ellvar + \frac{1}{2}}Q_{0}u, A_{-\frac{1}{2},
    +\frac{1}{2}}A_{s-\frac{1}{2}, \ellvar+ \frac{1}{2}}Q_{0}P_{0}u}
    &\geq - \frac{1}{4\epsilon}\norm{Q_{0}P_{0}u}_{s-1,\ellvar +1}^{2} - \epsilon \norm{Q_{0}u}_{s,\ellvar}^{2}.
  \end{align*}
  
  On the other hand, we know the relationship between the principal
  symbol of $i [P_{0}, Q^{*}Q]$ and the Hamiltonian flow, which we
  exploit to construct $Q$ symbolically.  In principle, the construction requires
  three cases; when $\alpha \in \pa X \times \overline{\RR^{n+1}}$,
  when $\alpha \in X \times \pa \overline{\RR^{n+1}}$, and when
  $\alpha \in \pa X \times \pa \overline{\RR^{n+1}}$.   We treat
  the first two cases simultaneously; the third case is handled by including an
  additional boundary defining function in the definition of the
  symbol of $Q$.

Following Vasy~\cite[Section 4.3]{V18}, we first construct
  $q_{0}\in \Ssc^{0,0}(X)$ so that $q_{0}$ is elliptic at $\alpha$ and
  there are $c > 0$ and a compact set $K\subset \Ell(e)$ and
  \begin{equation*}
    - \left( a_{s-\frac{1}{2},\ellvar+\frac{1}{2}}^{2}q_{0} \Hamf
      q_{0} + q_{0}^{2}a_{s-\frac{1}{2}, \ellvar+\frac{1}{2}}\Hamf
      a_{s-\frac{1}{2}, \ellvar+\frac{1}{2}}\right) - c a_{s,\ellvar}^{2}q_{0}^{2} \geq 0 \quad \text{off of
    }K. 
  \end{equation*}
  In a small neighborhood of $\alpha$ we employ coordinates
  $q_{1}, \dots, q_{2(n + 1)}$ on an open neighborhood $W$ of $\alpha$
  contained in $\Ell(G) $in $\Tsco^{*}X$ so that $\Hamscp = \pa_{q_{1}}$
and $q_{2(n+1)}$ is a boundary defining function.  We
  further assume that $\alpha = (0, \dots, 0)$ in this system.  This
  is possible because $\alpha$ is on the boundary and $\Hamscp$ is
  non-vanishing and tangent to the boundary.  We suppose that $\gamma$
  is the integral curve of $\Hamscp$ passing through $\alpha$ with
  $\gamma (0) = \alpha$ and $\gamma (\sigma) = \beta$, $\sigma < 0$.
  Given open neighborhoods $U_{1}$ of $\gamma([\sigma, 0])$ and
  $U_{2}$ of $\beta$ in $W$ we take $\epsilon > 0$ so that the open
  rectangles
  \begin{align*}
    &\{ q : q_{1} \in (\sigma-\epsilon, \epsilon) , q_{2(n+1)} < \epsilon,
    \abs{q_{j}}<\epsilon, j = 2, \dots , 2n+1\}, \\ 
    &\{ q : q_{1}\in (\sigma - \epsilon, \sigma + \epsilon), q_{2(n+1)} <
      \epsilon, \abs{q_{j}} < \epsilon, j = 2 , \dots, 2n+1\}
  \end{align*}
  around $\gamma ([\sigma, 0])$ and $\beta$ are contained in $U_{1}$
  and $U_{2}$, respectively.  Here $q_{1}$ acts as the flow parameter
  and $q_{2(n + 1)}$ acts as the spacetime boundary defining function;
  powers $q_{2(n + 1)}^{2 \ellvar}$ are spacetime weights.

  We then set
  \begin{equation*}
    \varphi (s) =
    \begin{cases}
      \exp (- \kappa s + (s-\epsilon)^{-1} - (s - \sigma + \epsilon)^{-1})
      & \sigma - \epsilon <s < \epsilon, \\
      0 & \text{otherwise}
    \end{cases},
  \end{equation*}
  and let $\chi \in \CI_{c}(\RR)$ be a smooth function that is
  identically one in a neighborhood of $0$.  We finally set
  \begin{equation*}
    q_{0}  = \varphi ( q_{1})
    \chi^{2}\left(\frac{q_{2}}{\delta}\right)\dots
    \chi^{2}\left(\frac{q_{2n-1}}{\delta}\right) \coloneqq \varphi \chi^{2}.
  \end{equation*}
For $\delta > 0$ sufficiently small, this function $q_{0}$ is
supported in the rectangle above.  Moreover, the explicit definition
of $\varphi$ allows us to bound $\varphi$ in terms of $\varphi ' = \Hamsc_{p} (\varphi(q_{1}))$.
  
  With $q = a_{s, \ellvar} q_{0}$, the principal symbol of
  $(i/2)[P_{0}, Q^{*}Q]$ is then
  \begin{equation*}
   q H_{p} q =  a_{s-\frac{1}{2},\ellvar + \frac{1}{2}}^{2} \varphi ' \varphi
    \chi^{2} + \varphi^{2} \chi^{2} a_{s - \frac{1}{2}, \ellvar +
      \frac{1}{2}}\Hamf a_{s-\frac{1}{2},\ellvar+\frac{1}{2}}.
  \end{equation*}
  In the region of interest, the weight $a$ is a non-vanishing smooth
  multiple of $q_{2(n+1)}^{-\ellvar- \frac{1}{2}}$, i.e., $a_{s-1/2,
    \ellvar + 1/2} = b
  q_{2(n+1)}^{-\ellvar-\frac{1}{2}}$, so
  \begin{equation*}
    a_{s-\frac{1}{2}, \ellvar+\frac{1}{2}} \Hamf a_{s-\frac{1}{2},
      \ellvar + \frac{1}{2}} = q_{2(n+1)}^{-2\ellvar - 1}b\Hamf b +
    b^{2}q_{2(n+1)}^{-2\ellvar-1}(\log q_{2(n+1)}) \left( - \Hamf \ellvar\right).
  \end{equation*}
  The second term has a favorable sign as $\ellvar$ is decreasing
  along the flow, while the first term will be controlled by the main
  term in the commutator.
  
Taking $\kappa > 0$ sufficiently large allows
  for the main term (arising from $\varphi '$) to control the
  others.  
  
  We now set $e \in \Ssc^{0,0}(X)$ so that $E = \Op_{L}(e)$; as $K
  \subset \Ell(E)$, there is thus some $C> 0$ so that, with $q = a_{s-\frac{1}{2},\ellvar+\frac{1}{2}}q_{0}$,
  \begin{equation}
    \label{eq:scattering-nonneg-quant}
    C a_{s,\ellvar}^2 e^{2} - q \Hamf q - c a_{s, \ellvar}^{2}q_{0}^{2}\geq 0.
  \end{equation}
  The construction of $q_{0}$ also guarantees that
  $\WF'(Q_{0}) \subset \Ell(G)$; explicit choice of the commutant
  $Q_{0}$ will also guarantee that this nonnegative
  quantity~\eqref{eq:scattering-nonneg-quant} above is a smooth sum of
  squares.\footnote{This allows us to avoid the use of the sharp
    G\aa{}rding inequality here.}
  The G\aa{}rding inequality in
  Lemma~\ref{lemma:scattering-easy-garding} below then shows that
  \begin{equation*}
    C\ang{E^{*}Eu, u} - \ang{\frac{i}{2}[P_{0},Q^{*}Q]u, u} - c
    \ang{Q_{0}^{*}A_{s,\ellvar}^{*}A_{s,\ellvar}Q_{0}u, u} \gtrsim -
    \norm{Gu}_{s-1/2,\ellvar-1/2}^{2} - \normres{u}^{2}.
  \end{equation*}
  In other words, we have the bound
  \begin{equation*}
      \ang{\frac{i}{2}[P_{0},Q^{*}Q]u, u} + c \norm{Q_0 u}_{s,\ellvar}^2 \lesssim
    \norm{Eu}_{s, \ellvar}^{2} + \norm{Gu}_{s-1/2, \ellvar-1/2}^{2} + \normres{u}^{2}.
  \end{equation*}

  Putting the two bounds together and taking $\epsilon = c / 2$ yields the estimate 
  \begin{equation*}
    \norm{Q_{0}u}_{s,\ellvar}^{2} \lesssim
      \norm{Gu}_{s - 1/2, \ellvar - 1/2}^{2}
      + \norm{Eu}_{s,\ellvar}^{2} +
      \norm{Q_{0}P_{0}u}_{s-1,\ellvar+1}^{2} + \normres{u}^{2}.
  \end{equation*}
\end{proof}

For completeness, we include the ``easy'' version of the G\aa{}rding
inequality used in the proof sketch above:
\begin{lemma}
  \label{lemma:scattering-easy-garding}
  If $A \in \Psisc^{m,\ellvar}(X)$ has a nonnegative principal
  symbol that is a sum of squares, i.e., there are $B_{1}, \dots,
  B_k \in \Psisc^{m/2, \ellvar/2}(X)$ with
  \begin{equation*}
    \sigma_{\sc, m,\ellvar} (A) = \sum_{j=1}^k \abs{\sigma_{\sc, m/2, \ellvar/2}(B_j)}^{2},
  \end{equation*}
  and $G \in \Psi^{0,0}(X)$ satisfies $\WF'(A) \subset \Ell(G)$, then
  for every $M, N \in \RR$, there is a constant $C$ so that for all
  $u \in \Sobsc^{m/2, \ellvar/2}$, 
  \begin{equation*}
    \ang{Au, u} \geq - C \norm{Gu}_{\frac{m-1}{2},
        \frac{\ellvar-1}{2}}^{2} - C \normres{u}^{2}.
  \end{equation*}
\end{lemma}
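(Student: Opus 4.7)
The plan is to prove this by the classical sum-of-squares argument for G\aa{}rding's inequality, adapted to the scattering double-order calculus. I would first arrange, without loss of generality, that each $B_j$ has $\WF'(B_j)\subset \Ell(G)$: since $\sigma(A)$ is supported in $\WF'(A)\subset \Ell(G)$, composing each $B_j$ on the left with a cutoff $\chi\in \Psisc^{0,0}(X)$ that equals $1$ microlocally on $\WF'(A)$ and has $\WF'(\chi)\subset \Ell(G)$ does not alter the sum-of-squares identity for the principal symbol. After this reduction, the identity $\sigma(A)=\sum_j |\sigma(B_j)|^2 = \sigma(\sum_j B_j^* B_j)$ forces
\[R \coloneqq A - \sum_j B_j^* B_j \in \Psisc^{m-1,\ellvar-1}(X),\qquad \WF'(R)\subset \Ell(G),\]
and pairing with $u$ gives $\ang{Au,u} = \sum_j \norm{B_j u}^2 + \ang{Ru,u}$. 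Since the first term is nonnegative, it suffices to bound $|\ang{Ru,u}|$ from above.

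The second step is an iterative symbolic construction that factors $R$ symmetrically through $G$. Because $\sigma(R)$ is supported in $\Ell(G)$, where $|\sigma(G)|^2$ is bounded below by a positive constant on any compact microlocal neighborhood of $\WF'(R)$, the function $\sigma(R)/|\sigma(G)|^2$, multiplied by a cutoff to $\Ell(G)$ equal to $1$ near $\WF'(R)$, is a classical scattering symbol of order $(m-1,\ellvar-1)$. Quantizing it yields $S_0\in \Psisc^{m-1,\ellvar-1}$ with $R-G^*S_0 G\in \Psisc^{m-2,\ellvar-2}$, whose wavefront set remains in $\Ell(G)$. Iterating this reduction and taking an asymptotic sum $S\sim \sum_k S_k$ in $\Psisc^{m-1,\ellvar-1}$ produces
\[R = G^* S G + R_{\mathrm{res}},\qquad S\in \Psisc^{m-1,\ellvar-1}(X),\quad R_{\mathrm{res}}\in\Psisc^{-\infty,-\infty}(X).\]

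The estimate is then immediate from the mapping properties already set up in the scattering calculus. The residual term satisfies $|\ang{R_{\mathrm{res}}u,u}|\leq C\normres{u}^2$ for any $M,N$, directly from the definition of $\Psisc^{-\infty,-\infty}$. For the main term, $S\in \Psisc^{m-1,\ellvar-1}$ is bounded as a map $\Sobsc^{(m-1)/2,(\ellvar-1)/2}\to \Sobsc^{-(m-1)/2,-(\ellvar-1)/2}$, so Cauchy--Schwarz in the corresponding duality pairing gives
\[|\ang{SGu,Gu}| \leq \norm{SGu}_{-(m-1)/2,-(\ellvar-1)/2}\,\norm{Gu}_{(m-1)/2,(\ellvar-1)/2}\leq C\norm{Gu}_{(m-1)/2,(\ellvar-1)/2}^2.\]
Combining these with the nonnegativity of $\sum_j \norm{B_j u}^2$ delivers the claimed bound. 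The only technical subtlety, more a matter of bookkeeping than a true obstacle, is the convergence of the iterative parametrix construction simultaneously in the differential and decay orders together with the preservation of the wavefront containment at each stage; both are standard consequences of the simultaneous asymptotic expansions available in Melrose's scattering calculus.
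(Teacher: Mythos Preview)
Your proof is correct and is essentially what the paper has in mind: the paper's one-line proof cites elliptic regularity, and your argument simply unpacks this---writing $A=\sum_j B_j^*B_j + R$ with $R\in\Psisc^{m-1,\ellvar-1}$ and $\WF'(R)\subset\Ell(G)$, then using the microlocal parametrix for $G$ to factor $R=G^*SG+R_{\mathrm{res}}$, which is exactly the mechanism behind the elliptic estimate in Proposition~\ref{prop:ellip-specific-scattering-variable}. The preliminary reduction to $\WF'(B_j)\subset\Ell(G)$ via a microlocal cutoff is a clean way to guarantee $\WF'(R)\subset\Ell(G)$ and is harmless since $\sigma(A)$ is supported in $\WF'(A)$.
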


\begin{proof}
    This is an easy consequence of elliptic regularity, see \cite{Baskin2024} for details.
\end{proof}

An inductive application of
Lemma~\ref{lemma:weaker-scattering-prop-estimate-schwartz} (combined
with repeated adjustments to the supports of the symbols) then
establishes Proposition~\ref{prop:scattering-propagation-variable} for
$u \in \Sobsc^{s, \ellvar}$ with $P_{0}u \in \Sobsc^{s-1, \ellvar +
  1}$.  Finally, a regularization argument finishes the proof.  The
regularization argument is technical and involves replacing the
symbols in the proof of
Lemma~\ref{lemma:weaker-scattering-prop-estimate-schwartz} with weaker
approximating symbols.  As the relevant symbol classes are not used
in the rest of the paper, we refer the reader to Vasy~\cite[Section
4.4]{V18} and defer our discussion of regularization arguments to
Section~\ref{sec:3sc_propagation} below.

\subsection{Radial points estimates}\label{sec:localized rad est}

We also need estimates that hold near the radial set $\Rad$.  In
addition to illuminating the proof for the model setting, we also must
employ estimates which hold microlocally near open subsets of $\Rad$
located away from $\poles$.  In Section~\ref{sec:3sc_radial} below, we
establish estimates that hold in a more general setting; these are then
combined with estimates that are local on $\Rad$.  For simplicity, we use
the radial points estimates of \cite{V18} directly, and then show that
these can easily be localized on $\Rad$.

We use the radial points estimates from Vasy~\cite{V18} proven for a
real principal type operator $P \in \Psisc^{m,r}$ near a smooth
submanifold of radial points (there denoted $L$).  Specifically, we
use the estimates in Proposition 4.12 of \cite{V18}, which are the
``first pass" estimates in which the norm $\norm{B u}_{s, \ell}$ is
controlled by a norm $\norm{G u}_{s-1/2, \ell'}$ with
$r' \in [\ell - 1/2, \ell)$.  An induction argument then removes this norm
on the right hand side.  We use these estimates because they clarify
how easily the estimates can be localized along the radial set.  The
assumptions in Vasy's proposition about the Hamiltonian flow of $P$
near the radial set are given in equation~(4.12) there; these
assumptions are satisfied for radial vector fields as in
equation~\eqref{eq:hamf_simple}.  Vasy's quantity $\tilde \beta$ is $0$
if $P - P^* \in \Psi^{m-2, r-2}$ (i.e. is lower than the expected
order for real principal type operators).  As $P_{0}$ is self-adjoint,
we do not include that correction here.

As discussed by Vasy~\cite[Section 4.7]{V18}, the regularity and decay
orders $s, \ell$ can be exchanged thanks to the symmetry in scattering
analysis realized by the Fourier transform, which in particular maps
$\Sobsc^{s, \ell}$ isometrically to $\Sobsc^{\ell, s}$.  In particular, even though
the radial set in Vasy's notes is at fiber infinity, it is a trivial
change to adapt it to our setting.  In contrast with the propagation
estimates, the positivity near radial points is due entirely to the
weight (in our case $\ellvar + \frac{1}{2}$).  This leads to two cases
depending on the sign of the weight; we refer to these as ``above
threshold'' and ``below threshold'' estimates.  We first state the
above threshold results, which bounds $\Sobsc^{s, \ellvar}$ norm of
$u$ near a component $\Rad'$ of the radial set of $P_0$.  

\begin{proposition}[Proposition 4.11 of
  \cite{V18}, above threshold]\label{thm:vasy above prop}
  Let $\Rad'$ denote any one of the four components of the radial set
  $\Rad$ of $P_0$.  Suppose $\ellvar$ is constant near $\Rad'$ and
  that $\ellvar > -1/2$ there.
  
  Let $s, \ell' \in \RR$, $\ell' > -1/2$,
  $\ell' \in [\ellvar - 1/2, \ellvar)$.  Then there are
  $B,G \in \Psisc^{0,0}$ with $\Rad' \subset \Ell(B)$ and
  $\WF'(B) \subset \Ell(G)$, such that: if
  $G u \in \Sobsc^{s - 1/2, \ell'}$ and
  $G P_0 u \in \Sobsc^{s -1, \ellvar + 1}$, then $B u \in \Sobsc^{s, \ellvar}$,
  and for all $M,N$ there is $C > 0$ such that
\begin{equation}
  \norm{Bu}_{s,\ellvar} \leq C \left( \norm{GP_{0}u}_{s-1,\ellvar+1} + \norm{Gu}_{s-\frac{1}{2}, \ell'} + \normres{u}\right).
  \label{eq:first pass radial scattering}
\end{equation}
\end{proposition}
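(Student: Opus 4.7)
The plan is a positive commutator argument analogous to the proof of Lemma \ref{lemma:weaker-scattering-prop-estimate-schwartz}, with the essential difference that positivity is furnished not by a flow parameter (there is none at a radial point) but by the action of the rescaled Hamilton vector field on the spacetime weight. Treating the sink case ($\Rad^f_\pm$; the source case is obtained by reversing the sign of $P_0$), in the coordinates \eqref{eq:great coords} near $\Rad'$ we have $(1/2)\Hamscp = -\beta(x\partial_x + w\cdot\partial_w)$ with $\beta = x/\rho > 0$ smooth. Since $a_{s-1/2,\ellvar+1/2}$ is a smooth nonvanishing multiple of $x^{-\ellvar-1/2}\ang{\tau,\zeta}^{s-1/2}$ near $\Rad'$, a direct calculation gives
\begin{equation*}
    a_{s-1/2,\ellvar+1/2}^{-1}\,\Hamscp\, a_{s-1/2,\ellvar+1/2} = 2(\ellvar+1/2)\beta + O(x) + O(w),
\end{equation*}
which is strictly positive on $\Rad'$ precisely because $\ellvar > -1/2$. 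This is the source of all positivity in the argument.

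First I would construct a commutant $Q = A_{s-1/2,\ellvar+1/2}Q_0$ with $Q_0 = \Op_L(q_0)$, where $q_0 \in \Ssc^{0,0}(X)$ is a cutoff supported in a neighborhood of $\Rad'$ contained in $\Ell(G)$, identically one on a smaller neighborhood containing $\WF'(B)$, and depending only on the transverse variables to $\Rad'$. Writing $q = a_{s-1/2,\ellvar+1/2}q_0$, the principal symbol of $\tfrac{i}{2}[P_0, Q^*Q]$ is $2q\Hamscp q$, which splits as
\begin{equation*}
    2q_0^2\, a_{s-1/2,\ellvar+1/2}\,\Hamscp a_{s-1/2,\ellvar+1/2} \;+\; 2 a_{s-1/2,\ellvar+1/2}^2\, q_0\, \Hamscp q_0.
\end{equation*}
The first term is, near $\Rad'$, a strictly positive multiple of $a_{s,\ellvar}^2 q_0^2$ by the calculation above; the second is supported in the transition region of $q_0$, which by construction is contained in $\Ell(G) \setminus \WF'(B)$. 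Choosing the transition of $q_0$ so that $q_0 \Hamscp q_0$ admits a sum-of-squares bound by the $G$-symbol (with an extra weight factor of order $\ell' + 1/2$) yields, modulo lower-order remainders, a pointwise inequality
\begin{equation*}
    c\, a_{s,\ellvar}^2 b^2 + C\, a_{s-1/2,\ell'+1/2}^2 g^2 - \scprinsymbz\bigl(\tfrac{i}{2}[P_0, Q^*Q]\bigr) \;\ge\; 0,
\end{equation*}
where $b, g$ are symbols of $B, G$. The hypothesis $\ell' > -1/2$ ensures the $G$-term is itself above threshold, which is needed in the subsequent induction (outside the scope of this proof) that removes the lower-order term.

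Pairing $\tfrac{i}{2}\ang{[P_0,Q^*Q]u, u} = \Im\ang{Qu, QP_0u}$, bounding the right-hand side via Cauchy-Schwarz with a small parameter absorbed into the positive commutator contribution, and applying Lemma \ref{lemma:scattering-easy-garding} to the sum-of-squares decomposition yields \eqref{eq:first pass radial scattering} at the formal level. The main obstacle is that this computation requires enough regularity of $u$ that $\ang{\tfrac{i}{2}[P_0, Q^*Q]u, u}$ is a priori finite, whereas the hypotheses only provide microlocal regularity of $Gu$ and $GP_0 u$. Following \cite[Sect.\ 4.4, 4.8]{V18}, I would introduce regularizers $J_\delta \in \Psisc^{-\epsilon,-\epsilon}(X)$ bounded uniformly in $\Psisc^{0,0}(X)$ as $\delta \to 0$, redo the commutator calculation with $QJ_\delta$ in place of $Q$, establish the estimate uniformly in $\delta$, and take $\delta \to 0$. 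The delicate point is that $[P_0, J_\delta]$ yields a remainder which, although vanishing in the limit, must be controlled uniformly by $\norm{Gu}_{s-1/2,\ell'}$; this is possible because the symbol structure of $J_\delta$ confines the extra term to a region already microlocally controlled by $G$.
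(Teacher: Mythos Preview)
Your overall strategy---positive commutator with a weighted cutoff, positivity sourced from $\Hamscp$ acting on the spacetime weight, pairing plus Cauchy--Schwarz, then regularization---matches the paper's sketch. But you misidentify how the cutoff-derivative term $a_{s-1/2,\ellvar+1/2}^{2}\,q_{0}\,\Hamscp q_{0}$ is handled, and this is exactly the mechanism that distinguishes the above-threshold radial estimate from principal-type propagation.

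You propose to treat this term as an error dominated by a $G$-term at the lower weight $\ell'+\tfrac12$. That cannot work as stated: the transition contribution sits at symbol order $(2s,2\ellvar)$, while $a_{s-1/2,\ell'+1/2}^{2}\,g^{2}$ is of order $(2s-1,2\ell'+1)$ with $\ell'<\ellvar$, so if the transition term carried an unfavorable sign it could not be absorbed into a strictly weaker weight. The actual point, made explicit in \eqref{eq:radial-hamvf-calc-scattering}, is that at a sink these terms have the \emph{same} sign as the weight term: with $q_{0}$ built from bump functions $\phi_{0}$ in $x$ and $|w|^{2}$ one has $-x\phi_{0}'(x)/\phi_{0}(x)\ge 0$ and $-|w|^{2}\phi_{0}'(|w|^{2})/\phi_{0}(|w|^{2})\ge 0$, so they reinforce rather than erode positivity. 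This is why no $\norm{Eu}$ appears on the right of \eqref{eq:first pass radial scattering}. The $\norm{Gu}_{s-1/2,\ell'}$ term arises solely from the lower-order remainder in the G\aa{}rding step (Lemma~\ref{lemma:scattering-easy-garding}), not from bounding the cutoff transition.

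The same misunderstanding recurs in your regularization remark. The regularizer is a global weight, not a cutoff, so the extra commutator contribution from $[P_{0},J_{\delta}]$ is not ``confined to a region controlled by $G$''; it is present throughout $\WF'(Q)$. What makes it harmless is again a sign: with a regularizer of the form $(1+r/x)^{-\delta}$ the effective coefficient in the analogue of \eqref{eq:radial-hamvf-calc-scattering} becomes $\ellvar+\tfrac12-\delta\,r/(x+r)$, which stays positive precisely when $\delta<\ellvar+\tfrac12$. Taking $\delta=\ellvar-\ell'$ is what forces the a priori hypothesis $Gu\in\Sobsc^{s-1/2,\ell'}$ with $\ell'>-1/2$; this is where the threshold condition genuinely enters the regularization, not through any microlocal confinement.
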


The operators $B$ and $G$ in the proposition are essentially
microlocal cutoffs microsupported near $\Rad'$, with $B$
microsupported in a compact subset of the elliptic set of $G$.  Such
operators can easily be constructed in the coordinates in
\eqref{eq:great coords} near $\Rad^f_+$ (and the analogous coordinates
near the three other components of $\Rad$.)  Indeed, for $\delta_1 > 2 \delta_0 > 0$ sufficiently
small and $c > 0$, we choose smooth bump functions $\phi_{0}, \phi_1,
\chi_{>c}$ so that
$\phi_i(s) = 1$ for $|s| < \delta_i$, $\phi_i(s) = 0$ for
$|s| > 2\delta_i$, $i = 1,2$, and 
$\chi_{> c}(s) = 1$ for $s \ge c $ and $\chi_{> c}(s) = 0$ for
$s \le c / 2$.  With these choices, we define:
\[
B = \Op_L(b), \quad b = \chi_{> m - \delta_0}(1/\rho) \phi_0(x) \phi_0(|w|) \phi_0( 1 -
|\mu|^2 - m^2 \rho^2), 
\]
with a similar construction for $G$:
\begin{equation*}
  G = \Op_{L}(g), \quad g = \chi_{>m - \delta_{1}} (1/\rho) \phi_{1}(x)\phi_{1}(\abs{w})\phi_{1}(1-\abs{\mu}^{2}-m^{2}\rho^{2}).
\end{equation*}
Here, $\phi_i( 1 - |\mu|^2 - m^2 \rho^2)$ cuts off to the characteristic set, $\phi_i(x)
\phi_i(|w|)$ to the radial set, and $\chi_{> m -\delta_i}(1/\rho) =
\chi_{> m - \delta_i}(\tau)$ cuts of to the $\tau > 0$ portion of the
radial set.  It is not hard to check that, despite the presence of $1/
\rho$, $b$ and $g$ are smooth scattering symbols microlocalized near the radial set.   Note that
\[
b,g \in S^{0,0}(\RR^{n + 1}),
\]
as they are smooth functions on $\Tsco^* X$.  Indeed, they are supported in a set in which all the coordinates in
$(x, w, \rho, \mu)$ are bounded, and thus they form
smooth coordinates on $\Tsco^* X$ in which $x, \rho$ are boundary
defining functions.
In particular,
\[
b g = b.
\]
The proof of such an estimate follows from a commutator argument with
a commutant $Q^{*}Q$ with $Q = \Op_{L}(q)$,
\begin{equation*}
  q = \chi_{> m - \delta_{1}}\left( \frac{1}{\rho}\right)
  \phi_{0}(x)\phi_{0}(\abs{w}^{2})\phi_{0}(1-\abs{\mu}^{2}-m^{2}\rho^{2})
  x ^{-\ellvar- \frac{1}{2}}\rho^{-s+\frac{1}{2}}.
\end{equation*}
The commutator $\frac{i}{2}[P_{0},Q^{*}Q]$ then has principal symbol
\begin{equation}
  \label{eq:radial-hamvf-calc-scattering}
  q \Hamf q = \frac{x}{\rho}\left(  (\ellvar+\frac{1}{2}) - x
    \frac{\phi_{0}'(x)}{\phi_{0}(x)} -
    \abs{w}^{2}\frac{\phi_{0}'(\abs{w}^{2})}{\phi_{0}(\abs{w}^{2})} \right)q^{2}.
\end{equation}
As $\ellvar > -\frac{1}{2}$, the first two terms have the same sign,
which explains the absence of a term $\norm{Eu}$ on the right side of
the estimate.  The other term can be absorbed into the first one if
$\delta_{0}$ is sufficiently small by observing that
$\delta_{0}^{2}\abs{w}^{2} < 4\delta_{0}^{2}$ on the support of
$\phi_{0}'$.  The
remaining terms in the estimate arise as before.

We now localize the proposition to neighborhoods of particular closed
subsets of $\Rad'$, allowing us later to combine them with more
specialized estimates.  In particular, we localize to $\Rad^{f}_{+} \cap \{
\abs{y} > 0\}$.  Indeed, as $\mu= 0$ is the only point in
$\Rad^{f}_{+}$ lying above $y = 0$, we use symbols as in the global radial point
estimate together with an additional localizer in $\mu$.  In
particular, we replace $b$ with $b' = \chi_{>c_{0}}(\abs{\mu})b$ with
a similar definition for $g' = \chi_{>c_{1}}(\abs{\mu})g$.  Then,
provided $c_{i}>2\delta_{i}$, we find that
\begin{equation*}
  \esssupp (b) \subset \{ \abs{y}\geq c_{0}'-2\delta_{0}\}, \quad
  \esssupp (g) \subset \{\abs{y}\geq c_{1}' - 2\delta_{1}\}.
\end{equation*}
We assume moreover that $2c_{1}' < c_{0}'$ so that
\begin{equation*}
  b'g' = b' \quad \text{and}\quad \WF'(B')\subset \Ell(G').
\end{equation*}
We therefore have a family of cutoffs localized near the radial set
and away from $\{ y = 0\}$.  For any $c>0$, we can choose these
operators so that
\begin{equation*}
  \Rad^{f}_{+} \cap \{ \abs{y}>c\} \subset \Ell(B') \quad
  \text{and}\quad \{ y=0\} \cap \WF'(G) = \varnothing.
\end{equation*}

From the proposition above and considerations using these microlocalizers
on $\Rad$, we have the following:
\begin{proposition}[Localized above threshold estimate]
  \label{prop:localized above thresh}
  Let $\Rad'$ denote any one of the four components of the radial set
  $\Rad$ of $P_{0}$.  Suppose $\ellvar$ is constant near $\Rad'$ and
  that $\ellvar > -1/2$ there.

  Let $s, \ell' , M, N\in \RR$, $\ell' > -1/2$ and $B', G' \in
  \Psisc^{0,0}(X)$ as above.
  If $G'u \in \Sobsc^{-N, \ell'}(X)$ and $G' P_{0}u \in \Sobsc^{s-1,\ellvar+1}$,
  then $B'u \in \Sobsc^{s, \ellvar}$  Moreover, there is a constant
  $C>0$ depending on $M, N, \ellvar, \ell', s$ so that
  \begin{equation*}
    \norm{B'u}_{s,\ellvar} \leq C \left( \norm{G'P_{0}u}_{s-1, \ellvar + 1} + \norm{G'u}_{-N, \ell'} + \normres{u}\right).
  \end{equation*}
\end{proposition}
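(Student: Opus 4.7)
The plan is to derive the localized estimate by iterating the first pass radial points estimate of Proposition \ref{thm:vasy above prop}. That proposition controls $\norm{Bu}_{s,\ellvar}$ by $\norm{Gu}_{s-1/2, \ell'}$ and $\norm{GP_{0}u}_{s-1, \ellvar+1}$ provided $\ell' \in [\ellvar - 1/2, \ellvar)$, whereas our target is to replace the a priori term by $\norm{G'u}_{-N, \ell'}$ with arbitrarily low differential order $-N$ and possibly a decay order $\ell'$ lying below $\ellvar - 1/2$. Both deficits are removed simultaneously by bootstrapping in increments of $1/2$: each application of the first pass raises the differential order by $1/2$ and, unless already saturated, raises the decay order by $1/2$, so finitely many iterations reach $(s, \ellvar)$.

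Concretely, fix $K \in \NN$ large enough that $-N + K/2 \geq s$ and $\ell' + K/2 \geq \ellvar$, and construct a nested family
\begin{equation*}
    B' = B_{0} \prec B_{1} \prec \dots \prec B_{K} \prec G',
\end{equation*}
where $A \prec B$ denotes $\WF'(A) \subset \Ell(B)$. Each $B_j \in \Psisc^{0,0}(X)$ is taken of the same form as $B'$ and $G'$ from the construction preceding the proposition, with cutoff parameters interpolating between those of $B'$ and $G'$; in particular every $B_j$ is microsupported near $\Rad' \cap \{ \abs{y} > c' \}$ for some uniform $c'>0$, so no iterate picks up contributions from the excluded direction $\{y=0\}$. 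Define $s_k = \min\{-N + k/2,\, s\}$ and $\ellvar_k = \min\{\ell' + k/2,\, \ellvar\}$, so that $(s_0, \ellvar_0) = (-N, \ell')$ and $(s_K, \ellvar_K) = (s, \ellvar)$.

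At step $k = 1, \dots, K$, I would apply Proposition \ref{thm:vasy above prop} with commutants built from $B_{K-k}$ and a priori control from $B_{K-k+1}$, choosing the auxiliary weight appearing in the first pass to be $\ellvar_{k-1}$ when $\ellvar_k < \ellvar$, or any value in $[\ellvar - 1/2, \ellvar)$ when $\ellvar_k = \ellvar$. The hypotheses $\ell' \in [\ellvar_k - 1/2, \ellvar_k)$, $\ell' > -1/2$, and $s_k - 1/2 \leq s_{k-1}$ are then all verified, yielding
\begin{equation*}
    \norm{B_{K-k} u}_{s_k, \ellvar_k} \leq C\bigl( \norm{B_{K-k+1} P_{0}u}_{s_k - 1, \ellvar_k + 1} + \norm{B_{K-k+1} u}_{s_{k-1}, \ellvar_{k-1}} + \normres{u} \bigr).
\end{equation*}
The a priori term on the right is precisely the quantity bounded at step $k-1$, so telescoping through the $K$ estimates reduces everything to $\norm{B_K u}_{-N, \ell'}$, which is controlled by $\norm{G' u}_{-N, \ell'} + \normres{u}$ using $\WF'(B_K) \subset \Ell(G')$ and microlocal elliptic regularity. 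The intermediate forcing terms $\norm{B_{K-k+1} P_0 u}_{s_k - 1, \ellvar_k + 1}$ are bounded, up to residuals, by $\norm{G' P_0 u}_{s-1, \ellvar + 1}$ for the same reason, since $s_k \leq s$ and $\ellvar_k \leq \ellvar$.

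The principal difficulty is purely organizational rather than analytic: one must ensure at every step that the first pass's hypothesis $\ellvar_{k-1} \in [\ellvar_k - 1/2, \ellvar_k)$ is respected and that the nested microlocalizers $B_{K-k}\prec B_{K-k+1}$ are arranged so that the a priori term produced at step $k$ coincides with the quantity bounded at step $k-1$. Because only finitely many iterations are required and no mechanism beyond Proposition \ref{thm:vasy above prop} (and $L^{2}$-boundedness of $\Psisc^{0,0}$ operators) is invoked, these concerns are routine.
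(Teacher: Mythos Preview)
Your iteration scheme is sound once you have a \emph{localized} first-pass estimate, but that is precisely the point you have not justified. Proposition~\ref{thm:vasy above prop} does not say the estimate holds for arbitrary $B,G$ with $\WF'(B)\subset\Ell(G)$; it asserts the \emph{existence} of specific $B,G$ with $\Rad'\subset\Ell(B)$, i.e.\ $B$ is elliptic on the \emph{entire} component $\Rad'$. Your nested $B_j$, being microsupported in $\{|y|>c'\}$, never satisfy this hypothesis, so you cannot invoke the proposition as a black box with ``commutants built from $B_{K-k}$''.

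The missing ingredient is that the extra $\mu$-localizer is harmless for the positive commutator argument. In the coordinates $(x,w,\rho,\mu)$ of \eqref{eq:great coords} one has $\Hamscp = -x\partial_x - w\cdot\partial_w$ near $\Rad'$, with no $\partial_\mu$ component, so multiplying the commutant by $\chi_{>c}(|\mu|)$ produces no new terms in $q\Hamf q$. The paper exploits exactly this, but packages it differently: rather than rebuild the commutant, it applies the \emph{global} estimate of Proposition~\ref{thm:vasy above prop} to $\tilde Q u$ with $\tilde Q=\Op_L(\chi_{>c}(|\mu|)\phi(1-|\mu|^2-m^2\rho^2))$, and then observes that $\WF'([P_0,\tilde Q])\cap\WF'(G)=\varnothing$ (the commutator is lower order near the radial set precisely because $\Hamscp\mu=0$ there). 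This converts control of $G'u$ and $G'P_0u$ into control of $G\tilde Q u$ and $GP_0\tilde Q u$, which are the actual inputs Proposition~\ref{thm:vasy above prop} accepts; then $\WF'(B')\subset\Ell(B\tilde Q)$ recovers $B'u$. Only after this localization step does the paper iterate in half-integer steps as you propose. You need to supply either this argument or the direct commutant modification; without it the first application of your induction is unjustified.
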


\begin{proof}
  We first show that the estimates in Proposition \ref{thm:vasy above
    prop} hold with the $B$ and $G$ replaced by $B'$ and $G'$.

  Thus, we assume we are given $\ellvar, \ell', s$ as Proposition \ref{thm:vasy above
    prop} and, for $G'$ as in the current proposition, that
  $G' u \in \Sobsc^{s-1/2, \ell'}$ and $G' P_0 u \in \Sobsc^{s - 1, \ellvar- 1}$.
  We wish to deduce that $B' u \in \Sobsc^{s, \ellvar}$.  Taking $G$ as
  in Proposition \ref{thm:vasy above prop}, note that, if $\tilde Q$ a
  Fourier localizer to $|\mu > c|$ e.g.,
   \[
  \tilde Q \coloneqq \Op_L(\chi_{>  c}(|\mu|) \phi(1 - |\mu|^2 - m^2 \rho^2))
  \in \Psisc^{0,0},
  \]
  for $\phi$ a bump
function supported near $0$ and $\chi_{> c}$ a localizer to $|\mu| \ge
c$,  then
\[
G\tilde Q u \in  \Sobsc^{s-1/2, \ell'}.
\]
Indeed, for $c > 0$ and $\delta_1 > 0$ sufficiently small, $\WF'(G
\tilde Q) \subset \Ell G'$.

As for $G P_0 \tilde Q u$, we have that $G P_0 \tilde Q u = G
\tilde Q  P_0 u + G [ P_0 , \tilde Q] u$, and $G [ P_0 , \tilde Q]
u$ can be made lower order than expected because 
\[
\WF'([P_0, \tilde Q]) \cap \WF'(G) = \varnothing,
\]
which follows directly from the form of the Hamilton vector
field \eqref{eq:hamf_simple} and the definition of $\tilde Q$.\footnote{In fact,
in this case $[P_{0}, \tilde Q] \equiv 0$.  In the case where $P_{0}$ is perturbed by a
lower order scattering operator only the wavefront set containment holds.}
Hence,
\[
G [P_0, \tilde Q] \in \Psisc^{0,-2}.
\]
  Thus
we have, again taking $c, \delta_1$ sufficiently small,
\begin{equation}
  \norm{G P_{0}\tilde{Q}u}_{s-1,\ellvar+1} \leq C \left(
    \norm{G' P_{0}u}_{s-1,\ellvar+1} + \norm{G'
      u}_{s-1,\ell'} + \normres{u}\right).
  \label{eq:an estimate}
\end{equation}

Thus, the hypotheses of Proposition \ref{thm:vasy above prop} apply to
$\tilde Q u$, and we obtain \eqref{eq:first pass radial scattering}
for $u = \tilde Q u$ for $B$ as in that estimate.  But then there is
$B'$ with
\[
\WF'(B' ) \subset \Ell(B \tilde Q),
\]
obtained simply by taking $c > 0$ sufficiently large and $\delta_0$
sufficiently small. We may thus use
\begin{equation*}
  \norm{B'u}_{s,\ellvar} \leq C \left( \norm{B\tilde{Q}u}_{s,\ellvar} + \normres{u}\right)
\end{equation*}
in combination with
\eqref{eq:first pass radial scattering} and \eqref{eq:an estimate} 
to give \eqref{eq:first pass radial scattering} for $B'$ and $G'$.  In
other words, if
$G' u \in \Sobsc^{s - 1/2, \ell'}$, $G' P_0 u \in \Sobsc^{s -1, \ellvar + 1}$,
then $B' u \in \Sobsc^{s, \ellvar}$ together with the estimate
\begin{equation*}
  \norm{B'u}_{s,\ellvar} \leq C \left( \norm{G' P_{0}u}_{s-1, \ellvar+1} + \norm{G' u}_{s-1/2 \ell'} + \normres{u}\right).
\end{equation*}

A standard argument using induction on $s$ in half-integer steps then
finishes the proposition.
\end{proof}

We similarly have the below threshold and localized below threshold
estimates.  Note the presence of an additional term on the right side
owing to the sign change in equation~\eqref{eq:radial-hamvf-calc-scattering}.
\begin{proposition}[Proposition 4.11 of~\cite{V18}]
  \label{thm:vasy below thresh}
  Let $\Rad'$ denote any one of the four components of the radial set
  $\Rad$ of $P_{0}$.  Suppose $\ellvar$ is constant near $\Rad'$ and
  that $\ellvar < -1/2$ there.

  Let $s\in \RR$ and $B, E, G\in \Psisc^{0,0}(X)$ be such that
  $\WF'(B) \setminus \Rad'$ is controlled along $\Hamscp$ by $\Ell (E)$
  through $\Ell(G)$.  If $Eu \in \Sobsc^{s, \ellvar}$, $GP_{0}u \in
  \Sobsc^{s-,\ellvar+1}$, and $G u \in \Sobsc^{s-\frac{1}{2},
    \ellvar-\frac{1}{2}}$, then $Bu \in \Sobsc^{s, \ellvar}$ and, for
  any $M, N \in \RR$, there is a constant $C$ so that
  \begin{equation*}
    \norm{Bu}_{s,\ellvar} \leq C \left( \norm{Eu}_{s,\ellvar} + \norm{GP_{0}u}_{s-1, \ellvar + 1} + \norm{Gu}_{s-1/2, \ellvar-1/2} + \normres{u}\right).
  \end{equation*}
\end{proposition}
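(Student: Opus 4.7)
The strategy parallels the above-threshold estimate (Proposition~\ref{thm:vasy above prop}) and the propagation-of-singularities argument in Lemma~\ref{lemma:weaker-scattering-prop-estimate-schwartz}: I will run a positive commutator argument with essentially the same commutant, but the sign change $\ellvar + 1/2 < 0$ reverses the role played by the weight. Rather than producing ellipticity at the radial set (as in the above-threshold case), it yields a propagation-like estimate that requires incoming control via $\norm{E u}_{s,\ellvar}$, exactly as in ordinary propagation of singularities.

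I take $Q = \Op_L(q)$ with
\[
q = \phi_0(x)\,\phi_0(|w|^2)\,\phi_0(1 - |\mu|^2 - m^2\rho^2)\, \chi_{>m-\delta_0}(1/\rho)\, x^{-\ellvar - 1/2} \rho^{-s + 1/2}
\]
in the coordinates~\eqref{eq:great coords} near $\Rad^f_+$ (with analogous expressions near the other three components of $\Rad$). The same calculation leading to~\eqref{eq:radial-hamvf-calc-scattering} gives
\[
q H_p q = \frac{x}{\rho}\Bigl((\ellvar + 1/2) - x\frac{\phi_0'(x)}{\phi_0(x)} - |w|^2 \frac{\phi_0'(|w|^2)}{\phi_0(|w|^2)}\Bigr) q^2.
\]
The derivative terms are nonnegative (since $\phi_0$ is nonincreasing in $|x|$ and $|w|^2$) and vanish on a neighborhood of $\Rad'$. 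Because $\ellvar + 1/2 < 0$, on a smaller neighborhood $U$ of $\Rad'$ where the derivative terms vanish I have $q H_p q \le -c\, (x/\rho)\, q^2$ for some $c > 0$. Outside $U$ but inside $\supp(q)$, the derivative terms can dominate; the hypothesis that $\WF'(B) \setminus \Rad'$ is controlled along $\Hamscp$ by $\Ell(E)$ through $\Ell(G)$, combined with the radial sink/source structure of $\Hamscp$ in \eqref{eq:hamf_simple}, lets me arrange the support of $q$ so that $\supp(q) \setminus U \subset \Ell(E)$.

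Writing $-qH_pq = c_1 |b|^2 + r_E$ where $|b|^2$ is elliptic on $\WF'(B) \cap U$ and $r_E \ge 0$ is supported in $\Ell(E)$, Lemma~\ref{lemma:scattering-easy-garding} promotes this symbol identity to the operator-level inequality
\[
-\Bigl\langle \tfrac{i}{2}[P_0, Q^*Q]\, u,\, u\Bigr\rangle \ge c_1 \norm{Bu}_{s,\ellvar}^2 - C \norm{Eu}_{s,\ellvar}^2 - C \norm{Gu}_{s-1/2, \ellvar - 1/2}^2 - C \normres{u}^2.
\]
The identity $\langle \tfrac{i}{2}[P_0, Q^*Q] u, u\rangle = \Im\langle Qu, QP_0 u\rangle$, Cauchy-Schwarz with a small parameter $\epsilon$, and the elliptic estimate (Proposition~\ref{prop:ellip-specific-scattering-variable}) to bound $\norm{Qu}_{s,\ellvar}$ by $\norm{Bu}_{s,\ellvar} + \norm{Eu}_{s,\ellvar}$ plus lower-order errors, and $\norm{QP_0u}_{s-1,\ellvar+1}$ by $\norm{GP_0u}_{s-1,\ellvar+1}$ plus errors, together yield (after absorbing $\epsilon \norm{Bu}_{s,\ellvar}^2$ to the left) the claimed estimate for $u \in \schwartz$.

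To extend the estimate to $u$ of the stated a priori regularity, for which the commutator pairings do not a priori converge, I run the argument with $R_t Q$ in place of $Q$ for a family of regularizers $R_t \in \Psisc^{-1,-1}(X)$, $t \in (0,1]$, uniformly bounded in $\Psisc^{0,0}(X)$ and converging strongly to $\Id$ as $t\to 0$. The main technical obstacle is to verify that the sign splitting of $-qH_pq$ into a nonnegative main piece and an $\Ell(E)$-supported error is preserved uniformly in $t$, so that the $t \to 0$ limit produces the estimate. Unlike the above-threshold case, no induction in $s$ is required: the term $\norm{Gu}_{s-1/2,\ellvar-1/2}$ remains on the right-hand side, reflecting the structural loss inherent in below-threshold radial estimates.
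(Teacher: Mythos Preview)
Your approach is correct and is exactly the standard positive-commutator argument underlying the result; the paper itself does not give a proof here but only the one-line remark that the extra $\norm{Eu}$ term arises from the sign change in~\eqref{eq:radial-hamvf-calc-scattering}, which you have correctly expanded.

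One small slip: in your decomposition $-qH_pq = c_1|b|^2 + r_E$ with $r_E \ge 0$, the sign on $r_E$ is wrong. Below threshold the weight contribution $(\ellvar+\tfrac12)$ is negative while the cutoff-derivative terms $-x\phi_0'/\phi_0$ and $-|w|^2\phi_0'(|w|^2)/\phi_0(|w|^2)$ remain nonnegative, so on the annulus they subtract from, rather than add to, the main term of $-qH_pq$. The correct statement is $-qH_pq = c_1|b|^2 - e^2$ (or $-qH_pq + e^2 \ge c_1|b|^2$) with $e^2 \ge 0$ supported on the annulus. Your operator-level inequality, with $-C\norm{Eu}_{s,\ellvar}^2$ on the right, is nonetheless the correct one.

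Also, the claim that you can arrange $\supp(q)\setminus U \subset \Ell(E)$ directly is slightly optimistic for arbitrary $B,E,G$ satisfying the hypotheses: in general one first uses Proposition~\ref{prop:scattering-propagation-variable} to propagate control from $\Ell(E)$ inward to a small annulus around $\Rad'$ (this is possible since that annulus sits in $\WF'(B)\setminus\Rad'$ and is hence controlled by $\Ell(E)$ through $\Ell(G)$), and only then runs the radial commutator with $q$ whose support is inside that annulus. This is routine and does not affect your outline.
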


For the localized version of the below threshold estimate, we also
introduce $e' = \chi_{>c_{1}}(\abs{\mu})e$ in addition to the
definitions of $b'$ and $g'$ given for the localized above threshold
estimate.  The proof is essentially the same as the proof of Proposition~\ref{prop:localized above thresh}.

\begin{proposition}[Localized below threshold estimate]
  \label{prop:localized below thresh}
  Let $\Rad'$ be any one of the four components of the radial set
  $\Rad$ of $P_{0}$.  Suppose $\ellvar$ is constant near $\Rad'$ and
  that $\ellvar < -1/2$ there.

  Let $s, M, N \in \RR$ and let $B', E', G' \in \Psisc^{0,0}(X)$ be as
  above.  If $E' u \in \Sobsc^{s, \ellvar}$, $G'P_{0}u \in
  \Sobsc^{s-1, \ellvar+1}$, then $B' u \in \Sobsc^{s, \ellvar}$, and
  for any $M, N\in \RR$, there is a constant $C$ so that
  \begin{equation*}
    \norm{B'u}_{s,\ellvar} \leq  C\left( \norm{E'u}_{s, \ellvar} + \norm{G'P_{0}u}_{s-1,\ellvar+1} + \normres{u}\right).
  \end{equation*}
\end{proposition}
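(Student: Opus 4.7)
The plan is to mimic the localization argument used to deduce Proposition~\ref{prop:localized above thresh} from Proposition~\ref{thm:vasy above prop}, except starting from the global below threshold estimate (Proposition~\ref{thm:vasy below thresh}) rather than the global above threshold one. The key point is that near $\Rad'$, cutting off in the fiber variable $\mu$ away from $\mu = 0$ corresponds, under the identification \eqref{eq:rad_future}, to cutting off in the base variable $y$ away from $0$, which is precisely the localization encoded in the primed operators $B', E', G'$.

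First I would introduce a scattering pseudodifferential localizer
\begin{equation*}
\tilde Q \coloneqq \Op_L\bigl(\chi_{>c}(\abs{\mu})\,\phi(1 - \abs{\mu}^2 - m^2\rho^2)\bigr) \in \Psisc^{0,0}(X),
\end{equation*}
with $\phi$ a bump near $0$ and $\chi_{>c}$ a cutoff to $\abs{\mu}\geq c$, choosing the constants so that on the one hand $\WF'(\tilde Q E) \subset \Ell(E')$, $\WF'(\tilde Q G) \subset \Ell(G')$ for the unprimed operators $B,E,G$ from Proposition~\ref{thm:vasy below thresh}, and on the other hand $\Rad' \cap \Ell(B) \subset \Ell(\tilde Q)$ with $\WF'(\tilde Q B)$ still a subset of $\Ell(B')$. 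Because the Hamilton vector field \eqref{eq:hamf_simple} involves only derivatives in $x$ and $w$ (both base variables at $\Rad'$), $\tilde Q$ commutes with $P_0$ to leading order, and in fact
\begin{equation*}
\WF'([P_0, \tilde Q]) \cap \WF'(G) = \varnothing,
\end{equation*}
so that $G[P_0,\tilde Q] \in \Psisc^{0,-\infty}$ can be absorbed into the residual term.

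Next, I would apply Proposition~\ref{thm:vasy below thresh} to $\tilde Q u$ in place of $u$, with commutants $B, E, G$ as in that proposition. The hypotheses are satisfied because $E \tilde Q u$ has wavefront set controlled by $\Ell(E')$, hence
$\norm{E \tilde Q u}_{s,\ellvar} \lesssim \norm{E' u}_{s,\ellvar} + \normres{u}$, and similarly for the $G\tilde Q u$ and $G P_0 \tilde Q u$ terms, the latter via
\begin{equation*}
\norm{G P_0 \tilde Q u}_{s-1,\ellvar+1} \leq C\bigl(\norm{G' P_0 u}_{s-1,\ellvar+1} + \norm{G' u}_{s-1,\ellvar-1} + \normres{u}\bigr)
\end{equation*}
using the commutator vanishing on $\WF'(G)$. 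The global below threshold estimate then yields $B\tilde Q u \in \Sobsc^{s,\ellvar}$ with
\begin{equation*}
\norm{B\tilde Q u}_{s,\ellvar} \leq C\bigl(\norm{E' u}_{s,\ellvar} + \norm{G' P_0 u}_{s-1,\ellvar+1} + \norm{G' u}_{s-1/2,\ellvar-1/2} + \normres{u}\bigr),
\end{equation*}
and elliptic regularity gives $\norm{B' u}_{s,\ellvar} \leq C(\norm{B\tilde Q u}_{s,\ellvar} + \normres{u})$ since $\WF'(B') \subset \Ell(B\tilde Q)$ by construction.

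The remaining task is to remove the auxiliary term $\norm{G' u}_{s-1/2,\ellvar-1/2}$ from the right-hand side, which I would accomplish by an induction on $s$ in half-integer steps: reapplying the estimate just derived at regularity $s - 1/2$ (with the weight $\ellvar$ unchanged, since $\ellvar$ is \emph{fixed} near $\Rad'$ and the localized version only requires the wavefront containment $\WF'(G') \subset \Ell$(larger cutoff)) allows one to absorb $\norm{G' u}_{s-1/2,\ellvar-1/2}$ into the $\norm{E' u}_{s,\ellvar} + \norm{G' P_0 u}_{s-1,\ellvar+1} + \normres{u}$ terms, base case being provided by directly estimating with $-N$ regularity. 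The main technical obstacle is the same as in \cite[Section 4.4]{V18}, namely the regularization argument needed to justify the a priori $u \in \Sobsc^{-N,-M}$ reduction to finite regularity: one must replace the commutant symbol by a family of approximating symbols in a $\delta$-class with uniform bounds and pass to the limit, and this is the step where care is required to ensure the signs in the positive commutator argument (which, below threshold, rely on the weight term having the \emph{same} sign as the propagation term in \eqref{eq:radial-hamvf-calc-scattering} when $\ellvar < -1/2$) are preserved uniformly in the regularization parameter.
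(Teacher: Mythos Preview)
Your proposal is correct and follows essentially the same approach as the paper: the paper simply states that the proof is the same as that of Proposition~\ref{prop:localized above thresh}, with the additional introduction of $e' = \chi_{>c_1}(\abs{\mu})e$, and your argument is precisely this localization via $\tilde Q$ applied to the global below threshold estimate. Your concern about the regularization argument is slightly misplaced, since that machinery is already encapsulated in Proposition~\ref{thm:vasy below thresh} and does not need to be redone here.
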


\subsection{Weight functions, Fredholm estimates, and propagators} \label{sec:fred for scat}

We now describe in more detail how to combine the estimates of the
previous sections to obtain Fredholm estimates for $P_{0}$.  We pass
to a microlocal partition of unity subordinate to a cover of $\Tsco ^{*}X$ by
open neighborhoods of the components of the radial set $\Rad$, the
characteristic set $\Char P_{0}$, and then the elliptic set.  On each
of these neighborhoods, we appeal to the estimates of previous
sections.  Away from the characteristic set, we appeal to elliptic
estimates.  Near the characteristic set, the above threshold estimate
propagates regularity from a source component of the radial set to a
small neighborhood of it; we then use the propagation estimates to
conclude regularity in a neighborhood of a sink component, which is
then propagated into the radial set by the below threshold estimate.

As noted earlier, it is impossible to satisfy simultaneously the
conditions for the above threshold and below threshold estimates with
a constant weight, so we appeal to variable weights.  Because the
radial estimates have no threshold conditions in the regularity order
$s$, there is no need to allow variable order regularity.  It is only
the spacetime weight $\ellvar$ that must vary, and the conditions it
must satisfy are summarized in the following definition:
\begin{definition}\label{def:forward backward weights}
  Let $\ellvar \in \CI (\Tsco^{*}X; \RR)$.  We call $\ellvar$ admissible if
  $\ellvar$ is monotone along the Hamiltonian flow within each
  component of the characteristic set and constant near the components
  of $\Rad$.  Moreover, we say $\ellvar$ is:
  \begin{enumerate}
  \item \emph{forward} if $\ellvar > -1/2$ on $\Rad_{+}^{p}\cup \Rad_{-}^{p}$
    and $\ellvar <-1/2$ on $\Rad_{+}^{f}\cup \Rad_{-}^{f}$,
  \item \emph{backward} if $\ellvar <-1/2$ on $\Rad_{+}^{p}\cup
    \Rad_{-}^{p}$ and $\ellvar > -1/2$ on $\Rad_{+}^{f}\cup
    \Rad_{-}^{f}$, 
  \item \emph{Feynman} if $\ellvar < -1/2$ on $\Rad_{+}^{f}\cup
    \Rad_{-}^{p}$ and $\ellvar > -1/2$ on $\Rad_{+}^{p}\cup
    \Rad_{-}^{f}$, and 
  \item \emph{anti-Feynman} if $\ellvar > -1/2$ on $\Rad_{+}^{f}\cup
    \Rad_{-}^{p}$ and $\ellvar <-1/2$ on $\Rad_{+}^{p}\cup \Rad_{-}^{f}$.
  \end{enumerate}
  We write forward weights as $\ellvar_{+}$ and backward weights as $\ellvar_{-}$.
\end{definition}
Note that the four types of weight functions described here correspond
to the four distinguished parametrices of
Duistermaat--H{\"o}rmander~\cite{DuHo72}.  We encode which propagator we
are considering by selecting an appropriate weight for the function
spaces.

In particular, a forward (resp.\ backward) weight function decreases
(resp.\ increases) as $t$ increases, while a Feynman (resp.\
anti-Feynman) weight function decreases (resp.\ increases) along the
global Hamiltonian flow.  Forward and backward weight functions
distinguish the causal (i.e., forward and backward) propagators.

For the causal propagators, the weights can be taken to be functions
on spacetime (i.e., independent of $\tau, \zeta$), while the Feynman
and anti-Feynman weights must be genuinely pseudodifferential.
We focus now on the causal propagators, but the construction in the
Feynman and anti-Feynman settings follows similar lines (though with a
less explicit weight).

To construct the causal weights, we seek a function on spacetime that
has the desired monotonicity and is equal to $-1/2 \pm \epsilon$ at
$\Rad^{p/f}$.  We therefore employ the function
\begin{equation*}
  \phi_{t} = \frac{t}{\ang{t,z}}
\end{equation*}
from the proof of Proposition~\ref{thm:global char set scattering}.
Indeed, we show there that $\phi_{t}$ is monotone increasing along the
$\tau > 0$ component of the flow and decreasing on the $\tau < 0$
component.  For any $\epsilon, \delta > 0$, we then let $f:[-1,1] \to
\RR$ be any smooth, non-increasing function with $f(s) = -1/2 +
\epsilon$ for $s < -1/ \sqrt{2} + \delta$ and $f (s) = -1/2 -
\epsilon$ for $s > 1/\sqrt{2}-\delta$ and set
\begin{equation*}
  \ellvar (t,z) = f(\phi_{t}).
\end{equation*}
With this definition, $\ellvar$ is a forward weight function, and $-1
- \ellvar$ is a backward weight function.

We now introduce some notation for the function spaces on which we
expect $P_{0}$ to be Fredholm.  To simplify matters, we focus on the
the construction leading to the forward propagator, but the same
argument shows that the other three choices of weights lead to
Fredholm estimates.  We let $\ellvar_{+}$ denote a forward weight
function, so that $-1 - \ellvar_{+}$ is a backward weight function.
We recall that
\begin{equation*}
  \cX^{s,\ellvar_{+}} = \left\{ u \in \Sobsc^{s,\ellvar_{+}}
    \colon P_{0} u \in \Sobsc^{s-1, \ellvar_{+}+1}\right\}\,,\quad
  \cY^{s,\ellvar_{+}} = \Sobsc^{s,\ellvar_{+}}
\end{equation*}
and $\cY^{s,\ellvar_+}$ is equipped with the norm of $\Sobsc^{s,\ellvar_+}$, whereas
\begin{align*}
    \norm{u}_{\cX^{s,\ellvar_+}}^2 = \norm{u}_{s,\ellvar_+}^2 + \norm{P_0}_{s-1, \ellvar_+ + 1}^2\,.
\end{align*}
Although we do not need this fact
here, the space $\cX^{s, \ellvar_{+}}$ depends only on the principal
symbol of $P_{0}$ and operators with the same principal symbol induce
equivalent norms.

As $P_{0} : \cX^{s,\ellvar_{+}}\lra \cY^{s-1,\ellvar_{+}+1}$ is
continuous, showing that it is Fredholm therefore reduces to
the following two estimates:
\begin{align}
  \label{eq:global scat fredholm}
  \norm{u}_{s, \ellvar_{+}} &\leq C \left( \norm{P_{0}u}_{s-1,\ellvar_{+}+1} + \normres{u}\right), \\
  \norm{u}_{1-s, -1 - \ellvar_{+}} &\leq C\left( \norm{P_{0}u}_{-s, -\ellvar_{+}} + \norm{u}_{-N', - M'}\right), \notag
\end{align}
for some $M,M', N, N'$ are such that the inclusions
$\Sobsc^{s, \ellvar_{+}}\hookrightarrow \Sobsc^{-N, -M}$ and
$\Sobsc^{1-s, -1-\ellvar_{+}}\hookrightarrow \Sobsc^{-N', -M'}$ are
compact.

We take now an open cover $O_{1}, O_{2}, O_{3}, O_{4}$ of $\Tsco^{*}X$
so that:
\begin{enumerate}
\item $\Rad_{\pm}^{p} \subset O_{1} \subset \{ \ellvar_{+} = -1/2 +
  \epsilon\}$, 
\item $\Rad_{\pm}^{f}\subset O_{2}\subset \{ \ellvar_{+}=
  -1/2-\epsilon\}$, 
\item $\Char P_{0} \subset O_{1}\cup O_{2}\cup O_{3}$, 
\item $O_{3}$ is controlled along $\Hamscp$ by $O_{1}$, 
\item $O_{2}\setminus \Rad$ is controlled along $\Hamscp$ by $O_{3}$,
  and 
\item $O_{4} \subset \Ell(P_{0})$.
\end{enumerate}
Now we take a microlocal partition of unity $\Id = B_{1} + B_{2} +
B_{3} + B_{4}$, $B_{i} \in \Psisc^{0,0}(X)$ with 
\begin{equation}
\WF' (B_{i})\subset O_{i}.\label{eq:same old WF condition}
\end{equation}

If $u \in \cX^{s, \ellvar_{+}}$, then, by assumption, the above
threshold estimate applies to $u$ near $\Rad_{\pm}^{p}$ and so, by
Proposition~\ref{thm:vasy above prop} (with $G = \Id$),
\begin{equation}\label{eq:B1}
  \norm{B_{1}u}_{s, \ellvar_{+}} \leq C\left( \norm{P_{0}u}_{s-1,\ellvar_{+}+1} + \norm{u}_{s-1/2, \ellvar'}\right),
\end{equation}
where $\ellvar ' < \ellvar_{+}$ and $-1/2 < \ellvar' < \ellvar_{+}$ on
$O_{1}$.  Now, as $O_{3}$ is controlled by $O_{1}$,
Proposition~\ref{prop:scattering-propagation-variable} tells us
\begin{equation}\label{eq:B3}
  \norm{B_{3}u}_{s, \ellvar_{+}} \leq C\left( \norm{B_{1}u}_{s, \ellvar_{+}} + \norm{P_{0}u}_{s-1, \ellvar_{+}+1} + \norm{u}_{s-1/2, \ellvar'}\right).
\end{equation}
The hypotheses for Proposition~\ref{thm:vasy below thresh} are now
fulfilled and so we obtain
\begin{equation}\label{eq:B2}
  \norm{B_{2}u}_{s, \ellvar_{+}} \leq C\left( \norm{B_{3}u}_{s, \ellvar_{+}} + \norm{P_{0}u}_{s-1, \ellvar_{+}+1} + \norm{u}_{s-1/2, \ellvar'}\right).
\end{equation}
Because $\WF'(B_{4})\subset \Ell(P_{0})$, the elliptic estimates of
Proposition~\ref{prop:scat elliptic} tell us
\begin{equation}\label{eq:B4}
  \norm{B_{4}u}_{s, \ellvar_{+}} \leq C \left( \norm{P_{0}u}_{s-2,\ellvar_{+}} + \norm{u}_{s-1/2, \ellvar'} \right)
  \leq C\left( \norm{P_{0}u}_{s-1,\ellvar_{+}+1} + \norm{u}_{s-1/2, \ellvar'} \right).
\end{equation}
Because $\Id = B_{1} + B_{2} + B_{3} + B_{4}$, we then have the
estimate
\begin{equation*}
  \norm{u} _{s, \ellvar_{+}} \leq C \left( \norm{P_{0}u}_{s-1,\ellvar_{+}+1} + \norm{u}_{s-1/2, \ellvar'}\right),
\end{equation*}
and the inclusion
$\cX^{s, \ellvar_{+}}\hookrightarrow \Sobsc^{s-1/2,\ellvar'}$
is compact.

To obtain the estimate
\begin{equation*}
  \norm{u}_{1-s, -1-\ellvar_{+}} \leq C \left( \norm{P_{0}^{*} u}_{-s, -\ellvar_{+}} + \norm{u}_{-s-1/2, \ellvar'}\right),
\end{equation*}
we use the same chain of estimates, but with the roles of $\Rad^{p}$
and $\Rad^{f}$ exchanged.  In other words, we propagate regularity
from $\Rad^{f}$ to $\Rad^{p}$ along the Hamiltonian flow in
$\Char (P_{0})$.  Here $\ellvar'$ must be chosen analogously, i.e.,
$\ellvar' < -1-\ellvar_{+}$ must also be greater than the threshold
$-1/2$ near $\Rad^{f}$.

By the standard argument of
  iterating by $1/2$ differential orders we replace the $-1/2$ on the
  right by an arbitrary differential order $-N$ and deduce \eqref{eq:global scat
    fredholm}. Note that the formulation of the estimate in
  \eqref{eq:global scat fredholm} with the lower order error term on
  the right hand side follows from bounding the $\ellvar'$ term by a
  small factor times that $\ellvar$ term on the left; specifically,
  using that for any $M > 0$ and $\epsilon > 0$ there is
  $C(\epsilon) > 0$ such that
  $x^{-\ellvar'} \le C(\epsilon) x^M + \epsilon x^{-\ellvar}$, based
  off which we have, for any $N \in \RR$, a $C > 0$ such that
  \begin{equation}\label{eq:youngs inequality}
    \norm{u}_{-N, \ellvar'} \le
    C \norm{u}_{-N,  -M} + \epsilon \norm{u}_{-N, \ellvar}\,,
  \end{equation}
so for $\epsilon$ sufficiently small the last term can be absorbed
onto the left hand side.

With these estimates, we have nearly proved Theorem~\ref{thm:sec2thm}:
\begin{proof}[Proof of Theorem~\ref{thm:sec2thm}]
  The estimates above in equation~\eqref{eq:global scat fredholm} show
  that $P_{0}$ is Fredholm between the stated spaces.  Indeed, the
  estimates directly imply that the operators have closed range and
  finite dimensional kernel.  That they have finite dimensional
  cokernel follows from the identification of the cokernel with the
  kernel of the operator on the adjoint space, which for the forward
  weight $\ellvar_{+}$ is the backward weight $-1 -\ellvar_{+}$.

  The fact that $P_{0}$ is invertible on the stated space then follows
  if its kernel and cokernel are zero.  This claim follows from the
  energy/Gr\"{o}nwall argument given below in the proof of Theorem
  \ref{thm:fredholm causal 3sc}. The cokernel of
  the forward problem is the kernel of a corresponding backward
  problem, and vice versa, so this completes the proof.

  For the statement that the inverse is the forward propagator,
let $f \in
  \Sobsc^{s - 1, \ellvar_{+} + 1}$ for any forward weight
  $\ellvar_{+}$, and assume that for some $T \in \mathbb{R}$,
  \[
\supp f \subset \{ t \ge T \}.
\]
Then the inverse mapping of \eqref{eq:model mapping} applied to $f$
gives a solution $u_{+}$ to $P_{0} u_{+} = f$ with
$u_{+} \in \Sobsc^{s, \ellvar_{+}}$.  Then $u_{+}$ satisfies the above
threshold condition near the past radial sets, and just as with
elements in the kernel, $u_{+}$ is Schwartz near past causal infinity.
The same energy/Gr\"{o}nwall argument then shows that $u_{+}$ is
identically zero in $t \le T$, i.e. $u_{+}$ is the forward
solution.
\end{proof}

\subsection{Scattering perturbations}\label{sec:scat perturb}

We finally observe that the estimates above are all symbolic in
nature, so the same estimates hold for any operator with the same
principal symbol and sub-principal symbol as $P_{0}$.  In fact, we
require only that these agree at $\partial X \times \overline{\RR^{n}}$
as long as the underlying Lorentzian metric is non-trapping.

We therefore consider a ``potential''
\begin{equation*}
    V \in \ang{t,z}^{-1} \Diffsc^{1}(X)
\end{equation*}
with $V - V^{*} \in \ang{t,z}^{-2}\Diffsc^{0}(X)$.  We also
allow the differential part
\begin{equation*}
    D_{t}^{2}-D_{z}\cdot D_{z} \eqqcolon \square_{g_0}
\end{equation*}
of $P_{0}$ to be replaced by the wave operator for an asymptotically
Minkowski metric $g$ satisfying
\begin{equation*}
  g - g_{0}\in S^{-2}\left( \RR^{n+1} ; \Sym^{0,2}\right),
\end{equation*}
where
\begin{equation*}
  g_{0} = dt^{2} - \sum_{j=1}^{n}dz_{j}^{2}.
\end{equation*}
We further demand that $g$ be non-trapping, i.e., that all null
  geodesics of $g$ approach $\scrI^{\pm}$ (or equivalently compressed
  null infinity $S^\pm$) in both directions along the
  flow.  Then, with
$\square_{g} = -\sqrt{|g|}^{-1}\partial_{i}\sqrt{|g|}g^{ij}\partial_{j}$, we
treat the operator
\begin{equation*}
  P_{g, V} \coloneqq \square_{g} - m^{2} - V.
\end{equation*}
We observe that $P_{0}$ and $P_{g,V}$ have the same principal symbol
at spatial infinity $\partial X \times \overline{\RR^{n}}$ and the
structure of the Hamiltonian flow is the same as that given in
Proposition~\ref{thm:global char set scattering}.  Additionally, we
note that $P_{g,V} - P_{g,V}^{*} \in \ang{t,z}^{-2}\Diffsc^{0}(X)$,
so that the same propagation and radial point estimates over the
boundary hold.  The propagation estimates at fiber infinity hold with
the same proof, and so we in fact have the following theorem:
\begin{theorem}
  \label{thm:sec2thmredux}
  If $g$ is a non-trapping asymptotically Minkowski metric in the
  sense that
  \begin{equation*}
    g - g_{0} \in S^{-2}\left( \RR^{n+1}; \Sym^{0,2}\right),
  \end{equation*}
  and $V \in \ang{t,z}^{-1}\Diffsc^{1}(X)$ satisfies $V - V^{*} \in
  \ang{t,z}^{-2}\Diff^{0}(X)$, then Theorem~\ref{thm:sec2thm} holds
  for the operator $P_{g,V}$.  In other words, if $s\in \RR$ and
  $\ellvar_{+}$ is a forward weight, then
  \begin{equation*}
    P_{g,V} : \cX^{s, \ellvar_{+}} \lra \cY^{s-1,\ellvar_{+}+1}
  \end{equation*}
  is an isomorphism and its inverse is the forward propagator.  The
  same is true when $\ellvar_{+}$ is replaced by a backward weight
  $\ellvar_{-}$, in which case the inverse is the backward propagator.
\end{theorem}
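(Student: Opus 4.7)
The plan is to observe that every microlocal ingredient in the proof of Theorem~\ref{thm:sec2thm} depends only on data that is preserved by the allowed perturbation, and then reuse the Fredholm-plus-uniqueness architecture verbatim. I would first compute the scattering principal symbol of $P_{g,V}$ at $\partial X \times \overline{\RR^{n+1}_{\tau,\zeta}}$ and verify that, under the stated decay of $g - g_{0}$ and the containment $V \in \ang{t,z}^{-1}\Diffsc^{1}(X)$, one has
\begin{equation*}
\scprinsymb{2,0}(P_{g,V}) = \scprinsymb{2,0}(P_{0}), \qquad \scnormsymb{2,0}(P_{g,V}) = \scnormsymb{2,0}(P_{0}),
\end{equation*}
so that $\Char(P_{g,V}) = \Char(P_{0})$ and $\Hamsc_{p_{g,V}} = \Hamsc_{p_{0}}$ on $\partial \Tsco^{*}X$. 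In particular the description of the radial set and the source/sink structure from Proposition~\ref{thm:global char set scattering} carries over without change, as does the monotonicity of $\phi_{t} = t/\ang{t,z}$ along the flow on $\Char(P_{g,V})$; the non-trapping hypothesis on $g$ is exactly what ensures no new closed or trapped bicharacteristics appear over the interior or at fiber infinity.

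Next I would replay the estimates of Sections~\ref{sec:elliptic-estimates-scattering}--\ref{sec:localized rad est} with $P_{0}$ replaced by $P_{g,V}$. The elliptic estimate of Proposition~\ref{prop:ellip-specific-scattering-variable} uses only ellipticity of the principal symbol, so it applies directly. For the propagation estimate (Proposition~\ref{prop:scattering-propagation-variable}) the positive commutator calculation in Lemma~\ref{lemma:weaker-scattering-prop-estimate-schwartz} sees $P_{g,V}$ only through $\Hamsc_{p_{g,V}}$ on $\partial \Tsco^{*}X$, which coincides with $\Hamsc_{p_{0}}$; any additional interior contribution from $V$ or $g - g_{0}$ lies in $\Psisc^{m_{1}+m_{2}-1,\,r_{1}+r_{2}-2}$, i.e.\ one order lower in the spacetime weight than the principal term of $i[P_{g,V},Q^{*}Q]$, and is therefore absorbed into the error. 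For the radial point estimates of Propositions~\ref{prop:localized above thresh} and~\ref{prop:localized below thresh}, the sign of the coefficient of $q^{2}$ in the commutator~\eqref{eq:radial-hamvf-calc-scattering} is determined purely by $\ellvar + \tfrac{1}{2}$; the assumption $V - V^{*} \in \ang{t,z}^{-2}\Diffsc^{0}(X)$ guarantees that $P_{g,V} - P_{g,V}^{*} \in \Psisc^{1,-2}$, which is precisely Vasy's condition that forces $\tilde\beta = 0$, so no threshold shift is introduced.

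Assembling these via the same microlocal partition $B_{1}+B_{2}+B_{3}+B_{4} = \Id$ used in Section~\ref{sec:fred for scat} produces the pair of Fredholm estimates
\begin{align*}
\norm{u}_{s,\ellvar_{\pm}} &\le C\bigl(\norm{P_{g,V}u}_{s-1,\ellvar_{\pm}+1} + \normres{u}\bigr),\\
\norm{u}_{1-s,-1-\ellvar_{\pm}} &\le C\bigl(\norm{P_{g,V}^{*}u}_{-s,-\ellvar_{\pm}} + \normres{u}\bigr),
\end{align*}
where the adjoint estimate uses that $-1 - \ellvar_{+}$ is a backward weight and vice versa. This yields the Fredholm property of $P_{g,V} : \cX^{s,\ellvar_{\pm}} \to \cY^{s-1,\ellvar_{\pm}+1}$.

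Finally, invertibility reduces to showing triviality of the kernel and cokernel. Any element $u$ of the forward kernel satisfies the above-threshold condition at $\Rad^{p}_{\pm}$, hence is Schwartz near $\iota^{-}$; standard energy/Gr\"{o}nwall estimates for the globally hyperbolic wave operator $\square_{g} - m^{2} - V$ (applied on level sets of a temporal function compatible with $g$) then propagate the vanishing forward in time, forcing $u \equiv 0$. The cokernel of the forward problem is identified with the kernel of the backward problem by the adjoint pairing, and is killed by the same energy argument run backward in time. The main obstacle I anticipate is bookkeeping the subprincipal contributions: one must check that the $\ang{t,z}^{-2}\Diffsc^{0}$ remainder in $P_{g,V} - P_{g,V}^{*}$ and the $S^{-2}$ decay of $g - g_{0}$ really do keep every error term at least one order lower in the spacetime weight than the main positive commutator term in both the propagation and radial point arguments, so that the identical sign structure is preserved under the perturbation.
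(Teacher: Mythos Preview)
Your proposal is correct and follows essentially the same approach as the paper: the paper's proof is a brief observation that all the estimates of Section~\ref{sec:model-case} are symbolic in nature, that $P_{g,V}$ and $P_0$ have the same principal symbol at $\partial X \times \overline{\RR^{n+1}}$ (so the characteristic set, Hamiltonian flow structure, and radial sets coincide), and that $P_{g,V} - P_{g,V}^* \in \ang{t,z}^{-2}\Diffsc^0(X)$ ensures the threshold is unchanged. Your writeup is in fact more detailed than the paper's, which dispatches the theorem in a single paragraph before its statement.
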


\section{Asymptotically static potentials}
\label{sec:operator-p_v}

In this section we describe the spacetime geometry adaptations for the
operator $P_{V}$.  For static potentials $V = V(z)$, the operator
$P_{V}$ is not a scattering operator in the sense of
Section~\ref{sec:model-case}; \emph{$V$ fails to be smooth on the north
pole and south pole of $X$ as described below.}  We therefore pass to the
minimal resolution of $X$ on which $V$ is smooth and thereby recognize
our operator as an element of Vasy's many-body scattering calculus.

\subsection{The resolution of \texorpdfstring{$X$}{X}}

Recall that, for the spacetime compactification $X = \overline{\RR^{n
    + 1}_{t, z}}$, the coordinates $x = 1/t, y = z/ t$ are valid up to
the boundary $\pa X$ in any region in which $(x, y)$ are bounded.  This
clarifies the failure of smoothness of $V$ at the point which $x = 0,
y = 0$, which we refer to as the
``north pole'' and denote $\NP$.  Indeed,
there, even for Schwartz potentials $V \in \schwartz(\RR^n)$, we see that $V(z) = V(y/x)$ fails even to be continuous at
$\NP$.  The same goes for the ``south pole'', $\SP$, where the
coordinates $x = - 1/t$, $y = z/t$ vanish.  (We only write $x = -1/t$
when working near $\SP$.)  We set $C = \{\NP, \SP\} \subset \pa X$.
Thus
\begin{equation}
  \label{eq:def of C}
  \poles = \pa X \cap \overline{\{ z = 0 \}}.
\end{equation}
and
\begin{align*}
    \NP =  \pa X \cap \overline{ \{ z = 0 \} } \cap \overline{\{ t > 0
  \}},\quad \SP  = \pa X \cap \overline{\{ z = 0 \}} \cap \overline{\{ t > 0 \}}.
\end{align*}
On the other hand, $V \in \schwartz(\RR^n)$ is smooth on $X
\setminus \poles$.

We are therefore led to consider the blow-up of $C$ in $X$ equipped
with the blow-down map
\begin{align}\label{eq:X blow C def}
    \beta_C : [X; C] \lra X\,.
\end{align}
Static potentials that are Schwartz functions $V = V(z) \in \schwartz(\RR^n)$ are smooth on
$[X;C]$.\footnote{More general $V$ must have a classical symbol
  expansion at infinity to be smooth on $[X;C]$.}

The space $[X; C]$ is the \emph{$\tsc$-single space} and is a manifold
with corners possessing three boundary hypersurfaces,
\begin{align*}
    \ff_+ &\coloneqq \beta_C^*(\NP)\,,\\
    \ff_- &\coloneqq \beta_C^*(\SP)\,,\\
    \mf &\coloneqq \beta_C^*(\partial X)\,.
\end{align*}
Note that $\ff_\pm$ has interior which is isomorphic to
$\RR^n_z$.  In what follows, we typically restrict our
attention to $\ff_{+}$ and write $\ff = \ff_{+}$.

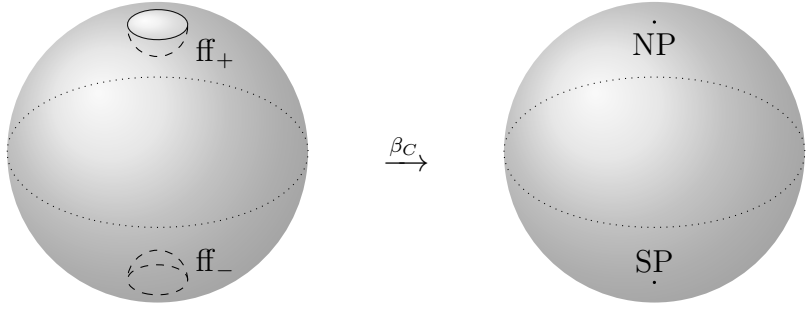
\begin{figure}
    \tdplotsetmaincoords{60}{115}
    \centering
    \begin{minipage}{0.3\textwidth}
        \centering
        \begin{tikzpicture}[tdplot_main_coords, scale = 2]
            \shade[ball color = lightgray, opacity = 0.5] (0,0,0) circle (1cm);
            \filldraw (0,0,1) circle (1pt);
            \begin{scope}[canvas is xy plane at z=0.98]
                \draw [fill=white](0,0) circle (0.20cm);
                \shade[ball color = lightgray, opacity = 0.25] (0,0) circle (0.20cm);
            \end{scope}
            \draw[dashed] (0.0,0.22,1.02) arc (0:-180:0.20cm);
            \begin{scope}[canvas is xy plane at z=-0.98]
                \draw[dashed] (0,0) circle (0.20cm);
            \end{scope}
            \draw[dashed] (0.0,0.22,-0.93) arc (0:180:0.20cm);
            \begin{scope}[canvas is xy plane at z=0]
                \draw[dotted] (0,0) circle (1cm);
            \end{scope}
            \draw (0,0.2,1) node [below right] {$\ff_+$};
            \draw (0,0.2,-1) node [above right] {$\ff_-$};
        \end{tikzpicture}
    \end{minipage}%
    \begin{minipage}{0.1\textwidth}
        \centering
        $\xrightarrow{\beta_C}$
    \end{minipage}%
    \begin{minipage}{0.3\textwidth}
        \centering
        \begin{tikzpicture}[tdplot_main_coords, scale = 2]
            \shade[ball color = lightgray, opacity = 0.5] (0,0,0) circle (1cm);
            \begin{scope}[canvas is xy plane at z=0]
                \draw[dotted] (0,0) circle (1cm);
            \end{scope}
            \filldraw (0,0,1) circle (0.2pt) node [below] {$\NP$};
            \filldraw (0,0,-1) circle (0.2pt) node [above] {$\SP$};
        \end{tikzpicture}
    \end{minipage}
    \caption{The blow-down map $\beta_C : [X;C] \to X$ of the
      $\tsc$-single space.}
    \label{fig:X and XC}
\end{figure}

Coordinates on $[X; C]$ can be understood in terms of those on $X$ as  
follows.  First off, coordinates near the boundary of $X$ can be
taken near any point $p \in \partial X$ near which $t \to \infty$, to
be
\begin{align*}
    x = 1/t,\quad y = z/t
\end{align*}
where here $x$ is a boundary defining function (bdf) of $\partial X$.
Then in the region
$\abs{z} < C$, $t > 0$, we have the simple coordinates
\begin{align*}
    x = 1/t, \quad z 
\end{align*}
with $x$ being a bdf of $\ff$ in this region, while near the intersection
of $\ff \cap \mf$, near any point there is at least one $z_k$ for
which $\hat{x} = 1/z_k$ is a bdf for
$\ff$ and there one can use
\begin{equation}
    \hat x = 1/z_k, \quad \hat{Y}_j = z_j / z_k (j \neq k), \quad y_k = z_k /
    t,\label{eq:bdry ff coords}
\end{equation}
and here $y_k$ is a bdf of $\mf$.

\subsection{\texorpdfstring{$\tsc$-differential operators}{3sc-differential operators}}

Differential operators in the $\tsc$-calculus are given by 
\begin{equation}
  \label{eq:1}
  \Difftsc^{m}(X) \coloneqq \Diffsc^{m}(X) \otimes_{\CI(X)} C^{\infty}([X;\poles])\,.
\end{equation}
More concretely, $L \in \Difftsc^{m}$, if
\begin{align}
\label{eq:tsc diff model}
    L = \sum_{\abs{\alpha} + k \leq m} a_{k,\alpha} D_t^k D_z^\alpha\,,
\end{align}
where the coefficients $a_{k,\alpha}$ are smooth on the blown up space $[X;C]$.

Using the $x, z$ coordinates, it is easy to see that
\begin{equation}  \label{eq:operator is in space duh}
  P_V = D_t^2 - (\Delta + m^2 + V(z)) \in \Difftsc^{2}(X).
\end{equation}
On $[X;\poles]$, general differential operators in the $\tsc$-calculus
are simply
\begin{equation*}
\Difftsc^{m,r} = \ang{t, z}^{r} \Difftsc^{m},
\end{equation*}
which
is to say we do not distinguish, in the notation, the rates of spatial
decay or blow up of coefficients at the faces $\ff$ and $\mf$.  Thus,
in particular
\begin{equation*}
  \Difftsc^{m} = \Difftsc^{m,0}.
\end{equation*}

The principal symbol of the operator $P_V$ will have three
components.  Two of them are inherited directly from the scattering
calculus; they are localized away from $\poles$, i.e.\ to the region
of $X$ where $V$ is smooth, and are essentially the same as the scattering principal
symbol.  The other component of the principal symbol, defined only
above $\poles$, is the ``indicial operator'', and is essentially the
time Fourier transform of $P_V$ restricted to $\poles$.

\subsection{\texorpdfstring{$\tsc$-geometry}{3sc-geometry}}
\label{sec:tsc-geometry}

We now aim to describe the domain of the principal symbol of a
$\tsc$-differential operator.

The (radial compactification) of the three-body scattering cotangent bundle is,
by definition, the pullback bundle
\begin{equation}
    \Ttsco^* [X;C] \coloneqq \beta_C^* \Tsco^* X.\label{eq:three scat cotangent}
\end{equation}
Since we are working over $\RR^{n + 1}$, these bundles are
trivial and thus there is a natural decomposition
\begin{align*}
    \Ttsco^* [X;C] = [\overline{\RR^{n + 1}} ; C] \times \overline{\RR^{n + 1}}.
\end{align*}
The manifold with corners $\Ttsco^* [X;C]$ has three boundary
hypersurfaces, namely
\begin{equation}
  \label{eq:Ttsc_bhss}
  \Ttsco_{\ff}^* [X; C], \quad \Ttsco_{\mf}^* [X; C], \mbox{ and } \Stsc^* [X; C]\,,
\end{equation}
where the latter is the ``fiber'' boundary of $\Ttsco^* X$, i.e. the
$\Stsc^* [X; C] = [\overline{\RR^{n + 1}} ; C] \times \partial \overline{\RR^{n + 1}}$.

We denote the corresponding boundary defining functions as
\begin{align}\label{eq:bdfs}
    \rho_{\ff}\,,\quad \rho_{\mf}\,,\quad \rho_{\fib}\,.
\end{align}
Moreover, we also define the total boundary defining function for the
spacetime boundary $\rho_\infty \coloneqq \rho_{\ff} \rho_{\mf}$.  
As discussed in Remark \ref{rem:positive
  prefactor}, such boundary defining functions, which are used in
particular in re-weighting of symbols and distributions below, can be
multiplied by positive function without effecting the estimates in
which they are used.  One global choice of $\rho_{\ff}$ would be
$\ang{t, z}^{-1}$, but (again see Remark \ref{rem:positive
  prefactor}), it is more convenient to assume that
\[
\rho_{\infty} = x = 1/t
\]
in regions $0 \le x \le C, |y| \le C$.  Similarly, a global
choice of $\rho_{\fib}$ would be $\ang{\tau, \zeta}^{-1}$, but it is more convenient for us to choose
$\rho_{\fib}$ so that
\[
\rho_{\fib} = \rho = 1 / \tau
\]
in regions $0 \le \rho \le C, |\mu| \le C$.  We can take $\rho_{\mf}$
globally as
\[
\rho_{\mf} = \ang{z}^{-1}.
\]

In our analysis using the three-body calculus, near $\ff$ we will prove estimates which are global on the
$\{\tau = const. \}$ subsets of phase space.  
Thus, following the notation of \cite{V2000}, we define the vector bundle
\begin{equation}\label{eq:Wperp}
    W^\perp \coloneqq \modspan_{\RR}\left( \frac{dx}{x^2}\right) \subset \Tsc^*_C X.
\end{equation}
Thus $W^\perp$ is parametrized by $\tau \in \RR$ corresponding
to the form $- \tau (dx / x^2)$, both at $\NP$ and $\SP$, recalling
that near
$\SP$ we write $x = -1/t$.  Formally, $W^{\perp} \subset \Tsc^*_C X$ is defined as the
annihilator of the subset $W \subset \Tsc_{C} X$ consisting of vectors
arising from vector fields $U = x U' $ with $U' \in
\mathcal{V}_{\mathrm{b}}(X)$ with $U'$
tangent to $\poles$, where $\mathcal{V}_{\mathrm{b}}(X)$ are the
vector fields tangent to the boundary.  But the simple definition
above suffices.  In particular,
\begin{equation}
  \label{eq:8}
  W^{\perp} = \mathbb{R} \sqcup \mathbb{R},
\end{equation}
where both copies of $\mathbb{R}$ are parametrized by $\tau$, one over $\NP$ and the other over $\SP$.

The orthogonal projection 
\begin{align}\label{eq:piWperp}
    \pi \colon \Tsc^*_C X \longrightarrow W^\perp\,,
\end{align}
with action $(\tau, \zeta) \mapsto \tau$, does \emph{not} extend
smoothly to $\Tsco^*_C X$, but the closures of the fibers
$\pi^{-1}(\tau)$ are smooth submanifolds with common boundary, as
depicted in Figure~\ref{fig:indicial_operator}.
Pulling back to the three-body space, we have
\[
\beta_C^{-1} \mathrm{cl}(\pi^{-1}(\tau)) = \ff \times \mathrm{cl}\{
(\tau, \zeta )\} \subset \ff \times \overline{\RR^{n + 1}}
\subset \Ttsc^* X,
\]
and thus
\begin{equation}
  \label{eq:tau const fibs}
  \beta_C^{-1} \mathrm{cl}(\pi^{-1}(\tau)) \simeq \ff \times
  \overline{\RR^{n}} \simeq \Tsco^* \RR^n,
\end{equation}
where the equivalence is induced simply by dropping the $\tau$.  This will arise below in the analysis of indicial operators of
three-body operators, which will be $\tau$-dependent families of
scattering operators on $\RR^n$ for which $\pm 1 / \tau$ acts as
semiclassical parameter.

We can now extend the defintion of the radial set to the
$\tsc$-setting for $P_{0}$.  It is given by
\begin{align}\label{eq:tsc radial set}
    \Radtsc \coloneqq \Rad \cap \left( \Ssc^*_{X \setminus \poles} X \cup \Tsc^*_{\pa X  \setminus \poles} X \right) \cup \left( \poles \times \set{ \pm m } \right)
\end{align}
and
\begin{align*}
    \Radtsc_{\sources} &\coloneqq \Rad_{\sources} \cap \left( \Ssc^*_{X \setminus \poles} X \cup \Tsc^*_{\pa X  \setminus \poles} X \right) \cup \Rad_{\ff,\sources}\,,\\
    \Radtsc_{\sinks} &\coloneqq \Rad_{\sinks} \cap \left( \Ssc^*_{X \setminus \poles} X \cup \Tsc^*_{\pa X  \setminus \poles} X \right) \cup \Rad_{\ff, \sinks}\,.
\end{align*}
We have that $\Radtsc = \Radtsc_{\sources} \sqcup \Radtsc_{\sinks}$
and $\Radtsc_{\bullet} \subset \gamma_{\tsc}( \Rad_{\bullet} )$ for $\bullet \in \set{\sources, \sinks}$.

  The coincidence of the boundaries of the  $\{ \tau = const. \}$
  sets in $\Tsco^{*}_{C} X$ will be significant, below, as it will
  force the scattering symbols of indicial operators to be constant in
  $\tau$.  We will denote this common boundary, the fiber equator, by
  $\fibeq$, so, for fixed $\tau_{0} \in \mathbb{R}$, 
  \begin{equation}
    \label{eq:fibeq def}
    \fibeq = \partial \overline{\mathbb{R}^{n + 1}_{\tau, \zeta}} \cap
    \overline{ \{ \tau = \tau_{0}  \} } \subset \partial \overline{\mathbb{R}^{n + 1}_{\tau, \zeta}}.
  \end{equation}
  This set is independent of $\tau_{0}$ and depicted in Figure~\ref{fig:indicial_operator}.  See Section \ref{sec:principal symb
    sec} for further discussion of the role played by $\fibeq$ in
  quantization and as the locus of definition of the fiber symbol of
  the indicial operator.

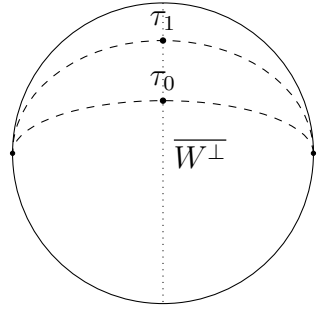
\begin{figure}
    \centering
    \begin{tikzpicture}[scale=2]
        \draw[dashed] (1,0) arc [start angle=0, end angle=180, x radius=1cm, y radius=0.75cm];
        \draw[dashed] (1,0) arc [start angle=0, end angle=180, x radius=1cm, y radius=0.35cm];
        
        \draw (0,0) circle (1cm);
        \filldraw (0,0.75) circle (0.5pt) node [above] {$\tau_1$};
        \filldraw (0,0.35) circle (0.5pt) node [above] {$\tau_0$};
        \filldraw (-1cm,0) circle (0.4pt);
        \filldraw (1cm,0) circle (0.4pt);
        \draw[dotted] (0,1) -- (0,-1);
        \draw (0,0) node [right] {$\Wperpo$};
    \end{tikzpicture}
    \caption{
    The dashed lines are the sets $\beta_C^{-1} \mathrm{cl}(\pi^{-1}(\tau))$ for $\tau = \tau_0, \tau_1$.
    The common boundary of these sets is the fiber equator, $\fibeq$.
  }
  \label{fig:indicial_operator}
\end{figure}

The principal symbol of an element of $\Difftsc^{m}(X)$ therefore has
three pieces corresponding to the three boundary components of $\Ttsco
[X;C]$.  The first two pieces are the two components of scattering
principal symbol, \eqref{eq:scnormsymb} and \eqref{eq:scfibsymb}, while the
new piece is called the indicial operator.   For differential
operators in our context, it is the Fourier transform in time with the
dual variable $\tau$ (dual to $t$) entering as a parameter.  This
piece of the principal symbol then yields a parametrized family of
scattering differential operators on $\ff \simeq \overline{\RR^{n}}$.

Indeed, the components $a_{k, \alpha}$ of a differential operator
restrict to $\ff$ to smooth functions, and the indicial operator for
$L$ as in equation \eqref{eq:tsc diff model} is given by
\begin{align}
\label{eq:tsc diff model indicial}
\ffsymbz(L)(\tau) = \widehat{L}_{\ff}(\tau) \coloneqq \sum_{\abs{\alpha} + k \leq m} (
  a_{k,\alpha} \rvert_{\ff})  \tau^{k} D_z^\alpha\,.
\end{align}
Note that if $L$ were a scattering operator, then the coefficients
$a_{k,\alpha} \rvert_{\ff}$ would be constant at $\ff$.  Thus, when $L
\in \Diffsc$, $\ffsymbz(L)(\tau)$ is translation invariant in $z$, and can
be identified via the Fourier tranform with its total symbol.

More generally, $\ffsymbz(L)(\tau)$ must be regarded as
operator-valued function of $\tau$.  In particular, as we will see in our discussion of
ellipticity below, global ellipticity of a $\tsc$-operator is
equivalent to invertibility of \emph{all three components} of the
principal symbol, in particular it requires invertibility of the
indicial operator for each $\tau$.

The $\tsc$-principal symbol of $P_V$ is
\begin{align*}
\prinsymbz(P_V) = \left( \fibsymbz(P_V), \mfsymbz(P_V),
  \ffsymbz(P_V)(\tau) \right).
\end{align*}
Here, $\fibsymbz(P_V)$ is the ``standard'' interior principal symbol,
i.e.\ the scattering fiber principal symbol, $\mfsymbz(P_V)$ is the
spacetime boundary symbol, and $\ffsymbz(P_V)(\tau)$ is the indicial
operator.  Concretely, $\fibsymbz(P_V)$ and $\mfsymbz(P_V)$ are given
by $(\tau^2 - |\zeta|^2 - m^2) \rho_{\fib}^2$ restricted to momentum
infinity and $\mf$, respectively.  Convenient expressions for these are
\begin{align*}
  \fibsymbz(P_V)(t, z, \tau, \zeta) = \tau^2 - |\zeta|^2,
\end{align*}
while for $\mfsymbz(P_V)$, a function defined at the spacetime boundary away
from $\poles$, 
\begin{align*}
  \mfsymbz(P_V)(y, \tau, \zeta) = \tau^2 - |\zeta|^2 - m^2.
\end{align*}
These functions ``match'' in the sense that if you multiply by
$\rho_{\fib}^2$ they are equal as $\ang{t, z} \to \infty,
\ang{\tau, \zeta} \to \infty.$ For our static potential $V = V(z)$, the \textbf{indicial operator} is simply
\begin{align}\label{eq:indicial operator of PV}
  \ffsymbz(P_V)(\tau) = \tau^2 - (\Delta_z + m^2 + V(z)).
\end{align}
The absence of $V$ in the second component of the symbol is a
consequence of the assumption that $V$ decays in $z$.

In fact, as described below in Section \ref{sec:principal symb sec},
the components of this principal symbol must satisfy matching
conditions on the intersection of their domains.  This is
straightforward for the components $\fibsymbz(P_V)$ and
$\mfsymbz(P_V)$, which are restriction of \emph{functions}, so matching
simply means the values of their restrictions are equal.  The
component $\ffsymbz(P_V)$, however, is a family of scattering
operators, and the matching condition for $P_V$ is that its scattering
symbols are exactly the restrictions of $\fibsymbz(P_V)$ and
$\mfsymbz(P_V)$ to the boundary components of phase space over the front face.  This
is clarified for a general $\tsc$-operator below.
This a generalization of the matching condition that holds for
scattering symbols \eqref{eq:scnormsymb}-\eqref{eq:scfibsymb}.

\subsection{Asymptotically static potentials \texorpdfstring{$V$}{V} and generalizations}
\label{sec:V assump}

Our results apply both to potential functions and to more general
perturbations $V$.  For potential functions, we treat smooth functions
$V$ which approach fixed spatial functions as $t \to \pm \infty$.

We assume in general that 
\begin{equation}
V = V(t, z) \in  \rho_{\mf}  \CI([X; C]; \RR).\label{eq:potential
assump 1}
\end{equation}
Thus (recalling $\rho_{\mf} = \ang{z}^{-1}$) these are
potential functions which are smooth on the whole of $[X; C]$, vanish
at least to order one as $z \to \infty$ (i.e.\ at $\mf$) and have, in
general, non-vanishing limits at $\ff$.  

In particular, we have, in regions with $|z/t| < C$, 
\begin{align*}
  V = V_+(z) + V'(t, z), \quad V' = O(1/t) \mbox{ as } t \to + \infty
\end{align*}
where $V_+$ is a symbol of order $-1$, meaning
\begin{align*}
\abs{ \pa_z^\alpha V_+(z)} &\lesssim_{\alpha}  \ang{z}^{-\abs{\alpha}}
\end{align*}
and $V'$ satisfies the following estimates
\begin{align*}
\abs{ \pa_t^k \pa_z^\alpha V'(t,z)} &\lesssim_{k,\alpha} \ang{t,z}^{-1-k} \ang{z}^{-\abs{\alpha}}\,,
\end{align*}
The estimates here are equivalent to the containment \eqref{eq:potential
assump 1} if you assume in addition that the $V_+$ and $V'$ has
asymptotic expansions in $1/t$ and $\ang{z}^{-1}$.
A simple example of such a potential is a
$V = V(t, z) \in \CI(\RR_t ; \schwartz(\RR^n))$,
with $V(t, z) \equiv V_\pm (z) \in \schwartz(\RR^n)$ for
$\pm t \gg 0$.

More generally, for complex-valued $V$, we assume assume that $V \in \rho_{\mf}  \CI([X; C]; \CC)$ with
\begin{equation*}
    2 \Im V =   V - \overline{V}  \in \ang{t, z}^{-2} \CI([X; C]; \CC).  
\end{equation*}
This ensures in particular that the ``subprincipal symbol'' does not
influence the threshold weight of $-1/2$  that appears in Theorem \ref{thm:basic propagator result}.  In terms of the asymptotic
decomposition above, this means that 
\begin{align*}
  V = V_+(z) + V'(t, z) + i \Im V, 
\end{align*}
where $V_+$ and $V'$ are as above and
\begin{align*}
              \abs{ \pa_t^k \pa_z^\alpha \Im V (t,z)} &\lesssim_{k,\alpha} \ang{t,z}^{-2-k} \ang{z}^{-\abs{\alpha}}\,.
\end{align*}

In fact, our results more generally, including to differential and
pseudodifferential $V$ with suitable regularity and decay hypotheses,
and with an assumption on the subprincipal symbol $V - V^*$ which
generalizes the assumption on $\Im V$ above.

\section{The three-body scattering calculus for
  Klein-Gordon}\label{sec:3body klein}

In this section we will recall the key features of the $\tsc$-calculus
adapted to our setting.  As noted above, our $P_V$ is not a scattering
operator on the whole of $\RR^{n + 1}_{t, z}$ in the sense of
Melrose, but it is a $\tsc$-operator in the sense of Vasy.  We will
now describe what that means in detail, what the $\tsc$-operators and
their features look like in our setting, and the basic properties that
inform our analysis.  

Generally, the $\tsc$-calculus introduced
by Vasy \cite{V2000}, is defined with respect to data which includes
both the total space and collision planes.  In contrast with the
general case, in our setting, we have only the $\{ z = z_0 \}$ collision
planes (really lines) for $z_0 \in \RR^n$ fixed, corresponding to the points $\poles$ on the boundary at
infinity.

We can summarize the main features of this introductory section to the
$\tsc$-calculus as follows.
\begin{itemize}
\item The $\tsc$-operators in our setting are the natural pseudodifferential
  generalization of the $\tsc$-differential operators defined above,
  exactly in the standard sense that they are quantizations of symbols
  whose behaviour is analogous to the behavior of the total symbols of
  elements of $\Difftsc^{m,r}$.  These pseudodifferential operators
  are denotes $\Psitsc^{m,r}$ when they have differential order $m$
  and spacetime weight order $r$.
\item As with $P_V$ \eqref{eq:indicial operator of PV}, the principal
  symbols of these operators have three components.  The first two,
  like the scattering symbols, are local, i.e.\ they are functions.
  The third is the \textbf{indicial operator}, which has both a
  $t = + \infty$ and a $t = - \infty$ component itself, is defined
  only from data that lives over $\poles$, and like the symbol of
  $P_V$ for static $V$ in \eqref{eq:indicial operator of PV}, is a
  family of operators parametrized by $\tau$, the dual variable to
  $t$.  Given $A \in \Psitsc^{m,r}$ and focusing, as we do below, on
  $\NP$, the indicial operator is denoted
  $\ffsymb{r}(A) = \ffsymb{r}(A)(\tau)$. \textit{The principal symbol is
  multiplicative,} in the sense that the indicial operator of the
  composition of two $\tsc$-operator is the composition of the
  indicial operators.  The behavior of $\ffsymb{r}(A)(\tau)$ in $\tau$
  is semiclassical as $\tau \to \pm \infty$.
\item Ellipticity is still appropriately construed as
  invertibility of the principal symbol.  Namely, global ellipticity
  is exactly the assumption that the first two components of the
  principal symbol (which are functions) are non-zero, and that the
  indicial operator is semiclassically elliptic and invertible for all
  $\tau$.  
\item There is also an appropriate notion of microlocal
  $\tsc$-ellipticity, and corresponding $\tsc$-microlocal elliptic
  estimates.
\item As in Vasy's treatment, we use the standard $L^2$-based Sobolev spaces
on $\RR^{n + 1}$.  (In particular, we do not introduce spaces
specifically adapted to the $\tsc$ setting.)  Thus, our Sobolev spaces
are exactly those used in Section \ref{sec:model-case} above.  All the
estimates we state and prove are for distributions in the scattering
(i.e.\ standard!) weighted Sobolev spaces $\Sobsc^{s,\ell}(\RR^{n +
  1}_{t, z})$. 
\end{itemize}

Moreover, in this section we confront perhaps this most striking
difference between $\tsc$ and $\sc$ operators, namely that general commutators
of $\tsc$ operators do not have the standard loss of one order in
comparison to composition.  Namely, the analogue of \eqref{eq:scattering-commutator-hammy} for $\tsc$-operators fails.  Indeed, a static, potential function
$V(z) \in \schwartz(\RR^n)$ lies in $\Psitsc^{0,0}(X)$ (because it does
not decay in time) and the partial derivative in a spatial coordinate
$\pa_{z_j} \in \Diffsc^{1}(X) \subset \Psitsc^{1,0}(X)$ while the commutator
$[\pa_{z_j}, V] = \pa_{z_j} V$ has no additional time decay, and thus
  one concludes only that $[\pa_{z_j}, V] \in \Difftsc^{0,0}(X) \subset
    \Psitsc^{0,0}(X)$.  Regarding commutators, we make the additional point.
    \begin{itemize}
    \item As in the examples just discussed, if
      $A \in \Psitsc^{m_1, r_1}$ and $B \in \Psitsc^{m_2, r_2}$, then
      in general $[A, B] \in \Psitsc^{m_1 + m_2 - 1, r_1 + r_2}$.  If either one
      of $A$ or $B$ satisfies a ``centrality condition'', which is
      essentially that $\ffsymb{r}(A)(\tau)$ is a function (as opposed
      to an operator) then in fact
      $[A, B] \in \Psitsc^{m_1 + m_2 -1, r_1 + r_2 -1 }$.  In case
      this centrality condition is satisfied, a formula for the
      principal symbol of $[A, B]$ is given.  See Section
      \ref{sec:commutators}.
    \end{itemize}

\bigskip

Our treatment is simplified in comparison to the general case of the
$\tsc$-calculus, in which more complex arrangements of collision planes are treated.
In particular, we present a simplified commutator formula for the
indicial operator below.  More important is the fact that
in our setting, as we treat a hyperbolic operator, the characteristic
set extends to fiber infinity; we must therefore discuss the behavior
of the indicial operator for large $\pm \tau$, and we do so below in
our treatment of the indicial operator as a semiclassical scattering operator.

\subsection{\texorpdfstring{$\tsc$-pseudodifferential operators}{3sc-pseudodifferential operators}}
The space of (classical) three body scattering symbols is
\begin{equation}
  \label{eq:symbol space}
  \Stsc^{m,r}(X; \poles) \ = \rho_{\fib}^{-m} \rho_{\infty}^{-r} \CI(\Tsco^*[X;C])
\end{equation}
and the space of (classical) $\tsc$-pseudodifferential operators of
order $m,r$ is 
\begin{equation}
  \label{eq:3}
  \Psitsc^{m,r} = \Op_L(\Stsc^{m,r})
\end{equation}
by \cite[Lemma 3.5]{V2000}, where, concretely, for $a \in \Stsc^{m,r}$,
\[
A = \Op_L(a) = \int e^{i (t - t') \tau + i (z - z') \cdot \zeta} a(x, z, \tau, \zeta)
d\tau d\zeta
\]
(This is taken as the definition of
$\Psitsc^{m,r}$, whereas in the cited paper the space of PsiDO's is
defined as an appropriate set of integral on the three-scattering
double space; we do not use this latter definition directly in our
work.)
In the original $(t,z)$ coordinates, a $\tsc$-symbol is a smooth
function on $\RR^{n+1}_{t,z} \times \RR^{n+1}_{\vartheta}$ such that
each seminorm
\begin{align*}
\norm{a}_{\tsc, M}  = \sum_{k + |\alpha| + |\beta| \le M}  \sup_{t, z,
  \vartheta}    \ang{t,z}^{k} \ang{z}^{- r+ \abs{\alpha}}
  \ang{\vartheta}^{- m + \abs{\beta}}\, \abs{ \pa_t^k \pa_z^\alpha
  \pa_\vartheta^{\beta} a(t,z, \vartheta)}
\end{align*}
is finite.

\begin{remark}
  The spaces of classical scattering and $\tsc$-symbols can also be
  defined by reference to the compactified spaces.  Namely,
  $\Ssc^{0,0}(\RR^{n+1})$ are exactly the smooth functions on the
  compactified scattering cotangent bundle $\Tsco^* X$ while
  $\Stsc^{0,0}(\RR^{n + 1})$ is exactly the smooth functions
  on the compactified $\tsc$-cotangent bundle $\Ttsco^* X$.
\end{remark}

To define the indicial operator, we need a lemma to the effect that a $\tsc$-operator defines an
operator on $\ff$ via extension and restriction to the boundary.  From \cite[Corollary 3.4]{V2000} we have the mapping properties for
smooth functions (the differential order plays no significant role
here): 
\begin{lemma}\label{lem:mapping_prop}
    If $A \in \Psitsc^{m,r}(X)$, then
    \begin{align*}
        A : \CdI(X) &\lra \CdI(X)
        \intertext{and}
        A : \rho_{\mf}^k \rho_{\ff}^{k'} \CI([X; C]) &\lra \rho_{\mf}^{k + r} \rho_{\ff}^{k' + r} \CI([X; C])\,.
    \end{align*}
\end{lemma}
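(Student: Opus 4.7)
The first assertion, that $A$ maps $\CdI(X)$ into itself, is standard. Recall $\CdI(X)$ is the space of smooth functions on $\RR^{n+1}$ vanishing to infinite order at infinity, i.e., $\schwartz(\RR^{n+1})$. For $u \in \schwartz(\RR^{n+1})$ and $a \in \Stsc^{m,r}$, the oscillatory integral $\Op_L(a)u$ can be rewritten via integration by parts so that Schwartz decay in the base variable and its Fourier transform convert the polynomial symbol bounds on $a$ into absolute convergence, and analogously for all derivatives; hence $Au \in \schwartz(\RR^{n+1})$. The passage to $[X;C]$ plays no role here since the three-body refinement only restricts the symbol class away from $\poles$, which is disjoint from infinite-order vanishing.

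For the second assertion, the plan is to propagate joint polyhomogeneity across the two boundary hypersurfaces of $[X;C]$ using a partition of unity subordinate to a cover by three types of open sets: (i) the interior of $[X;C]$, (ii) a neighborhood of $\mf \setminus \ff$, and (iii) a neighborhood of $\ff$. Writing $u = \rho_\mf^{k}\rho_\ff^{k'}\phi$ with $\phi \in \CI([X;C])$, region (i) is trivial since $u$ is smooth with bounded support and $Au$ is Schwartz at infinity, hence lies in the target space. In region (ii), the factor $\rho_\ff$ is smooth and bounded away from zero, $\rho_\mf$ is (up to a smooth positive multiple) a total boundary defining function for $X \setminus \poles$, and $a$ restricts to a classical scattering symbol on this region. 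The required mapping statement in region (ii) therefore reduces to the standard mapping property for the Melrose scattering calculus: $\Psisc^{m,r}(X)$ preserves the module $\rho_\infty^{k}\CI(X)$ with the appropriate shift in weight.

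The main case is region (iii). I would use local coordinates $(x,z)$ near $\ff$ (with $x = \pm 1/t$ a boundary defining function for $\ff$) and write
\[
Au(x,z) = \int e^{i\ioo*{(t-t')\tau + (z-z')\cdot \zeta}} a(t,z,\tau,\zeta)\, u(t',z')\, dt'\,dz'\,d\tau\,d\zeta,
\]
where now the crucial input is that $a \in \Stsc^{m,r}$ is smooth on the compactified bundle $\Ttsco^*[X;C]$, so it admits Taylor expansions (in suitable compactification variables) at both $\ff$ and fiber infinity. Substituting the ansatz $u = \rho_\mf^{k}\rho_\ff^{k'}\phi$, expanding $u$ to sufficient order at $\ff$ and $\mf$ about the diagonal, and integrating by parts in $(\tau,\zeta)$ to transfer derivatives from $u$ onto $a$ produces an asymptotic expansion of $Au$ at $\ff$ whose leading term is governed by the indicial operator $\ffsymbz(A)(\tau)$ applied fiberwise to the leading coefficient of $u$, with remainders vanishing to higher order at both $\ff$ and $\mf$.

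The key technical obstacle is the corner $\mf \cap \ff$, where both boundary defining functions vanish simultaneously and one must ensure that the formal expansion is actually joint polyhomogeneous rather than merely separate in each variable. This is where the full smoothness of $a$ on the resolved phase space $\Ttsco^*[X;C]$, as opposed to merely a product of radial compactifications, is essential: it guarantees that derivatives with respect to the two bdfs commute with the correct asymptotic ordering. This is exactly the content of Corollary 3.4 of \cite{V2000}, to which I would appeal directly, noting that our $[X;C]$ is the many-body space of \cite{V2000} specialized to a single collision submanifold.
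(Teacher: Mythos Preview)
Your proposal is correct and aligns with the paper's treatment: the paper does not give its own proof of this lemma but simply attributes it to \cite[Corollary 3.4]{V2000}, which is exactly the reference you invoke at the end. Your sketch of the partition-of-unity reduction and the corner analysis is a reasonable outline of how that corollary is established, but the paper itself offers no additional argument beyond the citation.
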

We also note that $\sc$-operators are $\tsc$-operators
\begin{align*}
\Psisc^{m,r}(X) \subset \Psitsc^{m,r}(X)\,,
\end{align*}
since the scattering symbol estimates imply in particular the
$\norm{\bullet}_{\tsc, M}$ estimates above.  (All our operators are
assumed classical throughout.)

We recall the main boundedness property for
$\tsc$-operations, which is proven using the standard square-root trick.
\begin{proposition}[\cite{V2000}, Cor.\ 8.2]\label{prop:bounded_Sobolev}
  For $A = \Op_L(a) \in \Psitsc^{m,r}$ and $s, \ell \in \RR$,
  \begin{equation}
  \label{eq:mapping on scat spaces}
  A \colon \Sobsc^{m + s, r + \ell}(\RR^{n + 1}) \lra \Sobsc^{s, \ell}(\RR^{n + 1}).
\end{equation}
is bounded, with operator norm bounded by a seminorm 
$\| a \|_M$.
\end{proposition}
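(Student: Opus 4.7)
The plan is to adapt H\"ormander's square-root trick to the $\tsc$-calculus, proceeding in two reductions.

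\textbf{Reduction to $L^{2}\to L^{2}$.} First I reduce to showing $L^{2}$-boundedness of elements of $\Psitsc^{0,0}$. The scattering weight operators $A_{m',\ell'} = \Op_{L}(a_{m',\ell'}) \in \Psisc^{m',\ell'}\subset \Psitsc^{m',\ell'}$ from~\eqref{eq:def_asl} are globally scattering elliptic and invertible on $\schwartz$; the standard parametrix combined with a Neumann series on the smoothing error gives true inverses in $\Psitsc^{-m',-\ell'}$. Writing
\begin{equation*}
A = A_{-s,-\ell} \, \tilde A \, A_{m+s,r+\ell}, \qquad \tilde A \coloneqq A_{s,\ell} \, A \, A_{-(m+s),-(r+\ell)} \in \Psitsc^{0,0},
\end{equation*}
and using that $A_{m+s,r+\ell}\colon \Sobsc^{m+s,r+\ell}\to L^{2}$ and $A_{-s,-\ell}\colon L^{2}\to \Sobsc^{s,\ell}$ are isomorphisms by definition of the Sobolev spaces, the mapping~\eqref{eq:mapping on scat spaces} for $A$ reduces to $L^{2}$-boundedness of $\tilde A$, with seminorm dependence on $a$ preserved.

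\textbf{The square-root iteration.} Let $\tilde a = \sigma(\tilde A)\in \Stsc^{0,0}$ and pick $K > \sup_{\Ttsco^{*}[X;\poles]}\abs{\tilde a}$; this constant is controlled by a single $\tsc$-seminorm of $\tilde a$. Because $\Ttsco^{*}[X;\poles]$ is compact, the function $b_{0}\coloneqq \sqrt{K^{2}-\abs{\tilde a}^{2}}$ is smooth and strictly positive there, hence $b_{0}\in \Stsc^{0,0}$ and $B_{0}\coloneqq \Op_{L}(b_{0})\in \Psitsc^{0,0}$ is self-adjoint modulo lower order. By the symbolic calculus for $\Psitsc$,
\begin{equation*}
K^{2}\Id - \tilde A^{*}\tilde A - B_{0}^{*}B_{0} =: R_{1} \in \Psitsc^{-1,-1}.
\end{equation*}
Setting $C_{N-1} = B_{0}+B_{1}+\dots+B_{N-1}$, and at each stage choosing $B_{N}\in \Psitsc^{-N,-N}$ with real principal symbol $\sigma(R_{N})/(2b_{0})$ so that $C_{N-1}^{*}B_{N}+B_{N}^{*}C_{N-1}$ cancels the (self-adjoint) principal symbol of $R_{N}$, one obtains inductively, for every $N$,
\begin{equation*}
K^{2}\Id - \tilde A^{*}\tilde A - \bigl(\sum_{j=0}^{N} B_{j}\bigr)^{*}\bigl(\sum_{j=0}^{N} B_{j}\bigr) = R_{N+1}\in \Psitsc^{-(N+1),-(N+1)}.
\end{equation*}

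\textbf{Conclusion.} For $N$ sufficiently large (depending on $n$), the Schwartz kernel of $R_{N+1}$, computed as an oscillatory integral with amplitude decaying to order $N+1$ in $(\tau,\zeta)$ and in $(t,z)$, satisfies Schur's hypotheses with both row and column integrals bounded by finitely many $\tsc$-seminorms of $\tilde a$; hence $\norm{R_{N+1}}_{L^{2}\to L^{2}} \leq C_{N}\norm{\tilde a}_{\tsc,M}$. Pairing the identity with $u\in L^{2}$ and dropping the non-negative square yields
\begin{equation*}
\norm{\tilde A u}_{L^{2}}^{2} \leq \bigl(K^{2} + \norm{R_{N+1}}_{L^{2}\to L^{2}}\bigr)\norm{u}_{L^{2}}^{2},
\end{equation*}
and tracking constants through the reduction step gives $\norm{A}\leq C\norm{a}_{\tsc,M'}$ for some $M'$. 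The main technical obstacle is precisely the $L^{2}$-boundedness of $R_{N+1}$: unlike on a compact manifold, operators in $\Psitsc^{-\infty,-\infty}$ are not automatically $L^{2}$-bounded, so one must verify directly, via integration by parts in both the fiber and base variables, that the decay orders $(-(N+1),-(N+1))$ translate into Schur-type kernel estimates with constants controlled by a finite $\tsc$-seminorm of the symbol.
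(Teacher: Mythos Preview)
Your proposal is correct and follows exactly the approach the paper indicates: the paper does not give a proof but simply records that the result ``is proven using the standard square-root trick'' and cites \cite[Cor.~8.2]{V2000}, and your argument is a careful implementation of precisely that trick in the $\tsc$-setting (reduction to order $(0,0)$ via the weight operators, iterative approximate square root of $K^{2}-\tilde A^{*}\tilde A$, and Schur's test on the finite-order remainder).

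One small inaccuracy worth correcting: operators in $\Psitsc^{-\infty,-\infty}$ \emph{are} automatically $L^{2}$-bounded, since $\Psitsc^{-\infty,-\infty}=\Psisc^{-\infty,-\infty}$ consists of operators with Schwartz kernels. What genuinely requires the direct Schur-type argument is not the residual class but rather bounding $R_{N+1}\in\Psitsc^{-(N+1),-(N+1)}$ for a \emph{finite} $N$, with operator norm controlled by finitely many $\tsc$-seminorms of $a$; your treatment of this via integration by parts in both fiber and base variables is the right one.
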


Having introduced the total symbols of $\tsc$-operators, we now
consider the appropriate definition of their principal symbols.
Indeed, recall from Section \ref{sec:scattering-calc} that for a scattering operator, $\Op_L(a) = A \in
\Psisc^{m,l}(X)$, the two components of the principal symbol
$\scprinsymb{m,r}(A) = (\scfibsymb{m,r}(A), 
\scnormsymb{m,r}(A))$ are the restriction sof the function
$\ang{\tau, \zeta}^{-m} \ang{t, z}^{-r} a$ to the two components of
$\partial \Ttsco^* X$.

In principal, one could define the $\tsc$-principal symbol of $\Op_L(a) = A \in
\Psitsc^{m,r}$ to be the restriction of $a$ to the four components of
the boundary of $\Ttsco^* X$.  However, such a definition would have
the limitation that it
would not be multiplicative over $\ff$, a limitation which is
addressed by using, instead of the front face restriction, the family
of indicial operators mentioned above and described in
Section~\ref{sec:principal symb sec} and Section~\ref{sec:norm fam} below.

When $A = \Op_L(a) \in \Psitsc^{0,0}$, the symbols over $\mf$ and at
fiber infinity are simply the boundary component restrictions
\begin{align*}
    \mfsymbz(A) =  a \rvert_{\Ttsco_{\mf}^* [X; C]} \in \CI(\Ttsco_{\mf}^* [X; C])
\end{align*}
and
\begin{align*}
    \fibsymbz(A) = a \rvert_{\Stsc^* [X;C]} \in \CI(\Stsc^* [X; C]),
\end{align*}
on in terms of the boundary defining functions in~\eqref{eq:bdfs}, they
are the restrictions on $\rho_{\mf} = 0$ and $\rho_{\fib} = 0$,
respectively.  This is the $\tsc$ generalization of the two scattering
principal symbol components \eqref{eq:scnormsymb}-\eqref{eq:scfibsymb}.
When $A = \Op_L(a) \in \Psitsc^{m,r}$ and $a$ needs to be
re-weighted to have boundary restrictions, then we use the boundary
defining functions most suitable to our analysis.  We proceed to
formalize this now.

\subsection{The principal symbol and the indicial operator}\label{sec:principal symb sec}

Generalizing the indicial operator $\ffsymbz(P_V)$ \eqref{eq:indicial operator of PV}, we describe now how $A \in \Psitsc^{m,r}$ has an indicial operator
$\ffsymb{r}(A) = \ffsymb{r}(A) (\tau)$, a smoothly-parametrized family of scattering
operators on $\ff$.  The indicial family is one of the three components of the
total $\tsc$-principal symbol of $A$, together with the (rescaled)
restrictions of the symbol of $A$ to $\mf$ and fiber infinity
\cite[Chap.\ 6]{V2000}. The main goals of this subsection are three-fold:
\begin{itemize}
\item to define the indicial family $\ffsymb{r}(A) $ and
  show that it is the quantization of a boundary restriction of the
  symbol of $A$,
\item to prove that $\ffsymb{r}(A)$ is in fact a
  semiclassical scattering operator, and to characterize
  those semiclassical scattering operators which arise as indicial
  operators of $\Psitsc$ operators,
\item to recall the $\tsc$-principal symbol $\prinsymbz$, show that it
  is multiplicative, and define left-quantization on appropriate
  principal symbols.  This is Proposition
  \ref{prop:principal_symbol_map} below.
\end{itemize}
The properties of the indicial operator are simplified in our case, in
comparison with the general $\tsc$-calculus, due $\poles$ being
zero-dimensional, and correspondingly $\Wperpo$ being one-dimensional.
Thus, in particular $\ffsymb{r}(A)$ depends on a single parameter.
What we develop here is the precise sense in the operator is
semiclassical, and the precise sense in which a special class of
semiclassical scattering operators over $\ff$ can be quantized into
$\tsc$-operators.

Also note that, while we typically work near $\NP$ for brevity, there
are in fact two components of the indicial operator corresponding to
the two boundary hypersurfaces $\ff = \ff_+,  \ff_-$, and the
indicial operator in fact is two separate families, one over $\NP$ and
one over $\SP$.  We often elide separate discussion of the $\SP$
component since the details are nearly identical to that of the $\NP$ component.

Beginning with $A \in \Psitsc^{m,0}$, we assume here and throughout
that $A = \Op_L(a)$ for a classical symbol $a$.  
By \cite[Cor. 3.4]{V2000}, if $u \in \CI([X;\poles])$ then
$Au \in \CI([X;\poles])$.  If $f \in \CI([X;C])$ and
$u \rvert_{\ff} = f$, we define
\begin{align*}
    A_{\partial} f \coloneqq (A u)\ \rvert_{\partial [X;C]},
\end{align*}
which, by Lemma~\ref{lem:mapping_prop}, is independent of the
extension $u$ of $f$.  In particular, identifying $\ff$ with $\RR^n$, if
$f \in \mathcal{S}(\RR^n)$, then the operator
\[
A_{\ff} f \coloneqq A_\partial(f) \rvert_{\ff}
\]
is well defined.

Choosing $\tau_0 \in \RR$, by Vasy's work~\cite[Lemma 6.1]{V2000},
\[
e^{ i \tau_0 / x}A e^{- i \tau_0/x} \in \Psitsc^{0,0}
\]
which gives the definition of the indicial operator at $\tau_0$, namely:
\begin{equation}
  \label{eq:ff boundary symbol}
\ffsymbz(A)(\tau_0) \coloneqq   \left( e^{ i \tau_0 / x}A e^{- i
      \tau_0/x}  \right)_{\ff}.
\end{equation}
(In particular, $A_{\ff} = \hat{A}_{\ff}(0)$.)  We write
\begin{equation}
  \label{eq:ff zero notation}
  \widehat{A}_{\ff}(\tau_0) =   \ffsymbz(A)(\tau_0) 
\end{equation}

Given $A \in \Psitsc^{m,r}$ we have $x^{r} A \in \Psitsc^{m,0}$ and 
define
\begin{equation}\label{eq:general ff boundary symbol}
\ffsymb{r}(A)(\tau_0) = \widehat{x^r A}_{\ff}(\tau_0),
\end{equation}
so in particular for $A \in \Psitsc^{0,0}$, $\widehat{A}_{\ff, 0} = \widehat{A}_{\ff}$.
We also write for short
\begin{align*}
    \hat{A}_{\ff, r} \coloneqq \ffsymb{r}(A)\,.
\end{align*}

Given $A = \Op_L(a)$,
we now derive a simple formula for $\hat{A}_{\ff, r}$ in terms of $a$.
Indeed in Lemma~\ref{thm:indicial family explicit} below we see that $\hat{A}_{\ff, \ell}$ is the left quantization in $z$ of 
an appropriate boundary restriction of $a$ to $\ff$.

Vasy~\cite[p.~23]{V2000} shows that $A_{\ff}$ can be obtained by
restriction of the integral kernel of $A \in \Psitsc^{m,0}$ to a
boundary hypersurface $\mathrm{sf}_C$ of the scattering triple
space $X^2_{\tsc}$; although we do not discuss this space in detail
here, we provide an illustrative formal computation.  In particular, in the coordinates, 
\[
x = 1/t,\quad   S = (x - x')/x^2, \quad z, \quad Y = (y-y')/x, 
\]  
the kernel of $A$ is conormal to the diagonal $S = 0,  Y = 0$
smoothly down to $x = 0$.
\begin{equation}
  \label{eq:A and the boundary}
  \begin{split}
    A u &= \int K_A(t, z, t', z') u(t', z') dt' dz' \\
    &= \int K_A(t, z, S/(1-xS), \frac{z - Y}{1 - xS}) u\left(\frac{S}{1-xS}, \frac{z - Y}{1 - xS}
    \right)  \frac{1}{(1 - xS)^{n + 2}}dSdY 
  \end{split}
\end{equation}
Then defining
\[
A(x, S, z, Y) = (1 -
xS)^{-(n + 2)} K_A(t, z, \frac{S}{1-xS}, \frac{z - Y}{1 - xS})
\]
on $x = 0$, we obtain, as in \cite[Eq.\
4.15]{V2000}
\[
A u \rvert_{x = 0} = A_{\ff} u \rvert_{x = 0} = \int A(0, S, z, Y)
u(0, z - Y) dS dY,
\]
which is to say that, as a Schwartz kernel,
\[
A_{\ff} = \int A(0,S,z,Y) dS
\]

The functions
\[
x, \quad \tilde t = t - t', \quad z, \quad \tilde z = z - z'
\]
are also coordinates near $\mathrm{sf}_C$.  (These are the $W_a, W^a$
functions from \cite{V2001}.)  At $x = 0$ we have $S = \tilde t$ and
$\tilde z = Y$.
If $\Op_L(a) = A$, then, with $\aff(z, \tau, \zeta) =  a
\rvert_{\Ttsco_{\ff} X}$ and $a_0(x, z, \tau, \zeta) = a(1/x, z, \tau, \zeta)$
\begin{equation}
  \begin{split}\label{eq:7}
    K_A &= \int e^{i \tilde t \tau + i \tilde z \cdot \zeta} a(t, z,
    \tau, \zeta) d\tau d\zeta  \\
    &= \int e^{i \tilde t \tau + i \tilde z \cdot \zeta} a_0(x, z,
    \tau, \zeta)d\tau d\zeta  \\
    \implies A_{\ff} &= \int e^{i \tilde t \tau + i \tilde z \cdot \zeta} \aff(z,
    \tau, \zeta) d\tau d\zeta dt = \int e^{i \tilde z \cdot \zeta} \aff(z,
    0, \zeta) d\zeta.
  \end{split}
\end{equation}
Conjugating by $e^{i \tau / x}$ we then obtain:
\begin{lemma}\label{thm:indicial family explicit}
  Let $A = \Op_L(a)$ for $a \in \Stsc^{m, r}$, and denote the
  (rescaled) restriction of $a$ to $\ff$ by
\begin{equation}
  \label{eq:ff restrict}
  \aff \coloneqq (x^r a) \rvert_{\Ttsco_{\ff} [X; \poles]},
\end{equation}
Then $\hat A_{\ff}(\tau_0) \in \Psisc^{m,0}(\RR^n_z)$ and has kernel
\begin{equation}
\widehat{A}_{\ff}(\tau_0) = \int e^{i (z - z') \cdot \zeta} \aff(z,
\tau_0, \zeta) d\zeta = \Op_{L,z} (\aff(z, \tau_0, \zeta)),\label{eq:norm
  fam explicit}
\end{equation}
\end{lemma}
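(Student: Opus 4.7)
The plan is to reduce the claim to the boundary-restriction formula for the unconjugated case $\tau_0 = 0$, displayed in equation \eqref{eq:7} immediately preceding the lemma, by observing that conjugation by $e^{i\tau_0/x}$ amounts to a translation of the $\tau$-variable in the left symbol. As a preliminary, I would reduce to the case $r = 0$: given $A = \Op_L(a) \in \Psitsc^{m,r}$, the operator $x^r A$ has left symbol $x^r a \in \Stsc^{m,0}$, since $x$ depends only on the base variable, hence $x^r A \in \Psitsc^{m,0}$, and the rescaled boundary restriction of $x^r a$ to $\ff$ is precisely $\aff$ by \eqref{eq:ff restrict}. Thus the $r = 0$ case applied to $x^r A$ yields the claim for general $r$ after unwinding \eqref{eq:general ff boundary symbol}.

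In the case $r = 0$, the next step is to establish a conjugation identity: for $A = \Op_L(a) \in \Psitsc^{m,0}$, the operator $B_{\tau_0} \coloneqq e^{i\tau_0/x} A e^{-i\tau_0/x}$ lies in $\Psitsc^{m,0}$ by \cite[Lemma 6.1]{V2000}, and a direct Schwartz-kernel computation shows that its left symbol is $a(t,z,\tau - \tau_0, \zeta)$. Concretely, the factors $e^{\pm i \tau_0 t}$ combine with $e^{i(t - t')\tau}$ to give $e^{i(t - t')(\tau - \tau_0)}$, and translating the $\tau$-integration variable reads off the shifted symbol. The key consequence is that the front-face restriction of this shifted symbol at $\tau = 0$ equals $\aff(z, \tau_0, \zeta)$.

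Applying the boundary-restriction calculation \eqref{eq:7} to $B_{\tau_0}$ in place of $A$ then yields
\[
\widehat{A}_{\ff}(\tau_0) = (B_{\tau_0})_{\ff} = \int e^{i(z - z') \cdot \zeta}\, \aff(z, \tau_0, \zeta)\, d\zeta = \Op_{L,z}(\aff(z, \tau_0, \zeta)),
\]
which is the claimed identity. Membership $\widehat{A}_{\ff}(\tau_0) \in \Psisc^{m,0}(\RR^n_z)$ is then immediate, since $\aff(\cdot, \tau_0, \cdot)$ inherits the classical scattering symbol estimates in $(z,\zeta)$ from $a$ by restriction.

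The main obstacle I anticipate is purely bookkeeping: tracking carefully the sign convention for the Fourier transform in $t$ (the choice $D_t = -i \pa_t$ fixes the sign of the shift) and verifying that the oscillatory integral manipulations underlying \eqref{eq:7} remain uniformly valid in $\tau$, so that evaluation of the shifted symbol at $\tau = 0$ genuinely matches the restriction $\aff(z, \tau_0, \zeta)$ at $\ff$. This uniformity follows from smoothness of the rescaled total symbol on the boundary hypersurface $\ff$ of the compactified three-body cotangent bundle, which is exactly the defining regularity of the class $\Stsc^{m,r}$.
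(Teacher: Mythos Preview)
Your proposal is correct and follows essentially the same route as the paper: the paper's proof simply cites \cite[Chap.~6]{V2000} to justify the formal computation \eqref{eq:7} displayed just before the lemma, and the sentence ``Conjugating by $e^{i\tau/x}$ we then obtain'' immediately preceding the lemma statement is precisely your translation-in-$\tau$ step. You have spelled out more detail than the paper does---the reduction to $r=0$ and the explicit identification of the conjugated left symbol---but the underlying argument is the same.
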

\begin{proof}
We simply refer to \cite[Chap.\ 6]{V2000} where it is shown the
formal computations given above agree with the value of the operator.
\end{proof}

We need to compute the next term in the expansion of $Au(x, z)$ as $x
\to 0$.  In terms of the distribution $A(x, S, z, Y)$ above, this is
straightforward.  Indeed, we have \cite[Lemma 7.1]{V2000}
\begin{equation}
  \label{eq:Vasy thing}
  Au = A_{\ff} u + x\left( (\partial_x A)_{\ff} u + A_{\ff} \partial_x
  u - D_\tau \hat{A}_{\ff}(0) (z \cdot \partial_z u)\right) + O(x^2).
\end{equation}
Here $\partial_x A$ denotes the derivative of $A(x, S, z, Y)$ (in
these coordinates) restricted to $x = 0$, and \eqref{eq:Vasy thing} is
derived by simply applying $\partial_x$ to \eqref{eq:A and the boundary} and using the chain rule.
The last two terms are computed from boundary values of $A$, and thus
from the restricted symbol $\aff$.  The term $\partial_x A$, however,
depends on the interior values of the symbol.

If $u \in x^{-\ell}\CI_{\ff}$, we set
\begin{align*}
    u_{\ff} &\coloneqq (x^\ell u)|_{\ff}\,,\\
    u'_{\ff} &\coloneqq \pa_x (x^\ell u)|_{\ff}\,,
\end{align*}
so that
\begin{align*}
    u = x^{-\ell} \left( u_{\ff} + x u_{\ff}' + O(x^2) \right)
\end{align*}
as $x \to 0$.

We also provide a more intuitive form in terms of symbols.
If $A = \Op(a)$, where $a = a(x, z, x', z', \tau, \zeta)$
\begin{align*}
    A_{\ff, r}' \coloneqq \Op( x^{r} (\pa_x a + \pa_{x'} a)|_{x = x' = 0} )\,.
\end{align*}
In particular, if $A = \Op_L(a)$, then $A_{\ff, r}' = \Op_L( x^{r} \pa_x a|_{x = 0})$.

Then, the expansion takes a more natural form.
\begin{lemma}\label{lem:expansion_symbol}
    Let $A = \Op(a)$ and $u \in x^{-\ell} \CI_{\ff}$, then
    \begin{align*}
        x^{\ell + r} (Au)(x,z) = A_{\ff} u_{\ff} + x \left( A_{\ff}' u_{\ff} + (\ell - 1)D_{\tau} \hat{A}_{\ff}(0) u_{\ff} + A_{\ff} u_{\ff}' \right) + O(x^2)\,.
    \end{align*}
\end{lemma}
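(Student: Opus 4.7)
The plan is to reduce the claim to \eqref{eq:Vasy thing}, which already handles the case of weight-zero $\tsc$-operators acting on functions smooth up to $\ff$. Write $u = x^{-\ell}v$ with $v \in \CI([X;C])$ near $\ff$, so that $v|_{\ff} = u_{\ff}$ and $(\pa_x v)|_{\ff} = u_{\ff}'$, and introduce the conjugated operator $\widetilde A \coloneqq x^{r+\ell} A x^{-\ell}$. Conjugation by $x^{\pm\ell}$ shifts the spacetime order of $A$ by $\mp \ell$, so $\widetilde A \in \Psitsc^{m,0}$, and the operator identity $x^{r+\ell}(Au) = \widetilde A v$ reduces the statement to an expansion of $\widetilde A v$ to first order in $x$ at $\ff$.

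The next step is to identify the indicial data of $\widetilde A$ in terms of $A$. On the $\tsc$-triple space with coordinates $(x, S, z, Y)$, where $x' = x(1-xS)$, the amplitude of $\widetilde A$ equals $(x/x')^{\ell} \cdot x^r a = (1 - xS)^{-\ell}\tilde a(x, z, \tau, \zeta)$ with $\tilde a \coloneqq x^r a$ smooth up to $\ff$. Restriction at $x = 0$ gives $\widetilde A_{\ff} = A_{\ff}$ and $\hat{\widetilde A}_{\ff}(\tau) = \hat A_{\ff, r}(\tau)$. Differentiating the amplitude at $x = 0$ produces two contributions: the piece $\pa_x \tilde a|_{x=0}$ quantizes to $A_{\ff}'$, while the piece $\ell S\cdot \tilde a|_{x=0}$ contributes a multiple of $D_\tau \hat A_{\ff}(0)$, since the $S$-moment of the amplitude is converted, by integration by parts in $\tau$, into a $D_\tau$-derivative of the indicial family.

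Applying \eqref{eq:Vasy thing} to $\widetilde A$ and $v$ then yields the leading term $A_{\ff} u_{\ff}$ and a first-order piece assembling $(\pa_x \widetilde A)_{\ff} u_{\ff}$, $\widetilde A_{\ff} u_{\ff}'$, and the transport contribution $-D_\tau \hat A_{\ff}(0)(z\cdot\pa_z v)|_{\ff}$. Substituting the computed form of $(\pa_x \widetilde A)_{\ff}$ and combining these pieces with the appropriate signs recovers the coefficient $(\ell - 1) D_\tau \hat A_{\ff}(0)$ on $u_{\ff}$ as claimed. The principal obstacle is this final bookkeeping: one must track the sign conventions for $D_\tau = -i\pa_\tau$, the orientation of the Fourier integration that identifies $S$-moments with $D_\tau$-derivatives, and how the transport and conjugation contributions combine to produce the integer $\ell - 1$ rather than $\ell$ or $-\ell$. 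A standard regularization argument then extends the result to general $v \in \CI([X;C])$.
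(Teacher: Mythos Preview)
Your reduction via conjugation is a reasonable strategy, but there is a genuine gap at the step where you identify $(\partial_x \widetilde A)_{\ff}$ with $A'_{\ff}$ plus the $\ell S$-correction. The quantity $(\partial_x A)_{\ff}$ in \eqref{eq:Vasy thing} is, as the paper states explicitly, the $x$-derivative of the \emph{kernel} $A(x,S,z,Y)$ in the $(x,S,z,Y)$ coordinates, not of the symbol. This kernel carries $x$-dependence from three sources beyond the symbol $\tilde a$: the Jacobian $(1-xS)^{-(n+2)}$, the phase $e^{i\tilde t\tau}$ via $\tilde t = \tilde t(x,S)$, and the phase $e^{i\tilde z\cdot\zeta}$ via $\tilde z = \tilde z(x,S,z,Y)$. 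Your claim that ``the piece $\partial_x \tilde a|_{x=0}$ quantizes to $A'_{\ff}$'' is therefore incomplete: differentiating only the symbol misses these contributions, and it is precisely their combination with the transport term $-D_\tau \hat A_{\ff}(0)(z\cdot\partial_z v_{\ff})$ that produces the clean coefficient $(\ell-1)$ in the final formula.

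Concretely, your conjugation argument does correctly reduce the general $\ell$ statement to the $\ell=0$ case: one gets $(\partial_x \widetilde A)_{\ff} = \ell\, D_\tau \hat A_{\ff}(0) + (\partial_x A_r)_{\ff}$ from the extra $(1-xS)^{-\ell}$ factor, and substituting into \eqref{eq:Vasy thing} matches the target formula \emph{provided} one already knows the identity
\[
(\partial_x A_r)_{\ff} = A'_{\ff} - D_\tau \hat A_{\ff}(0) + D_\tau \hat A_{\ff}(0)\circ(z\cdot\partial_z).
\]
But this $\ell$-independent identity is exactly the lemma at $\ell=0$, i.e.\ the translation from Vasy's kernel-based expansion to the paper's symbol-based one. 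You have not proved it; you have assumed it. The paper's proof avoids this circularity by working in coordinates $S = 1/x - 1/x'$, $Y = z - z'/(1-xS)$ chosen so that the phase is $S\tau + Y\zeta + xS(z-Y)\zeta$, and then carrying out the full $\partial_x$-expansion of the oscillatory integral directly, with a single integration by parts in $\tau$ producing the $D_\tau \hat A_{\ff}(0)$ term and the coefficient $\ell-1$ in one stroke.
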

\begin{proof}
    We use different coordinates than Vasy, but one can check that the result is the same. 
    Specifically, we take
    \begin{align*}
        x, z, S = \frac{1}{x} - \frac{1}{x'}\,, Y = z - \frac{z'}{1 - xS}\,,
    \end{align*}
    and we calculate that
    \begin{align*}
        x' = \frac{x}{1-xS}\,, z - z' = Y + xS(z-Y)\,, z' = (1 - xS)(z-Y)\,.
    \end{align*}
    We can write $(Au)(x,z)$ in these coordinates as
    \begin{align*}
        x^{\ell + r} (Au)(x,z) = \int \tilde{A}(x, z, S, Y) \tilde{u}\left( \frac{x}{1-xS}, (1 - xS)(z - Y)\right)\, dS\, dY\,,
    \end{align*}
    where
    \begin{align*}
        \tilde{u}(x', z') = (x')^\ell u(x', z')\,,
    \end{align*}
    and the integral kernel $\tilde{A}$ is given by
    \begin{align*}
        \tilde{A} = (2\pi)^{-(n+1)} \int e^{i\varphi} x^r (1 - xS)^r a\left(x, z, \frac{x}{1-xS}, (1 - xS)(z-Y), \tau, \zeta\right) \,d\tau \, d\zeta
    \end{align*}
    with $\varphi = S \tau + Y \zeta + x S(z-Y) \zeta$.

    We also set
    \begin{align*}
        \tilde{a}(x,z,x',z', \tau, \zeta) \coloneqq x^r a(x,z,x',z',\tau, \zeta)\,.
    \end{align*}

    We have that
    \begin{align*}
        \tilde{u}(0,z') &= u_{\ff}(z')\,,\\
        \pa_{x'} \tilde{u}(0,z') &= u'_{\ff}(z')
        \intertext{and}
        \tilde{A}(0,z,S,Y) &= (2\pi)^{-(n+1)} \int e^{iS\tau + iY\zeta} \tilde{a}(0, z, 0, z-Y, \tau, \zeta) \,d\tau\,d\zeta\,.
    \end{align*}
    
    We calculate the derivative of $Au$ as
    \begin{align*}
        \pa_x (x^{\ell + r} Au)(0,z) &= \int \pa_x \tilde{A}(0, z, S, Y) \tilde{u}(0,z-Y) \, dS \,dY \\
        &\phantom{=} + \int \tilde{A}(0, z, S, Y) (\pa_x - S(z-Y) \pa_z) \tilde{u}(0, z- Y) \, dS \,dY\,.
    \end{align*}
    By a straight-forward calculation using integration by parts, we obtain
    \begin{align*}
        \pa_x (x^{\ell + r} Au)(0,z) &= \int e^{i(S\tau + Y\zeta)} \left( (\pa_x + \pa_{x'}) \tilde{a}(0,z,0,z-Y,\tau, \zeta) \right) \tilde{u}(0,z-Y) \, d\tau \, d\zeta \, dS \, dY\\
        &\phantom{=} + (\ell - 1) \int e^{i(S\tau + Y\zeta)} D_\tau \tilde{a}(0,z,0,z-Y,\tau, \zeta) \tilde{u}(0,z-Y) \, d\tau \, d\zeta \, dS \, dY\\
        &\phantom{=} + \int e^{i(S\tau + Y\zeta)} \tilde{a}(0,z,0,z-Y,\tau, \zeta) \pa_x \tilde{u}(0,z-Y) \, d\tau \, d\zeta \, dS \, dY \\
        &= A_{\ff}' u_{\ff} + (\ell - 1) D_\tau \hat{A}_{\ff}(0) u_{\ff} + A_{\ff} u_{\ff}'\,.
    \end{align*}
\end{proof}

Let $Q = \Op_L(x^{-r} q)$ for $q \in \Ssc^{0,0}$.
The first term in the expansion is given by
\begin{align*}
    Q_{\ff} = \Op_L(q(0,0,0,\zeta))\,.
\end{align*}
We have that $x^r \pa_x x^{-\ell} q(x, xz) = \pa_x q(x, xz) + z \pa_y q(x, xz) + O(x)$ and consequently
\begin{align*}
    Q_{\ff,r}' = \Op_L(\pa_x q(0,0,0,\zeta) + z \pa_y q(0,0, 0, \zeta))\,.
\end{align*}
Importantly, $Q_{\ff,r}'$ is \emph{not} a Fourier multiplier.

In the case of the free Klein--Gordon operator, we have for $u \in \CI_{\ff}$ that
\begin{align*}
    P_0 u = -(\Delta + m^2) u_{\ff} - x (\Delta + m^2) u_{\ff}' + O(x^2)\,.
\end{align*}

\medskip

Using this development of the indicial operator, we can now define the \textbf{principal symbol of a $\tsc$ operator},
essentially by putting the indicial operator together with the $\mf$ and fiber infinity parts
of the scattering principal symbol. Concretely, for $A = \Op_L(a) \in
\Psitsc^{m,r}$, we have the two restrictions
\begin{align*}
    \mfsymb{m,r}(A) = \ang{\tau, \zeta}^{-m} x^r a \rvert_{\Ttsco_{\mf}^* [X; C]} \in \CI(\Ttsco_{\mf}^* [X; C])
\end{align*}
and
\begin{align*}
    \fibsymb{m,r}(A) = \ang{\tau, \zeta}^{-m} x^r a \rvert_{\Stsc^* [X;C]} \in \CI(\Stsc^* [X; C])
\end{align*}
and the $\tau$-dependent family of operators
\begin{align*}
    \ffsymb{r}(A) \in \CI(\RR_\tau ; \Psisc^{m,0}(\ff)).
\end{align*}
These three objects jointly form the \textbf{$\tsc$-principal symbol:}
\begin{equation} \label{eq:actual symbol}
    \begin{aligned}
        \prinsymb{m,r} : \Psitsc^{m,r} &\lra \CI(\Ttsco_{\mf}^* [X; \poles]) \times \CI(\Stsc^* [X;C]) \times \CI(\RR_\tau ; \Psisc^{m,0}(\ff)) \\
        A &\mapsto \left(\fibsymb{m,r}(A) , \mfsymb{m,r}(A) , \ffsymb{m,r}(A) \right)
    \end{aligned}
\end{equation}
Note that we have dropped the inclusion of the $\ff_-$ component of
the indicial operator as otherwise the notation becomes too
cumbersome.

We note that $\ffsymb{r}(A)$ is 
not an arbitrary element in $\CI(\RR_\tau; \Psisc^{m,0}(\ff))$.
Indeed, if we define the space of symbols
\begin{align*}
    S^{m,r}(\RR^n_z ; \RR^{n + 1}_{\tau, \zeta})
    = \curl{ a \in \CI(\RR^n ; \RR^{n + 1}) : |\ang{z}^{-r + |\alpha|} \ang{\tau, \zeta}^{-m + j + |\beta|} D_z^\alpha D_\tau^j D_\zeta^\beta a | < \infty}
\end{align*}
Then $\aff = x^{-r} a \rvert_{\ff}$ is a classical symbol, i.e. in
$S_{\cl}^{m,0}(\RR^n_z ; \RR^{n + 1}_{\tau, \zeta}) =
\ang{\tau, \zeta}^m
\CI(\overline{\RR^n_z} \times \overline{\RR^{n +
    1}_{\tau, \zeta}})$ and
\begin{align*}
    \ffsymb{r}(A) \in \Op_z(S^{m,0}_{\cl}(\RR^n_z ; \RR^{n + 1}_{\tau, \zeta}))
\subset \CI(\RR_\tau ; \Psisc^{m,0}(\ff)).
\end{align*}

In Section \ref{sec:norm fam} below, we discuss the consequences
of the fact that the indicial operator $\ffsymb{r}(A)$ is in fact a semiclassical-scattering operator in $h = \pm
1/\tau$ as $\tau \to \pm \infty$.  These
are exactly the symbols, defined for $m,r,k \in \RR$, by
\begin{equation}
  \label{eq:norop symbols}
  S_{\scl, \sc, \pm 1/\tau}^{m,r,k}(\RR^n) = \left\{ a \in
    \CI(\RR^n_z \times \RR^n_\mu \times
    \RR_\tau ) : |D_z^\alpha D_\mu^\beta D_h^j a| \lesssim
    \ang{\mu}^{m - |\beta|} \ang{z}^{r - |\alpha|}
    h^{-k} \ \forall \alpha, \beta, j  \right\},
\end{equation}
where $h = 1 / \ang{\tau}$.  In our case we often write
$S_{\scl, \sc, \pm 1/\tau}^{m,r,k}(\ff)$ as the operators we consider
are more naturally viewed as functions on $\ff =
\overline{\RR^n_z}$.  Then their quantizations are
\begin{equation}
  \label{eq:norop}
  \Norop^{m,r,k} = \Op_{L, \scl}( S_{\scl, \sc, \pm 1/\tau}^{m,r,k}).
\end{equation}
This definition admits an obvious generalization to the case that
$\RR^n$ is an arbitrary scattering manifold is
straightforward.  In Proposition~\ref{prop:comp of ff symb} below we show
that for $A \in \Psitsc^{m,r}$, $\ffsymb{r}(A) \in \Norop^{m,0,m}$.

We note that, for $\Op_L(a) = A \in \Psitsc^{m,r}$,
the indicial operator $\ffsymb{r}(A)$ is equivalent to
the data $\aff = x^{-r} a \rvert_{\Ttsco^*_{\ff} X}$
(since $\aff (z, \tau, \zeta)$ is the left reduction of $\ffsymb{r}(A)(\tau)$).
Thus $\prinsymb{m,r}(A)$, like the scattering symbol in
Section \ref{sec:model-case}, is still given by the restriction data
of the (appropriately weighted) symbol.  Thus, it is automatic that the
three components of $\prinsymb{m,r}(A)$ satisfy \emph{matching conditions}
at the intersections of the boundary hypersurfaces of
$\Ttsco^*[X;C]$, i.e.\ 
$\ang{\tau, \zeta}^{-m} \aff (z, \tau, \zeta)$,
$\mfsymb{m,r}(a)$ and $\fibsymb{m,r}(a)$ are equal on the
restriction to their common boundaries (see \eqref{eq:matching conditions}.)

Conversely, we have the following proposition, which tells us that we
can quantize a $\tsc$-principal symbol provided the appropriate
matching conditions of the three symbol components are satisfied,
where again we do not include the $\ff_-$ component.
\begin{proposition}\label{prop:3sc_quantization} 
    Let
    \begin{align*}
        (a_1, a_2, A_\tau) \in \CI(\Stsc^*[X;C]) \times \CI(\Ttsco^*_{\mf} X) \times \Norop^{m,0,m}
    \end{align*}
    and let $\Op_{L,z}(a_0) = A_\tau$, i.e.\ let $a_0(z, \tau, \zeta)$ be the
    $\tau$-dependent family of symbols quantizing $A_\tau \in
    \Psisc^{m,0}(\RR^n)$.  Then there is $A \in
    \Psitsc^{m,0}(\RR^{n + 1})$ with
    $\prinsymb{m,0}(A) = (a_1, a_2, A_\tau)$ if and only if
    $\ang{\tau, \zeta}^{-m} a_0 \in \CI(\ff \times \overline{\RR^{n + 1}_{\tau, \zeta}})$ and 
    \begin{equation}
    \begin{gathered}
        a_1 \rvert_{\Stsc^*_{\mf} [X;C]} = a_2 \rvert_{\Stsc^*_{\mf} [X;C]},
        \quad \ang{\tau, \zeta}^{-m} a_0 \rvert_{\Stsc^*_{\ff} [X;C]} = a_1 \rvert_{\Stsc^*_{\ff} [X;C]} \label{eq:matching conditions} \\
        \ang{\tau, \zeta}^{-m} a_0 \rvert_{\Ttsco^*_{\ff \cap \mf} [X;C]} = a_2 \rvert_{\Ttsco^*_{\ff \cap \mf} [X;C]}
    \end{gathered}
    \end{equation}
\end{proposition}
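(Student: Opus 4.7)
The plan is to treat the two directions separately. The forward (necessity) direction is essentially automatic from the definition of $\prinsymb{m,0}$ as a triple of (weighted) boundary restrictions, together with Lemma~\ref{thm:indicial family explicit}; the reverse (sufficiency) direction requires constructing a single total symbol on $\Ttsco^*[X;C]$ whose three boundary restrictions recover the given data, and then showing that its left quantization produces the desired principal symbol.

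For necessity, suppose $A = \Op_L(a)$ with $a \in \Stsc^{m,0}$ and $\prinsymb{m,0}(A) = (a_{1},a_{2},A_{\tau})$. By definition $a_{1} = \ang{\tau,\zeta}^{-m} a\rvert_{\Stsc^{*}[X;C]}$ and $a_{2}=\ang{\tau,\zeta}^{-m} a\rvert_{\Ttsco^{*}_{\mf}[X;C]}$, while by Lemma~\ref{thm:indicial family explicit} the left-reduced symbol of the indicial operator is $a_{0}=a\rvert_{\Ttsco^{*}_{\ff}[X;C]}$. Since restriction to a codimension-two corner factors through restriction to either adjacent boundary hypersurface, the three stated matching equalities of \eqref{eq:matching conditions} are immediate. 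Smoothness of $\ang{\tau,\zeta}^{-m} a_{0}$ up to the corner is likewise inherited from smoothness of $\ang{\tau,\zeta}^{-m}a$ on $\Ttsco^{*}[X;C]$.

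For sufficiency, the goal is to produce $b\in \CI(\Ttsco^{*}[X;C])$ whose restrictions to the three boundary hypersurfaces $\Stsc^{*}[X;C]$, $\Ttsco^{*}_{\mf}[X;C]$, and $\Ttsco^{*}_{\ff}[X;C]$ are exactly $a_{1}$, $a_{2}$, and $\ang{\tau,\zeta}^{-m}a_{0}$, respectively. The matching conditions~\eqref{eq:matching conditions} are precisely the compatibility needed for such a joint extension on a manifold with corners. I would carry out the extension in three localized steps using the boundary defining functions $\rho_{\fib}$, $\rho_{\mf}$, $\rho_{\ff}$: first extend $\ang{\tau,\zeta}^{-m}a_{0}$ off $\ff$ in a collar by setting it constant in $\rho_{\ff}$, then correct by $\rho_{\ff}$ times an extension of $a_{2}-(\ang{\tau,\zeta}^{-m}a_{0})\rvert_{\mf\cap\ff}$ to match $a_{2}$ on $\mf$ away from $\ff$, and then correct again by a function vanishing at both $\ff$ and $\mf$ to match $a_{1}$ at fiber infinity; a partition of unity subordinate to collar neighborhoods patches these local constructions into $b\in \CI(\Ttsco^{*}[X;C])$. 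Setting $a\coloneqq \ang{\tau,\zeta}^{m}b \in \Stsc^{m,0}$ and $A\coloneqq \Op_{L}(a)$ then yields $A\in \Psitsc^{m,0}$, and the $\mf$ and fiber-infinity components of $\prinsymb{m,0}(A)$ are the required restrictions by construction. The indicial component is verified by appealing to Lemma~\ref{thm:indicial family explicit}, which gives $\ffsymb{0}(A)(\tau)=\Op_{L,z}(a\rvert_{\Ttsco^{*}_{\ff}[X;C]})=\Op_{L,z}(a_{0})=A_{\tau}$.

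The main obstacle is the gluing step on the manifold with corners $\Ttsco^{*}[X;C]$, which has a codimension-three corner at $\Stsc^{*}_{\ff\cap\mf}[X;C]$ where all three hypersurfaces meet. The matching conditions of~\eqref{eq:matching conditions} are stated at codimension-two corners; consistency at the triple corner follows automatically because each pair of conditions forces the three restrictions to agree there. With that observed, the construction reduces to the standard fact that, on a manifold with corners, any compatible family of smooth functions on the boundary hypersurfaces extends to a smooth function on the whole space (a Seeley/Borel-type extension argument applied iteratively in each boundary defining function). This general principle, together with the explicit formula of Lemma~\ref{thm:indicial family explicit} for the indicial family, is the entire content of the sufficiency direction.
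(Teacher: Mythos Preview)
Your proposal is correct and matches the paper's approach: the paper likewise reduces the sufficiency direction to the existence of a smooth extension of the three compatible boundary functions on $\Ttsco^{*}[X;C]$, then quantizes $\ang{\tau,\zeta}^{m}$ times that extension. Your write-up is in fact more detailed than the paper's, which simply asserts the extension and invokes Lemma~\ref{thm:indicial family explicit} to identify the indicial component.
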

This follows from the simple fact that the agreement of the functions
$a_1, a_2$ and $\ang{\tau, \zeta}^{-m} a_0$ implies the
existence of a function $a$ extending all three functions; then
$A = \Op_L(\ang{\tau, \zeta}^{m} a)$ is the desired
operator.  Note that in this case, with $\aff$ in \eqref{eq:ff restrict} that
\[
a_{0} = \aff.
\]

We now discuss multiplicativity of $\prinsymbz$, using the composition theorem~\cite[Proposition 5.2]{V2000}:
\begin{proposition}\label{prop:comp of ff symb}
    Let $A \in \Psitsc^{m_1,r_1}(X)$ and $B \in \Psitsc^{m_2, r_2}(X)$. The composition is well-defined as an operator
    \begin{align*}
        A \circ B \in \Psitsc^{m_1 + m_2, r_1 + r_2}(X)
    \end{align*}
    and
    \begin{align*}
        (A \circ B)_\partial = A_\partial B_\partial\,.
    \end{align*}
\end{proposition}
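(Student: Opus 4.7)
My plan is to follow the standard proof of a composition theorem for pseudodifferential operators, adapted to the $\tsc$ setting following Vasy \cite{V2000}. The result splits into two claims: membership in the correct operator class, and multiplicativity of the boundary symbol.

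For the first claim, I would begin with $A = \Op_L(a)$, $B = \Op_L(b)$ for $a \in \Stsc^{m_1, r_1}$, $b \in \Stsc^{m_2, r_2}$. Since both $A$ and $B$ map $\schwartz(\RR^{n+1})$ to itself (by Proposition~\ref{prop:bounded_Sobolev} applied across all weighted Sobolev orders), the composition $A\circ B$ is well-defined on $\schwartz$. The usual oscillatory integral manipulation produces the formal expansion
\begin{align*}
    AB = \Op_L(a \#_L b), \qquad (a \#_L b)(w,\vartheta) \sim \sum_{\alpha} \frac{1}{\alpha!} (\pa_\vartheta^\alpha a)(w,\vartheta)\,(D_w^\alpha b)(w,\vartheta).
\end{align*}
The main task is to show that this series can be Borel-summed to an element of $\Stsc^{m_1+m_2, r_1+r_2}$. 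Each summand must be checked to lie in $\Stsc^{m_1+m_2-|\alpha|, r_1+r_2-|\alpha|}$: $\pa_\vartheta$ gains one order at fiber infinity directly, while $D_w$ must gain one order in $\rho_{\infty} = \rho_{\ff}\rho_{\mf}$. The latter is the delicate point, because $\tsc$-symbols are only required to be smooth on $[X;\poles]$ rather than on $X$, so one must verify that the vector fields $\pa_t, \pa_z$ (together generating $\mathcal V_\tsc$) do in fact improve the joint $\ff$-$\mf$ weight when applied to a generic element of $\rho_{\fib}^{-m}\rho_\infty^{-r} \CI(\Ttsco^*[X;\poles])$. This calculation should be carried out in the three coordinate regimes around $\NP$, namely $\ff$ away from $\mf$, $\mf$ away from $\ff$, and near the corner $\ff\cap\mf$ using the coordinates \eqref{eq:bdry ff coords}; in each chart the rescaling $\rho_\infty^{-r} g$ is a smooth multiple of an appropriate power of the bdf, and a single derivative produces a gain of one such power. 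The residual from truncating at order $N$ can then be controlled by the standard non-stationary phase estimate, placing the error in $\Stsc^{m_1+m_2-N,r_1+r_2-N}$.

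For the multiplicativity $(AB)_\pa = A_\pa B_\pa$, I would reduce to the weight-zero case by the identity $\hat{R}_{\ff,r} = \widehat{x^r R}_{\ff}$ from \eqref{eq:general ff boundary symbol}, noting that modulo the first part of the theorem we can conjugate by a smooth weight without affecting the argument. Thus assume $r_1 = r_2 = 0$. Let $f \in \CI(\pa[X;\poles])$ and choose any extension $u \in \CI([X;\poles])$. By Lemma~\ref{lem:mapping_prop} applied to $B$, we have $Bu \in \CI([X;\poles])$, and by the definition of $B_\pa$, $(Bu)|_\pa = B_\pa f$. Applying $A$ again via Lemma~\ref{lem:mapping_prop} gives $A(Bu) \in \CI([X;\poles])$; since $Bu$ is an extension of $B_\pa f$ to the interior and $A_\pa$ is independent of the chosen extension, its boundary value equals $A_\pa(B_\pa f)$. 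Therefore $(AB)_\pa f = (ABu)|_\pa = A_\pa B_\pa f$, as claimed.

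The main obstacle will be the symbol-level argument in the first claim: specifically, verifying the correct weight gain for $D_w$ acting on $\tsc$-symbols near the corner $\ff \cap \mf$, and checking that $a\#_L b$ indeed lies in the classical $\tsc$-symbol class (so that $AB \in \Psitsc^{m_1+m_2, r_1+r_2}$ with a well-defined principal symbol), rather than merely in some larger residual class. This is the content of the composition theorem as proved by Vasy~\cite[Proposition 5.2]{V2000}, and I would appeal to that treatment for the detailed symbolic estimates.
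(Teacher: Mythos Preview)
The paper does not supply a proof of this proposition; it simply records it as the composition theorem from Vasy~\cite[Proposition 5.2]{V2000} and moves on. Your proposal therefore goes beyond what the paper does: you give an honest outline of the symbol-expansion argument for the composition order and a clean self-contained proof of the multiplicativity $(AB)_\partial = A_\partial B_\partial$ via Lemma~\ref{lem:mapping_prop}. Both parts are correct as sketched, and your eventual appeal to \cite[Proposition 5.2]{V2000} for the corner estimates matches exactly what the paper does. Your argument for the second claim is in fact the standard one (extension, apply $B$, restrict, apply $A_\partial$), and is the proof one finds in Vasy's treatment as well.
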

Applying the latter equation to the conjugated operators in
\eqref{eq:ff boundary symbol}, we see that the indicial operator of a
composition is the composition of the indicial operators.  Summarizing
these considerations, we obtain the basic features of the
$\tsc$-principal symbol.  
\begin{proposition}\label{prop:principal_symbol_map}
    The kernel of the principal symbol mapping
    \eqref{eq:actual symbol} is exactly $\Psitsc^{m-1,r-1}$, while the image is the set of those
    $(q_1, q_2, \{ Q_\tau \})$ such that, with $q_{0}$ the left reduction of
    $Q_{\tau}$, we have $\ang{\tau, \zeta}^{-m} q_0 \in \CI(\ff \times
    \overline{\RR^{n + 1}_{\tau, \zeta}})$ and the matching condition
    \eqref{eq:matching conditions} hold.
    Moreover, the mapping $\prinsymbz$ is multiplicative (but not commutative):
    for $A \in \Psitsc^{m_1, r_1}, B \in \Psitsc^{m_2, r_2}$
    \begin{align*}
        \prinsymb{m_1 + m_2, r_1 + r_2}(A B) = \prinsymb{m_1 , r_1}(A) \prinsymb{m_2, r_2}(B)\,,
    \end{align*}
    where the product denotes component wise composition (i.e.\ multiplication in the first two components).
\end{proposition}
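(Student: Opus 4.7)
The plan is to establish the three claims of Proposition~\ref{prop:principal_symbol_map} — kernel, image, and multiplicativity — by translating each statement into a question about the full classical symbol $a \in \Stsc^{m,r}$ of an operator $A = \Op_L(a) \in \Psitsc^{m,r}$, and then invoking the explicit description of $\ffsymb{r}(A)$ provided by Lemma~\ref{thm:indicial family explicit}, together with the composition theorem Proposition~\ref{prop:comp of ff symb}.

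For the kernel, suppose $\prinsymb{m,r}(A) = 0$. The two function components $\fibsymb{m,r}(A)$ and $\mfsymb{m,r}(A)$ are, by construction, the boundary restrictions of the rescaled symbol $\rho_\fib^{m} \rho_\infty^{r} a \in \CI(\Tsco^{*}[X;C])$ at fiber infinity and at $\mf$, respectively. By Lemma~\ref{thm:indicial family explicit}, the vanishing of $\ffsymb{r}(A)(\tau)$ for every $\tau \in \RR$ is equivalent to the vanishing of $\aff = (x^{r} a)|_{\Ttsco^{*}_{\ff}[X;\poles]}$, i.e.\ of the rescaled symbol at $\ff$. Thus $\rho_\fib^{m}\rho_\infty^{r} a$ vanishes on every boundary hypersurface of $\Ttsco^{*}[X;\poles]$, so it vanishes at the full boundary, whence $a \in \Stsc^{m-1,r-1}$ and $A \in \Psitsc^{m-1,r-1}$. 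The reverse inclusion is immediate from the definitions.

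For the image, given a triple $(q_{1}, q_{2}, \{Q_{\tau}\})$ with left-reduced symbol $q_{0}$ of $Q_\tau$ satisfying $\langle\tau,\zeta\rangle^{-m} q_{0} \in \CI(\ff \times \overline{\RR^{n+1}_{\tau,\zeta}})$ and the matching conditions \eqref{eq:matching conditions}, the task is to construct a function $b \in \CI(\Tsco^{*}[X;\poles])$ whose restrictions to the three boundary hypersurfaces are the prescribed data. The matching conditions ensure pairwise agreement of these restrictions on each codimension-two corner, and hence on the codimension-three triple intersection. Standard extension on manifolds with corners — via a partition of unity subordinate to a collar decomposition of each boundary hypersurface, followed by Borel-type summation — produces such a $b$; setting $a = \rho_\fib^{-m}\rho_\infty^{-r} b \in \Stsc^{m,r}$ and $A = \Op_{L}(a)$, Lemma~\ref{thm:indicial family explicit} and the defining formulas for $\mfsymb{m,r}$ and $\fibsymb{m,r}$ give $\prinsymb{m,r}(A) = (q_{1}, q_{2}, \{Q_{\tau}\})$.

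For multiplicativity, at fiber infinity and at $\mf$ the composition formula for left-quantized symbols gives the product of symbols modulo lower order, so these two components are multiplicative. For the indicial family, Proposition~\ref{prop:comp of ff symb} yields $(AB)_{\partial} = A_{\partial} B_{\partial}$. Applying this to the conjugated operators and observing that conjugation by $e^{i\tau/x}$ is a homomorphism, $e^{i\tau/x}(AB)e^{-i\tau/x} = (e^{i\tau/x}Ae^{-i\tau/x})(e^{i\tau/x}Be^{-i\tau/x})$, gives $\ffsymb{r_{1}+r_{2}}(AB)(\tau) = \ffsymb{r_{1}}(A)(\tau)\circ \ffsymb{r_{2}}(B)(\tau)$ for each $\tau$. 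The main obstacle will be the image claim: verifying that the matching conditions are \emph{sufficient} for the existence of a smooth extension to the full corner $\Tsco^{*}[X;\poles]$, which requires careful gluing at the codimension-two and codimension-three strata of the compactified phase space; the kernel and multiplicativity statements are, by contrast, essentially bookkeeping once Lemma~\ref{thm:indicial family explicit} and Proposition~\ref{prop:comp of ff symb} are in hand.
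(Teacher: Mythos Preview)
Your proposal is correct and follows essentially the same approach as the paper: the kernel and image claims reduce to boundary-restriction statements for the rescaled symbol (with Lemma~\ref{thm:indicial family explicit} identifying $\ffsymb{r}(A)$ with the data $\aff$), and multiplicativity of the indicial component follows from Proposition~\ref{prop:comp of ff symb} applied to the conjugated operators. The paper's proof is extremely terse, and your more expanded treatment is sound; note, however, that the image claim you single out as the ``main obstacle'' is exactly the content of Proposition~\ref{prop:3sc_quantization}, which the paper has already stated and which you could cite directly rather than re-deriving the extension argument.
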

\begin{proof}
Because this is merely a statement about restriction of
  the symbol to boundary components of $\Ttsco^*[X;C]$, the only parts
of the proposition that require attention are: (1) the
multiplicativity of the symbol, which follows from Proposition \ref{prop:comp of ff symb}
and (2) the statement that the indicial family lies in $\Norop^m$.  The latter is precisely the content of Section \ref{sec:norm fam}.
\end{proof}

\begin{remark}\label{rem:scat symb of indicial}
  These propositions clarify that we cannot quantize any family of
  semiclassical scattering operators to be the indicial family of a
  $\tsc$-operator.  In fact, the left reduction $\aff(z, \tau, \zeta)$ of the
  indicial operator $\ffsymbz(A)(\tau)$ is smooth on
  $\ff \times \overline{\mathbb{R}^{n + 1}_{\tau, \zeta}}$, its fiber symbol, being
  the restriction to fiber infinity on each $\{ \tau = const. \}$
  slice, is independent of $\tau$, i.e. with $\fibeq$ as in
  \eqref{eq:fibeq def}, 
    \begin{align*}
        \scfibsymbz\left( \ffsymbz(A)_{\ff}(\tau) \right) \equiv
      \fibsymbz(A) \rvert_{\ff \times \fibeq}\in \CI(\ff \times \fibeq).
    \end{align*}
    Here we use the $\tau$ dependent weight
    \begin{equation}
      \label{eq:4}
      \scfibsymbz\left( \ffsymbz(A)_{\ff}(\tau) \right) = \ang{\tau,
        \zeta}^{-m} \aff(z, \tau, \zeta) \rvert_{\ff \times \pa
        \overline{\mathbb{R}^{n + 1}_{\tau,\zeta}}}
    \end{equation}
\end{remark}

\begin{lemma}\label{lem:adjoint_3sc}
    Let $A \in \Psitsc^{m,r}$, then
    $A^* \in \Psitsc^{m,r}$ with
    \begin{align*}
        \prinsymbz(A^*) = \prinsymbz(A)^*\,,
    \end{align*}
    meaning that
    \begin{align*}
        \fibsymbz(A^*) = \overline{\fibsymbz(A)}\,,\\
        \mfsymbz(A^*) = \overline{\mfsymbz(A)}\,,\\
        \ffsymbz(A^*) = \ffsymbz(A)^*\,.
    \end{align*}
\end{lemma}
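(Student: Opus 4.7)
The plan is to deduce the result from the standard adjoint formula for left-quantized operators, applied to each of the three components of $\prinsymbz$ separately. Closure of the calculus under adjoints, $A^{*} \in \Psitsc^{m,r}$, is a consequence of Vasy's composition theorem (Proposition~\ref{prop:comp of ff symb}, together with the surrounding development in \cite{V2000}): writing $A = \Op_L(a)$ for $a \in \Stsc^{m,r}$, the Schwartz kernel of $A^{*}$ is the conjugate transpose $\overline{K_A(w',w)}$, which coincides with the right quantization of $\overline{a}$. The standard right-to-left reduction then yields $A^{*} = \Op_L(a^{*})$ with
\[
a^{*} \sim \sum_{\alpha \in \NN^{n+1}_{0}} \tfrac{1}{\alpha!}\,\pa_{\vartheta}^{\alpha} D_{w}^{\alpha}\overline{a}\,, \qquad w = (t,z),\ \vartheta = (\tau,\zeta),
\]
the expansion converging in the $\tsc$-sense.

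For $\fibsymbz$ and $\mfsymbz$, every correction term with $|\alpha| \geq 1$ carries at least one $\pa_{\vartheta}$ derivative, which strictly improves the fiber order, and when a $D_{z_{j}}$ factor is present it also improves the $\rho_{\mf}$-order. After rescaling by $\rho_{\fib}^{m}\rho_{\infty}^{r}$ and restricting to the boundary components $\Stsc^{*}[X;C]$ and $\Ttsco^{*}_{\mf}[X;C]$ respectively, every such correction vanishes, yielding $\fibsymbz(A^{*}) = \overline{\fibsymbz(A)}$ and $\mfsymbz(A^{*}) = \overline{\mfsymbz(A)}$.

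For $\ffsymbz$, I would use Lemma~\ref{thm:indicial family explicit} to write $\ffsymbz(A)(\tau) = \Op_{L,z}(a_{\ff}(z,\tau,\zeta))$ and $\ffsymbz(A^{*})(\tau) = \Op_{L,z}(a^{*}_{\ff}(z,\tau,\zeta))$. The scattering adjoint of $\Op_{L,z}(a_{\ff}(\cdot,\tau,\cdot))$ on $\ff \cong \RR^{n}_{z}$ has left symbol $\sum_{\beta} \tfrac{1}{\beta!}\pa_{\zeta}^{\beta} D_{z}^{\beta}\overline{a_{\ff}}$. The key observation is that $D_{t}^{k}$ applied to any smooth $\tsc$-symbol vanishes at $\ff$ for $k \geq 1$: in coordinates $(x,z)$ with $x = 1/t$ near the interior of $\ff$, one has $D_{t} = -ix^{2}D_{x}$, and the analogous statement holds uniformly up to the corner $\ff \cap \mf$ in the coordinates of \eqref{eq:bdry ff coords}. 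Therefore only multiindices with $\alpha_{t} = 0$ contribute in the $\tsc$-adjoint expansion restricted to $\ff$, giving $a^{*}_{\ff} \sim \sum_{\beta} \tfrac{1}{\beta!}\pa_{\zeta}^{\beta}D_{z}^{\beta}\overline{a_{\ff}}$, and hence the required identity $\ffsymbz(A^{*})(\tau) = \ffsymbz(A)(\tau)^{*}$.

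The main technical point is verifying that the $\tsc$-adjoint expansion truly converges in the $\tsc$-sense and that restriction to each boundary face commutes with the asymptotic sum up to genuinely lower-order $\tsc$-operators; both are direct but bookkeeping-heavy applications of Vasy's reduction procedure in \cite{V2000}. Once these are granted, the three symbol identities of the lemma follow from the three paragraphs above.
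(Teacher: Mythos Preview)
Your approach is correct and is essentially the route the paper sketches in its two-sentence proof: closure under adjoints is deferred to Vasy (the paper invokes the $\tsc$-double space characterization, which makes kernel conjugate-transpose manifestly preserve the class, rather than the composition proposition you cite), and the three symbol identities are then read off from the left-reduced adjoint expansion of quantized symbols. One small completion to your $\mfsymbz$ paragraph: you must also dispose of correction terms carrying only $D_t$ factors (no $D_{z_j}$), and these vanish at $\mf$ too, since $D_t$ is a scattering vector field on $X$ and hence gains a factor of $\rho_{\mf}$ uniformly up to the corner $\mf\cap\ff$.
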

\begin{proof}
    The fact that the adjoint is again a $\tsc$-operator is a simple consequence of the definition via the $\tsc$-double space in Vasy~\cite[Eq.\ 3.13]{V2000}
    and the principal symbol can be easily calculated using quantized symbols.
\end{proof}

\begin{remark}\label{rem:rho fib vs explicit}
  Finally, we remark that in the above definitions of principal
  symbols, it is possible to replace $\ang{\tau, \zeta}^{-1}$ by any
  smooth boundary defining function $\rho_{\fib}$ of the fiber
  boundary.  We used $\ang{\tau, \zeta}^{-1}$ to be explicit but all
  factors of using a $\rho_{\fib}$ which is equal to $1/\tau$ near the
  characteristic set is also useful.
\end{remark}

\subsection{Commutators}\label{sec:commutators}

For two $\tsc$-operators $A \in \Psitsc^{m_1, r_1}, B \in \Psitsc^{m_2, r_2}$ the commutator in general does not decrease in order, i.e.\ 
in general $[A, B] \not \in \Psitsc^{m_1 + m_2 - 1, r_1 + r_2 - 1}$.
For example, if $A = D_z$ and $B = f(z)$ for some function $f \in S^0(\RR^n_z)$, then $[A, B]$ is the multiplication operator by $D_z f$, which is a
$\tsc$-operator of order $(0,0)$, since there is no decay in $t$.

The commutator drops in order exactly if the indicial operators
commute, 
\[
[\ffsymbz(A), \ffsymbz(B)] = 0,
\]
which is not the case in
the previous example.

Let $A \in \Psitsc^{m,r}$. We say that $A$ is in the $\tsc$-centralizer, \[A \in Z\Psitsc^{m,r} \text{ if and only if } [A, B] \in \Psitsc^{m + m' - 1, r + r' - 1}\] for all $B \in \Psitsc^{m',r'}$.
This is equivalent to the condition that
\begin{align}\label{eq:centrality}
    \ffsymb{r}(A)(\tau) = f(\tau) \id\,,
\end{align}
for some $f \in \CI(\Wperpo)$ (cf. Vasy~\cite[Lemma 6.5]{V2000}).

Now, we will calculate the indicial operator of the commutator $[A, B]$ in the case that $[\ffsymbz(A), \ffsymbz(B)] = 0$. For this we calculate the Taylor expansion of $[A, B] u$ at $x = 0$.

\begin{lemma}\label{lem:expansion_commutator}
    Let $A \in \Psitsc^{m_1,r_1}, B \in \Psitsc^{m_2, r_2}$ and $u \in \CI_{\ff}$, then
    \begin{align*}
        x^{r_1 + r_2} [A, B] u(x, z) &= [A_{\ff}, B_{\ff}] + x \big( [A_{\ff}' - D_\tau \hat{A}_{\ff}(0), B_{\ff}] + [A_{\ff}, B_{\ff}' - D_\tau \hat{A}_{\ff}(0)] \big) u_{\ff} \\
        &\phantom{=} + x [A_{\ff}, B_{\ff}] (\pa_x u)_{\ff} + O(x^2)\,.
    \end{align*}
\end{lemma}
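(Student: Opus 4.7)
The natural plan is to obtain the expansion of $[A,B]u$ by applying Lemma~\ref{lem:expansion_symbol} twice, first to $Bu$ and then to $A(Bu)$ (and symmetrically to $B(Au)$), and then to subtract. The only subtlety is bookkeeping of the $D_\tau \hat{A}_{\ff}(0)$ term, which is sensitive to the order $\ell$ of the input.

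More precisely, since $u \in \CI_{\ff}$ we have $\ell = 0$, so Lemma~\ref{lem:expansion_symbol} applied to $B$ gives
\begin{equation*}
    x^{r_2}(Bu) = B_{\ff} u_{\ff} + x\bigl(B_{\ff}' u_{\ff} - D_\tau \hat{B}_{\ff}(0)\, u_{\ff} + B_{\ff} u_{\ff}'\bigr) + O(x^2),
\end{equation*}
so $Bu \in x^{-r_2}\CI_{\ff}$ with $(Bu)_{\ff} = B_{\ff} u_{\ff}$ and $(Bu)'_{\ff} = B_{\ff}' u_{\ff} - D_\tau \hat{B}_{\ff}(0) u_{\ff} + B_{\ff} u_{\ff}'$. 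I would then apply Lemma~\ref{lem:expansion_symbol} to $A$ acting on $Bu$, now with $\ell = r_2$, to obtain
\begin{equation*}
    x^{r_1 + r_2}(AB)u = A_{\ff} B_{\ff} u_{\ff} + x\bigl(A_{\ff}' B_{\ff} u_{\ff} + (r_2-1)D_\tau \hat{A}_{\ff}(0) B_{\ff} u_{\ff} + A_{\ff} (Bu)'_{\ff}\bigr) + O(x^2).
\end{equation*}
The analogous expansion for $BA$ is obtained by swapping $A \leftrightarrow B$ and $r_1 \leftrightarrow r_2$.

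The last step is to subtract the two expansions and collect the commutators. The leading term is immediately $[A_{\ff}, B_{\ff}] u_{\ff}$; the coefficient of $u_{\ff}'$ comes out as $[A_{\ff}, B_{\ff}]$; and the terms containing $A_{\ff}'$ or $B_{\ff}'$ combine into $[A_{\ff}', B_{\ff}] + [A_{\ff}, B_{\ff}']$. The remaining four terms involving $D_\tau$ rearrange using the identity $-D_\tau \hat{A}_{\ff}(0)\,\cdot + \,\cdot\, D_\tau \hat{A}_{\ff}(0) = -[D_\tau \hat{A}_{\ff}(0), \cdot\,]$ and likewise for $B$, so that everything packages into the stated commutators $[A_{\ff}' - D_\tau \hat{A}_{\ff}(0), B_{\ff}] + [A_{\ff}, B_{\ff}' - D_\tau \hat{B}_{\ff}(0)]$.

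The main (and only) potential obstacle is keeping the signs and coefficients $(r_1 - 1)$, $(r_2 - 1)$ straight, and verifying that the asymmetric factors cancel correctly when subtracted; this is a direct algebraic check once the two expansions above are written down. No further microlocal input beyond Lemma~\ref{lem:expansion_symbol} and the bilinearity of the commutator is required.
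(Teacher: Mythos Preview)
Your approach is exactly the one the paper takes: apply Lemma~\ref{lem:expansion_symbol} to $Bu$ (with $\ell=0$), then to $A(Bu)$ (with $\ell=r_2$), do the same for $BAu$, and subtract. Your intermediate expansion of $x^{r_1+r_2}ABu$ matches the paper's line for line.

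However, your final algebraic claim is wrong. The asymmetric factors do \emph{not} cancel. Carrying out the subtraction of the $D_\tau$ terms explicitly gives
\[
-[D_\tau\hat A_{\ff}(0),B_{\ff}] - [A_{\ff},D_\tau\hat B_{\ff}(0)]
\;+\; r_2\,D_\tau\hat A_{\ff}(0)\,B_{\ff} - r_1\,D_\tau\hat B_{\ff}(0)\,A_{\ff},
\]
and the last two terms survive. The paper's own proof records exactly this extra contribution $r_2\,D_\tau\hat A_{\ff}(0)\,B_{\ff} - r_1\,D_\tau\hat B_{\ff}(0)\,A_{\ff}$ in its displayed formula for $[A,B]u$, and it reappears in Proposition~\ref{prop:commutator_indicial}. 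In other words, the stated lemma is missing a line; your computation, done correctly, would have detected this rather than confirmed it. The place to be careful is precisely where you wave your hands: the coefficients $(r_2-1)$ and $(r_1-1)$ are genuinely asymmetric in $r_1,r_2$, and only the $-1$ parts combine into commutators.
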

\begin{proof}
    We use Lemma~\ref{lem:expansion_symbol} to see that
    \begin{align*}
        x^{r_1 + r_2} A B u(x, z)
        &= A_{\ff} B_{\ff} u_{\ff} + x \left( A_{\ff}' B_{\ff} + (r_2 - 1) D_\tau \hat{A}_{\ff}(0) B_{\ff} + A_{\ff} B_{\ff}' - A_{\ff} D_\tau  \hat{B}_{\ff}(0)\right) u_{\ff} \\
        &\phantom{=} + x A_{\ff} B_{\ff} u'_{\ff} + O(x^2)\,.
    \end{align*}
    Therefore,
    \begin{align*}
        [A, B] u(x, z) &= [A_{\ff}, B_{\ff}] + x \big( [A_{\ff}' - D_\tau \hat{A}_{\ff}(0), B_{\ff}] + [A_{\ff}, B_{\ff}' - D_\tau \hat{A}_{\ff}(0)] \big) u_{\ff} \\
        &\phantom{=} + x \left( r_2 D_{\tau} \hat{A}_{\ff}(0) B_{\ff} - r_1 D_{\tau} \hat{B}_{\ff}(0) A_{\ff} \right) u_{\ff} \\
        &\phantom{=} + x [A_{\ff}, B_{\ff}] u'_{\ff} + O(x^2)\,.
    \end{align*}
\end{proof}
\begin{proposition}\label{prop:commutator_indicial}
    If $A \in \Psitsc^{m_1,r_1}, B \in \Psitsc^{m_2, r_2}$ with $[\ffsymb{r_1}(A), \ffsymb{r_2}(B)] \equiv 0$, then
    \begin{align*}
        \ffsymb{r_1 + r_2 - 1}([A,B])(\tau) &= [\hat{A}'_{\ff}(\tau) - D_\tau\hat{A}_{\ff}(\tau), \hat{B}_{\ff}(\tau)] + [\hat{A}_{\ff}(\tau), \hat{B}'_{\ff}(\tau) - D_\tau\hat{B}_{\ff}(\tau)] \\
        &\phantom{=} + r_2 D_{\tau} \hat{A}_{\ff}(\tau) \hat{B}_{\ff}(\tau) - r_1 D_{\tau} \hat{B}_{\ff}(\tau) \hat{A}_{\ff}(\tau)\,.
    \end{align*}
\end{proposition}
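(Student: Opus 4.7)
The plan is to reduce the general $\tau$ statement to the $\tau = 0$ case already established in Lemma~\ref{lem:expansion_commutator} by applying that lemma to conjugated operators. For fixed $\tau \in \RR$, set
\begin{align*}
  \widetilde{A}_{\tau} \coloneqq e^{i\tau/x} A \, e^{-i\tau/x} \in \Psitsc^{m_1, r_1}, \qquad
  \widetilde{B}_{\tau} \coloneqq e^{i\tau/x} B \, e^{-i\tau/x} \in \Psitsc^{m_2, r_2},
\end{align*}
which remain $\tsc$-operators of the same orders by the fact quoted just before \eqref{eq:ff boundary symbol}. Since $x^k$ and $e^{\pm i\tau/x}$ are both functions of $x$, they commute, so
\begin{align*}
  x^{r_1+r_2-1} [\widetilde{A}_{\tau}, \widetilde{B}_{\tau}] = e^{i\tau/x}\, x^{r_1+r_2-1}[A,B]\, e^{-i\tau/x},
\end{align*}
and hence by the definition \eqref{eq:general ff boundary symbol} we have
\begin{align*}
  \bigl(x^{r_1+r_2-1} [\widetilde{A}_{\tau}, \widetilde{B}_{\tau}]\bigr)_{\ff} = \ffsymb{r_1+r_2-1}([A,B])(\tau).
\end{align*}
Thus it suffices to compute the boundary restriction of $x^{r_1+r_2-1}[\widetilde{A}_\tau, \widetilde{B}_\tau]$ via the $\tau = 0$ expansion.

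Applying Lemma~\ref{lem:expansion_commutator} to the pair $(\widetilde{A}_\tau, \widetilde{B}_\tau)$ gives the desired $x$-expansion. Writing $A = \Op_L(a(x,z,\tau',\zeta))$, the left symbol of $\widetilde{A}_\tau$ is the shift $a(x,z,\tau'+\tau,\zeta)$, which yields the identifications (at $\sigma = 0$, the indicial parameter of the tilded operators)
\begin{align*}
  (\widetilde{A}_\tau)_{\ff} = \widehat{A}_{\ff}(\tau), \quad (\widetilde{A}_\tau)'_{\ff} = \widehat{A}'_{\ff}(\tau), \quad D_{\sigma} \widehat{(\widetilde{A}_\tau)}_{\ff}(0) = D_\tau \widehat{A}_{\ff}(\tau),
\end{align*}
and analogously for $B$. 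Under the hypothesis $[\ffsymb{r_1}(A)(\tau), \ffsymb{r_2}(B)(\tau)] \equiv 0$, the leading term $[(\widetilde{A}_\tau)_{\ff}, (\widetilde{B}_\tau)_{\ff}]$ of the expansion vanishes, as does the coefficient of $x(\pa_x u)_{\ff}$. Consequently $x^{r_1+r_2-1}[\widetilde{A}_\tau,\widetilde{B}_\tau]u$ is smooth down to $\ff$, and its restriction there is precisely the $O(x)$-coefficient of the lemma applied to $u_\ff$. Substituting the identifications above reproduces the formula claimed.

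The only step requiring care is the asymmetric piece $r_2 D_\tau \widehat{A}_{\ff}(\tau) \widehat{B}_{\ff}(\tau) - r_1 D_\tau \widehat{B}_{\ff}(\tau) \widehat{A}_{\ff}(\tau)$, which is not explicit in the compact formulation of Lemma~\ref{lem:expansion_commutator} but arises in the course of its proof via Lemma~\ref{lem:expansion_symbol}. Specifically, Lemma~\ref{lem:expansion_symbol} applied to the composition $Bu \in x^{-r_2}\CI_{\ff}$ (so $\ell = r_2$) and to $Au \in x^{-r_1}\CI_{\ff}$ (so $\ell = r_1$) contributes the factors $(r_2 - 1)$ and $(r_1 - 1)$ in front of $D_\tau \widehat{A}_{\ff}(0) B_{\ff}$ and $D_\tau \widehat{B}_{\ff}(0) A_{\ff}$ respectively; after subtraction $AB - BA$ and regrouping, these combine with the commutators $[D_\tau \widehat{A}_{\ff}(0), B_{\ff}]$ and $[A_{\ff}, D_\tau \widehat{B}_{\ff}(0)]$ already present in the bracketed expressions of Lemma~\ref{lem:expansion_commutator} to produce both the commutator terms and the asymmetric tail of the proposition. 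I expect this bookkeeping to be the main subtlety; all remaining content is a translation of the fiber variable by $\tau$.
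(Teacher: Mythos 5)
Your proposal is correct and takes essentially the approach the paper implicitly intends: the paper gives no separate proof of Proposition~\ref{prop:commutator_indicial}, treating it as a direct consequence of Lemma~\ref{lem:expansion_commutator}, and the conjugation by $e^{i\tau/x}$ is exactly the mechanism that reduces the general-$\tau$ statement to the $\tau=0$ expansion. You are also right to flag that the displayed formula in Lemma~\ref{lem:expansion_commutator} as \emph{stated} omits the asymmetric tail $r_2 D_\tau\hat A_{\ff}(0) B_{\ff} - r_1 D_\tau\hat B_{\ff}(0) A_{\ff}$ (and contains a misprint with two copies of $D_\tau\hat A_{\ff}(0)$ where the second should be $D_\tau\hat B_{\ff}(0)$); those terms do appear in the displayed chain inside the lemma's proof, so your bookkeeping via Lemma~\ref{lem:expansion_symbol} with $\ell = r_2$ and $\ell = 0$ respectively is the correct way to recover them, and it lands on the stated formula once one uses the centrality hypothesis to kill the $[A_\ff, B_\ff]$ terms (both the $O(1)$ piece and the coefficient of $x\,u'_{\ff}$).
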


If $Q = \Op_L(x^{-r} q)$ for $q \in \Ssc^{0,0}$ supported in a neighborhood of $\NP$ and $P_0 = D_t^2 - (\Delta + m^2)$, then
\begin{align*}
    \scnormsymbz(i[P_0, Q]) = x^{r-1} \Hamf(x^{-r} q)|_{x = 0} &= -2 x^{r} \left( \tau x \pa_x + (\zeta + \tau y) \pa_y \right) x^{-r} q|_{x = 0} \\
    &= 2 r \tau q|_{x = 0} - 2 (\zeta + \tau y) \pa_y q|_{x = 0}\,.
\end{align*}
At the north pole we have that
\begin{align*}
    \scnormsymbz(i[P_0, Q])|_{\NP} = 2 r \tau q(0,0,\tau,\zeta) - 2 \zeta \pa_y q(0,0,\tau,\zeta)\,.
\end{align*}
We can recover this from Lemma~\ref{lem:expansion_commutator} as follows:
we have that
\begin{align*}
    \hat{Q}_{\ff}(\tau) &= \Op_L(q(0,0,\tau,\zeta))\,,\\
    \hat{Q}_{\ff}'(\tau) &= \Op_L(\pa_x q(0,0,\tau,\zeta) + z \pa_y q(0,0,\tau, \zeta))\,,\\
    (\widehat{P_0})_{\ff}(\tau) &= \tau^2 - (\Delta + m^2)\,,\\
    (\widehat{P_0})_{\ff}'(\tau) &= 0\,.
\end{align*}
Since the indicial operator of a $\sc$-operator is a Fourier multiplier, we have that \[[(\widehat{P_0})_{\ff}(\tau_0), \hat{Q}_{\ff}(\tau_0)] = 0.\]
We have that
\begin{align*}
    \ffsymb{r - 1}(i[P_0,Q])(\tau) &= [\tau^2 - (\Delta + m^2), \hat{Q}'_{\ff}(\tau)] + 2 r \tau \hat{Q}_{\ff}(\tau) \\
    &= - 2 \Op_L( \zeta\pa_y q(0,0,\tau, \zeta)) + 2 r \tau \Op_L(q(0,0,\tau,\zeta))\,.
\end{align*}
This is a special case of the general identity
\begin{align*}
    \ffsymb{r_1 + r_2 - 1}(i[A, B])(\tau) = \Op_L( \scnormsymbz(x^{r_1 + r_2} H_a(b))|_{\NP})
\end{align*}
for $A = \Op_L(a) \in \Psisc^{m_1, r_1}$ and $B = \Op_L(b) \in \Psisc^{m_2, r_2}$.

\subsection{The indicial operator as a semiclassical scattering
  operator}\label{sec:norm fam}

Two features of semiclassical scattering operators will be crucial in
our work below, namely, we will need, for $A \in \Psitsc^{m,r}(\RR^{n + 1})$.  
\begin{itemize}
\item  to understand how the semiclassical scattering principal
  symbol of $\ffsymb{r}(A)$ can be seen as a function on certain parts
  of $\Ttsco^*[X;C]$, and
\item to recall mapping properties of $\ffsymb{r}(A)$ on semiclassical
  Sobolev spaces.
\end{itemize}
The former is used in formulating ellipticity of $A$ at $\pm \infty
\in \Wperpo$, while the latter is used crucially in that part of the
propagation estimates in Sections~\ref{sec:3sc_propagation} and
Section~\ref{sec:3sc_radial} below in which the indicial operator of the relevant
commutator for the free Klein-Gordon operator $P_0$ is compared to that of $P_V$.

For background on semiclassical analysis we refer to
\cite{DS1999,Zworski_semiclassical}.  Our work below follows more
closely the discussion of smooth semiclassical pseudodifferential
operators found in \cite{Vasy_asymp_hyper_high-energy}.

We work with semiclassical-scattering PsiDO's of order
$m,l,r$, specifically with (classical, smooth) elements in
$\Psisclsc^{m,r,k}$, which are
(by definition) semiclassical quantizations of symbols
$\tilde a \in h^{-k} \CI([0, 1)_h \times S^{m,r}_{\sc}(\RR^n))$, i.e. operators
\[
B(h) = \Op_{\scl}(\tilde a) = \frac{1}{h^{n/2}} \int e^{i \frac{z - z'}{h} \cdot
  \mu} \tilde a (z, \mu ;
h) d\mu
\]
where $h^{k} a (z, \mu ; h)$ is a family of scattering symbols of
order $m,r$ that is smooth in $h \in [0,1)$.

Given $A \in \Psitsc^{m,r}$, recalling $\aff$ from \eqref{eq:ff
  restrict}, the fact that, for
\[
h = \pm \tau,\  \mu = \zeta / \tau, \ \mbox{ as } \tau \to \pm \infty.
\]
the symbol
\begin{equation}
\tilde a (z, \mu ; h) = \aff(z, 1/h, \mu / h)\label{eq:scl rescaling
  norm op}
\end{equation}
is semiclassical can be seen by relating
the radial compactification $\overline{\RR_{\tau,
    \zeta}^{n+1}}$ to the semiclassical symbol space by blowing up the
$\tau = 0$ equator of $\partial \overline{\RR_{\tau,
    \zeta}^{n+1}}$.

Here, given $A \in \Psitsc^{m,r}$, the value of the principal symbol $\tau = + \infty$
will be the semiclassical principal symbol of $\Op_L(\aff) = \hat{A}_{\ff, r}(\tau)$,
i.e.\ the function of $z,\mu = \zeta / \tau$ given by the limit
$\lim_{\tau \to + \infty, \mu = \zeta / \tau}  \ang{\tau, \zeta}^{-m} \aff(z, \tau, \zeta)$.  Provided the restriction
makes sense (which it does for classical symbols) this
is exactly
\begin{align*}
    \ang{\tau, \zeta}^{-m} x^r a \rvert_{h = 1/\tau = 0 = x, \tau > 0}(z, \mu)
\end{align*}
This is just the restriction to the interior of the upper half-sphere in fiber infinity
over $\ff$.  Thus, since $\ang{\tau, \zeta}^{-m} x^r a$ is a smooth function on the whole of
$\Ttsco^*[X;C]$, if we define
\begin{equation}\label{eq:upper half sphere}
    UH_+ = \Stsc_{\ff}^*[X;\poles] \cap \cl(\{ \tau \ge 0 \}),
\end{equation}
we obtain the following:

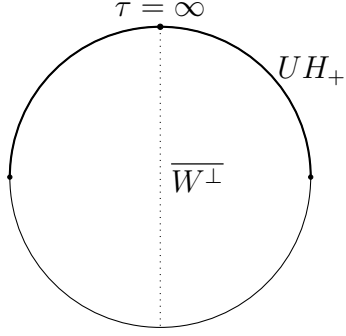
\begin{figure}
    \centering
    \begin{tikzpicture}[scale=2]
        \draw[thick] (1,0) arc [start angle=0, end angle=180, x radius=1cm, y radius=1cm];
        
        \draw (0,0) circle (1cm);
        \filldraw (0,1) circle (0.5pt) node [above] {$\tau = \infty$};
        \filldraw (-1cm,0) circle (0.4pt);
        \filldraw (1cm,0) circle (0.4pt);
        \draw[dotted] (0,1) -- (0,-1);
        \draw (0.707, 0.707) node [right] {$UH_+$};
        \draw (0,0) node [right] {$\Wperpo$};
    \end{tikzpicture}
    \caption{
    The upper half sphere $UH_+$.
    }
    \label{fig:UHplus}
  \end{figure}

\begin{lemma}\label{thm:normal is sclsc}
  Let $\Op_L(a) = A \in \Psitsc^{m,r}$ and $\aff = (x^{r} a) \rvert_{\ff}$.  Then
  $\tilde a$ in \eqref{eq:scl rescaling
  norm op} lies in the semiclassical symbol space $h^{-m} \CI([0, 1)_h
\times S^{m,0})$.  Thus, $\ffsymb{r}(A)(\tau) \in \Norop^{m,0,m}$, the space defined in
\eqref{eq:norop}.

For fixed $h = 1/\tau_0 = 0$, the scattering symbol $\scprinsymb{m,0}
(\ffsymb{r}(A)(\tau_0))$ satisfies
\begin{align*}
    \scfibsymb{m,0}(\ffsymb{r}(A)(\tau_0)),&= \ang{\tau_0, \zeta}^{-m} \aff \rvert_{\ff \times \fibeq} \\
    \scnormsymb{m, 0} (\ffsymb{r}(A)(\tau_0)) & = \ang{\tau_0, \zeta}^{-m} \aff \rvert_{\pa \ff \times \{ \tau = \tau_0 \}}.
\end{align*}
The semiclassical symbol at $h = 1/\tau = 0$ satisfies
\begin{equation}
    \sclsymb{h = 1/\tau}(\ffsymb{r}(A)(1/h)) =
    \ang{\tau, \zeta}^{-m} \aff  \rvert_{UH_+}.  \label{eq:scl symb of indicial}
\end{equation}
and similarly for $h = - 1/\tau$ and $UH_-$.

\end{lemma}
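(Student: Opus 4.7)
The plan is to read off the semiclassical structure of $\aff(z,\tau,\zeta)$ near $\tau \to \pm\infty$ directly from the smoothness of the rescaled symbol on the compactified fiber space. First, by the definition \eqref{eq:symbol space} of $\Stsc^{m,r}$ and the restriction $\aff = (x^r a)|_{\Ttsco^*_{\ff}[X;C]}$, the function $\ang{\tau,\zeta}^{-m}\aff(z,\tau,\zeta)$ lies in $\CI(\ff \times \overline{\RR^{n+1}_{\tau,\zeta}})$. In particular it is classical in the fiber variable, so admits a smooth extension to the radial compactification of $\RR^{n+1}_{\tau,\zeta}$.

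Second, I would pass to the semiclassical coordinates. In the half-space $\tau > 0$, introduce $h = 1/\tau$ and $\mu = \zeta/\tau$. These, together with $h$, form smooth coordinates on the blow-up of the equator $\{\tau = 0\} \cap \partial\overline{\RR^{n+1}_{\tau,\zeta}}$ near the ``north'' cap $UH_+$ defined in \eqref{eq:upper half sphere}; indeed $h \geq 0$ is a boundary defining function for $UH_+$ in these coordinates, and smoothness on $\overline{\RR^{n+1}_{\tau,\zeta}}$ translates into joint smoothness in $(\mu, h)$ down to $h = 0$ with $|\mu|$ bounded. Since $\ang{\tau,\zeta} \sim \tau \ang{\mu} = h^{-1}\ang{\mu}$ in this region, we obtain
\begin{equation*}
\tilde a(z, \mu; h) = \aff(z, 1/h, \mu/h) = h^{-m} \ang{\mu}^{m}\, b(z, \mu; h), \qquad b \in \CI\bigl([0,1)_h \times \ff \times \overline{\RR^n_\mu}\bigr).
\end{equation*}
The $\ang{\mu}^{m}$ factor together with the standard scattering symbol estimates for $b(\cdot,\cdot;h)$ (uniform in $h$ thanks to classical smoothness at fiber infinity) shows that $\tilde a \in h^{-m}\CI([0,1)_h; S^{m,0}_{\sc}(\RR^n))$, i.e.\ $\ffsymb{r}(A)(\tau) = \Op_{L,z}(\aff(z,\tau,\cdot))$ lies in $\Norop^{m,0,m}$, by the definition \eqref{eq:norop}. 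The analogous argument with $h = -1/\tau$ as $\tau \to -\infty$ handles $UH_-$.

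Third, for the identification of the two scattering principal symbols of $\ffsymb{r}(A)(\tau_0)$ at a finite $\tau_0$, the symbols $\scfibsymb{m,0}$ and $\scnormsymb{m,0}$ are simply the restrictions of $\ang{\tau_0,\zeta}^{-m}\aff(z,\tau_0,\zeta)$ to $\ff \times \fibeq$ and $\pa\ff \times \{\tau = \tau_0\}$, respectively, by the left-quantization formula \eqref{eq:norm fam explicit} from Lemma~\ref{thm:indicial family explicit} and the definition of the scattering symbol maps in Section~\ref{sec:scattering-calc}. Finally, the semiclassical principal symbol of $\ffsymb{r}(A)(1/h)$ at $h = 0$ is the restriction of $h^{m}\tilde a(z, \mu; h)$ to $h = 0$, which by the coordinate computation above equals $\ang{\tau,\zeta}^{-m}\aff|_{UH_+}$, giving \eqref{eq:scl symb of indicial}.

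The main technical point is really the first step: checking that the change of variables $(\tau,\zeta) \mapsto (h, \mu) = (1/\tau, \zeta/\tau)$ genuinely resolves the singularity of $\aff$ at $\tau = \infty$ into joint smoothness in $(z,\mu,h)$ with the claimed $h^{-m}$-weighted scattering symbol behavior. This amounts to the compatibility between the radial compactification of $\RR^{n+1}_{\tau,\zeta}$ (on which $\ang{\tau,\zeta}^{-m}\aff$ is smooth) and the blown-up space on which semiclassical-scattering symbols are naturally defined, an identification which, once set up, makes all the symbolic estimates routine.
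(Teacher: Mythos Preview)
Your proposal is correct and follows essentially the same approach as the paper: both start from the smoothness of $\ang{\tau,\zeta}^{-m}\aff$ on $\ff \times \overline{\RR^{n+1}_{\tau,\zeta}}$, pull back to the blow-up of the fiber equator, and use that near $UH_+$ this blow-up is parametrized by $(h,\mu) = (1/\tau,\zeta/\tau)$ together with $\ang{\tau,\zeta} \sim h^{-1}\ang{\mu}$ to read off the $h^{-m}\CI([0,1)_h; S^{m,0}_{\sc})$ membership. Your write-up is in fact slightly more explicit than the paper's in spelling out the identifications of the scattering and semiclassical principal symbols as boundary restrictions, which the paper leaves to the reader.
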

\begin{proof}
    The function $\aff$ satisfies $\ang{\tau, \zeta}^{-m} \aff \in
C^\infty(\ff \times \overline{\RR^{n + 1}_{\tau, \zeta}})$.  In
regions of the form $- C < \tau < C$, the lemma asserts only that
$\ffsymb{r}(A)(\tau) = \Op_{L,z}(\aff(z,\tau,\zeta))$ is a scattering operator of
order $m$, which follows since on each slice $\tau = 0$ the
restriction of $a_0$ is scattering of order $m$.

For regions of unbounded $\tau$, we consider the blown up space
\begin{equation}
\beta_{\scl} \colon [\overline{\RR_{\tau, \zeta}^{n+1}}; \fibeq] \lra
\overline{\RR_{\tau, \zeta}^{n+1}}.\label{eq:fibeq blow up}
\end{equation}
In this manifold with corners, the set $\mathrm{cl} (\beta_{\scl}^{-1} \{ \tau > 1 \})$ is
diffeomorphic to $[0, 1)_{1/\tau} \times \overline{\RR^{n}_{\mu = \zeta / \tau}}$
and thus smooth functions on the blown-up space define
classical semiclassical scattering operators in $h = 1/\tau$ as
$\tau \to \infty$.
Now, $\beta_{\sc}^{-1}(\ang{\tau, \zeta}^{-m} a_0)$ is smooth on the whole of $[\overline{\RR_{\tau, \zeta}^{n+1}};
\fibeq] $ (because pullbacks of
smooth functions via a blow down map are smooth) and thus the
restriction to $\tau > 1$ is smooth.  But in that region $\ang{\tau, \zeta}^{-1} \sim \ang{\mu}^{-1}(1/\tau)$,
so in face $\tau^{-m} \ang{\mu}^{-m} a_0$ is smooth, which is
what we wanted.

  The same goes for $h = - 1/\tau$ as
  $\tau \to - \infty$, i.e.\
  $\mathrm{cl} (\beta_{\scl}^{-1} \{ \tau < -1 \}) = [0, 1)_{- 1/\tau}
  \times \overline{\RR^{n}_{\mu = \zeta / \tau}}$.  
\end{proof}

Given the lemma, it makes sense to extend $\ffsymb{r}(A)(\tau)$ to
$h = 1 /\tau = 0$ as the semiclassical symbol of $\ffsymb{r}(A)$ at
$h = 0$.  Indeed, we define $\Wperpo$ to be the radial
compactification of $W^\perp$.  Thus, over $\NP$, it is
\begin{equation}
  \label{eq:Wperpo def}
\overline{\mathbb{R}_\tau} = \mathbb{R}_\tau \cup \{ \pm \infty \}.
\end{equation}
We then extend $\ffsymb{r}(A)$ to $\Wperpo$ be defining
\begin{equation}
  \label{eq:ff symb infty symbol}
  \ffsymb{r}(A)(\pm \infty) \coloneqq \ang{\tau,
  \zeta}^{-m} \aff  \rvert_{UH_\pm}
\end{equation}
This leads to a natural notion of ellipticity for $A$ at $+ \infty
\in \Wperpo$, namely that $\ffsymb{r}(A)$ be semiclassically
elliptic at $h = 1/\tau = 0$, i.e.\ its semiclassical symbol is
nowhere zero.  See Definition \ref{def:elliptic set} and after.

We recall that the semiclassical Sobolev space is defined as (cf. Zworski~\cite[Section 8.3]{Zworski_semiclassical})
\begin{align}\label{eq:scl sob spaces}
    \Sobscl^{s,\ell}(\mathbb{R}^n) &\coloneqq \curl{ u \in \schwartz'(\RR^n) :
                       \ang{\zeta}^s \cF_h (\ang{z}^\ell  u) \in L^2(\RR^n) }\,,\\
    \norm{u}_{\Sobscl^{s,\ell}}^2 &\coloneqq (2\pi h)^{-n} \int \ang{\zeta}^{2s} \abs{\cF_h (\ang{z}^{\ell} u)(\zeta)}^2 \,d\zeta\,.
\end{align}
Here,
\begin{align*}
    (\cF_h u)(\zeta) \coloneqq \int e^{-\frac{i}{h} \ang{z, \zeta}} u(z) \, dz
\end{align*}
denotes the semiclassical Fourier transform.

Semiclassical scattering operators are bounded on semiclassical Sobolev spaces:
\begin{proposition}\label{prop:semiclassical_bounded}
    If $A \in \Psisclsc^{m,r,k}$, then extends as a bounded operator
    \begin{align*}
        A : h^k \Sobscl^{m+s,r+\ell} \lra \Sobscl^{s,\ell}\,.
    \end{align*}
\end{proposition}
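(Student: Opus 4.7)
The plan is to reduce the statement to the model case where $A \in \Psisclsc^{0,0,0}$ and the target/domain are both $\Sobscl^{0,0} = L^2$, and then to handle this case by a standard square-root argument. In detail:

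First, I would extract the factor of $h^k$: if $A \in \Psisclsc^{m,r,k}$ then $h^k A \in \Psisclsc^{m,r,0}$, so it suffices to prove boundedness $\Psisclsc^{m,r,0} \ni B : \Sobscl^{m+s, r+\ell} \to \Sobscl^{s,\ell}$. Next, I would reduce the differential/decay orders. Because $\Lambda_{s,\ell} = \Op_{\scl}(\ang{\mu}^{s}\ang{z}^{\ell}) \in \Psisclsc^{s,\ell,0}$ is globally semiclassically-scattering elliptic, the symbolic calculus (Proposition \ref{prop:principal_symbol_map}-type statements, applied in the semiclassical-scattering setting, together with Neumann iteration in $h$ for small $h$ and the classical inverse for $h$ bounded away from $0$) produces a parametrix $\Lambda_{s,\ell}^{-1} \in \Psisclsc^{-s,-\ell,0}$. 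Since by definition $\Sobscl^{s,\ell} = \Lambda_{s,\ell}^{-1}(L^{2})$ with equivalent norms, the desired bound for $B$ is equivalent to $L^{2}$-boundedness of $\Lambda_{s,\ell} B \Lambda_{m+s,r+\ell}^{-1} \in \Psisclsc^{0,0,0}$. Thus everything reduces to showing that every $A \in \Psisclsc^{0,0,0}$ defines a bounded operator on $L^{2}(\RR^{n})$ with norm uniform in $h \in [0,1)$.

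For the $L^2$ case I would run the standard square-root / Hörmander trick uniformly in $h$. Writing $A = \Op_{\scl}(\tilde a)$ with $\tilde a \in \CI([0,1)_{h}; S^{0,0}_{\sc}(\RR^{n}))$, the symbol is bounded uniformly in $(z,\mu,h)$, so one chooses $C > \sup |\tilde a|^{2}+1$ and takes $\tilde b = (C-|\tilde a|^{2})^{1/2}$; this is again a smooth family in $h$ of classical scattering symbols of order $(0,0)$ because $C-|\tilde a|^{2} \geq 1$. Quantizing and applying the semiclassical-scattering composition formula gives
\[
C\,\Id - A^{*}A = B^{*}B + h R_{1}, \qquad R_{1}\in \Psisclsc^{-1,-1,0},
\]
with $B = \Op_{\scl}(\tilde b)$. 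An inductive refinement $B_{N} = \Op_{\scl}(\tilde b_{N})$, adjusting $\tilde b$ by terms in $h S^{-1,-1}$, kills the remainder to arbitrary order in $h$ and $(\mu,z)$: at stage $N$ one has $C\,\Id - A^{*}A = B_{N}^{*}B_{N} + h^{N} R_{N}$ with $R_{N} \in \Psisclsc^{-N,-N,0}$, and $R_{N}$ is uniformly bounded on $L^{2}$ because its kernel is Schwartz with seminorms bounded uniformly in $h$. Pairing against $u \in \schwartz(\RR^{n})$ gives
\[
\|Au\|_{L^{2}}^{2} = \ang{(A^{*}A)u,u} \le C\|u\|_{L^{2}}^{2} - \|B_{N}u\|_{L^{2}}^{2} + C' h^{N}\|u\|_{L^{2}}^{2},
\]
yielding uniform boundedness of $A$ on $L^{2}$; density of $\schwartz$ in $\Sobscl^{0,0}$ then extends the estimate.

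The main technical point, and the only step that really requires care, is verifying that the symbolic calculus used in the square-root argument is uniform in $h$ down to $h = 0$ and simultaneously has the classical scattering decay needed to keep the remainders in $\Psisclsc^{-N,-N,0}$ (rather than merely $\Psisclsc^{-N,-N,-j}$ for some loss $j$). This is guaranteed because, by definition, the symbol class $h^{-k}\CI([0,1)_{h}; S^{m,r}_{\sc})$ is closed under products and asymptotic summation, and the composition formula for $\Op_{\scl}$ specializes, in the scattering setting on $\RR^{n}$, to the standard Kohn–Nirenberg/Weyl expansion — so no new boundary analysis is needed beyond what is already contained in the definitions of Section~\ref{sec:norm fam}. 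Once this uniform calculus is in hand, the argument above is routine and completes the proof.
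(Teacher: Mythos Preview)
The paper states this proposition without proof, treating it as a standard fact about semiclassical scattering pseudodifferential operators (it is introduced with ``Semiclassical scattering operators are bounded on semiclassical Sobolev spaces:'' and followed immediately by the next recalled fact, Proposition~\ref{prop:elliptic-invertible}). Your argument---reduction to the $(0,0,0)$ case by stripping the $h^{k}$ and conjugating by the elliptic order-shifting operators $\Lambda_{s,\ell}$, followed by the H\"ormander square-root trick carried out uniformly in $h$---is exactly the standard proof one would supply, and it is correct; the only minor comment is that $\Lambda_{s,\ell}$ here can be taken as the explicit multiplication-and-multiplier operator, so no parametrix construction is needed to invert it.
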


We also recall that for semiclassical operators ellipticity implies invertibility since we can bound the error term in the
parametrix construction by the semiclassical parameter and then use a Neumann series to show invertibility (see Zworski~\cite[Theorem 4.29]{Zworski_semiclassical}).
\begin{proposition}\label{prop:elliptic-invertible}
    Let $A = A(h) \in \Psisclsc^{m,r,k}$ be elliptic, then there exists $h_0 > 0$ such that for all $h \in (0,h)$, $A(h)^{-1}$ exists as a bounded operator
    \begin{align*}
        A(h)^{-1} : \Sobscl^{s,\ell} \lra h^k \Sobscl^{m+s,r+\ell}\,.
    \end{align*}
\end{proposition}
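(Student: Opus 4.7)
The plan is to follow the standard semiclassical parametrix construction, adapted to the scattering setting. Since $A(h)$ is elliptic in $\Psisclsc^{m,r,k}$, its principal symbol is bounded below, so we can construct a parametrix $B_0(h) \in \Psisclsc^{-m,-r,-k}$ whose principal symbol is the inverse of that of $A$. By the composition formula for semiclassical-scattering operators together with the principal symbol calculus (which drops a full order in the semiclassical parameter, in the differential order, and in the spatial decay order), we obtain
\begin{equation*}
  A(h) B_0(h) = \Id - R_1(h), \qquad B_0(h) A(h) = \Id - R_2(h),
\end{equation*}
with $R_j(h) \in \Psisclsc^{-1,-1,-1}$.

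The next step is to iterate the construction: using the standard asymptotic summation in the semiclassical calculus, I would build $B(h) \sim B_0(h) + B_0(h) R_1(h) + B_0(h) R_1(h)^2 + \cdots$ to produce a refined parametrix satisfying
\begin{equation*}
  A(h) B(h) = \Id - R_1'(h), \qquad B(h) A(h) = \Id - R_2'(h),
\end{equation*}
where $R_j'(h) \in h^{N}\Psisclsc^{-N,-N,-N}$ for arbitrarily large $N$. (For invertibility purposes, even the first-order remainder suffices; one only needs that $R_j'$ has operator norm $O(h)$ on the relevant semiclassical Sobolev spaces.) By Proposition~\ref{prop:semiclassical_bounded}, $R_j'(h)$ is bounded on $\Sobscl^{s,\ell}$ with norm $O(h)$.

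With the error term small in norm, for $h \in (0, h_0)$ with $h_0 > 0$ sufficiently small, $\Id - R_j'(h)$ is invertible on $\Sobscl^{s,\ell}$ by a Neumann series, with inverse uniformly bounded in $h$. Setting
\begin{equation*}
  A(h)^{-1} = B(h)(\Id - R_1'(h))^{-1} = (\Id - R_2'(h))^{-1} B(h),
\end{equation*}
(the two expressions agree from the two-sided parametrix identities) yields a genuine two-sided inverse. The mapping property $\Sobscl^{s,\ell} \to h^{k} \Sobscl^{m+s,r+\ell}$ then follows from the boundedness statement of Proposition~\ref{prop:semiclassical_bounded} applied to $B(h) \in \Psisclsc^{-m,-r,-k}$ together with the uniform $h$-bound on $(\Id - R_j')^{-1}$.

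The main (minor) obstacle is bookkeeping: one must be careful that the parametrix construction is carried out in symbol classes that accommodate the three orders $(m,r,k)$ simultaneously, and that the asymptotic summation can be performed in the classical smooth semiclassical-scattering calculus. Since the calculus is a standard filtered $*$-algebra with a principal symbol sequence whose kernel lowers all three orders by one, no essentially new ingredient is required beyond the composition and boundedness results already recorded above.
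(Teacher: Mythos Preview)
Your proposal is correct and is exactly the standard argument the paper has in mind: the paper does not give its own proof but simply cites Zworski~\cite[Theorem~4.29]{Zworski_semiclassical}, remarking that the parametrix error is controlled by the semiclassical parameter and then a Neumann series finishes. Your write-up spells out precisely this mechanism in the semiclassical-scattering setting, so there is nothing to add.
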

\subsection{Wavefront sets and elliptic sets}\label{sec:wavefront and elliptic}

We now develop an appropriate notion of operator wavefront set in this
context.  We note that there are distinct notions of operator
wavefront set even in the original works on the $\tsc$-calculus.
Given $A \in \Psitsc^{m,r}(X)$, there is $\WFtsc(A)$ and $\WFsc(A)$,
the former from Definition 9.1 of \cite{V2000} and the latter from
Definition 5.1 of \cite{V2001}.  Both of these do not treat fiber
infinity over $\partial X$, so our notion of wavefront set must
generalize these.  Rather than recalling both in detail, we give one
definition of operator wavefront set, which we will denote by
$\WFtsc(A)$.

First we point out the source of subtlety in the definition of
operator wavefront set in this context.  We wish in particular to have
a notion of wavefront set $\WFtsc'$ such that for all $A, B \in \Psitsc$ we have that
\begin{equation} \label{eq:wavefront product type}
    \WFtsc'(AB) \subset   \WFtsc'(A) \cap   \WFtsc'(B).
\end{equation}
The issue arises at points
$\alpha \in \Ttsco [X;C]$ lying over the
intersection of $\mf$ and $\ff$, that is,
$\alpha \in \Ttsc^*_{\mf \cap \ff}[X; C]$.  A natural notion of
wavefront may assert that $\alpha$ is \emph{not} in the wavefront set
of $\Op_L(a) = A$ if $a$ vanishes to infinite order in a neighborhood
of $\alpha$.  But any neighborhood $U$ of $\alpha$ in $\Ttsco$ has
that $U \cap \Ttsco^*_{\ff} [X;C]$ is non-empty and open.  But the
global nature of $\ffsymb{r}(A)$ means that the symbol of $A^2$ is not
necessarily trivial in $U \cap \Ttsco^*_{\ff} [X;C]$ if $a$ is.

To avoid this issue we essentially do not include  points in $\mf \cap
\ff$ in the operator wavefront set.  Rather, as we describe in detail now, away from $\poles$ we use
the scattering definition of $\WFtsc'$ and over $\ff$ only look at $\tau$
levels.  We define
\begin{equation} \label{eq:compressed cotan}
    \Tdoto^* X = \left( \Tsco^* X \setminus \Tsco^*_{\poles} X \right) \cup \Wperpo,
\end{equation}
the compactification of the compressed scattering cotangent bundle
from Section 5 of \cite{V2001}.  In that paper, interpreted in our
context, defines the compressed scattering cotangent bundle to be
\begin{align*}
    \Tdot^* X = \left( \Tsc^* X \setminus \Tsc^*_{\poles} X \right)
\cup W^\perp,
\end{align*}
i.e.\ not compactified.  This space has a mapping
\begin{equation}
\pi^\perp \colon \Tsc^* X \longrightarrow \Tdot^* X\label{eq:pi perp}
\end{equation}
which is
identity everywhere except on $T_\poles^* X$ where it is projection
onto $W^\perp$ (which is just the mapping $(\tau, \zeta) \mapsto \tau$
on the fiber.)

We now define the operator wavefront set $\WFtsc'(A)$ of a
$\tsc$-PsiDO $A$.  We follow \cite{V2001} here as opposed to
\cite{V2000} here as there are two different definitions, and we
indicate the difference.  Given $A \in \Psitsc^{m,r}$, 
$\prinsymb{m,r}(A)$ is defined as a function on the set
from \cite[Eq.\ 9.1]{V2000}
\begin{align}\label{eq:square analogue}
    \Ctsc \coloneqq \Stsc^*[X;C] \cup \Ttsco^*_{\mf}[X;C] \cup \Wperpo\,.
\end{align}
Its value on these three components are exactly the three components of
$\prinsymb{m,r}(A)$.  However, as described above, points at the
boundary of $\Ttsco^{*}_{\ff} X$ are problematic when considered as
part of the operator wavefront set.  Thus we instead define what is
effectively the locus in $\Tdoto^{*} X$ which avoids those points in
$\Ctsc$, namely
\begin{align}\label{eq:square analogue dot}
    \Ctscd \coloneqq \Ssc^{*}_{X \setminus \poles} X \cup
  \Tsco^{*}_{\pa X \setminus \poles} X \cup \Wperpo
\end{align}
and use this to define both $\WFtsc'$ and $\Elltsc$ below.

Since $\WFtsc'(A)$ will specify points in $\Ttsco^*[X; C]$ near which
the left symbol $a$ is rapidly decreasing, we need a mechanism for
relating subsets of $\Ctscd$ to subsets of $\Ttsco^*[X; C]$.  This
will work by associating $\Tsco^*X \setminus \Tsco_C^*X$ naturally to
its image via the blow-down $\beta_C$, while points $\Wperpo$ will
correspond to the natural $\tau$ slices over $\ff$.

Thus, we define
\begin{align}\label{eq:gamma tsc easy}
\gamma_{\tsc} : \Ctscd \lra
\mathcal{P}(\pa \Ttsco^* [X;C])
\end{align}
as 
\begin{align*}
    \gamma_{\tsc}(p) &= \{p\}&{} &\mbox{ for } p \in  \Ssc^{*}_{X \setminus \poles} X \cup
  \Tsco^{*}_{\pa X \setminus \poles} X\,, \\
    \gamma_{\tsc}(\tau) &= \beta_{C}^{-1} (\pi^\perp)^{-1}\{\tau\}&{} &\mbox{ for } \tau \in W^\perp\,,\\
    \gamma_{\tsc}(\pm \infty) &= UH_{\pm}&{} &\mbox{ for } \pm \infty \in \partial \Wperpo\,.
\end{align*}
By abuse of notation, for a set $S \subset \Ctscd$ we write
\begin{align}\label{eq:gamma_tsc}
    \gamma_{\tsc}(S) \coloneqq \bigcup_{p \in S} \gamma_{\tsc}(p)\,.
\end{align}
We used above that $\Ssc^{*}_{X \setminus \poles} X \cup
  \Tsco^{*}_{\pa X \setminus \poles} X$ is naturally identified with $\Stsc^{*}_{[X;C] \setminus \ff} [X;C] \cup
  \Ttsco^{*}_{\pa [X;C] \setminus \ff} [X;C]$ via the blow down map.

The topology on $\Tdot X$ is that in which a neighborhood basis near
$\alpha \in W^{\perp}$ is induced by the open neighborhoods of
$\gamma_{\tsc}(\alpha)$, while in $\Tdoto X$, open
sets around boundary points are defined as usual for radial compactifications.

For a symbol $a \in \Stsc^{m,r}$, we define the essential support
$\esssupptsc(a) \subset \pa \Ttsco^* [X;C]$ by declaring
$p \in \esssupptsc(a)^c$ if and only if there exists $U \subset
\Ttsco^* [X;C]$ open and $\chi \in \CcI(\Ttsco^* [X;C])$ such that $p
\in U$, $\chi|_U \equiv 1$ and $\chi a \in \Stsc^{-\infty, -\infty}$.

\begin{definition}\label{def:WFtsc}
    Let $A = \Op_L(a) \in \Psitsc^{m,r}(X)$.
    The operator wavefront set
    \begin{align*}
        \WFtsc'(A) \subset \Ctscd
    \end{align*}
is defined as
    follows: a point $p \in \Ctscd$ is \textit{not }in the wavefront set,
    \begin{align*}
        p \in \WFtsc'(A)^c \text{ if and only if }
        \gamma_{\tsc}(p) \cap \esssupptsc(a) = \varnothing\,.
    \end{align*}

    Moreover, we define
    \begin{align*}
        \WF_{\fib}'(A) &\coloneqq \WFtsc'(A) \cap \Ssc^{*}_{X \setminus \poles} X \,,\\
        \WF_{\mf}'(A) &\coloneqq \WFtsc'(A) \cap \Tsco^*_{\pa X \setminus \poles}X \,, \\
        \WF_{\ff}'(A) &\coloneqq \WFtsc'(A) \cap \Wperpo\,.
    \end{align*}
\end{definition}
We can write the complements of each of the components as
\begin{align*}
    \WF_{\fib}'(A)^c &= \{ \alpha \in \Ssc^{*}_{X \setminus \poles} X\colon \exists U \subset \Ssc^{*}_{X \setminus \poles} X \text{ open such that $\alpha \in U$} \\
    &\phantom{= \{} \text{and $a(A)$ vanishes to infinite order on $\overline{U}$}\}\,,\\
    \WF_{\mf}'(A)^c &= \{ \alpha \in \Tsco^*_{\pa X \setminus \poles}X \colon \exists U \subset \Tsco^*_{\pa X \setminus \poles}X \text{ open such that $\alpha \in U$} \\
    &\phantom{= \{} \text{and $a(A)$ vanishes to infinite order on $\overline{U}$}\}\,,\\
        \WF_{\ff}'(A)^c &=
        \{ \tau \in W^\perp \colon \exists\ \epsilon > 0 \text{ such that $a(A)$
                             vanishes to } \\
  &\phantom{= \{} \text{ infinite order on } 
                  \beta^{-1}(\pi^\perp)^{-1}[\tau - \epsilon, \tau + \epsilon]\}\,  \\
  &\phantom{= \{} \cup \{ \pm \infty :  \exists  \text{ open $U
    \subset \partial \Ttsco^*[X;C]$ such
    that } UH_\pm  \subset U \\
  & \phantom{= \{} \text{ and $a(A)$ vanishes to infinite order on }
    \overline{U} \}.
\end{align*}

Now we define the elliptic sets.  Over $\mf$ and fiber infinity, the
definition of ellipticity is exactly as in the standard scattering
case, i.e.\ non-vanishing (or, for operators acting on sections of
vector bundles, invertibility) of the principal symbol.  Over $\ff$ in
$W^\perp$, the correct notation of ellipticity is invertibility
between appropriate scattering Sobolev spaces.

To define the elliptic set, we note that the two components of the
symbol $\fibsymb{m, r}(A)$ and $\mfsymb{m, r}(A)$ define, by
restriction, functions on $\Ssc_{X \setminus \poles}^* X$ and
$\Tsco_{\pa X \setminus \poles}^* X$, respectively.  
\begin{definition}\label{def:elliptic set}
    Let $A \in \Psitsc^{m, r}$.  The $\tsc$-elliptic set
    $\Elltsc(A)$ (whose $m, r$ dependence is suppressed from the
    notation) is
    \begin{align*}
        \Elltsc(A) = \Ell_{\fib}(A) \cup \Ell_{\mf}(A) \cup \Ell_{\ff}(A) \subset \Ctscd\,,
    \end{align*}
    with
    \begin{align*}
        \Ell_{\fib}(A) &= \{\alpha \in \Ssc_{X \setminus \poles}^* X \colon
                         \fibsymb{m,r}(A)(\alpha) \neq 0  \}\,,\\
        \Ell_{\mf}(A) &= \{\alpha \in \Tsco_{\pa X \setminus \poles}^* X \colon
                        \mfsymb{m,r}(A)(\alpha) \neq 0  \}\,,\\
    \end{align*}
    while
  \begin{align*}
        \Ell_{\ff}(A) &= \{\tau \in W^\perp \colon
                        \ffsymb{r}(A)(\tau) \text{ is scattering
                        elliptic and invertible}\}\, \\
    &\phantom{= \{} \cup \{ \pm \infty \in \partial \Wperpo :
      \fibsymb{m,r}(A) \text{ is nowhere vanishing on } UH_\pm \}.
    \end{align*}

    Moreover, we set
    \begin{align}\label{eq:def_Chartsc}
        \Chartsc(A) \coloneqq \Ctscd \setminus \Elltsc(A)\,,
    \end{align}
    the $\tsc$-characteristic set of $A$.
\end{definition}

Note that it makes sense to speak of invertibility of scattering
elliptic operators.  Indeed, if $B \in \Psisc^{m,r}$ is (scattering) elliptic
then $B \colon \Sobsc^{m + s, r + \ell} \lra \Sobsc^{s , \ell}$ is Fredholm.
By scattering ellipticity, its kernel and cokernel consist of Schwartz
functions, and therefore the invertibility of this Fredholm mapping is
independent of $s, \ell$, i.e.\ if it is invertible for any $s,\ell$ then it
is invertible for all $s,\ell$.  Thus, we say a (globally) elliptic,
scattering operator $B \in \Psisc^{m,r}$ is ``invertible'' if any (and
thus all) of these Fredholm operators is invertible.

We now describe in more detail the significance of
$\tau_0 \in \Ell_{\ff}(A)$ and $\pm \infty \in \Ell_{\ff}(A)$.  For
the former, we first consider the assumption that
$\ffsymb{r}(A)(\tau_0)$ is scattering elliptic.  Recall that if $\Op_L(a) = A$, then
$\ffsymb{r}(A)(\tau_0) = \Op_{L,z} (\aff)$ \eqref{eq:norm fam
  explicit}.  Our discussion of symbols in Section \ref{sec:principal
  symb sec} allows us to identify the scattering principal symbol of
$\hat A_{\ff, r}(\tau)$ with the appropriate boundary restrictions of
$a$; following Remark \ref{rem:scat symb of indicial}, choosing
$\tau-$dependent defining function we have 
\begin{equation*}
  \label{eq:scattering symbol normal symbol}
  \begin{split}
      \scprinsymb{m,0}(\hat A_{\ff, l}(\tau_0)) &=
      (\ang{\tau,\zeta}^{-m}  \aff \rvert_{\ff \times \fibeq},
      \ang{\tau, \zeta}^{-m} \aff \rvert_{\Ttsco^*_{\ff \cap \mf} \cap \{ \tau = \tau_0 \}}).
  \end{split}
\end{equation*}
Thus if $\tau_{0} \in \Ell_{\ff}(A)$ then these two components are nowhere
vanishing, and the scattering operator $\ffsymb{r}(A)(\tau_{0})$ is invertible.

As for the condition $+ \infty \in \Ell_{\ff}(A)$ (the $-$ case is
similar) this is equivalent to $\ffsymb{r}(A)$ being
semiclassically elliptic as $\tau = 1/h \to + \infty$.  Indeed, we
have $\ang{\tau, \zeta}^{-m} x^{r} a$ is invertible on the
whole of $UH_+$.  For $A \in \Psitsc^{m,r}$, the indicial operator
$\ffsymb{r}(A)(\tau)$ is a semiclassical scattering PsiDO with
semiclassical principal symbols at $\tau = \pm \infty$, so ellipticity
at $+\infty \in \Wperpo$ (over $\ff_{+}$) is
exactly the condition that
\[
\sclsymb{m}
(\ffsymb{r}(A)) = x^r \ang{\tau, \zeta}^{-m} a \rvert_{UH_+}
\]
is nowhere vanishing.

\subsection{Elliptic
  regularity}\label{sec:reduced elliptic}

We have the following microlocal parametrix theorem, which extends
Lemma 9.3 and Remark 9.4 from Vasy~\cite{V2000} to include fiber infinity.
\begin{proposition}\label{prop:micro ell parametrix}
   Let $A \in \Psitsc^{m,r}(X)$ and $K \subset \Elltsc(A)$ be compact.
   Then there exists $G \in \Psitsc^{-m,-r}(X)$ such that
    \begin{align*}
        K \cap \WFtsc'(AG - \id) = K \cap \WFtsc'(GA - \id) = \varnothing\,.
    \end{align*}
  \end{proposition}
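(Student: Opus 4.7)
Plan. I would follow the standard elliptic parametrix construction, now executed in the $\tsc$-calculus: symbolically invert the principal symbol in a neighborhood of $K$ to get a first approximate inverse $G_0 \in \Psitsc^{-m,-r}$, then correct by asymptotic summation. Since $K \subset \Elltsc(A)$ is compact, by a microlocal partition of unity it suffices to produce a parametrix on a small open neighborhood of each of finitely many compact pieces $K' \subset \Ell_\bullet(A)$ for $\bullet \in \{\fib, \mf, \ff\}$ and sum the results. For $K' \subset \Ell_\fib(A)\cup\Ell_\mf(A)$ the principal symbol is a nonvanishing function near $K'$, and the usual symbolic inversion (with a smooth cutoff to a neighborhood of $K'$ in $\Ttsco^*[X;C]$) produces a symbol $b \in \Stsc^{-m,-r}$ whose matching data on the ff component can be taken trivial because, by choice of the cutoff, the relevant $\tau$-levels lie away from $\pi^\perp(K')$ and the restrictions agree.

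For $K' \subset \Ell_\ff(A)$ the construction is more delicate. For $\tau \in K'\cap W^\perp$, the indicial family $\ffsymb{r}(A)(\tau)\in\Psisc^{m,0}(\ff)$ is scattering elliptic and invertible, so $\ffsymb{r}(A)(\tau)^{-1}\in\Psisc^{-m,0}(\ff)$ exists and depends smoothly on $\tau$ by a standard perturbation argument. For $\tau$ near $\pm\infty\in K'\cap \partial\Wperpo$, Lemma~\ref{thm:normal is sclsc} identifies $\ffsymb{r}(A)$ as a semiclassical scattering operator in $\Norop^{m,0,m}$ with $h=\pm1/\tau$, which is semiclassically elliptic at $h=0$ by the definition of $\Ell_\ff$ at $\pm\infty$; by Proposition~\ref{prop:elliptic-invertible} it is invertible for $|\tau|$ sufficiently large, with inverse in $\Norop^{-m,0,-m}$. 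I glue these inverses with cutoffs in $\tau$ to obtain a candidate indicial family $\hat G_\tau \in \Norop^{-m,0,-m}$ supported in a neighborhood of $K'$ in $\Wperpo$. I then combine this with pointwise inversions of $\fibsymb{m,r}(A)$ and $\mfsymb{m,r}(A)$, cutoff to neighborhoods where they are nonvanishing. The matching conditions \eqref{eq:matching conditions} for $(\fibsymb{m,r}(A)^{-1}, \mfsymb{m,r}(A)^{-1}, \hat G_\tau)$ are inherited from the matching for $A$: on $\Stsc^*_{\ff\cap\mf}$ and $\Ssc^*_\ff$, Remark~\ref{rem:scat symb of indicial} and Lemma~\ref{thm:normal is sclsc} identify the scattering/semiclassical principal symbols of $\hat G_\tau$ with the (inverted) restrictions of the other two symbols, since pointwise inversion commutes with restriction to the boundary. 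Proposition~\ref{prop:3sc_quantization} then quantizes this data to $G_0\in\Psitsc^{-m,-r}$.

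By multiplicativity of $\prinsymbz$ (Proposition~\ref{prop:principal_symbol_map}), $AG_0 - \id$ and $G_0A-\id$ have principal symbol vanishing on a neighborhood of $K$, hence lie in $\Psitsc^{-1,-1}$ microlocally there. A standard Neumann-series asymptotic summation in the $\tsc$-symbol classes (Borel summation in the difference of orders, with the wavefront containment \eqref{eq:wavefront product type} controlling the iterates near $K$) produces $G\in\Psitsc^{-m,-r}$ with
\begin{equation*}
K \cap \WFtsc'(AG - \id) = K \cap \WFtsc'(GA - \id) = \varnothing,
\end{equation*}
as required.

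The main obstacle is the construction and matching of the indicial inverse near $\pm\infty \in \Wperpo$, where two distinct regimes must be reconciled: the scattering-pseudodifferential inverse at bounded $\tau$ and the semiclassical inverse as $|\tau|\to\infty$. Verifying that the glued family indeed lies in $\Norop^{-m,0,-m}$ (so that it corresponds to an allowed ff component for some $\Psitsc^{-m,-r}$ operator) and that its scattering fiber symbol at $UH_\pm$ matches the pointwise inverse of $\fibsymb{m,r}(A)$ is where the specific structure of the $\tsc$-calculus -- via the identification in Lemma~\ref{thm:normal is sclsc} of the semiclassical principal symbol of $\ffsymb{r}(A)$ at $h=0$ with $\fibsymb{m,r}(A)|_{UH_\pm}$ -- is essential, and must be checked explicitly.
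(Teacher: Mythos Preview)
Your proposal is correct and follows essentially the same approach as the paper, which likewise only details the novel case $\pm\infty \in \Wperpo$ and otherwise defers to the standard scattering construction. For the obstacle you identify at the end---verifying that the glued indicial inverse lies in $\Norop^{-m,0,-m}$---the paper proceeds by first building a semiclassical-scattering parametrix $\tilde g$ for $\ffsymb{r}(A)$ near $h=0$, then using a Mazzeo-type comparison argument to show $\chi_{\tau_0}(\tau)\bigl(\ffsymb{r}(A)(\tau)^{-1} - \Op_{L,z}(\tilde g)\bigr) \in \Norop^{-\infty,0,-\infty}$, whence the true inverse differs from a manifestly good symbol only by a residual term.
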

  
\begin{proof}
We include only the construction for $+ \infty \in \Wperpo$.
  
  Let $+ \infty \in \Ell_{\ff}(A)$.  Let $\aff = x^r a
  \rvert_{\ff}$, so $\aff \in \ang{\tau, \zeta}^m
  C^\infty(\ff \times \overline{\RR^{n + 1}_{\tau, \zeta}})$.
  The ellipticity condition at $+ \infty$ is exactly that $\ang{\tau, \zeta}^m  \aff \rvert_{UH_+}$ is invertible.
  This implies
that $\hat A_{\ff, r}(\tau)$ is invertible for sufficiently large
$\tau > \tau_0$ for $\tau_0 \gg 0$ fixed by Proposition~\ref{prop:elliptic-invertible}.
We will show that
one can quantize $\hat A_{\ff}(\tau)^{-1}$ to a $\tsc$ operator, and
use this to perform a parametrix construction for $A$ near $+ \infty
\in \Wperpo$. 

Let $UH_+
\subset U \subset U'$ where $U, U'$ are open sets such that: (1) $\ang{\tau, \zeta}^m  \aff$ is invertible on $U'$ and
(2) there exists
$\chi \in \CI(\ff \times \overline{\RR^{n + 1}_{\tau, \zeta}})$
with $\chi \rvert_U \equiv 1$ and $\supp \chi \subset U'$.  (The
second condition is easily achievable since $UH_+$ is an embedded
p-submanifold of $\ff \times \partial \overline{\RR^{n +
  1}_{\tau, \zeta}})$.)  One can, in particular, arrange that $\chi =
\chi(\tau, \zeta)$ (since $UH_+ = \ff \times \{ \tau \ge  0 \} \cap \partial \overline{\RR^{n +
  1}_{\tau, \zeta}})$) and that $\chi(\tau) \equiv 1$ for $\tau >
\tau_0$ for some fixed $\tau_0$.  Then
\[
\chi \aff^{-1} \in  \ang{\tau, \zeta}^{-m}
  C^\infty(\ff \times \overline{\RR^{n + 1}_{\tau, \zeta}}),
\]
and thus $\Op_{L, z}(\chi \aff^{-1}) \in \Norop^{-m, 0, -m}$.  Moreover, if $q
\in C^\infty(\ff \times \overline{\RR^{n + 1}_{\tau, \zeta}})$
has $\supp q \subset U$, then $q(\chi \aff^{-1} \aff - 1) \equiv
0$ and thus
\[
\Op_{L,z}(q) (\Op_{L, z}(\chi \aff^{-1}) \hat A_{\ff, r} - 1) \in \Norop^{-1,0,-1}
\]
 
The preceding is the first step in a semiclassical-scattering
parametrix construction which yields $\tilde g \in  \langle \tau, \zeta \rangle^{-m}
  C^\infty(\ff \times \overline{\RR^{n + 1}_{\tau, \zeta}}),$
  such that
\[
\Op_{L,z}(q) (\Op_{L, z}(\tilde g) \hat A_{\ff, r} - 1) \in \Norop^{-\infty,0,-\infty}.
\]
Indeed, following the standard elliptic parametrix construction, if
$g_0$ has
\[
E = \Op_{L,z}(q) (\Op_{L, z}(g_N) \hat A_{\ff, r} - 1) \in
\Norop^{-N,0,-N},
\]
then one can solve $-b_N \aff = \sigma_{\scl, \sc, -N,
  0, -N}(E)$ on the support of $\chi$ and then take a Borel sum to
obtain $\tilde g$.

Letting $\chi_{\tau_0} \in \CI(\RR)$  have $\chi_{\tau_0}(\tau)
= 1$ for $\tau > \tau_0 + 1$ and $\supp \chi_{\tau_0} \subset [\tau_0,
\infty)$, then
\[
\hat K(\tau) \coloneqq \chi_{\tau_0}(\tau) \left( \hat A_{\ff,r}^{-1} - \Op_{L, z}(\tilde g) \right) \in
\Norop^{-\infty, 0, - \infty},
\]
(This follows from the argument \cite{Mazzeo:Edge} used in the proof of
Corollary \ref{thm:inverse of elliptic tsc}, where one compares the
parametrix and actual operator and shows they differ by a residual operator.) But this
implies that
\[
(1 - \chi_{\tau_0}) \tilde g + \chi_{\tau_0}(\tau) \aff^{-1}  \in
\ang{\tau, \zeta}^{-m} \CI(\ff \times \overline{\RR^{n + 1}_{\tau, \zeta}}),
\]
Indeed, this follows since $\tilde g \in \ang{\tau, \zeta}^{-m}
\CI(\ff \times \overline{\RR^{n + 1}_{\tau, \zeta}})$ and
$\chi_{\tau_0}(\tau) \aff^{-1} - \tilde g$ is order $-\infty, 0,
-\infty$. 

Thus 
\[
\hat G \coloneqq (1 - \chi_{\tau_0}) \Op_{L,z}(\tilde g) + \chi_{\tau_0}(\tau) A_{\ff,r}^{-1}
\in \Norop^{-m, 0, -m},
\]
and
\[
\hat G - \Op_{L,z}(\tilde g)  \in \Norop^{-\infty, 0, \infty}.
\]

To quantize this $\hat G$ to a full $\tsc$-PsiDO, we need to prescribe
matching values on the other faces $\Ttsco^*_{\ff}[X;C]$ and $\mf$ of
$\Ttsco^*_{\ff}[X;C]$, but this is easily done.
Indeed, the symbols of $G$, $\Op_{L,z}(g)$, and $\aff^{-1}$ agree at fiber
$UH_+$ (meaning concretely that when multiplied by $\ang{\tau, \zeta}^{m}$
they restrict to the same function there) and thus
in a sufficiently small neighborhoods $V \subset V'$ of $UH_+$ in
$\Ttsco^*_{\ff}[X;C] \cup \mf$
one can choose another cutoff $\chi'$ which is $1$ on $V$ and
supported in $V'$, and
let $g = \chi \aff^{-1}$ on $V$.  Then this $g$ matches the
principal symbol of $\hat G$ and thus there is a smooth function which
we also call $g$ on the whole of $\Ttsco^*[X;C]$ such that $\Op_L(x^{-r}g) =
G \in \Psitsc^{-m, r, m}$ and: (1) $g = \aff^{-1}$ on $V$, $\hat
G_{\ff}(\tau) = \hat G(\tau) = \hat A_{\ff}^{-1}(\tau)$ for $\tau >
\tau_0$.

Thus for $Q \in \Psitsc^{0,0,0}$ close to $UH_+$,
\[
E = Q(G A - I) \in \Psitsc^{-1, -1, -1}.
\]
This is the first step in a parametrix construction which produces a
residual error.  Indeed, to find $G_1 \in \Psitsc^{-m-1,-r-1}$ such that $G_1 A + E \in
\Psitsc^{-2,-2, -2}$ near $UH_+$, one solves $(\hat G_1)_{\ff, -r-1}
\hat A_{\ff,r} = \hat E_{\ff, -1}$, i.e.\ $(\hat G_1)_{\ff, -r-1}
= \hat E_{\ff, -1}\hat A_{\ff,l}^{-1}$, for $\tau > \tau_0$.  We have
already seen that $\hat A_{\ff,r}^{-1}$ can be approximated in
$\Norop^{-\infty, 0, -\infty}$ by an operator in $\Norop^{-m, 0, -m}$,
so again using the ellipticity on near $UH_+$ on the other bhs's of
$\Ttsco^*[X;C]$, we have $G_1$.  The inductive construction of the
higher order approximations is standard.
\end{proof}

From the elliptic parametrix construction we obtain elliptic estimates.
We now record these estimates and use them to discuss an
important application to a specific globally elliptic $\tsc$-operator.

\begin{proposition}\label{thm:elliptic regularity theorem}
    Let $u \in \mathcal{S}'$ and $A \in \Psitsc^{m,r}$.  Let $B, Q' \in \Psitsc^{0,0}$.
    If $\WFtsc'(B) \subset \Elltsc(A) \cap \Elltsc(Q')$, then $Q' A u \in \Sobsc^{s-m,\ell-r}$ implies $Bu \in \Sobsc^{s, \ell}$,
    and for any $M, N \in \RR$ there is $C > 0$ such that
    \begin{align*}
        \norm{B u}_{s, \ell} \le C \left( \norm{Q' A u}_{s-m, \ell-r} + \normres{u}\right) .
    \end{align*}
\end{proposition}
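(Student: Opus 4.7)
The plan is to reduce the proposition to the microlocal elliptic parametrix construction in Proposition~\ref{prop:micro ell parametrix}, applied to the composed operator $Q'A$ rather than to $A$ alone.

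First I would check that $Q'A\in\Psitsc^{m,r}$ is elliptic on $K=\WFtsc'(B)$, which is compact as the operator wavefront set of a $\tsc$-operator. By multiplicativity of the principal symbol (Proposition~\ref{prop:principal_symbol_map}), we have $\fibsymb{m,r}(Q'A) = \fibsymb{0,0}(Q')\fibsymb{m,r}(A)$, $\mfsymb{m,r}(Q'A)=\mfsymb{0,0}(Q')\mfsymb{m,r}(A)$, and $\ffsymb{r}(Q'A)(\tau)=\ffsymb{0}(Q')(\tau)\ffsymb{r}(A)(\tau)$ for each $\tau\in\Wperpo$. Since the hypothesis gives $K\subset\Elltsc(A)\cap\Elltsc(Q')$, each of these symbol components is invertible/nonzero at points of $K$, so $K\subset\Elltsc(Q'A)$. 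Note that invertibility of the indicial family is preserved under composition because both factors are invertible as scattering operators (and similarly semiclassically elliptic at $\pm\infty\in\Wperpo$).

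Next, Proposition~\ref{prop:micro ell parametrix} applied to $Q'A$ and the compact set $K$ produces $G\in\Psitsc^{-m,-r}$ with
\[
K\cap\WFtsc'(GQ'A-\Id)=\varnothing.
\]
Writing $Bu = BG(Q'Au) + B(\Id-GQ'A)u$, I would estimate each term separately. For the first term, $BG\in\Psitsc^{-m,-r}$ by the composition property (Proposition~\ref{prop:comp of ff symb}), so Proposition~\ref{prop:bounded_Sobolev} gives
\[
\norm{BG(Q'Au)}_{s,\ell}\leq C\norm{Q'Au}_{s-m,\ell-r}.
\]
For the second term, the product property of $\WFtsc'$ (analogous to \eqref{eq:wavefront product type}) applied to $B$ and $\Id-GQ'A$ yields $\WFtsc'(R)=\varnothing$ for $R:=B(\Id-GQ'A)$. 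Any operator with empty $\tsc$-wavefront set has symbol vanishing to infinite order at every boundary hypersurface of $\Ttsco^*[X;\poles]$, hence lies in $\Psitsc^{-\infty,-\infty}$ and so maps $\Sobsc^{-N,-M}\to\Sobsc^{s,\ell}$ continuously for any $M,N,s,\ell$; thus $\norm{Ru}_{s,\ell}\leq C\normres{u}$.

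Combining the two bounds yields the desired estimate. The regularity conclusion $Bu\in\Sobsc^{s,\ell}$ follows from the same decomposition once we know $u\in\mathcal{S}'$ lies in some $\Sobres$ so that the residual term makes sense; a standard approximation/regularization argument (using cutoffs in $\Psitsc^{-\infty,-\infty}$ converging to $\Id$ in a suitable weak sense) handles distributions without a priori Sobolev membership. The main subtlety, rather than a genuine obstacle, is verifying that the parametrix construction of Proposition~\ref{prop:micro ell parametrix} indeed delivers a residual error in the full $\Psitsc^{-\infty,-\infty}$ sense rather than only microlocally near $K$; but this is exactly what the statement of that proposition provides on $K$, and the mapping property we need for $R$ only requires residual behavior microlocally on $\WFtsc'(B)$, which is built into the argument above.
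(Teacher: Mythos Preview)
Your proposal is correct and follows essentially the same standard microlocal elliptic-regularity argument as the paper: build a parametrix on $K=\WFtsc'(B)$, split $Bu$ into a main piece controlled by $Q'Au$ and a residual piece. The only difference is that you apply Proposition~\ref{prop:micro ell parametrix} directly to the composite $Q'A$ (after checking $K\subset\Elltsc(Q'A)$ via multiplicativity of $\prinsymbz$), whereas the paper builds the parametrix for $A$ and then inserts $Q'$ separately; your route is a mild streamlining but not a genuinely different approach.
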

\begin{proof}
This follows from the standard argument using the mapping property in
Proposition \ref{prop:bounded_Sobolev}.  Namely, Let $G$ be as in
Proposition \ref{thm:elliptic regularity theorem} with respect to $K =
\WFtsc'(B)$.  Then
\begin{align*}
    \norm{B u}_{s, \ell} &\le \norm{ (GA - I)B u}_{s, \ell} + \norm{G A B u}_{s, \ell} \\
    &\le \norm{ (GA - I)B u}_{s, \ell} + \norm{G (\Id - Q') A B u}_{s, \ell} + \norm{G Q' A B u}_{s, \ell} \\
    &\le C \left( \norm{Q' A u}_{s-m, \ell - r} + \normres{u} \right) .
\end{align*}
where we used Proposition \ref{prop:bounded_Sobolev} and~\eqref{eq:wavefront product type}.
\end{proof}

Thus an operator $A \in \Psitsc^{m,r}$ that is globally elliptic satisfies the (global) elliptic estimate,
namely that for any $M, N \in \RR$, there is $C > 0$ such that,
\begin{equation}
  \label{eq:global elliptic estimate}
  \norm{u}_{s,\ell} \le C(\norm{Au}_{s - m, \ell - r} + \normres{u}).
\end{equation}
In particular, as have now established,
there are global parametrices $G, G' \in \Psitsc^{-m,-r}$ such that
\begin{equation}
  GA - I , AG' - I\in \Psitsc^{-\infty, - \infty}.
  \label{eq:global parametrix}
\end{equation}
We use these parametrices in the corollary below.

In order to define microlocalizers over $\ff$, we will need an
operator which is globally elliptic and commutes with $P_{V_0}$.
To accomplish this, we switch the signs in $P_{V_0}$ to make it elliptic;
that is, we consider, for $E \ge 0$,
\begin{equation}
D_t^2 + H_{V_0} + E\label{eq:elliptified operator}
\end{equation}
We will use the inverse of this
operator below to define such microlocalizers in Section \ref{sec:actual func calc sect}.  Specifically, we will need:
\begin{corollary}\label{thm:inverse of elliptic tsc}
The operator $D_t^2 + H_{V_0} + E$ is invertible for some $E \ge 0$, and
\[
(D_t^2 + H_{V_0} + E)^{-1} \in \Psitsc^{-2,0}(\RR^{n + 1}).
\]
\end{corollary}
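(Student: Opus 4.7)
Set $\widetilde{P} \coloneqq D_t^2 + H_{V_0} + E$, and choose $E$ large enough that $H_{V_0} + E \ge 1$ on $L^2(\RR^n_z)$; this is possible since $H_{V_0} = \Delta + m^2 + V_0$ is self-adjoint and bounded below. The plan is to verify that $\widetilde{P}$ is globally $\tsc$-elliptic, invert it on $L^2(\RR^{n+1})$ via self-adjointness, and deduce membership of the inverse in $\Psitsc^{-2,0}$ by comparison with the parametrix of Proposition~\ref{prop:micro ell parametrix}. Global ellipticity is checked component by component on $\Ctscd$: the fiber symbol is $\tau^2 + |\zeta|^2$, nonvanishing off the zero section; the $\mf$-symbol is $\tau^2 + |\zeta|^2 + m^2$, strictly positive; and the indicial operator $\ffsymbz(\widetilde{P})(\tau) = \tau^2 + H_{V_0} + E$ is a positive self-adjoint scattering operator on $\RR^n_z$ for every $\tau \in \RR$, hence scattering-elliptic and invertible. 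Semiclassical ellipticity at $|\tau| \to \infty$ follows because under the rescaling $\mu = \zeta/\tau$, $h = 1/|\tau|$, the semiclassical principal symbol of the indicial family is $1 + |\mu|^2$, nonvanishing on $UH_\pm$. Therefore $\Elltsc(\widetilde{P}) = \Ctscd$.

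Proposition~\ref{prop:micro ell parametrix} applied globally produces $G \in \Psitsc^{-2,0}$ satisfying $G\widetilde{P} - \Id,\ \widetilde{P}G - \Id \in \Psitsc^{-\infty,-\infty}$. For invertibility as an unbounded operator, observe that $D_t^2$ and $H_{V_0}$ act on complementary factors and commute; both are essentially self-adjoint on $\schwartz(\RR^{n+1})$, hence strongly commuting, and the joint spectral theorem gives $\sigma(\widetilde{P}) \subset [E + \inf \sigma(H_{V_0}),\infty) \subset [1,\infty)$, so $\widetilde{P}^{-1} \colon L^2(\RR^{n+1}) \to L^2(\RR^{n+1})$ is bounded. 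Writing $G\widetilde{P} = \Id - R$ with $R \in \Psitsc^{-\infty,-\infty}$ and composing with $\widetilde{P}^{-1}$ on the right gives $\widetilde{P}^{-1} = G + R\widetilde{P}^{-1}$; it then remains to show that $R\widetilde{P}^{-1} \in \Psitsc^{-\infty,-\infty}$, following the kernel analysis on the $\tsc$-double space from Mazzeo cited just above the corollary statement, which yields $\widetilde{P}^{-1} \in \Psitsc^{-2,0}$.

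The delicate step is the last one: confirming that composing the residual operator $R$ with the $L^2$-bounded inverse $\widetilde{P}^{-1}$ preserves the residual class $\Psitsc^{-\infty,-\infty}$, which requires tracking smoothness and decay of Schwartz kernels on the appropriate $\tsc$-double space. An arguably cleaner alternative avoiding this kernel analysis is to use the partial Fourier transform in $t$: since $D_t$ and $H_{V_0}$ commute, $\widetilde{P}^{-1}$ is the left-quantization in $(t,\tau)$ of the operator-valued symbol $\tau \mapsto (\tau^2 + H_{V_0} + E)^{-1}$, which lies in $\Norop^{-2,0,-2}$ by a semiclassical elliptic parametrix applied to $\tau^2 + H_{V_0} + E \in \Psisclsc^{2,0,2}$; verifying the matching conditions of Proposition~\ref{prop:3sc_quantization} against the fiber and $\mf$ symbols $(\tau^2+|\zeta|^2)^{-1}$ and $(\tau^2+|\zeta|^2+m^2)^{-1}$ then places $\widetilde{P}^{-1}$ in $\Psitsc^{-2,0}$ directly.
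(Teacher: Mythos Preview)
Your approach is essentially the same as the paper's: verify global $\tsc$-ellipticity, invoke the global parametrix, establish invertibility from positivity, and upgrade to membership in $\Psitsc^{-2,0}$ via the Mazzeo trick. The ellipticity and invertibility steps are fine.

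The gap is in the last step. From $G\widetilde P = \Id - R$ you obtain $\widetilde P^{-1} = G + R\widetilde P^{-1}$, but knowing only that $R \in \Psitsc^{-\infty,-\infty}$ and $\widetilde P^{-1}$ is bounded on $L^2$ does not yield $R\widetilde P^{-1} \in \Psitsc^{-\infty,-\infty}$: the kernel is Schwartz in the left variable but a priori only $L^2$ in the right. The paper carries out Mazzeo's argument with \emph{both} a left parametrix $G$ and a right parametrix $G'$ (errors $K,K'$), expanding $\widetilde P^{-1} = (G\widetilde P - K)\,\widetilde P^{-1}\,(\widetilde P G' - K')$ so that the only term not manifestly in $\Psitsc^{-2,0}$ is the two-sided sandwich $K\,\widetilde P^{-1}\,K'$, which lies in $\Psitsc^{-\infty,-\infty}$ by the general fact that $\Psitsc^{-\infty,-\infty}\circ L \circ \Psitsc^{-\infty,-\infty}\subset \Psitsc^{-\infty,-\infty}$ for any $L:\schwartz\to\schwartz'$. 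You gesture at this by citing Mazzeo, but the one-sided formula you wrote is not enough on its own.

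Your alternative via the $t$-Fourier transform is appealing and genuinely exploits that $V_0$ is static, but Proposition~\ref{prop:3sc_quantization} only quantizes a \emph{principal} symbol triple; it does not show that the full left symbol of $\tau\mapsto(\tau^2+H_{V_0}+E)^{-1}$ lies in $\Stsc^{-2,0}$. You would still need a parametrix-plus-residual argument (now in $\Norop$) to identify the true inverse with a classical symbol on $\ff\times\overline{\RR^{n+1}_{\tau,\zeta}}$, which is essentially the same work pushed to the indicial level.
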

\begin{proof}
The operator $D_t^2 + H_{V_0} + E$ is elliptic on $\mf$ and fiber infinity
for any $E > 0$.  The indicial operator is
\[
\tau^2 + H_{V_0} + E
\]
and this operator is invertible for all $\tau$ for $E$ sufficiently large.
Indeed, all negative elements in the spectrum of $\Delta_z + {V_0}(z)$ are eigenvalues.
This means that for $-E < 0$ in the spectrum of $\Delta_z + m^2 + V_0$, there exists $w \in L^2(\RR^n_z)$ such that
\begin{align*}
    (H_{V_0} + E) w(z) = 0\,.
\end{align*}
But then by scattering ellipticity of $H_{V_0} + E$, $w \in \schwartz$.
On the other hand, for all Schwartz $w$,we can integrate by parts to obtain
\begin{align*}
    \ang{(\Delta_z + m^2 + V_0)w, w} = \int \abs{\nabla_z w}^2 +
  \ang{{V_0} w, w} + (m^2 + E) \abs{w(z)}^2 dz.
\end{align*}
Since $\ang{{V_0}w, w} \ge - \epsilon/2 |{V_0} w|^2 - (1/2 \epsilon) |w|^2$
and we can bound $|{V_0} w| \le C(|\nabla_z w|^2 + |w|^2)$, we
obtain a lower bound, for $\epsilon > 0$ sufficiently small and $E$ sufficiently large,
\[
\ang{(\Delta_z + m^2 + V_0 + E)w, w} \ge \frac 12 \int |\nabla_z w|^2
+ (m^2 + E) |w(z)|^2 dz > 0
\]
for all Schwartz $w$.
Thus for $E$ sufficiently large, $\ffsymbz(D_t^2 + H_{V_0} + E)(\tau)$
is invertible for all $\tau$, hence $D_t^2 + H_{V_0}  + E$ is globally
$\tsc$-elliptic.

On the other hand, by the same integration by parts argument, $D_t^2 +
H_{V_0}+
E$ is positive for $E$ sufficiently large, hence is
invertible.  

To show that $(D_t^2 + H_{V_0} +
E)^{-1}  \in \Psitsc^{-2,0}$, we employ an argument from
\cite{Mazzeo:Edge}.  Namely, by global ellipticity, we have
parametrices $G, G'$ for $(D_t^2 + H_{V_0} + 
E)^{-1}$ as in
\eqref{eq:global parametrix}, and writing $G (D_t^2 + H_{V_0} + 
E) - I= K$ and $(D_t^2 + H_{V_0} + 
E) G' - I = K'$ we obtain 
\begin{align*}
(D_t^2 + H_{V_0} + 
E)^{-1} &= (G (D_t^2 + H_{V_0} + 
E) + K) (D_t^2 + H_{V_0} + 
E)^{-1} ((D_t^2 + H_{V_0} + 
E) G' + K')
  \\
&= G ((D_t^2 + H_{V_0} + 
E) G' + K') + KG' K' + K (D_t^2 + H_{V_0} + 
E)^{-1} K'.
\end{align*}
Since for any operator $L \colon \mathcal{S} \longrightarrow
\mathcal{S}'$ we have $\Psitsc^{-\infty, - \infty} \circ L \circ \Psitsc^{-\infty,
  - \infty} \subset \Psitsc^{-\infty, - \infty}$, this and the
composition properties for $\tsc$-PsiDOs show that the right hand side
above lies in $\Psitsc^{-2,0}$, which is what we wanted. \end{proof}



\section{Functional calculus, commutators, and special symbol
  classes}\label{sec:full func calc sec}

As discussed in the introduction, our localizers near $\ff$ 
include functions of the limiting static operator $P_{V_{+}}$ which
act as microlocalizers to the characteristic set.  Indeed, from
Section \ref{sec:commutators}, we recall that $\zeta$-dependence in
the indicial operator can lead to failure of the basic commutator
order relations.  Thus, again following Vasy, our commutant will be
locally a composition of an operator whose symbol is purely
$\tau$-dependent over $\poles$ composed with a function of the
operator $P_{V_\pm}$.  To prepare for this, in this section, for
static $V_0$, we wish to consider functions of $P_{V_0}$.

However, we must take special care in defining these functions, as it
is not automatic that a function of a differential operator of
positive order is pseudodifferential.  As a cautionary example,
consider the free Hamiltonian function
$p = \tau^{2} - |\zeta|^{2} - m^{2}$.  This is a classical symbol,
but, for $\psi \in \CcI(\RR)$ with $\psi(0) = 1$, the composition
$\psi(p)$ is not a symbol, in fact $D_{\tau}^{k} (\psi(p))$ grows to
order $k$ as $\tau$ goes to infinity along the characteristic set.
Correspondingly, $\psi(P_{0})$ is not a pseudodifferential operator of
any order.

To handle this issue we consider instead the normalization of $p$ in
which we divide essentially by $\ang{\tau, \zeta}^{2}$ to
make an order $0$ classical symbol.  Consider, for $E \ge 0$,
the operator
\begin{align}\label{eq:def_Gpsi0}
    G_{\psi,0} \coloneqq \psi\left( (D_t^2 + D_{z} \cdot D_{z} + m^2 +
  E)^{-1} P_0 \right),
\end{align}
which is the Fourier multiplier for the function
\begin{equation}
  \label{eq:Gpsi0 Fourier Mult}
\psi \left( \frac{\tau^2 - |\zeta|^2 - m^2}{\tau^2 +
    |\zeta|^2 + m^2 + E} \right) =  \psi \left( \frac{p}{\tau^2 +
    |\zeta|^2 + m^2 + E} \right).
\end{equation}
This $G_{\psi,0}$ depends on $E$; the value of which is fixed such
that the operator in \eqref{eq:elliptified operator} is invertible.
Since the ratio $p / (\tau^{2} + |\zeta|^{2} + m^2 + E)$ is a smooth
function on the whole of $\Tsco^* X$, its composition with $\psi$ is
as well.  Thus we have
\begin{align*}
  G_{\psi,0} \in \Psisc^{0,0}(X)\,.
\end{align*}

\subsection{Functional calculus}\label{sec:actual func calc sect}

In this section, we construct the operator $G_{\psi}$ as an element in the 3-scattering calculus.
We will construct the functional calculus for general self-adjoint $\Psitsc^{0,0}$-operators.

Our main result is a generalization of Proposition 10.2 \cite{V2000},
which relates to functions of a many-body Hamiltonian $H$.  For such
$H$, one obtains that for $\psi \in \CcI(\RR)$,
\[
\psi(H) \in \Psitsc^{- \infty, 0},
\]
while in contrast, we see from the example of $G_{\psi,0}$ that we do
not expect smoothing operator when taking functions of the
Klein-Gordon operator.  We will apply the following proposition to 
functions of $(D_t^2 + H_{V_0} + E)^{-1} P_{V_0}$ in Definition
\ref{def:Gpsi} below.
\begin{proposition}\label{prop:func_calc}
    Let $\psi \in \CcI(\RR)$ and $A \in \Psitsc^{0,0}$ be self-adjoint.
    Then
    \[
      \psi(A) \in \Psitsc^{0,0}
    \]
    and its principal symbol satisfies
    $\prinsymbz(\psi(A)) = \psi(\prinsymbz(A))$, meaning
    \begin{equation}
        \label{eq:fib symb and mf symb}
        \fibsymbz(\psi(A)) = \psi(\fibsymbz(A)), \quad \mfsymbz(\psi(A)) =
        \psi(\mfsymbz(A)), \quad \ffsymbz(\psi(A)) = \psi(\ffsymbz(A))
    \end{equation}
\end{proposition}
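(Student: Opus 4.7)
The plan is to apply the Helffer--Sj\"ostrand functional calculus. Choose an almost analytic extension $\tilde\psi \in \CcI(\CC)$ with $\tilde\psi\rvert_{\RR} = \psi$ and $|\bar\pa \tilde\psi(\zeta)| \le C_N |\Im \zeta|^N$ for all $N \ge 0$, so that
\begin{equation*}
\psi(A) = -\frac{1}{\pi} \int_{\CC} \bar\pa \tilde\psi(\zeta) (A - \zeta)^{-1}\, dm(\zeta).
\end{equation*}
The strategy is to show $(A-\zeta)^{-1} \in \Psitsc^{0,0}$ for $\zeta \notin \RR$, with seminorm control polynomial in $|\Im\zeta|^{-1}$, and then integrate.

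First I would establish that $A - \zeta$ is globally $\tsc$-elliptic in the sense of Definition~\ref{def:elliptic set} for every $\zeta \in \CC \setminus \RR$. By Lemma~\ref{lem:adjoint_3sc}, self-adjointness of $A$ forces $\fibsymbz(A)$ and $\mfsymbz(A)$ to be real-valued functions and $\ffsymbz(A)(\tau)$ to be self-adjoint as a scattering operator on $\ff$ for each $\tau$, semiclassical in $h = \pm 1/\tau$ near $\pm\infty \in \Wperpo$ with a real semiclassical principal symbol (by Lemma~\ref{thm:normal is sclsc}). Hence $\fibsymbz(A)-\zeta$ and $\mfsymbz(A)-\zeta$ are nonvanishing, and by self-adjointness the indicial operator $\ffsymbz(A)(\tau)-\zeta$ is invertible as a bounded map $\Sobsc^{0,0}(\ff)\to\Sobsc^{0,0}(\ff)$ for every $\tau\in\Wperpo$ (including at $\pm\infty$, where semiclassical invertibility follows from Proposition~\ref{prop:elliptic-invertible}). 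Arguing exactly as in Corollary~\ref{thm:inverse of elliptic tsc}, the global parametrix construction from Proposition~\ref{prop:micro ell parametrix} combined with the existence of the actual (bounded $L^2$) inverse $(A-\zeta)^{-1}$ given by the spectral theorem yields $(A-\zeta)^{-1} \in \Psitsc^{0,0}$, with principal symbol
\begin{equation*}
\prinsymbz\bigl((A-\zeta)^{-1}\bigr) = \bigl(\prinsymbz(A) - \zeta\bigr)^{-1},
\end{equation*}
the inverse being taken component-wise (ordinary reciprocal on the two function components, operator inverse for the indicial family).

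The main technical obstacle, as in Vasy's Proposition 10.2 of \cite{V2000}, is obtaining quantitative seminorm bounds on $(A-\zeta)^{-1}$ as an element of $\Psitsc^{0,0}$ that are polynomial in $|\Im\zeta|^{-1}$. This is done by tracking the $\zeta$-dependence through the iterative parametrix construction: the leading symbol involves $(a - \zeta)^{-1}$ with $a = \prinsymbz(A)$, each subsequent correction picks up an additional inverse power of $(a-\zeta)$, and the residual error is controlled by the $L^2$ operator norm bound $\|(A-\zeta)^{-1}\|\le |\Im\zeta|^{-1}$. Together with the Schwartz decay of $\bar\pa\tilde\psi$ normally to $\RR$, these bounds make the Helffer--Sj\"ostrand integral converge absolutely in each seminorm of $\Psitsc^{0,0}$, and hence define $\psi(A)$ as an element of this class.

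Finally, the principal symbol identity~\eqref{eq:fib symb and mf symb} follows by passing $\prinsymbz$ inside the integral, which is justified by the uniform convergence just established combined with continuity of $\prinsymbz$. The function components yield $\psi(\fibsymbz(A))$ and $\psi(\mfsymbz(A))$ directly from the scalar Helffer--Sj\"ostrand formula. For the indicial component, applying the formula to the self-adjoint scattering operator $\ffsymbz(A)(\tau)$ on $\ff$ produces $\psi(\ffsymbz(A)(\tau))$; at $\tau = \pm\infty$ one uses the semiclassical functional calculus to confirm that the resulting family remains in $\Norop^{0,0,0}$ with semiclassical principal symbol $\psi$ composed with that of $\ffsymbz(A)$, consistent with the matching conditions of Proposition~\ref{prop:3sc_quantization}.
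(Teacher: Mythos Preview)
Your approach is essentially the same as the paper's: both use the Helffer--Sj\"ostrand formula, verify global $\tsc$-ellipticity of $A-\zeta$ for $\zeta\notin\RR$ via Lemma~\ref{lem:adjoint_3sc}, and track polynomial growth in $|\Im\zeta|^{-1}$ through the parametrix construction.

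There is one packaging difference worth noting. You propose to show directly that $(A-\zeta)^{-1}\in\Psitsc^{0,0}$ with seminorms polynomial in $|\Im\zeta|^{-1}$ and then integrate. The paper instead works with the finite-order parametrices $B_k(\zeta)$ (Neumann-series truncations, so explicitly polynomial in $|\Im\zeta|^{-1}$), integrates \emph{those} against $\bar\partial\tilde\psi$ to obtain $A_{\psi,k}\in\Psitsc^{0,0}$, and only then performs the asymptotic summation on the $\zeta$-independent differences $A_{\psi,k+1}-A_{\psi,k}$; the remainder $\psi(A)-A_{\psi,k}$ is controlled via Sobolev mapping bounds using only $\|(A-\zeta)^{-1}\|_{L^2}\le|\Im\zeta|^{-1}$. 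The advantage of the paper's ordering is that it sidesteps having to Borel-sum a $\zeta$-dependent family while preserving polynomial seminorm growth, which your route would require (this can be arranged, since each fixed seminorm of the Borel sum depends on only finitely many terms, but it is a genuine extra step you would need to justify). Otherwise the two arguments are interchangeable.
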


\begin{remark}
    Since operators in $\Psitsc^{0,0}$ are bounded on $L^2$, the
    assumption that $A$ is self-adjoint is equivalent to $A$ being
    symmetric.  
\end{remark}

We start by recalling the results for the scattering calculus.

\begin{lemma}
    Let $A \in \Psisc^{0,0}(\RR^n)$ be self-adjoint.
    For each $k \in \NN$ there exists a family of order $k$ parametrices
    $B_k(z) \in \Psisc^{0,0}(\RR^n)$ for $z \in \CC \setminus \RR$ such that
    \begin{align*}
        (A - z) B_k(z) - \id = R_{1,z} &\in \Psisc^{-k,-k}(\RR^n)\,,\\
        B_k(z) (A - z) - \id = R_{2,z} &\in \Psisc^{-k,-k}(\RR^n)
    \end{align*}
    and the seminorms of order $k$ of $B_z, R_{1,z}, R_{2,z}$ are bounded by $C_k \abs{\Im z}^{-c(k)}$.
\end{lemma}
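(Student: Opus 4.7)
The plan is to construct $B_k(z)$ by a standard symbolic parametrix iteration, starting from the inverse of the principal symbol and tracking the $|\Im z|^{-1}$-losses at each step. Since $A$ is self-adjoint, $\scprinsymb{0,0}(A^{*}) = \overline{\scprinsymb{0,0}(A)}$, so we may choose a real-valued representative $a_0 \in \Ssc^{0,0}(\RR^n)$ of $\scprinsymb{0,0}(A)$. For every $z \in \CC \setminus \RR$ we then have
\[
    |a_0(\alpha) - z| \geq |\Im z| > 0 \qquad \text{for all } \alpha \in \Tsco^* \RR^n,
\]
so $a_0 - z$ is globally scattering-elliptic with ellipticity constant $|\Im z|$. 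A direct chain rule computation ($\pa^{N}b_{0}$ is a sum of products of the form $b_{0}^{j+1}\prod\pa^{\alpha_{i}}a_{0}$ with $j \leq N$) then shows $b_0(z) \coloneqq (a_0 - z)^{-1} \in \Ssc^{0,0}(\RR^n)$ with seminorm bounds $\norm{b_0(z)}_{N} \leq C_N |\Im z|^{-N-1}$.

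Setting $B^{(0)}(z) \coloneqq \Op_L(b_0(z)) \in \Psisc^{0,0}(\RR^n)$, the composition theorem in the scattering calculus yields
\[
    (A - z)B^{(0)}(z) = \id + R^{(1)}(z), \qquad R^{(1)}(z) \in \Psisc^{-1,-1}(\RR^n),
\]
since every subleading term in the Moyal expansion involves at least one derivative falling on $b_0$. The parametrix to order $k$ is then the truncated Neumann series
\[
    B_k(z) \coloneqq B^{(0)}(z) \sum_{j=0}^{k-1}\bigl(-R^{(1)}(z)\bigr)^j \in \Psisc^{0,0}(\RR^n),
\]
for which a direct induction gives $(A - z) B_k(z) - \id = \bigl(-R^{(1)}(z)\bigr)^k \in \Psisc^{-k,-k}(\RR^n)$, yielding the first claimed identity with $R_{1,z} = (-R^{(1)}(z))^k$. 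To produce the right parametrix identity $B_k(z)(A-z) - \id \in \Psisc^{-k,-k}$, I would construct in parallel a right parametrix $\tilde B_k(z)$ from the right reduction of $b_0$ (alternatively, exploit $A = A^{*}$ and take adjoints of the left construction for $A - \bar z$), and then invoke the usual identity
\[
    B_k - \tilde B_k = B_k\bigl((A-z)\tilde B_k - \id\bigr) - \bigl(B_k(A-z) - \id\bigr)\tilde B_k \in \Psisc^{-k,-k}(\RR^n),
\]
so $B_k$ itself is a right parametrix modulo $\Psisc^{-k,-k}$.

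The main point, which is really bookkeeping rather than a genuine obstacle, is tracking the $|\Im z|^{-1}$-losses through the iteration. Each of the $k$ compositions in the Neumann series incurs finitely many derivatives on $b_0(z)$, each costing one power of $|\Im z|^{-1}$, and the ambient composition formula in $\Psisc$ contributes further derivative losses depending on the seminorm order being estimated. Combined with the baseline bound $\norm{b_0(z)}_{N} \lesssim |\Im z|^{-N-1}$, this yields a polynomial $c(k)$ such that all seminorms of order $k$ of $B_k(z)$ and of the residuals $R_{1,z}, R_{2,z}$ are bounded by $C_k |\Im z|^{-c(k)}$, as required.
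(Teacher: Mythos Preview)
Your approach is correct and is essentially the same as the paper's: the paper simply observes that self-adjointness of $A$ forces $\scprinsymb{0,0}(A)$ to be real, so $\scprinsymb{0,0}(A)-z$ is invertible for $z\in\CC\setminus\RR$, and then defers to the standard parametrix construction in Vasy~\cite[Lemma 10.1]{V2000}. You have spelled out that standard construction explicitly, including the $|\Im z|^{-1}$ bookkeeping.

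Two small slips worth fixing: a sign is off in your Neumann-series computation (one gets $(A-z)B_k-\id=-(-R^{(1)})^k$, not $(-R^{(1)})^k$), and the identity you wrote to compare the left and right parametrices is both off by a sign and uses the wrong residual ($(A-z)\tilde B_k-\id$ rather than $\tilde B_k(A-z)-\id$, which is what you actually control). The correct version is to build a \emph{left} parametrix $\tilde B_k$ with $\tilde B_k(A-z)-\id\in\Psisc^{-k,-k}$ and then write $\tilde B_k-B_k=\tilde B_k\bigl((A-z)B_k-\id\bigr)-\bigl(\tilde B_k(A-z)-\id\bigr)B_k\in\Psisc^{-k,-k}$, from which $B_k(A-z)-\id\in\Psisc^{-k,-k}$ follows. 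These are routine corrections and do not affect the substance of your argument.
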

\begin{proof}
    The proof is the same as \cite[Lemma 10.1]{V2000}.
    The crucial point is that the principal symbol of $A - z$ is
    $\scprinsymbz(A) - z$ and since $A$ is self-adjoint, $\scprinsymbz(A)$ is real and therefore $\scprinsymbz(A) - z$ is invertible for $z \in \CC \setminus \RR$.
\end{proof}

\begin{proposition}\label{prop:func_calc_sc}
    Let $A \in \Psisc^{0,0}(\RR^n)$ be self-adjoint and $\psi \in \CcI(\RR)$. Then
    \begin{align*}
        \psi(A) \in \Psisc^{0,0}(\RR^n)
    \end{align*}
    with principal symbol
    \begin{align*}
        \scprinsymbz(\psi(A)) = \psi(\scprinsymbz(A))\,.
    \end{align*}
\end{proposition}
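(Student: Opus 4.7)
The plan is to apply the Helffer--Sj\"ostrand formula. Let $\tilde\psi \in \CcI(\CC)$ be an almost analytic extension of $\psi$ with $\supp \tilde\psi$ contained in a small complex neighborhood of $\supp \psi$ and satisfying $|\bar\pa \tilde\psi(z)| \le C_N |\Im z|^N$ for every $N$. Then, by the standard Helffer--Sj\"ostrand formula applied to the bounded self-adjoint operator $A$ on $L^{2}(\RR^{n})$,
\begin{equation*}
\psi(A) = -\frac{1}{\pi} \int_{\CC} \bar\pa \tilde\psi(z)\, (A - z)^{-1} \, dL(z),
\end{equation*}
with convergence in operator norm on $L^{2}$.

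First I would use the parametrix lemma above: fix a large $k$ and pick $B_{k}(z) \in \Psisc^{0,0}(\RR^{n})$ with $(A-z) B_{k}(z) = \Id + R_{k}(z)$ where $R_{k}(z) \in \Psisc^{-k,-k}$ and all seminorms of $B_{k}(z), R_{k}(z)$ through order $k$ grow at most polynomially in $|\Im z|^{-1}$. Writing
\begin{equation*}
(A-z)^{-1} = B_{k}(z) - (A-z)^{-1} R_{k}(z),
\end{equation*}
I substitute into the Helffer--Sj\"ostrand integral to split $\psi(A)$ as a principal piece
\begin{equation*}
T_{k} \coloneqq -\frac{1}{\pi} \int_{\CC} \bar\pa \tilde\psi(z)\, B_{k}(z) \, dL(z)
\end{equation*}
plus an error $E_{k} \coloneqq \frac{1}{\pi}\int_{\CC} \bar\pa \tilde\psi(z) (A-z)^{-1} R_{k}(z) \, dL(z)$.

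For $T_{k}$, since $\bar\pa \tilde\psi$ vanishes to infinite order at the real axis, the polynomial blow up of the $\Psisc^{0,0}$ seminorms of $B_{k}(z)$ is absorbed and the integral converges in every $\Psisc^{0,0}$ seminorm. Hence $T_{k} \in \Psisc^{0,0}(\RR^{n})$, and its principal symbol is
\begin{equation*}
\scprinsymbz(T_{k}) = -\frac{1}{\pi} \int_{\CC} \bar\pa \tilde\psi(z)\, (\scprinsymbz(A) - z)^{-1} \, dL(z) = \psi(\scprinsymbz(A)),
\end{equation*}
by applying Helffer--Sj\"ostrand pointwise to the scalar function $\scprinsymbz(A)$. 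For $E_{k}$, the $L^{2}$-bound $\|(A-z)^{-1}\| \le |\Im z|^{-1}$ combined with the polynomial seminorm growth of $R_{k}(z)$ and the infinite-order vanishing of $\bar\pa \tilde\psi$ shows that $E_{k}$ maps $\Sobsc^{-N, -M} \to \Sobsc^{N', M'}$ for $N', M'$ growing with $k$; standard characterizations of pseudodifferential operators in the scattering calculus then give $E_{k} \in \Psisc^{-k', -k'}$ for some $k'(k) \to \infty$. Taking $k$ arbitrarily large, $\psi(A) = T_{k} + E_{k}$ lies in $\Psisc^{0,0}$ with $\scprinsymbz(\psi(A)) = \psi(\scprinsymbz(A))$.

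The main obstacle is the quantitative control of the error $E_{k}$: one needs not merely $L^{2}$-boundedness but membership in a residual scattering class. I would handle this by iterating the parametrix, i.e. bootstrapping $(A-z)^{-1} R_{k}(z) = B_{k}(z) R_{k}(z) - (A-z)^{-1} R_{k}(z)^{2}$ and controlling the resulting operator through its action on weighted Sobolev spaces together with commutators with scattering vector fields, which improve the order in both the differential and the decay sense by $k$ at each step.
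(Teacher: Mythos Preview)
Your approach via the Helffer--Sj\"ostrand formula and the parametrix family $B_k(z)$ is exactly the paper's (which in turn follows Vasy). The split $\psi(A) = T_k + E_k$ and the principal symbol computation are correct.

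The gap is in your treatment of $E_k$. You assert that mapping $\Sobsc^{-N,-M} \to \Sobsc^{N',M'}$ together with ``standard characterizations'' forces $E_k \in \Psisc^{-k',-k'}$, but Sobolev mapping properties alone do \emph{not} characterize membership in the scattering calculus; a Beals-type criterion would require uniform control on iterated commutators with scattering vector fields, which you gesture at but do not carry out. Your proposed iteration $(A-z)^{-1}R_k = B_k R_k - (A-z)^{-1}R_k^2$ is just the construction of a higher-order parametrix $B_{k'}$ with $k' > k$, so it improves the Sobolev mapping of the remainder but does not by itself place $E_k$ in a symbol class for any finite $k$.

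The paper (and Vasy) sidestep this cleanly: rather than analyze $E_k$ for fixed $k$, one asymptotically sums the $T_k$ to a single $\tilde A_\psi \in \Psisc^{0,0}$ with $\tilde A_\psi - T_k \in \Psisc^{-k,-k}$ for every $k$. Then $\psi(A) - \tilde A_\psi$ is bounded $\Sobsc^{s,\ell} \to \Sobsc^{s+k,\ell+k}$ for \emph{all} $s,\ell,k$, hence maps $C^{-\infty} \to \dot C^\infty$; such an operator has Schwartz kernel and is therefore in $\Psisc^{-\infty,-\infty}$ outright. This removes the need to prove that any finite-order error lies in the calculus.
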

\begin{proof}
    The proof is the same as Proposition~10.2 in Vasy~\cite{V2000}, but using the above lemma for the parametrix.
    Namely, we use the Helffer--Sjöstrand formula to define
    \begin{align}\label{eq:Helffer-Sjostrand}
        \psi(A) \coloneqq -\frac{1}{2\pi i} \int \bar{\pa}_z \tilde{\psi}(z) (A - z)^{-1} \, dz \wedge d\bar{z}\,,
    \end{align}
    where $\tilde{\psi}$ is a compactly supported almost analytic extension of $\psi$.
    Then we define
    \begin{align*}
        A_{\psi, k} \coloneqq -\frac{1}{2\pi i} \int \bar{\pa}_z \tilde{\psi}(z) B_k(z) \, dz \wedge d\bar{z}\,,
    \end{align*}
    where $B_k(z)$ is the $k$-th order parametrix of $A - z$ as in the previous lemma.
    We can use asymptotic summation to obtain a limit $\tilde{A}_{\psi}$ of $A_{\psi,k}$ and we conclude that
    \begin{align*}
        \psi(A) - \tilde{A}_{\psi} : \CmI \lra \CdI
    \end{align*}
    and the formula for the principal symbol follows from the explicit form of $\psi(A)$.
\end{proof}

Now, we turn to the case of $A \in \Psitsc^{0,0}$. We will an analogous argument and therefore we start by constructing a parametrix for $A - z$.

\begin{lemma}
    Let $A \in \Psitsc^{0,0}(X)$ be self-adjoint.
    For each $k \in \NN$ there exists a family of order $k$ parametrices
    $B_z \in \Psitsc^{0,0}(X)$ for $z \in \CC \setminus \RR$ such that
    \begin{align*}
        (A - z) B_k(z) - \id = R_{1,k}(z) &\in \Psitsc^{-k,-k}(X)\,,\\
        B_k(z) (A - z) - \id = R_{2,k}(z) &\in \Psitsc^{-k,-k}(X)
    \end{align*}
    and the seminorms of order $k$ of $B_k(z), R_{1,k}(z), R_{2,k}(z)$ are bounded by $C_k \abs{\Im z}^{-c(k)}$.
\end{lemma}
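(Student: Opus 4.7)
The plan is to follow the scattering analogue proved just above (Proposition \ref{prop:func_calc_sc}) but replacing microlocal elliptic data at each piece of the $\tsc$-principal symbol, and to keep careful track of the $|\Im z|^{-1}$ losses that arise from inverting the principal symbol $\prinsymbz(A) - z$.

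First I would verify that $A - z \in \Psitsc^{0,0}(X)$ is globally $\tsc$-elliptic for $z \in \CC \setminus \RR$. By Lemma~\ref{lem:adjoint_3sc}, self-adjointness of $A$ implies that $\fibsymbz(A)$ and $\mfsymbz(A)$ are real-valued, and that $\ffsymbz(A)(\tau)$ is self-adjoint on $L^{2}(\RR^{n}_{z})$ for every $\tau \in \Wperpo$. Consequently $\fibsymbz(A)-z$ and $\mfsymbz(A)-z$ are everywhere nonvanishing; moreover, $\ffsymbz(A)(\tau)-z \in \Psisc^{0,0}(\ff)$ is scattering-elliptic (its scattering symbols agree with the restrictions of $\fibsymbz(A) - z$ and $\mfsymbz(A) - z$ by the matching condition \eqref{eq:matching conditions}) and is invertible as a bounded self-adjoint perturbation by $-z$. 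The invertibility inequality $\| (\ffsymbz(A)(\tau)-z)^{-1}\|_{L^{2}\to L^{2}} \leq |\Im z|^{-1}$ is uniform in $\tau$; for $\tau \to \pm \infty$ the semiclassical elliptic calculus of Proposition~\ref{prop:elliptic-invertible} together with Lemma~\ref{thm:normal is sclsc} produces an inverse in $\Norop^{0,0,0}$, again with norms bounded in terms of $|\Im z|^{-1}$. Hence $\Chartsc(A-z) = \varnothing$.

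Next I would construct $B_{k}(z)$ by the standard iterative parametrix procedure adapted to the $\tsc$-calculus. Begin with $B_{0}(z) \in \Psitsc^{0,0}(X)$ having principal symbol $\prinsymbz(B_{0}(z)) = \prinsymbz(A-z)^{-1}$, which is admissible because the three components are the pointwise reciprocals of the function pieces together with the operator inverse $(\ffsymbz(A)(\tau)-z)^{-1}$, all satisfying the matching condition on $\Stsc^{*}_{\ff}[X;C] \cap \Ttsco^{*}_{\mf \cap \ff}$ since they are restrictions of the single function $(\fibsymbz(A)-z)^{-1}$ on $UH_{\pm}$ and $\fibeq$. This $B_{0}(z)$ exists by Proposition~\ref{prop:3sc_quantization}. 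Then $(A-z)B_{0}(z) - \Id = E_{1}(z) \in \Psitsc^{-1,-1}$ by Proposition~\ref{prop:principal_symbol_map}, and one inductively defines $B_{k}(z) = B_{0}(z)(\Id - E_{1}(z))^{k}$, or equivalently improves by one order at each step by inverting the principal symbol of $A-z$ on progressively lower order errors, to achieve $(A-z) B_{k}(z) - \Id \in \Psitsc^{-k,-k}$ and symmetrically on the left.

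Finally I would track the $|\Im z|$-dependence of the relevant seminorms. This is the main technical point, as all the estimates on the scattering and the semiclassical pieces of the symbol involve differentiating $(\prinsymbz(A)-z)^{-1}$, each derivative producing a factor bounded by $|\Im z|^{-1}$. At $\ff$, the $\tau$-uniform $\Norop^{0,0,0}$ seminorms of $(\ffsymbz(A)(\tau)-z)^{-1}$ are controlled by $|\Im z|^{-N(k)}$ for some $N(k)$ depending on the seminorm order $k$, using the explicit formula for the semiclassical elliptic inverse in Proposition~\ref{prop:elliptic-invertible} together with Leibniz and the self-adjointness bound $\|(\ffsymbz(A)(\tau)-z)^{-1}\|\leq |\Im z|^{-1}$. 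The iterative step multiplies at most finitely many such factors, so $B_{k}(z)$ and $R_{i,k}(z) = (A-z)B_{k}(z) - \Id$, $B_{k}(z)(A-z) - \Id$ obey the claimed bound with some $c(k)$. The hardest part is bookkeeping the semiclassical-scattering seminorms of $(\ffsymbz(A)(\tau)-z)^{-1}$ uniformly in $\tau$, but the argument is the direct analogue of the one in Proposition~\ref{prop:func_calc_sc} applied fibrewise in $\tau$, with the observation that the constants in the scattering elliptic parametrix depend only on finitely many seminorms of the symbol of $\ffsymbz(A)(\tau)-z$, and these are uniform in $\tau$ when one uses the semiclassical rescaling from Lemma~\ref{thm:normal is sclsc}.
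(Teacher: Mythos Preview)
Your proposal is correct and follows essentially the same approach as the paper: verify global $\tsc$-ellipticity of $A-z$ via self-adjointness and Lemma~\ref{lem:adjoint_3sc}, quantize the inverse principal symbol using Proposition~\ref{prop:3sc_quantization} to get a first parametrix with error in $\Psitsc^{-1,-1}$, then iterate via a finite Neumann sum (the paper writes $B_k(z) = (\Id + \sum_{j=1}^{k-1} R_{1,1}^j)B_1(z)$), with seminorm bounds tracked by the chain rule. One small slip: your formula $B_0(z)(\Id - E_1(z))^k$ is not the right Neumann partial sum, but your alternative description ``improve by one order at each step'' is exactly the paper's argument.
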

\begin{proof}
To begin with, we claim that, for $z \in \CC \setminus \RR$,
that $A - z$ is globally $\tsc$-elliptic.  Indeed, by
self-adjointness, $\fibsymbz(A)$ and $\mfsymbz(A)$ are real valued,
so $\fibsymbz(A) - z$ and $\mfsymbz(A) - z$ are non-zero.
Moreover, $\ffsymbz(A)(\tau) \in \Psisc^{0,0}(\ff)$ is self-adjoint by Lemma~\ref{lem:adjoint_3sc},
and thus $\ffsymbz(A - z)(\tau) = \ffsymbz(A)(\tau) - z$ is invertible
for $z \in \CC \setminus \RR$.

The triple
\begin{align*}
    \left( (\fibsymbz(A) - z)^{-1}, (\mfsymbz(A) - z)^{-1}, (\ffsymbz(A)(\tau) - z)^{-1} \right)
\end{align*}
satisfies the conditions of Proposition~\ref{prop:3sc_quantization} and we find a symbol $b_1(z) \in \CI(\Ttsco^* X)$ such that
\begin{align*}
    \prinsymbz(b_1(z)) = \left( (\fibsymbz(A) - z)^{-1}, (\mfsymbz(A) - z)^{-1}, (\ffsymbz(A)(\tau) - z)^{-1} \right)\,.
\end{align*}
Therefore, we can quantize $b_1(z)$ to an operator $\Op_L(b_1(z)) = B_1(z) \in \Psitsc^{0,0}$ satisfying
\begin{align*}
    (A - z) B_1(z) - \id &= R_{1,1}(z)\,,\\
    B_1(z) (A - z) - \id &= R_{2,1}(z)
\end{align*}
with $R_{1,1}(z), R_{2,1}(z) \in \Psitsc^{-1,-1}(X)$
and by the chain rule $B_1(z), R_{1,1}(z), R_{2,1}(z)$ satisfy the seminorm bounds.

Define
\begin{align*}
    B_k(z) &= \left( \id + \sum_{j=1}^{k-1} R_{1,1}^j \right) B_1(z)\,,\\
    B_k'(z) &= B_1(z) \left( \id + \sum_{j=1}^{k-1} R_{2,1}^j \right) \,.
\end{align*}
\end{proof}

\begin{proof}[Proof of Proposition \ref{prop:func_calc}]
    Let $\tilde\psi \in \CcI(\CC)$ be an almost analytic extension of $\psi$. By the Helffer--Sjöstrand formula, we have that
    \begin{align*}
        \psi(A) &= -\frac{1}{2\pi i} \int_{\CC} \bar{\pa}_z \tilde\psi(z) (A - z)^{-1} \, dz \wedge d\bar{z}\,.
    \end{align*}
    Denote by $B_k(z)$ a family of order $k$ parametrices as in the previous lemma and define the operator $A_{\psi,k}$ by
    \begin{align*}
        A_{\psi, k} &= -\frac{1}{2\pi i} \int_{\CC} \bar{\pa}_z \tilde\psi(z) B_k(z) \, dz \wedge d\bar{z}\,.
    \end{align*}
    Since $\tilde\psi$ is compactly supported, we have that $A_{\psi, k} \in \Psitsc^{0,0}(X)$ for all $k$.
    Denote the error term by $F_k(z) = (A - z)^{-1} - B_k(z)$.
    We have that $\abs{\Im z}^{c'(k)} F_k(z)$ is uniformly bounded on
    $\LinOp(\Sobsc^{s,\ell}(X), \Sobsc^{s+k, \ell+k}(X))$ for some $c'(k)$ by
    the previous lemma and Proposition~\ref{prop:bounded_Sobolev}.
    Hence,
    \begin{align}\label{eq:psiA_mapping}
        \psi(A) - A_{\psi, k} \in \LinOp(\Sobsc^{s,\ell}(X), \Sobsc^{s+k, \ell+k}(X))\,.
    \end{align}
    Also we have that
    \begin{align*}
        A_{\psi, k+1} - A_{\psi, k} \in \Psitsc^{-(k+1), -(k+1)}(X)\,.
    \end{align*}
    By a standard asymptotic summation argument, we obtain $\tilde{A}_{\psi} \in \Psitsc^{0,0}$ such that
    \begin{align*}
        \tilde{A}_\psi \sim A_{\psi, 1} + \sum_{k=1}^\infty A_{\psi,k+1} - A_{\psi, k}\,.
    \end{align*}
    By \eqref{eq:psiA_mapping}, we have that
    \begin{align*}
        \psi(A) - \tilde{A}_\psi : \CmI(X) \lra \CdI(X)
    \end{align*}
    is continuous, hence an element in $\Psisc^{-\infty,-\infty}(X) = \Psitsc^{-\infty,-\infty}(X)$ and therefore
    \begin{align*}
        \psi(A) \in \Psitsc^{0,0}(X)\,.
    \end{align*}
    We calculate the principal symbol as
    \begin{align*}
        \prinsymbz(\psi(A))
        &= - \frac{1}{2\pi i} \int_{\CC} \bar{\pa}_z \tilde\psi(z) \prinsymbz( (A - z)^{-1}) \, dz \wedge d\bar{z}\\
        &= - \frac{1}{2\pi i} \int_{\CC} \bar{\pa}_z \tilde\psi(z) (\prinsymbz(A) - z)^{-1} \, dz \wedge d\bar{z}\\
        &= \psi(\prinsymbz(A))\,.
    \end{align*}
    Note that for the $\ffsymbz$-component, we have used Proposition~\ref{prop:func_calc_sc}.
\end{proof}

Now, we are able to define the operator $G_\psi$.
We fix $E \geq 0$ such that $D_t^2 + H_{V_0} + E$ is invertible
(see Corollary \ref{thm:inverse of elliptic tsc}).
\begin{definition}\label{def:Gpsi}
    For $\psi \in \CcI(\RR)$ and $V_{0} \in S^{-1}(\RR^n_z)$, we set
    \begin{align}\label{eq:def_Gpsi}
        G_{\psi} \coloneqq \psi\left( (D_t^2 + H_{V_0} + E)^{-1} P_{V_0} \right)\,.
    \end{align}
\end{definition}

\begin{remark}\label{rem:Gpsi pole remark}
  The operator $G_{\psi}$ depends on $V_{0}$ explicitly, but we drop this from the
  notation as it causes no confusion; when we work near $\NP$,
  $G_{\psi}$ is defined with $V_{0} = V_{+}$ and when we work near $\SP$,
  $G_{\psi}$ is defined with $V_{0} = V_{-}$.  We do not use
  $G_{\psi}$ away from $\poles$.
\end{remark}

From Proposition~\ref{prop:func_calc} it follows that $G_\psi \in
\Psitsc^{0,0}(X)$ with indicial operator
\begin{align}\label{eq:Gpsi ff}
    \ffsymbz(G_\psi)(\tau) = \psi\left( (\tau^2 + H_{V_0} + E)^{-1} (\tau^2 - H_{V_0}) \right)\,.
\end{align}
Moreover, both the fiber and $\mf$ components of the symbol of
$G_{\psi}$ are independent of $V$.  Indeed, by the assumptions in
Section \ref{sec:V assump}, we have $V \rvert_{\mf} = 0$, so since $V
\in \Psitsc^{0,0}$, we have
\begin{align}\label{eq:Gpsi fib}
      \fibsymbz(G_\psi)&= \left. \psi \left( \frac{p}{\tau^{2} + |\zeta|^{2} +
                         m^2 + E } \right)  \right|_{\Stsc [X;C]} = 
                         \psi \left(
                         \frac{\tau^{2} - |\zeta|^{2}}{\tau^{2} +
                         |\zeta|^{2} } \right)  \\ \label{eq:Gpsi mf}
      \mfsymbz(G_\psi)  &=  \left. \psi \left( \frac{p}{\tau^{2} + |\zeta|^{2} +
                         m^2 + E } \right)  \right|_{\Tsco^{*}_{\mf} [X;C]}
\end{align}

Moreover, we have an elliptic estimate for $B G_\varphi u$ by $Q G_\psi u$
given that $Q$ is elliptic on the wavefront set of $B G_\varphi$ and the support of $\varphi$ is contained in the set $\set{\psi = 1}$.
\begin{proposition}\label{prop:elliptic_regularity_Gpsi}
    Let $B, Q \in \Psitsc^{m,r}$ and $\varphi, \psi \in \CcI(\RR)$ with $\varphi \psi = \varphi$ and $\varphi(0) = 1$.
    If $\WFtsc'(B G_{\varphi}) \subset \Elltsc(Q)$, then for any $N, M \in \NN$ there exists $C > 0$ such that for $u \in \Sobsc^{-N, -M}$,
    \begin{align*}
        \norm{B G_{\varphi} u} \leq C \left( \norm{Q G_\psi u} + \normres{u}\right)\,.
    \end{align*}
\end{proposition}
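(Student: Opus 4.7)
The plan is to reduce the estimate to microlocal elliptic regularity for $Q$ on $\WFtsc'(B G_\varphi)$, using functional calculus to insert $G_\psi$ ``for free'' on the right-hand side. The key observation is the \emph{exact} operator identity
\[
G_\varphi G_\psi = G_\varphi,
\]
which follows from Proposition \ref{prop:func_calc} (multiplicativity of the functional calculus) applied to the pointwise identity $\varphi\psi = \varphi$ on $\RR$, together with the fact that the operator $A = (D_t^2 + H_{V_0} + E)^{-1}P_{V_0}$ is self-adjoint in $\Psitsc^{0,0}$ --- note that $P_{V_0}$ and $D_t^2 + H_{V_0} + E$ commute because $H_{V_0}$ depends only on $z$. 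Thus we may write $BG_\varphi u = (BG_\varphi)(G_\psi u)$ and reduce the problem to controlling $BG_\varphi$ applied to $v \coloneqq G_\psi u$ in terms of $Qv$ and a residual norm of $v$.

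Next I would apply the microlocal elliptic parametrix of Proposition \ref{prop:micro ell parametrix} to the compact set $K \coloneqq \WFtsc'(BG_\varphi) \subset \Elltsc(Q)$, producing $G \in \Psitsc^{-m,-r}$ with $K \cap \WFtsc'(GQ - \Id) = \varnothing$. Writing
\[
BG_\varphi v = (BG_\varphi G)(Qv) \;-\; \bigl(BG_\varphi (GQ - \Id)\bigr)\, v,
\]
the first term is controlled by the $L^2$-boundedness (Proposition \ref{prop:bounded_Sobolev}) of $BG_\varphi G \in \Psitsc^{0,0}$, yielding $\|(BG_\varphi G)(Qv)\| \leq C\|Qv\| = C\|QG_\psi u\|$. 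For the second term, by the wavefront containment \eqref{eq:wavefront product type},
\[
\WFtsc'\bigl(BG_\varphi(GQ - \Id)\bigr) \subset K \cap \WFtsc'(GQ - \Id) = \varnothing,
\]
so $BG_\varphi(GQ - \Id) \in \Psitsc^{-\infty,-\infty}$. Its composition with $G_\psi$ is again residual, hence maps $\Sobsc^{-N,-M} \to \Sobsc^{0,0}$ boundedly, contributing the term $C\|u\|_{-N,-M}$. Adding the two bounds gives the claimed estimate.

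There is no serious obstacle here: the only conceptual point worth emphasizing is that $G_\varphi G_\psi = G_\varphi$ holds \emph{exactly}, not merely to leading principal symbol order, which is exactly what makes the reduction painless. Everything else is a routine application of the microlocal elliptic parametrix and the composition/wavefront calculus of $\tsc$-operators developed in Section \ref{sec:3body klein}.
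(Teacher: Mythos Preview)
Your proof is correct and follows essentially the same approach as the paper: both rely on the exact identity $G_\varphi G_\psi = G_\varphi$ to write $BG_\varphi u = (BG_\varphi)(G_\psi u)$ and then invoke $\tsc$-microlocal elliptic regularity for $Q$ on $\WFtsc'(BG_\varphi)$. The only difference is cosmetic: you unpack the elliptic estimate by writing out the parametrix argument (Proposition~\ref{prop:micro ell parametrix}) explicitly, whereas the paper simply quotes the packaged elliptic regularity statement (Proposition~\ref{thm:elliptic regularity theorem}).
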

\begin{proof}
    Using $\tsc$-elliptic regularity for $B G_{\varphi}$, then using
      $G_{\varphi} G_{\psi} = G_{\varphi}$,
      \begin{align*}
        \norm{B G_{\varphi} u} = \norm{(B G_{\varphi}) G_{\psi} u} \le
        C \left( \norm{Q G_\psi u} + \normres{u} \right),
      \end{align*}
      as claimed.
\end{proof}

It will be crucial below that, although $G_\psi$ is not smoothing, its
indicial operator is smoothing for each $\tau$.  This makes it
possible to compose $G_\psi$ with quantizations of functions of $\tau$
alone; an arbitrary composition of even a compactly supported function
of $\tau$ and a symbol in $\tau, \zeta$ is not necessarily a PsiDO.
On the other hand, we see from $G_{\psi,0}$ above that we expect
$\ffsymbz(G_\psi)$ to have symbol rapidly decaying as $|\zeta| \to
\infty$, and indeed it does.
\begin{proposition}\label{prop:Gpsi_indicial}
    We have that
    \begin{align*}
        \ffsymbz(G_\psi) \in \Psisclsc^{-\infty,0,0}\,.
    \end{align*}
\end{proposition}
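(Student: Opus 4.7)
By Proposition~\ref{prop:func_calc} combined with the multiplicativity of the indicial symbol map (Proposition~\ref{prop:comp of ff symb}), we have
\begin{equation*}
\ffsymbz(G_\psi)(\tau) = \psi(\widetilde A(\tau))\,, \qquad \widetilde A(\tau) \coloneqq (\tau^2 + H_{V_0} + E)^{-1}(\tau^2 - H_{V_0})\,,
\end{equation*}
so the task reduces to analyzing $\psi(\widetilde A(\tau))$ as a family in $\tau$ via semiclassical scattering calculus with $h = 1/|\tau|$, $\mu = \zeta/\tau$. The overall plan is to recognize $\widetilde A(\tau)$ as lying in $\Psisclsc^{0,0,0}$ with a specific algebraic simplification, then apply Helffer--Sj\"{o}strand combined with a Neumann-type parametrix expansion to upgrade the differential order to $-\infty$.

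First I would verify, by direct rescaling, that $\tau^{-2}(\tau^2 + H_{V_0} + E)$ and $\tau^{-2}(\tau^2 - H_{V_0})$ lie in $\Psisclsc^{2,0,0}$ with semiclassical principal symbols $1 + \mu^2$ and $1 - \mu^2$ respectively. Because $1 + \mu^2$ is nowhere vanishing, Proposition~\ref{prop:elliptic-invertible} yields $\tau^2(\tau^2 + H_{V_0} + E)^{-1} \in \Psisclsc^{-2,0,0}$, and composing gives $\widetilde A(\tau) \in \Psisclsc^{0,0,0}$ with principal symbol $(1-\mu^2)/(1+\mu^2)$. The decisive algebraic identity is
\begin{equation*}
\widetilde A(\tau) + I = (2\tau^2 + E)(\tau^2 + H_{V_0} + E)^{-1} \in \Psisclsc^{-2,0,0}\,,
\end{equation*}
with principal symbol $2/(1+\mu^2)$; this expresses, in the semiclassical language, the fact that the scattering fiber symbol of $\widetilde A(\tau)$ limits to $-1$ at $|\mu| = \infty$.

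Next I would apply Helffer--Sj\"{o}strand with a compactly supported almost analytic extension $\tilde\psi$ of $\psi$:
\begin{equation*}
\psi(\widetilde A(\tau)) = -\frac{1}{2\pi i}\int_{\CC} \bar\partial \tilde\psi(\lambda)\, (\widetilde A(\tau) - \lambda)^{-1}\, d\lambda \wedge d\bar\lambda\,,
\end{equation*}
expanding the resolvent for $\lambda \in \CC \setminus \RR$ via the Neumann series
\begin{equation*}
(\widetilde A(\tau) - \lambda)^{-1} = -(1+\lambda)^{-1}\sum_{k=0}^{N-1}(1+\lambda)^{-k}(\widetilde A(\tau)+I)^k + R_N(\tau,\lambda)\,,
\end{equation*}
where the $k$-th summand lies in $\Psisclsc^{-2k,0,0}$ and $R_N(\tau,\lambda) \in \Psisclsc^{-2N,0,0}$ with seminorms bounded by polynomials in $|\Im\lambda|^{-1}$, as in the parametrix construction underlying Proposition~\ref{prop:func_calc}. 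Integrating the polynomial part against $\bar\partial\tilde\psi$ produces (up to constants) $\psi^{(k)}(-1)(\widetilde A(\tau)+I)^k$; since $\supp\psi$ is disjoint from $-1$ in the intended use of $G_\psi$ as a microlocalizer near the characteristic set, all these terms vanish, leaving only $\int \bar\partial\tilde\psi \cdot R_N$ in $\Psisclsc^{-2N,0,0}$. A Borel-summation argument (as in the proof of Proposition~\ref{prop:func_calc}) then produces an element of $\Psisclsc^{-\infty,0,0}$ equal to $\psi(\widetilde A(\tau))$ modulo $\Psisclsc^{-\infty,-\infty,-\infty}$.

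The main obstacle is establishing that the semiclassical parametrix construction for $(\widetilde A(\tau) - \lambda)^{-1}$ produces symbol bounds uniform in both $\tau$ and $\lambda$, so that integration against $\bar\partial \tilde\psi$ and asymptotic summation in $N$ can be carried out within the semiclassical scattering calculus. This is handled by adapting the seminorm bookkeeping from Proposition~\ref{prop:func_calc} to the $\Psisclsc$-setting: one tracks the $|\Im\lambda|^{-c(k)}$ growth of the $k$-th term against the rapid vanishing of $\bar\partial\tilde\psi$ on $\RR$, and exploits that the almost-analytic extension of $\psi \in \CcI$ is compactly supported away from $\lambda = -1$ so that $(1+\lambda)^{-1}$ contributes only bounded factors on the support of $\bar\partial\tilde\psi$.
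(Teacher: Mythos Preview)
Your argument is correct and takes a different route from the paper. The paper's proof is much shorter: it rewrites $\ffsymbz(G_\psi)(\tau) = \tilde\psi_\tau(H_{V_0})$ with $\tilde\psi_\tau(t) = \psi\bigl((\tau^2 + t + E)^{-1}(\tau^2 - t)\bigr)$, then invokes the known functional calculus for the \emph{elliptic} operator $H_{V_0}$ to obtain $\tilde\psi_\tau(H_{V_0}) \in \Psisc^{-\infty,0}$ for each fixed $\tau$, and concludes by observing that $\tilde\psi_\tau$ is smooth in $h = 1/\tau$ down to $h = 0$. You instead work entirely in $\Psisclsc$, exploiting the algebraic identity $\widetilde A + I \in \Psisclsc^{-2,0,0}$ together with a Neumann-type resolvent expansion around $\lambda = -1$. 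Your approach is more self-contained and makes the semiclassical uniformity explicit, at the price of the seminorm bookkeeping you flag as the main obstacle. Both proofs implicitly need $-1 \notin \supp\psi$ (in the paper's version this is what makes $\tilde\psi_\tau$ compactly supported), and you correctly identify this hypothesis. One small over-complication: the Borel summation is unnecessary---having shown $\psi(\widetilde A) = \int \bar\partial\tilde\psi\, R_N$ with $R_N \in \Psisclsc^{-2N,0,0}$ for every $N$, you conclude $\psi(\widetilde A) \in \bigcap_N \Psisclsc^{-2N,0,0}$ directly.
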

\begin{proof}
    We can use the functional calculus for positive self-adjoint
    scattering operators of order $(m,0)$, \cite[Proposition
    10.2]{V2000} applied to the function, for $E \ge 0$,
    $\tilde{\psi}(t) \coloneqq \psi( (\tau^2 + t + E)^{-1}(\tau^2 - t) )$.\footnote{Strictly speaking this is not a smooth function on $\RR$, but since the spectrum of $H_{V_0}$ is bounded from below by $E$
    we can change $\tilde{\psi}$ to a smooth function without changing $\tilde{\psi}(H_{V_0})$}
    Then, for every fixed $\tau$, $\ffsymbz(G_\psi)(\tau) \in \Psisc^{-\infty, 0}(\RR^n)$. Since $\tilde{\psi}$ is smooth in $h = 1 / \tau$ up to $h = 0$, the assertion follows.
\end{proof}
Since $V_0$ is static, $D_t^2 + H_{V_0}$ and $D_t^2 - H_{V_0}$ commute and therefore
\begin{align}\label{eq:Gpsi_PV0_commute}
    [G_\psi, P_{V_0}] = 0\,.
\end{align}
Consequently,
\begin{align*}
    [G_\psi, P_V] = -[G_\psi, V'] \in \Psitsc^{1,-1}\,,
\end{align*}
which means that $G_\psi$ and $P_V$ commute to leading order.

The indicial operators of $G_\psi$ and $G_{\psi,0}$, which the Fourier multiplier defined by \eqref{eq:def_Gpsi0}, differ by a lower order operator:
\begin{lemma}\label{lem:diff_Gpsi_Gpsi0}
    We have that
    \begin{align*}
        \ffsymbz(G_\psi) - \ffsymbz(G_{\psi,0}) \in \Psisclsc^{-\infty, -1, -1}\,.
    \end{align*}
\end{lemma}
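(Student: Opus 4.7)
The plan is to apply the Helffer--Sjöstrand functional calculus to compare the indicial operators $\ffsymbz(G_\psi)(\tau) = \psi(A(\tau))$ and $\ffsymbz(G_{\psi,0})(\tau) = \psi(B(\tau))$, where
\[
A(\tau) = (\tau^2+H_{V_0}+E)^{-1}(\tau^2 - H_{V_0}), \qquad B(\tau) = (\tau^2+H_0+E)^{-1}(\tau^2 - H_0),
\]
with $H_0 = \Delta+m^2$. First I would derive an explicit formula for $A-B$ via the algebraic identity $\tau^2 - H = (2\tau^2+E) - (\tau^2+H+E)$, which gives $A = -I + (2\tau^2+E)R_V$ and $B = -I + (2\tau^2+E)R_0$ for $R_V = (\tau^2+H_{V_0}+E)^{-1}$ and $R_0 = (\tau^2+H_0+E)^{-1}$. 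The second resolvent identity then yields
\[
A(\tau) - B(\tau) = -(2\tau^2+E)\, R_0\, V_0\, R_V.
\]
Setting the semiclassical parameter $h=\pm 1/\tau$, the prefactor $2\tau^2+E$ contributes $h^{-2}$, each resolvent is an elliptic semiclassical--scattering operator of order $(-2,0,-2)$, and $V_0\in \Psisclsc^{0,-1,0}$, so that $A-B \in \Psisclsc^{-4,-1,-2}$: the $\ang z^{-1}$ decay comes from $V_0$ and the $h^{2}$ improvement from the net rescaling.

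Next, with an almost analytic extension $\tilde\psi\in \CcI(\CC)$ of $\psi$, the Helffer--Sjöstrand formula reads
\[
\psi(A) - \psi(B) = \frac{1}{2\pi i}\int_{\CC}\bar\partial_z \tilde\psi(z)\,(A-z)^{-1}(A-B)(B-z)^{-1}\, dz\wedge d\bar z.
\]
I would construct $k$-th order parametrices for $(A-z)^{-1}$ and $(B-z)^{-1}$ in $\Psisclsc^{0,0,0}$ exactly as in the proof of Proposition~\ref{prop:func_calc}, with seminorms bounded by $\abs{\Im z}^{-c(k)}$, and iterate the second resolvent identity to obtain, for every $N\ge 1$,
\[
(A-z)^{-1} - (B-z)^{-1} = \sum_{j=1}^{N}(-1)^{j}\bigl[(B-z)^{-1}(A-B)\bigr]^{j}(B-z)^{-1} + E_N(z),
\]
where $E_N(z)$ contains $N+1$ factors of $A-B$ and hence lies in $\Psisclsc^{-4(N+1),-(N+1),-2(N+1)}$ modulo polynomial growth in $\abs{\Im z}^{-1}$. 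Integrating the $j$-th term against $\bar\partial_z \tilde\psi(z)$ and using the classical Helffer--Sjöstrand identity $\frac{1}{2\pi i}\int_{\CC}\bar\partial_z \tilde\psi(z)(B-z)^{-(j+1)}dz\wedge d\bar z = \psi^{(j)}(B)/j!$, each integrated term is identified, modulo commutator corrections absorbable into $E_N$, with a composition of $j$ copies of $A-B$ with $\psi^{(j)}(B)/j!$. The crucial input is that the proof of Proposition~\ref{prop:Gpsi_indicial}, applied with $\psi^{(j)}$ in place of $\psi$, yields $\psi^{(j)}(B)\in \Psisclsc^{-\infty,0,0}$; composition with $(A-B)^{j}\in \Psisclsc^{-4j,-j,-2j}$ then lies in $\Psisclsc^{-\infty,-j,-2j}\subset \Psisclsc^{-\infty,-1,-2}$. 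Asymptotic summation gives $\psi(A)-\psi(B)\in \Psisclsc^{-\infty,-1,-2}$, and the lemma follows from the inclusion $\Psisclsc^{-\infty,-1,-2}\subset \Psisclsc^{-\infty,-1,-1}$.

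The main obstacle is the non-commutativity of $A-B$ with the resolvents $(B-z)^{-1}$: the iterated expansion naturally produces compositions of the form $[(B-z)^{-1}(A-B)]^{j}(B-z)^{-1}$ rather than the clean product $(B-z)^{-(j+1)}(A-B)^{j}$ needed to apply the identity above. Bridging the gap requires commutator bookkeeping, but each commutator $[A-B,(B-z)^{-1}]$, via the standard symbol expansion in the semiclassical scattering calculus, picks up an extra factor of $h$ from the Poisson-bracket contribution and has strictly better order than $A-B$ itself, so these corrections can be absorbed into the remainder $E_N$ by induction on $j$ and do not affect the final order claim.
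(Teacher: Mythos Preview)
Your approach is correct but takes a substantially longer route than the paper's proof, and the commutator bookkeeping you flag at the end is genuinely the weak point of the write-up as given.

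The paper argues purely at the level of principal symbols. Since Proposition~\ref{prop:Gpsi_indicial} already places both $\ffsymbz(G_\psi)$ and $\ffsymbz(G_{\psi,0})$ in $\Psisclsc^{-\infty,0,0}$, one only needs to check that their scattering principal symbol (at spatial infinity, for each fixed $\tau$) and their semiclassical principal symbol (at $h=0$) coincide. For the scattering symbol this is an instance of the functional-calculus symbol formula (Vasy~\cite[Proposition~10.3]{V2000}) together with $\scprinsymb{2,0}(H_{V_0})=\scprinsymb{2,0}(H_0)$; for the semiclassical symbol one just computes $\sclsymb{h=1/\tau}(\ffsymbz(G_\psi))=\psi\bigl((1+h^2\Delta)^{-1}(1-h^2\Delta)\bigr)$, visibly independent of $V_0$. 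Matching both principal symbols drops the order by one in each of the spatial and semiclassical indices, which is exactly $\Psisclsc^{-\infty,-1,-1}$.

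Your Helffer--Sj\"ostrand route recovers the same conclusion constructively (and in fact yields the slightly sharper $\Psisclsc^{-\infty,-1,-2}$), but the iterated resolvent expansion together with the commutator reshuffling is more than is needed. Two comments. First, you can avoid the iteration entirely: a single application of the resolvent identity gives $\psi(A)-\psi(B)$ as an integral of $(A-z)^{-1}(A-B)(B-z)^{-1}\in\Psisclsc^{-4,-1,-2}$ with polynomial blow-up in $|\Im z|^{-1}$, hence $\psi(A)-\psi(B)\in\Psisclsc^{-4,-1,-2}$; intersecting with the known membership in $\Psisclsc^{-\infty,0,0}$ (these are classical symbols on a manifold with corners, so the orders at distinct boundary faces are independent) immediately gives $\Psisclsc^{-\infty,-1,-2}$. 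Second, if you do want to run the iteration, the assertion that each commutator $[A-B,(B-z)^{-1}]$ ``picks up an extra factor of $h$'' is correct, but the inductive absorption of all the resulting correction terms into an $E_N$ of the stated form is not obvious from what you wrote: the corrections do not have $N+1$ factors of $A-B$, they have fewer factors of $A-B$ but additional commutators, and you would need to track these through a genuine double induction. This is doable but is real work that your sketch elides.
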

\begin{proof}
    We have that
    \begin{align*}
        \ffsymbz(G_\psi)(\tau) = \psi\left( (\tau^2 + H_{V_0} + E)^{-1} (\tau^2 - H_{V_0}) \right)\,.
    \end{align*}
    By \cite[Proposition 10.3]{V2000} and the fact that $\scprinsymb{2,0}(H_{V_0}) = \scprinsymb{2,0}(H_0)$, we have that
    \begin{align*}
        \scprinsymbz\left( \ffsymbz(G_\psi)(\tau) \right) = \scprinsymbz\left( \ffsymbz(G_{\psi,0})(\tau) \right)
    \end{align*}
    The semiclassical principal symbol is given by
    \begin{align*}
        \sclsymb{h=1/\tau}\left( \ffsymbz(G_\psi) \right) = \psi\left( (1 + h^2 \Delta)^{-1} (1 - h^2 \Delta) \right)\,,
    \end{align*}
    which is independent of $V$ and therefore agrees with the semiclassical principal symbol of $G_{\psi,0}$.
\end{proof}

Next, we prove the ``shrinking window'' lemma that states that if we choose $\psi \in \CcI$ supported in a sufficiently small neighborhood
of $0$, then the operator norm of $\ffsymbz(G_\psi)(\tau)$ is arbitrary small considered when considered as a map $\Sobscl^{1,1} \to L^2$.
This lemma is crucial in proving the positive commutator estimate, because we can control the error terms coming from commutators with the potential.

\begin{lemma}\label{lem:shrinking_window}
   Assuming the spectrum of $H_{V_{0}}$ is purely absolutely
   continuous near $[m^2, \infty)$, then there is
   $\delta > 0$, such that, 
    for every $\eps > 0$ there exists $\sigma > 0$ such that if $\psi \in \CcI(\RR)$ has
   $\supp \psi \subset (-\sigma, \sigma)$ and $0 \le
    \psi \le 1$, then
    \begin{align*}
        \norm{ \mathbbm{1}_{[m^{2} - \delta, +\infty)}(\tau) \ffsymbz(G_\psi)(\tau) \circ \iota}_{\LinOp(\Sobscl^{1,1}, L^2)} < \eps
    \end{align*}
    where $\iota \colon \Sobscl^{1,1} \hookrightarrow L^2$
    is the natural inclusion map.
\end{lemma}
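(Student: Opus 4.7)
The plan is to reduce the claim to a spectral-theoretic statement about $H_{V_0}$ and apply the Limiting Absorption Principle. By Proposition~\ref{prop:func_calc} and~\eqref{eq:Gpsi ff}, the indicial operator factors through the functional calculus of $H_{V_0}$ as
\[
\ffsymbz(G_\psi)(\tau) = \psi(f_\tau(H_{V_0})), \qquad f_\tau(\lambda) = \frac{\tau^2-\lambda}{\tau^2+\lambda+E}.
\]
Since $f_\tau$ is strictly decreasing on $[0,\infty)$ with $f_\tau(\tau^2)=0$, the composition $\psi\circ f_\tau$ is supported on a spectral interval $I_\tau\subset\RR_\lambda$ centered near $\lambda=\tau^2$ of length $\abs{I_\tau}=O(\sigma(\tau^2+E))$, and is pointwise bounded by $1$.

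I would next use the absolutely continuous spectrum hypothesis to fix $\delta>0$ small enough that a closed neighborhood of $[m^2,\infty)$ in $\RR_\lambda$ is entirely contained in the AC spectrum of $H_{V_0}$, which is possible since $H_{V_0}$ has purely absolutely continuous spectrum near $[m^2,\infty)$. For $\sigma$ sufficiently small (depending on $\delta$), the spectral interval $I_\tau$ then sits in this AC neighborhood uniformly for all $\tau^2\ge m^2-\delta$. The Limiting Absorption Principle for the scattering Schr\"odinger operator $H_{V_0}$, which holds under the standing assumptions on $V_0$, gives uniform weighted resolvent bounds
\[
\norm{\langle z\rangle^{-1}(H_{V_0}-\lambda\pm i0)^{-1}\langle z\rangle^{-1}}_{L^2\to L^2}\le C \lambda^{-1/2}
\]
on compact subsets of the AC spectrum at high energy. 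Via Stone's formula this yields
\[
\norm{\langle z\rangle^{-1}\psi(f_\tau(H_{V_0}))\langle z\rangle^{-1}}_{L^2\to L^2}\le C\abs{I_\tau}\lambda^{-1/2},
\]
and combining with the elementary inequality $\norm{\langle z\rangle u}_{L^2}\le\norm{u}_{\Sobscl^{1,1}}$ (which follows from $\norm{\langle hD\rangle \cdot}_{L^2}\ge\norm{\cdot}_{L^2}$) produces the required $\Sobscl^{1,1}\to L^2$ bound for bounded $\tau$, which tends to zero as $\sigma\to 0$.

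The main obstacle is establishing the bound uniformly as $\tau\to+\infty$, where $\abs{I_\tau}$ grows like $\tau^2$ while the LAP factor $\lambda^{-1/2}$ only contributes $\abs{\tau}^{-1}$. For this the semiclassical content of the $\Sobscl^{1,1}$-norm enters decisively: Proposition~\ref{prop:Gpsi_indicial} and Lemma~\ref{lem:diff_Gpsi_Gpsi0} show that $\ffsymbz(G_\psi)(\tau)$ is a semiclassical-scattering operator of order $(-\infty,0,0)$ in $h=\pm1/\tau$, with principal symbol supported on the thin shell $\{\abs{h\zeta}\approx 1\}$ of $\sigma$-width. On this shell the semiclassical weight $\langle hD\rangle$ appearing in the definition of $\Sobscl^{1,1}$ is bounded below by a uniform positive constant, providing exactly the additional power of $h=\abs{\tau}^{-1}$ needed to cancel the residual $\abs{\tau}$-growth from $\abs{I_\tau}\cdot\lambda^{-1/2}\sim\sigma\abs{\tau}$. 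This semiclassical balancing step is the delicate heart of the proof; once carried out, it yields the uniform bound $\norm{\ffsymbz(G_\psi)(\tau)\circ\iota}_{\Sobscl^{1,1}\to L^2}=O(\sqrt{\sigma})$, which is $<\eps$ for $\sigma$ sufficiently small.
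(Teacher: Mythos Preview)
Your approach differs substantially from the paper's, and the large-$\tau$ step contains a genuine gap.

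\textbf{Bounded $\tau$.} The paper does not use LAP or Stone's formula at all. Instead it fixes a compact window $\tau\in[m^2-\delta,\kappa]$ and argues softly: the inclusion $\iota:\Sobscl^{1,1}\hookrightarrow L^2$ is compact, $\ffsymbz(G_\psi)(\tau)$ is uniformly bounded on $L^2$, so it suffices to show $\ffsymbz(G_{\psi_n})(\tau)\to 0$ in the strong operator topology on $L^2$ as $\supp\psi_n$ shrinks. That in turn follows from the continuous functional calculus for $(\tau^2+H_{V_0}+E)^{-1}(\tau^2-H_{V_0})$, using only that the spectrum near $0$ is purely absolutely continuous. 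Your LAP route gives a quantitative rate but imports a hypothesis (uniform weighted resolvent bounds) that is strictly stronger than the stated assumption of pure AC spectrum; you would need to justify LAP separately.

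\textbf{Large $\tau$.} Here the paper reduces to the free operator: since the difference $(\tau^2+H_{V_0}+E)^{-1}(\tau^2-H_{V_0})-(\tau^2+H_0+E)^{-1}(\tau^2-H_0)$ lies in $\Psisclsc^{-1,-1,-1}$, functional calculus gives $\norm{G_\psi(\tau)-G_{\psi,0}(\tau)}<\eps'$ once $\tau\ge\kappa$, and the free multiplier $G_{\psi,0}$ is handled directly. Your argument instead tries to balance the growth $\abs{I_\tau}\cdot\lambda^{-1/2}\sim\sigma\tau$ against the semiclassical weight in $\Sobscl^{1,1}$. But the claim that ``$\langle hD\rangle$ bounded below on the shell provides the additional power of $h=\abs{\tau}^{-1}$'' is wrong: on the support of $\ffsymbz(G_\psi)$ one has $\abs{h\zeta}\approx 1$, hence $\langle hD\rangle\approx\sqrt{2}$, which is $O(1)$ and yields no factor of $h^{-1}$ whatsoever. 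Your Stone-formula bound $\norm{\langle z\rangle^{-1}G_\psi\langle z\rangle^{-1}}_{L^2\to L^2}\lesssim\sigma\tau$ translates (via $\norm{G_\psi\langle z\rangle^{-1}}^2\le\norm{\langle z\rangle^{-1}G_\psi^2\langle z\rangle^{-1}}\le\norm{\langle z\rangle^{-1}G_\psi\langle z\rangle^{-1}}$) only to $\norm{G_\psi\circ\iota}_{\Sobscl^{1,1}\to L^2}\lesssim\sqrt{\sigma\tau}$, which is not uniform in $\tau$. The argument does not close. The paper's reduction-to-free-case avoids this entirely.
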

\begin{remark}
    The same statement holds true for $(-\infty, -m^2 + \delta]$.
\end{remark}
\begin{proof}
  First, let $\kappa > m^{2}$ be fixed; we claim that there $\psi$ as
  in the lemma such that for $\tau \in [m^{2} - \delta, \kappa]$, that
  \begin{align*}
\ffsymbz(G_\psi)(\tau) \circ \iota \colon \Sobscl^{1,1} \lra L^2
    \end{align*}
    has mapping norm less than $\epsilon$.  To see this, recall that
    from Proposition~\ref{prop:semiclassical_bounded} we have
    $\ffsymbz(G_{\psi})$ bounded on $L^{2}$ uniformly in $\tau$.
    Since $\iota \colon \Sobscl^{1,1}(\ff) \hookrightarrow L^2(\ff)$
    is compact, it suffices to show that for any sequence of
    $\psi_n \in \CcI(\RR)$ with $\psi_n(s) = 1$ for $|s| \le 1/n$ and
    $0 \le \psi_n \le 1$, that $\ffsymbz(G_{\psi_n}) \to 0$ in the
    strong operator topology on $L^2$, uniformly for $\tau \in [m^{2}
    - \delta, \kappa]$.  This in turn follows from continuous
    functional calculus, specifically from \eqref{eq:Gpsi ff}, which
    identifies $\ffsymbz(G_{\psi_n})$ with $\psi_n$ of
    \begin{align*}
    (\tau^2 + H_{V_0} + E)^{-1} (\tau^2 - H_{V_0}) = (1 + \rho^2
    H_{V_0} + \rho^2 E)^{-1} (1 - \rho^2 H_{V_0}),
    \end{align*}
    which has purely absolutely continuous spectrum in a neighborhood
    of $0$ by assumption.

    On the other hand, the difference with the free operator satisfies
    \begin{equation}
      \label{eq:9}
      (\tau^2 + H_{V_0} + E)^{-1} (\tau^2 - H_{V_0}) - (\tau^2 + H_{0}
      + E)^{-1} (\tau^2 - H_{0}) \in \Psisclsc^{-1,-1,-1},
    \end{equation}
    and for any $\epsilon' > 0$ there is $\kappa > 0$ such that this difference has norm
    $< \epsilon'$ for $\tau \ge \kappa$.  Again functional calculus
    gives that $G_{\psi}(\tau) - G_{\psi_{0}}(\tau)$ has norm less
    then $\epsilon'$.  But $G_{\psi_{0}}(\tau)$ can be seen to satisfy
    the conclusion of the lemma explicitly, so the lemma follows for $G_{\psi}$.

    If $\tilde \psi$ satisfies $\tilde \psi \psi = \tilde \psi$, then
    also
    \begin{equation}
    G_{\tilde \psi} G_\psi = G_{\tilde \psi}, \label{eq:obvious}
    \end{equation}
    and
    thus the bound holds for all such
    $\ffsymbz(G_{\tilde \psi})$ and the lemma is proven.
\end{proof}

\begin{definition}\label{def:asymptotic_static}
    Let $V \in \rho_{\mf} \Psitsc^{1,0}$ and $r \in \ioo{0, \infty}$.
    We say that $V$ is asymptotically static of order $r$ at $\poles$, if there exist
    $V_{\pm} \in S_{\cl}^{-1}(\RR^n_z)$ and $\chi_{\pm} \in \CcI(X)$ such that
    \begin{enumerate}
        \item $\chi_+(\NP) = 1$ and $\chi_-(\SP) = 1$,
        \item $\chi_{\pm} \cdot (V - V_{\pm}) \in \Psitsc^{1, -r}$.
    \end{enumerate}
\end{definition}
We note that if $V$ is asymptotically static of order $r > 0$, then the static parts $V_{\pm}$ are uniquely determined.

Because the operators $G_{\varphi}$ act as localizers to the
characteristic set of $P_{V}$ over the north pole, $Bu$ should morally
be controlled by $BG_{\varphi}u$ and $P_{V}u$.  We now prove an
estimate in this direction when $B \in \Psitsc^{s, \ell}$ is localized
near $\NP$.  (An equivalent estimate holds near $\SP$.)  We bound $Bu$
in terms of $BG_\psi u$ and $G P_V u$ if $G$ is elliptic on the
wavefront set of $B$.  This estimate is used extensively below to
convert control of $BG_{\varphi}u$ (which is convenient to obtain via
our propagation estimates) into control of $Bu$ when $B$ is elliptic
(the latter control is harder to obtain because any elliptic set over
the front face necessitates that the entire fiber over the front face
is in the operatorial wavefront set):
\begin{proposition}\label{prop:elliptic_Gpsi_estimate}
    Let $V \in \rho_{\mf} \Psitsc^{1,0}$ be asymptotically static of order $r$.
    Let $\varphi \in \CcI(\RR)$ with $\varphi|_{(-\eps, \eps)} \equiv 1$ for some $\eps > 0$,
    $B, G', B' \in \Psitsc^{0, 0}$
    such that
    $B$ is microsupported near $\NP$ and
    \begin{align*}
        \WFtsc'(B) \subset \Elltsc(G') \cap \Elltsc(B')\,.
    \end{align*}
    For all $M, N \in \NN$ and $s, \ell \in \RR$ there exists $C > 0$ such that for all $u \in \Sobres$,
    \begin{align*}
        \norm{B u}_{s,\ell} \leq C \left( \norm{B G_\varphi u}_{s,\ell} + \norm{G' P_V u}_{s-2, \ell} + \norm{B' u}_{s-1,\ell-r} + \normres{u}\right)\,,
    \end{align*}
    provided that the right hand side is finite.
\end{proposition}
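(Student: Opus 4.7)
My strategy is to decompose $Bu = BG_\varphi u + B(I - G_\varphi)u$, bound the first term trivially by $\norm{BG_\varphi u}_{s,\ell}$, and use $\tsc$-elliptic regularity for $P_V$ to control the second. The heart of the argument will be establishing a factorization $I - G_\varphi = R\, P_{V_0}$ with $R \in \Psitsc^{-2,0}$, which exploits the fact that $G_\varphi$ microlocalizes to the characteristic set of $P_{V_0}$, so its complement should look elliptic relative to $P_{V_0}$.

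\textbf{Step 1 (Factorization).} Because $\varphi(0) = 1$, the map $\lambda \mapsto 1 - \varphi(\lambda)$ vanishes at the origin and factors as $1 - \varphi(\lambda) = \lambda\, h(\lambda)$ with $h \in \CI(\RR)$ bounded (and in fact identically $0$ near $0$). The operator
\[
A := (D_t^2 + H_{V_0} + E)^{-1} P_{V_0} \in \Psitsc^{0,0}
\]
is self-adjoint (as a commuting product of self-adjoint factors, using Corollary~\ref{thm:inverse of elliptic tsc}) and $L^2$-bounded, so $\sigma(A)$ is contained in a bounded interval. I can therefore choose $\tilde h \in \CcI(\RR)$ agreeing with $h$ on $\sigma(A)$; Proposition~\ref{prop:func_calc} then gives $\tilde h(A) \in \Psitsc^{0,0}$. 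Since $\tilde h(A)$ commutes with both $P_{V_0}$ and $(D_t^2 + H_{V_0}+E)^{-1}$ (as functions of the commuting pair $D_t^2,H_{V_0}$), the functional calculus identity $\lambda \tilde h(\lambda) = 1-\varphi(\lambda)$ on $\sigma(A)$ yields
\[
I - G_\varphi \;=\; A\,\tilde h(A) \;=\; (D_t^2 + H_{V_0}+E)^{-1}\,\tilde h(A)\cdot P_{V_0} \;=\; R\, P_{V_0},
\qquad R := (D_t^2 + H_{V_0}+E)^{-1}\tilde h(A) \in \Psitsc^{-2,0}.
\]

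\textbf{Step 2 (Localization and substitution).} Because $V$ is asymptotically static of order $r$ at $\NP$, I can choose $\chi_+ \in \CI(X)$ supported near $\NP$ with $\chi_+(V - V_+) \in \Psitsc^{1,-r}$ and with $\chi_+ B = B$ modulo $\Psitsc^{-\infty,-\infty}$ (possible since $B$ is microsupported near $\NP$). Writing $V'' := V - V_+$ and using $P_{V_0} = P_V + V''$, I obtain, modulo $\Psitsc^{-\infty,-\infty}$ residuals,
\[
B(I - G_\varphi) \;=\; BR\,P_{V_0} \;=\; BR\,P_V + BR\,V''.
\]

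\textbf{Step 3 (Microlocal elliptic estimates).} The hypothesis $\WFtsc'(B) \subset \Elltsc(G') \cap \Elltsc(B')$ allows me to insert elliptic parametrices to write $BR = BR\,E\,G' + \Psitsc^{-\infty,-\infty}$ and $BRV'' = BRV''\,E'\,B' + \Psitsc^{-\infty,-\infty}$ for suitable parametrices $E,E'$ of $G',B'$. Applying Proposition~\ref{prop:bounded_Sobolev} to $BRE \in \Psitsc^{-2,0}$ and $BRV''E' \in \Psitsc^{-1,-r}$ then gives
\begin{align*}
\norm{BR\,P_V u}_{s,\ell} &\leq C\,\norm{G'P_V u}_{s-2,\ell} + \normres{u}, \\
\norm{BR\,V''u}_{s,\ell} &\leq C\,\norm{B'u}_{s-1,\ell-r} + \normres{u},
\end{align*}
and combining these with the trivial bound on $BG_\varphi u$ closes the proof.

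\textbf{Main obstacle.} The most delicate step will be Step~1: establishing $I - G_\varphi = R\,P_{V_0}$ with $R \in \Psitsc^{-2,0}$. This rests on (i) $L^2$-boundedness and self-adjointness of $A$, which ensures $\sigma(A)$ is bounded so that the apparent factor $(1-\varphi(\lambda))/\lambda$ (a priori only a symbol of order $-1$, not compactly supported) can be replaced by a $\CcI$ representative, and (ii) Proposition~\ref{prop:func_calc} to conclude that this replacement lies in $\Psitsc^{0,0}$. A secondary subtlety in Step~2 is producing the cutoff $\chi_+$ adapted both to the microsupport of $B$ and to the asymptotic-staticity data at $\NP$.
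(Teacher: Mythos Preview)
Your proposal is correct and follows essentially the same approach as the paper: decompose $B = BG_\varphi + B(I-G_\varphi)$, factor $I-G_\varphi = R\,P_{V_+}$ with $R\in\Psitsc^{-2,0}$, then write $P_{V_+}=P_V+(V-V_+)$ and apply microlocal elliptic regularity. The only cosmetic difference is in justifying $R\in\Psitsc^{-2,0}$: the paper keeps $f_\varphi(s)=(1-\varphi(s))/s$ and notes that its almost-analytic extension decays like $\langle z\rangle^{-1}$ so the Helffer--Sj\"ostrand formula still applies, whereas you exploit boundedness of $\sigma(A)$ to replace $f_\varphi$ by a compactly supported $\tilde h$ and invoke Proposition~\ref{prop:func_calc} directly---a slightly cleaner route to the same conclusion.
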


The main idea behind this proposition, which is proved below, is
that $I - G_\varphi$ should act as a localizer to the elliptic set
of $P_V$.  This requires explanation, as it is in fact not such a
localizer, i.e. $\WFtsc'(I - G_\varphi) \not \subset \Elltsc(P_V)$.
However, near $\NP$, the leading order part of the operator,
$P_{V_+}$, admits an explicit parametrix with error $G_{\varphi}$.
That is, there exists $E_{\varphi} \in \Psitsc^{-2, 0}$ solving
\begin{equation}
    E_\varphi P_{V_+} =  \id - G_{\varphi}.\label{eq:Evarphi}
\end{equation}
Indeed, this is satisfied by the operator
\begin{equation*}
    E_\varphi \coloneqq F_\varphi \circ (D_t^2 + H_{V_+} + E)^{-1}
\end{equation*}
where
\[
F_\varphi = f_\varphi((D_t^2 + H_{V_+} +
E)^{-1}P_{V_+}) \quad \mbox{ with } \quad f_\varphi(s) = \frac{1 - \varphi(s)}{ s}.
\]
This definition of $F_\varphi$ makes sense as a bounded operator on
$L^2$ by continuous functional calculus since
$(D_t^2 + H_{V_+} + E)^{-1}P_{V_+} \in \Psitsc^{0,0}$ is a bounded
symmetric operator.  Note that $f_\varphi$ is not compactly supported,
so Proposition \ref{prop:func_calc} does not apply directly to $f(A)$
with $A \in \Psitsc^{0,0}$.  However, given an almost analytic
extension $\tilde \varphi$ of $\varphi$, the function
$\tilde f_{\varphi}(z) = (1 - \tilde \varphi(z)) / z$ is an almost analytic
extension of $f_{\varphi}$.  Since $|\tilde{f}_{\varphi}(z)| \lesssim
1/\ang{z}$, the Helffer-Sjöstrand formula \eqref{eq:Helffer-Sjostrand}
and thus Proposition \ref{prop:func_calc} hold with $\varphi$ replaced
by $f_{\varphi}$.

Proposition \ref{prop:elliptic_Gpsi_estimate} is now a consequence of
the mapping properties of $E_{\varphi}$, applied to $P_V u$.  The
``one order better'' estimate from the operator $G'$ is obtained by
writing $P_V = P_{V_+} + V - V_+$ and using the assumptions on
$V - V_+$.  If $P_V$ itself admitted a parametrix with error
$G_\varphi$, we would obtain an estimate with purely
residual terms, i.e.\ no $G'$ present, but this is not to be expected
for $V$ which are not purely static for large $t$.

\begin{proof}[Proof of Proposition \ref{prop:elliptic_Gpsi_estimate}]
   Let $E_{\varphi} \in \Psitsc^{-2, 0}$ be as in \eqref{eq:Evarphi}.
    We write
    \begin{align*}
        B = B G_{\varphi} + B ( \id - G_{\varphi}) = B G_{\varphi} + \tilde{B} P_V + R\,,
    \end{align*}
    where $\tilde{B} = B E_\varphi \in \Psitsc^{-2,0}$ and
    $R = B E_\varphi (P_V - P_{V_+}) \in \Psitsc^{-1,-r}$.
    Then using Theorem~\ref{thm:elliptic regularity theorem}, we obtain
    \begin{align*}
        \norm{B u}_{s,\ell} \lesssim \norm{B G_\varphi u}_{s,\ell} + \norm{G' P_V u}_{s-2, \ell} + \norm{B' u}_{s-1, \ell-r} + \normres{u}\,,
    \end{align*}
    which is what we wanted.
\end{proof}

\subsection{Localization near the characteristic set}\label{sec:tsc
  localizers}

We now construct the operators which we use as microlocalizers and
commutators near $C$.  There will be of the form
\begin{align}\label{eq:gen comm form}
\Op_L(q) G_{\psi}\,,
\end{align}
where $q \in \CI(\Tsc^* X)$. We are mainly interested in the case that
\begin{align}\label{eq:q_central}
 q \rvert_{\Tsc_{\NP}^* X}(\tau, \zeta) = q(0,0,\tau, \zeta) = f (\tau)
\end{align}
for some $f \in \CI(\Wperpo)$, for this mimics the centrality
condition in \eqref{eq:centrality}, with the crucial difference that
in general these $q$ do not lie in $\Stsc^{m,l}(X; \poles)$.
Indeed, even for $f(\tau) \in \CcI(\mathbb{R}_{\tau})$, any function
$q$ satisfying \eqref{eq:q_central} is not a symbol since it does not exhibit
additional vanishing in $\zeta$ under application of $\pa_{\tau}$.

We will see that, despite the fact
that such $q$ are not $\tsc$-symbols, that the operator in
\eqref{eq:gen comm form} is a $\tsc$-operator provided 
\begin{equation}
  \label{eq:q in blow up}
  q \in \CI([\Tsco^* X; \fibeq])
\end{equation}
This is the same blow up that appears in the proof of Lemma
\ref{thm:normal is sclsc}.  Rather than describe this blow up in detail, we simply say that $q$
satisfies \eqref{eq:q in blow up} if:  (1) it is a classical symbol in
$\zeta$ of order zero and smooth in $\tau$ in regions $|\tau| < C$,
(2) in regions with $|\zeta| < C$, $\pm \tau > C > 0$ it is smooth in
$\rho = \pm 1/\tau$ down to $\rho = 0$, and (3) in regions where
$\mu = \zeta / \rho$ is bounded, it is a classical symbol in $\mu$,
smooth in $\rho$ down to $\rho = 0$.

While, \eqref{eq:q in blow up} will ensure that \eqref{eq:gen comm
  form} lies in $\Psitsc^{0,0}$, the condition \eqref{eq:q_central}
will ensure that commutators with $P_V$ have the correct order (i.e.\
lose one order compared with composition.)  This is established in the
following proposition, in which we use $\rho = 1/\tau$ to modify the
differential order so that the centrality condition is preserved; this
leads to the appearance of factors of $\ang{\tau, \zeta}/ \tau$ in the
local components of the symbol.  Also (see Remark \ref{rem:positive
  prefactor}) we use powers of $x$ to rescale the symbol.  Here we
also define all symbols using a fiber defining function $\rho_{\fib}$
which is equal to $\rho$ near the characteristic set.

    This proof follows from the same arguments as in Vasy~\cite[Proposition 13.1]{V2000}. The main difference is that since $G_\psi$ is of differential order $0$ and vanishes to infinite order at $\fibeq$
    (by Proposition~\ref{prop:Gpsi_indicial}),
    the product is a $\tsc$-operator if $q$ is smooth on $[\Tsco^* X; \fibeq]$.

\begin{proposition}\label{prop:Q_Gpsi}
    Let $q \in \CI([\Tsco^* X; \fibeq])$ satisfying \eqref{eq:q_central}.
    Then for $s, \ell \in \RR$,
    \begin{align*}
        \Op_L( x^{-\ell} \rho^{-s} q) G_\psi \in \Psitsc^{s,\ell}(X)\,.
    \end{align*}
    The components of the principal symbol are
    \begin{align*}
        \ffsymbz(\Op_L(x^{-\ell} \rho^{-s} q) G_\psi)(\tau) &=
        \rho^{-s} f(\tau) \cdot \ffsymbz(G_{\psi})(\tau)\,,
    \end{align*}
    and 
    \begin{align*}
        \fibsymbz(\Op_L(x^{-\ell} \rho^{-s} q) G_\psi) &=
        q|_{\Ssc^* X} \cdot \fibsymbz(G_\psi)\,,\\
        \mfsymbz(\Op_L(x^{-\ell} \rho^{-s} q) G_\psi) &=
        q|_{\Tsc^*_{\pa X} X} \cdot \mfsymbz(G_{\psi})\,,
    \end{align*}
    where we have implicitly used that $\rho_{\infty} = x, \rho_{\fib}
    = \rho$ near $\NP$ on $\supp (\fibsymbz(G_\psi))$ and on $\supp
    (\mfsymbz(G_{\psi}))$.  
\end{proposition}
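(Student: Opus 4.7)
The plan is to reduce the argument to Vasy's Proposition 13.1 in \cite{V2000}, with the crucial adjustment that the smoothing property he exploits for many-body Hamiltonians is replaced in our setting by the rapid decay of $\ffsymbz(G_\psi)$ at fiber infinity. The fundamental obstruction to $\Op_L(x^{-\ell}\rho^{-s} q)$ being a $\tsc$-operator is that $q \in \CI([\Tsco^*X;\fibeq])$ fails to be a classical $\tsc$-symbol: repeated $\tau$-derivatives gain powers of $\ang{\tau}^{-1}$ rather than $\ang{\tau,\zeta}^{-1}$, precisely because $\fibeq$ has been blown up. However, by Proposition~\ref{prop:Gpsi_indicial}, $\ffsymbz(G_\psi) \in \Psisclsc^{-\infty,0,0}$, which means its left symbol is Schwartz in $\mu = \zeta/\tau$ (rapidly decaying in $\zeta$ along each $\tau$-slice as $\tau \to \pm \infty$), and this is exactly what is needed to control the non-symbolic behavior of $q$ near $\fibeq$.

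First, I would decompose $q = q_1 + q_2$ using a cutoff $\chi$ supported in a small neighborhood of $\fibeq$ on $[\Tsco^*X;\fibeq]$ and equal to one on a smaller neighborhood, with $q_1 = \chi q$ and $q_2 = (1-\chi)q$. Away from $\fibeq$ the function $q_2$ is smooth on $\Tsco^*X$ and is therefore a classical $\tsc$-symbol in $\Stsc^{0,0}$, so $\Op_L(x^{-\ell}\rho^{-s}q_2) \in \Psitsc^{s,\ell}$ directly, and composition with $G_\psi \in \Psitsc^{0,0}$ yields an element of $\Psitsc^{s,\ell}$ with the advertised principal symbol by multiplicativity (Proposition~\ref{prop:principal_symbol_map}). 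The main work is then the $q_1$ piece, where one writes out the oscillatory integral for $\Op_L(x^{-\ell}\rho^{-s}q_1) G_\psi$. The composition formula in the scattering calculus gives a total left symbol expressed as an oscillatory integral in $(\tau,\zeta)$; the rapid decay of the left symbol of $\ffsymbz(G_\psi)$ in $\mu$, together with its smoothness in $h=1/\tau$ down to $h=0$, allows one to carry out a stationary-phase expansion in which all error terms are genuine $\tsc$-symbols and in fact decay faster in both $\rho_{\fib}$ and $\rho_{\infty}$. The hard part will be tracking how the blow-up of $\fibeq$ interacts with the iterated stationary-phase expansion: one needs to verify that the remainders at each step continue to lie in $\Stsc^{s-k,\ell-k}$ rather than only in a weaker class adapted to $[\Tsco^*X;\fibeq]$, and that the asymptotic summation producing the full symbol takes place in $\Stsc^{s,\ell}$.

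Once membership in $\Psitsc^{s,\ell}$ is established, the symbol computation is a direct consequence of the multiplicativity of $\prinsymbz$ (Proposition~\ref{prop:principal_symbol_map}) combined with the centrality hypothesis \eqref{eq:q_central}. Away from $\NP$, $q$ is smooth on $\Tsco^*X$, so the restrictions of $q$ to $\Ssc^*X$ and $\Tsc^*_{\pa X}X$ give the $\fibsymbz$ and $\mfsymbz$ components, and (using that $\rho_{\infty}=x$ and $\rho_{\fib}=\rho$ near the microsupport of $G_\psi$ in line with Remark~\ref{rem:rho fib vs explicit}) we pick up the factors $\fibsymbz(G_\psi)$ and $\mfsymbz(G_\psi)$. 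For the indicial component at $\NP$, the centrality condition \eqref{eq:q_central} forces $\ffsymbz(\Op_L(x^{-\ell}\rho^{-s} q))(\tau) = \rho^{-s} f(\tau)\cdot \Id$ as a scalar Fourier multiplier on $\ff$, and composing with $\ffsymbz(G_\psi)(\tau)$ yields $\rho^{-s}f(\tau)\ffsymbz(G_\psi)(\tau)$ as stated. The scalar nature of this indicial operator is precisely what prevents the usual loss in commutator orders from appearing when such operators are later paired with $P_V$ in the positive commutator arguments.
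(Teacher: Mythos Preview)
Your proposal is correct and takes essentially the same approach as the paper: the paper's entire proof is the sentence ``This proof follows from the same arguments as in Vasy~\cite[Proposition 13.1]{V2000}. The main difference is that since $G_\psi$ is of differential order $0$ and vanishes to infinite order at $\fibeq$ (by Proposition~\ref{prop:Gpsi_indicial}), the product is a $\tsc$-operator if $q$ is smooth on $[\Tsco^* X; \fibeq]$,'' which is precisely the reduction you describe. Your decomposition $q=q_1+q_2$ and the symbol computation via multiplicativity and the centrality condition flesh out details the paper leaves implicit, but the key insight is identical.
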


\begin{remark}
    Note that we implicitly assume that
    \begin{align*}
        x^{-\ell} \rho^{-s} q \in \ang{t, z}^{-\ell} \ang{\tau, \zeta}^{-s} \CI([\Tsco^* X; \fibeq])\,.
    \end{align*}
    This condition is for instance satisfied if $q$ vanishes for $\abs{\tau}$ small and supported near $\NP$.
\end{remark}

While $\Op_L(q)$ is not in the $\tsc$-calculus in general, we can find $Q \in \Psitsc^{0,0}$ such that $Q G_\psi = \Op_L(q) G_\psi$.
\begin{lemma}\label{lem:QGpsi_eq_OpqGpsi}
    Let $q \in \CI([\Tsco^* X; \fibeq])$ satisfying \eqref{eq:q_central} and $s, \ell \in \RR$.
    Assume that there is $\tau_0 \in \Wperpo$ such that $f(\tau_0) \not = 0$.
    Then there exists $Q \in \Psitsc^{s,\ell}$ such that
    \begin{align*}
        Q G_\psi = \Op_L( x^{-\ell} \rho^{-s} q) G_\psi
    \end{align*}
    and $\tau_0 \in \Elltsc(Q)$.
\end{lemma}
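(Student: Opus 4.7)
The proof is in two stages. First build some $Q_0 \in \Psitsc^{s,\ell}$ satisfying the equation $Q_0 G_\psi = \Op_L(x^{-\ell}\rho^{-s}q) G_\psi$, and then modify $Q_0$ by adding a right-annihilator of $G_\psi$ that is used to achieve $\tsc$-ellipticity at $\tau_{0}$ without affecting the identity.

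For the first stage, choose $\psi_1 \in \CcI(\RR)$ with $\psi_1 \psi = \psi$ (i.e.\ $\psi_1 \equiv 1$ on $\supp \psi$). By the functional calculus for $\Psitsc^{0,0}$-selfadjoint operators (Proposition~\ref{prop:func_calc}), we have the pointwise identity $\psi_1(\lambda)\psi(\lambda) = \psi(\lambda)$, hence $G_{\psi_1} G_\psi = G_\psi$. Set
\[
Q_0 \coloneqq \Op_L(x^{-\ell}\rho^{-s} q)\, G_{\psi_1}.
\]
By Proposition~\ref{prop:Q_Gpsi} applied with $\psi_1$ in place of $\psi$, $Q_0 \in \Psitsc^{s,\ell}$, and by construction $Q_0 G_\psi = \Op_L(x^{-\ell}\rho^{-s}q)\, G_{\psi_1} G_\psi = \Op_L(x^{-\ell}\rho^{-s}q) G_\psi$, as desired.

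For the second stage, observe that $(I - G_{\psi_1})G_\psi = G_\psi - G_{\psi_1}G_\psi = 0$. Thus for \emph{any} $Q_1 \in \Psitsc^{s,\ell}$, the operator
\[
Q \coloneqq Q_0 + Q_1\,(I - G_{\psi_1}) \in \Psitsc^{s,\ell}
\]
still satisfies $Q G_\psi = Q_0 G_\psi = \Op_L(x^{-\ell}\rho^{-s}q) G_\psi$. It remains to choose $Q_1$ so that $\tau_0 \in \Elltsc(Q)$. By Proposition~\ref{prop:Q_Gpsi} and the functional calculus identity $\ffsymbz(G_{\psi_1})(\tau) = \psi_1(T_\tau)$ with $T_\tau = (\tau^2 + H_{V_0} + E)^{-1}(\tau^2 - H_{V_0})$, one has
\[
\ffsymbz(Q)(\tau_0) = \rho^{-s} f(\tau_0)\,\psi_1(T_{\tau_0}) + \ffsymbz(Q_1)(\tau_0)\,\bigl(I - \psi_1(T_{\tau_0})\bigr).
\]
Since $\psi_1(T_{\tau_0})$ and $I - \psi_1(T_{\tau_0})$ are commuting bounded functions of the same self-adjoint operator, we can arrange the sum to be invertible by choosing $Q_1$ so that $\ffsymbz(Q_1)(\tau_0)$ acts as the Fourier multiplier $\rho^{-s}(\tau_0) f(\tau_0)\, I$ on $\ff$ --- concretely, by quantizing a $\tsc$-symbol satisfying the centrality condition with indicial value the constant $\rho^{-s}f(\tau_0)$, which produces a globally elliptic $\tsc$-operator of order $(s,\ell)$ by Corollary~\ref{thm:inverse of elliptic tsc}-type considerations. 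With this choice, the two pieces combine to give $\ffsymbz(Q)(\tau_0) = \rho^{-s}f(\tau_0)\, I$, which (since $f(\tau_0) \neq 0$) is scattering elliptic and invertible on $\ff$.

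\textbf{Main obstacle.} The delicate point is the case $\tau_0 \in \{\pm\infty\} \subset \partial\Wperpo$, where $\tsc$-ellipticity is measured not by operator invertibility of the indicial operator but by the non-vanishing of its semiclassical principal symbol on $UH_\pm$. Here the analysis uses Lemma~\ref{lem:diff_Gpsi_Gpsi0} and Proposition~\ref{prop:Gpsi_indicial}, which identify the relevant semiclassical symbols with their free-operator counterparts modulo lower order; then the explicit semiclassical principal symbol $\psi((\tau^2-|\zeta|^2)/(\tau^2+|\zeta|^2))$ of $G_{\psi_1}$ at $h = \pm 1/\tau = 0$, together with the corresponding expression for $Q_1$, produces a sum which is a non-vanishing symbol on $UH_\pm$, giving the required ellipticity.
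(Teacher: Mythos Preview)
Your proof is correct and follows essentially the same approach as the paper's. Both constructions take the form $Q = \Op_L(x^{-\ell}\rho^{-s}q)\,G_{\psi_1} + Q_1(\Id - G_{\psi_1})$ with $\psi_1\psi = \psi$ (the paper writes $\varphi$ for your $\psi_1$ and $\tilde Q \in \Psisc^{s,\ell}$ for your $Q_1$), verify $QG_\psi = \Op_L(\cdots)G_\psi$ via $G_{\psi_1}G_\psi = G_\psi$, and arrange $\tau_0\in\Elltsc(Q)$ by choosing the correction term so that the two indicial contributions at $\tau_0$ combine to a nonzero scalar multiple of the identity. Your treatment is more detailed than the paper's very terse argument, and you additionally sketch the boundary case $\tau_0 \in \{\pm\infty\}$, which the paper does not discuss explicitly; that extension is reasonable but not needed for the applications in Sections~\ref{sec:3sc_propagation}--\ref{sec:3sc_radial}, where the relevant $\tau_0$ are always finite.
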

\begin{proof}
    Choose $\varphi \in \CcI(\RR)$ with $\varphi|_{\supp \psi} \equiv 0$ so that $\varphi \psi = \psi$.
    We set
    \begin{align*}
        Q = \Op_L( x^{-\ell} \rho^{-s} q) G_\varphi + \tilde{Q} (\id - G_\varphi)\,,
    \end{align*}
    where $\tilde{Q} \in \Psisc^{s,\ell}$ with $\ffsymbz(\tilde{Q}) = f(\tau_0) \id$.
    This $Q$ satisfies the claimed equality, since $G_\varphi G_\psi = G_\psi$ and
    $\ffsymb{\ell}(Q)(\tau_0) = \tau_0^{s} f(\tau_0) \id$, and hence $\tau_0 \in \Elltsc(Q)$.
\end{proof}

We note that the commutator with $P_V$ decreases the order
as expected:
\begin{proposition}
    The commutator of $P_V$ and $Q G_\psi$ satisfies $[P_V, Q G_\psi] \in \Psitsc^{s+1,\ell-1}(X)$ and
    \begin{align*}
        \tau^{-s} \ffsymb{\ell-1}([P_V, Q G_\psi]) &= - f(\tau) [\pa_x V'|_{x = 0}, (\widehat{G_\psi})_{\ff}] \\
        &\phantom{=} - [H_{V_0}, \Op_L(\pa_x q(0,0,\tau,\zeta) + z \pa_y q(0,0,\tau,\zeta) )] (\widehat{G_{\psi}})_{\ff} \\
        &\phantom{=} - 2 i \ell \tau f(\tau) (\widehat{G_\psi})_{\ff}\,.
    \end{align*}
\end{proposition}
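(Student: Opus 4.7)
The plan is to decompose the commutator according to the static/time-decaying splitting of $V$ and exploit the vanishing commutator $[P_{V_0}, G_\psi] = 0$ from \eqref{eq:Gpsi_PV0_commute} to reduce to commutator computations handled by Proposition \ref{prop:commutator_indicial}. Writing $V = V_0 + V'$ with $V_0 = V_+$ near $\NP$ and $V' = V - V_0 \in \rho_\ff \CI([X;\poles])$, so that $P_V = P_{V_0} - V'$, I will first observe via Lemma \ref{lem:QGpsi_eq_OpqGpsi} that $QG_\psi = \tilde Q G_\psi$ exactly, where $\tilde Q = \Op_L(x^{-\ell}\rho^{-s}q)$ (since $G_\varphi G_\psi = G_\psi$ and $(\id - G_\varphi)G_\psi = 0$). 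Then expanding the commutator yields
\begin{equation*}
    [P_V, QG_\psi] = [P_{V_0}, \tilde Q]G_\psi + \tilde Q[P_{V_0}, G_\psi] - [V', \tilde Q G_\psi] = [P_{V_0}, \tilde Q]G_\psi - [V', \tilde Q G_\psi],
\end{equation*}
so it suffices to compute $\ffsymb{\ell-1}$ of each of these two terms.

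For $[P_{V_0}, \tilde Q]G_\psi$: The centrality hypothesis $q(0,0,\tau,\zeta) = f(\tau)$ forces $\ffsymb{\ell}(\tilde Q)(\tau)$ to be scalar (a multiple of the identity) at leading order, hence it commutes with $\widehat{(P_{V_0})}_\ff(\tau) = \tau^2 - H_{V_0}$, so Proposition \ref{prop:commutator_indicial} applies with $(m_1,r_1)=(2,0)$ and $(m_2,r_2)=(s,\ell)$. Since $V_0$ is static, $\widehat{(P_{V_0})}'_\ff \equiv 0$, and $D_\tau(\tau^2 - H_{V_0}) = -2i\tau$ is scalar, so the first bracket $[\widehat{(P_{V_0})}'_\ff - D_\tau \widehat{(P_{V_0})}_\ff, \hat{\tilde Q}_\ff]$ vanishes. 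By the computation of $Q'_{\ff,r}$ given after Lemma \ref{lem:expansion_symbol}, we have $\hat{\tilde Q}'_{\ff,\ell}(\tau) = \tau^s \Op_L(\pa_x q(0,0,\tau,\zeta) + z\pa_y q(0,0,\tau,\zeta))$, while $D_\tau \hat{\tilde Q}_\ff$ is scalar and commutes with $H_{V_0}$. Hence the remaining bracket term contributes $-\tau^s[H_{V_0}, \Op_L(\pa_x q(0,0,\tau,\zeta) + z\pa_y q(0,0,\tau,\zeta))]$, and the weight-correction term contributes $\ell D_\tau(\tau^2 - H_{V_0}) \cdot \hat{\tilde Q}_\ff = -2i\ell\tau \cdot \tau^s f(\tau)\id$. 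Composing on the right with $G_\psi$ by multiplicativity of the indicial operator (Proposition \ref{prop:comp of ff symb}) and dividing by $\tau^s$ produces the second and third terms in the claimed formula.

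For $-[V', \tilde Q G_\psi]$: Since $V'\in \rho_\ff \CI$, its indicial operator $\hat{V'}_\ff$ is identically zero, so all centrality conditions in Proposition \ref{prop:commutator_indicial} hold trivially. Only the bracket $[\hat{V'}'_\ff(\tau), (\widehat{\tilde Q G_\psi})_\ff(\tau)]$ survives (all other terms contain factors of $\hat{V'}_\ff$ or its $\tau$-derivative). The first Taylor coefficient $\hat{V'}'_\ff$ is the multiplication operator by $\pa_x V'|_{x=0}$ (independent of $\tau$, since $V'$ is a potential), and $(\widehat{\tilde Q G_\psi})_\ff(\tau) = \tau^s f(\tau)(\widehat{G_\psi})_\ff(\tau)$ by Proposition \ref{prop:Q_Gpsi}. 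Extracting the $\tau^s$ factor and accounting for the overall minus sign in $[P_V, QG_\psi]$ produces the first term in the claimed formula.

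The main technical obstacle is verifying that Proposition \ref{prop:commutator_indicial} and the underlying Lemma \ref{lem:expansion_commutator} apply even though $\tilde Q$ is not itself in the $\tsc$-calculus (its left symbol $x^{-\ell}\rho^{-s}q$ is only smooth on the blowup $[\Tsco^*X;\fibeq]$, not on $\Ttsco^*[X;\poles]$). This is handled by Proposition \ref{prop:Gpsi_indicial}, according to which $\ffsymbz(G_\psi) \in \Psisclsc^{-\infty,0,0}$ vanishes to infinite order at $\fibeq$; consequently, the composite $\tilde Q G_\psi$ belongs to $\Psitsc^{s,\ell}$ (this is precisely Proposition \ref{prop:Q_Gpsi}), its indicial operator is a bona fide semiclassical-scattering family, and the Taylor expansions of Lemma \ref{lem:expansion_symbol} pass through composition with $G_\psi$ term-by-term. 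With this in hand, the only remaining work is careful bookkeeping of signs and of the $\rho^{-s} = \tau^s$ weighting in extracting $\tau^{-s}\ffsymb{\ell-1}$.
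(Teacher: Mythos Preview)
Your proof is correct and reaches the right formula, but it takes a slightly different route from the paper. The paper applies Proposition~\ref{prop:commutator_indicial} in a single stroke with $A = P_V$ and $B = QG_\psi$ (both genuine $\tsc$-operators), computing $\hat A'_{\ff} = -\pa_x V'|_{x=0}$ and $\hat B'_{\ff} = \Op_L(\pa_x q + z\pa_y q)(\widehat{G_\psi})_\ff + f(\tau)(\widehat{G_\psi})'_\ff$, then using $(\widehat{G_\psi})'_\ff = 0$ (since $H_{V_0}$ is static). You instead split $P_V = P_{V_0} - V'$ and exploit $[P_{V_0}, G_\psi] = 0$ to separate the commutator into $[P_{V_0}, \tilde Q]G_\psi$ and $-[V', \tilde Q G_\psi]$, treating each via Proposition~\ref{prop:commutator_indicial}. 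Your decomposition buys you $\hat{(P_{V_0})}'_\ff = 0$, which kills one bracket term immediately, at the cost of having to pass through the non-$\tsc$ operator $\tilde Q$. You correctly flag this and patch it via Proposition~\ref{prop:Gpsi_indicial}, but note that the cleanest way to make your argument rigorous is simply to observe $[P_{V_0}, \tilde Q]G_\psi = [P_{V_0}, \tilde Q G_\psi]$ and apply Proposition~\ref{prop:commutator_indicial} to $A = P_{V_0}$, $B = \tilde Q G_\psi \in \Psitsc^{s,\ell}$ directly; then your term-by-term computations of $\hat B_\ff$, $\hat B'_\ff$ are exactly those the paper does, and the fact that $(\widehat{G_\psi})'_\ff = 0$ is what lets you ``factor out'' $G_\psi$ as you do.
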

\begin{proof}
    Without loss of generality, we may assume that $s = 0$.
    By Proposition~\ref{prop:Q_Gpsi}, we have that the front face symbols of $Q G_\psi$ and $P_{V}$ commute.
    To calculate the indicial operator of the commutator, we use Proposition~\ref{prop:commutator_indicial} with $A = P_V$ and $B = Q G_\psi$.
    We have that
    \begin{align*}
        \hat{A}_{\ff} &= \tau^2 - H_{V_0}\,,\\
        D_\tau \hat{A}_{\ff} &= -2 i \tau\,,\\
        \hat{A}_{\ff}' &= - \pa_x V'|_{x = 0}\,,\\
        \hat{B}_{\ff} &= f(\tau) (\widehat{G_{\psi}})_{\ff}\,,\\
        D_\tau \hat{B}_{\ff} &= -if'(\tau) (\widehat{G_{\psi}})_{\ff} + f(\tau) D_{\tau} (\widehat{G_\psi})_{\ff}\,,\\
        \hat{B}_{\ff}' &= \Op_L(\pa_x q(0,0,\tau,\zeta) + z \pa_y q(0,0,\tau, \zeta)) (\widehat{G_{\psi}})_{\ff} + f(\tau) (\widehat{G_{\psi}})_{\ff}'\,.
    \end{align*}
    Using that $(\widehat{G_\psi})_{\ff}$ and $H_{V_0}$ commutate we calculate
    \begin{align*}
        [\hat{A}_{\ff}' - D_\tau \hat{A}_{\ff}, \hat{B}_{\ff}] &= - f(\tau) [\pa_x V'|_{x = 0}, (\widehat{G_\psi})_{\ff}]\,,\\ 
        [\hat{A}_{\ff}, \hat{B}_{\ff}' - D_\tau \hat{B}_{\ff}] &= - [H_{V_0}, \hat{B}_{\ff}']\,,\\ 
        &= -[H_{V_0}, \Op_L(\pa_x q(0,0,\tau,\zeta) + z \pa_y q(0,0,\tau, \zeta))] (\widehat{G_{\psi}})_{\ff} - f(\tau) [H_{V_0}, (\widehat{G_{\psi}})_{\ff}'] \,.
    \end{align*}
    The second summand vanishes because $(H_{V_0})' = 0$ implies that $(\widehat{G_\psi})'_{\ff} = 0$.
    This proves the claim.

\end{proof}
The positive commutator argument will evaluate the commutator of the unperturbed operator in the scattering calculus and therefore we need to
compare the commutator in the perturbed and unperturbed setting (cf. \cite[Corollary 13.4]{V2000}):
\begin{corollary}\label{cor:commutator_symbol}
    Let
    \begin{align*}
        R(\tau) &\coloneqq \tau^{-s} \left(\ffsymbz([P_V, Q G_\psi])(\tau) - \ffsymbz([P_0, Q G_{\psi,0}])(\tau)\right)\,.
    \end{align*}
    Then,
    \begin{align*}
        R \in \Psisclsc^{-\infty,-1, 0}
    \end{align*}
    and
    \begin{align*}
        \norm{R(\tau)}_{\LinOp(L^2, \Sobscl^{1,1})} \lesssim \sup\{ \abs{ D_{x,y,\tau}^\alpha D^\beta_\zeta q(0,0,\tau,\zeta)} \colon \abs{\alpha} \leq 1, \abs{\beta} \leq c n \}\,,
    \end{align*}
    where $c > 0$ is a universal constant and the implied constant is independent of $\tau$ and $q$.
\end{corollary}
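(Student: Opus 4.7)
The plan is to prove this by explicit computation: evaluate the previous proposition's formula for $\ffsymbz([P_V, QG_\psi])(\tau)$, then apply the same formula in the free, static case (replacing $V$ with $0$, $V_0$ with $0$, $V'$ with $0$, and $G_\psi$ with $G_{\psi,0}$) to get the corresponding expression for $\ffsymbz([P_0, QG_{\psi,0}])(\tau)$. Taking the difference yields four families of terms:
\begin{align*}
    R(\tau) &= -f(\tau)\bigl[\partial_x V'\rvert_{x=0}, (\widehat{G_\psi})_{\ff}\bigr] \\
    &\phantom{=}- \bigl[V_0,\Op_L(\partial_x q(0,0,\tau,\zeta)+z\partial_y q(0,0,\tau,\zeta))\bigr](\widehat{G_\psi})_{\ff} \\
    &\phantom{=}- \bigl[H_0,\Op_L(\partial_x q(0,0,\tau,\zeta)+z\partial_y q(0,0,\tau,\zeta))\bigr]\bigl[(\widehat{G_\psi})_{\ff} - (\widehat{G_{\psi,0}})_{\ff}\bigr] \\
    &\phantom{=}-2i\ell\,\tau\,f(\tau)\bigl[(\widehat{G_\psi})_{\ff} - (\widehat{G_{\psi,0}})_{\ff}\bigr].
\end{align*}
The first and second terms will be handled using the hypothesis that $V'$ is Schwartz in $z$ with time decay (so $\partial_x V'\rvert_{x=0}$ is Schwartz in $z$) and $V_0\in S^{-1}(\RR^n_z)$. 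In each case we use the standard fact that the commutator of a multiplication operator (by a decaying function) with a (semiclassical) scattering PsiDO improves decay by one order and gains a factor of $h=1/\tau$, placing these contributions in $\Psisclsc^{-\infty,-1,0}$ (in fact better). The third and fourth terms will be controlled using Lemma~\ref{lem:diff_Gpsi_Gpsi0}, which gives $(\widehat{G_\psi})_{\ff} - (\widehat{G_{\psi,0}})_{\ff}\in\Psisclsc^{-\infty,-1,-1}$; composing with $[H_0,\Op_L(\cdot)]\in \Psisclsc^{1,0,0}$ or multiplying by $\tau$ absorbs the extra semiclassical factor and lands in $\Psisclsc^{-\infty,-1,0}$.

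For the quantitative norm bound, I track precisely which $q$-derivatives enter the symbols appearing above. The first term contains no $q$-derivatives (the $q$-dependence is through the construction of $Q$, but at the front face this reduces to $f(\tau)=q(0,0,\tau,\zeta)\rvert_{\zeta\mapsto\text{fib}}$); the second and third involve only $\partial_x q(0,0,\tau,\zeta)$ and $\partial_y q(0,0,\tau,\zeta)$; the fourth involves only $f(\tau)$. Using Proposition~\ref{prop:semiclassical_bounded} to bound each piece as an operator $L^2\to\Sobscl^{1,1}$, and the standard fact that scattering (and semiclassical-scattering) operator norms are dominated by finitely many symbol seminorms — specifically, derivatives up to order $\lesssim n$ in $\zeta$ and $\lesssim 1$ in the base variables — gives a bound of the form $\sup\{|D^\alpha_{x,y,\tau}D^\beta_\zeta q(0,0,\tau,\zeta)| : |\alpha|\le 1,\ |\beta|\le cn\}$, with the constant $c$ the universal scattering quantization constant, independent of $\tau$ by the semiclassical boundedness.

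The main obstacle is the fourth term: the prefactor $\tau$ is unbounded, so we must verify that the semiclassical gain of one order in $(\widehat{G_\psi})_{\ff}-(\widehat{G_{\psi,0}})_{\ff}$ exactly compensates. This is where the semiclassical viewpoint established in Section~\ref{sec:norm fam} becomes essential: $\tau$ is $h^{-1}$ (up to sign) on both components $\tau\to\pm\infty$, so multiplying an operator in $\Psisclsc^{-\infty,-1,-1}$ by $\tau$ indeed produces an operator in $\Psisclsc^{-\infty,-1,0}$ with uniform $\LinOp(L^2,\Sobscl^{1,1})$-norm. In the bounded-$\tau$ regime the factor $\tau\,f(\tau)$ is harmless since $f$ is compactly supported. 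Combining the four estimates and absorbing the resulting seminorms of $q$ yields the stated bound.
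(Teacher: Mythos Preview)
Your decomposition and argument match the paper's proof: subtracting the formula of the preceding proposition in the perturbed and free cases yields the same terms (the paper groups your third and fourth together by rewriting $[H_0,\Op_L(q_x+zq_y)]=-2i\Op_L(\zeta\cdot q_y)$), and the analysis via $V_0\in S^{-1}$, Proposition~\ref{prop:Gpsi_indicial}, and Lemma~\ref{lem:diff_Gpsi_Gpsi0} is identical. One minor correction: $[H_0,\Op_L(q_x+zq_y)]=-2i\Op_L(\zeta\cdot q_y)$ has semiclassical order $1$ (not $0$), so your third term lands in $\Psisclsc^{-\infty,-1,0}$ for the same reason as your fourth, via the extra $h$-gain in $(\widehat{G_\psi})_{\ff}-(\widehat{G_{\psi,0}})_{\ff}$; your conclusion is unaffected.
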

\begin{proof}
    For brevity, we set $q_x(\tau, \zeta) \coloneqq \pa_x q(0,0,\tau,\zeta)$ and $q_y(\tau, \zeta) = \pa_y q(0,0,\tau,\zeta)$.
    From the previous proposition, we calculate that
    \begin{align*}
        R(\tau) &= - f(\tau) [\pa_x V'|_{x=0}, (\widehat{G_\psi})_{\ff}] + [V_0, \Op_L( q_x + z q_y)] (\widehat{G_\psi})_{\ff} \\
        &\phantom{=} -i \left( -2 \Op_L(\zeta q_y) + 2 \ell \tau f(\tau) \right) \left( (\widehat{G_\psi})_{\ff} - (\widehat{G_{\psi,0}})_{\ff} \right)\,.
    \end{align*}
    We have that $(\widehat{G_\psi})_{\ff} \in \Psisclsc^{-\infty,0,0}$ and therefore the first term is in $\Psisclsc^{-\infty,-1,0}$ and bounded independently of $q$ and $\tau$.
    The commutator $[V_0, \Op_L(q_x + z q_y)]$ is in $\Psisclsc^{0, -1, 0}$ and we can estimate its operator norm as a map $\Sobsc^{1,0} \to \Sobsc^{1,1}$ by
    \begin{align*}
        C \sup\{ \abs{ D_{x,y}^\alpha D^\beta_\zeta q(0,0,\tau,\zeta)} \colon \abs{\alpha} = 1, \abs{\beta} \leq c n \}\,,
    \end{align*}
    and composition with $(\widehat{G_\psi})_{\ff}$ is a bounded map $L^2 \to \Sobsc^{1,1}$ with the same norm.

    From Lemma~\ref{lem:diff_Gpsi_Gpsi0}, we have that $(\widehat{G_\psi})_{\ff} - (\widehat{G_{\psi,0}})_{\ff} \in \Psisclsc^{-\infty,-1,-1}$ and
    \begin{align*}
        \Op_L(\zeta q_y) - \ell \tau f(\tau) \in \Psisclsc^{0,0,1}
    \end{align*}
    and the operator norm is bounded by
    \begin{align*}
        C \sup\{ \abs{ D_y^\alpha D^\beta_\zeta q(0,0,\tau,\zeta)} \colon \abs{\alpha} \leq 1, \abs{\beta} \leq c n \}\,,
    \end{align*}
    which completes the proof.
\end{proof}

\subsection{G\aa{}rding type theorems}

In this section, we state and prove a sharp G\aa{}rding type theorem for $\tsc$-operators.
In contrast to Vasy~\cite{V2000}, we use a localization operator $G_\psi$ that is in $\Psitsc^{0,0}$ and therefore we have to
take the fiber principal symbol in account.
We will consider the general situation of a localizer $\psi(A)$, where $A \in \Psitsc^{0,0}$ is self-adjoint.
In the case of the Klein-Gordon equation, we take $A = (D_t^2 +
H_{V_0} + E^2)^{-1} P_{V_0}$ in which case $\psi(A) = G_\psi$.  

Our proof again follows \cite{V2000}, in particular using a method for
construction of square roots of operators, which we recall now.  In
\cite{V2000} this appears as Lemma C.1, but as we do not use it
directly here, we merely state the result and recall the method of
proof.  

Assume that we are given self-adjoint operators $A, Q \in
\Psitsc^{0,0}(X)$, $c > 0$ and $\psi \in \CcI(\RR)$ real-valued with
$\psi(x) = 1$ for $\abs{x} < \delta$ for some $\delta > 0$.  Then,
if we have a bound from below of the form
    If
    \begin{align*}
        \psi(A) Q \psi(A) \geq c \psi(A)^2\,,
    \end{align*}
    then for any $c' \in (0,c)$ and $\phi \in \CcI(\RR)$ with $\phi
    \psi = \phi$,  we can find a square root $B \in \Psitsc^{0,0}(X)$
    in the sense that
    \begin{align}\label{eq:square root bulk}
        \phi(A) (Q - c') \phi(A) = \phi(A) B^* B \phi(A)\,.
    \end{align}

    We recall the proof almost verbatim from \cite{V2000}, the main
    difference being that our $\psi(A)$ is in $\Psitsc^{0,0}$ as
    opposed to $\Psitsc^{- \infty, 0}$.  Define
    \begin{align*}
        P = \psi(A) Q \psi(A) + c (\id - \psi(A)^2) \in \Psitsc^{0,0}(X)\,.
    \end{align*}
    Since $P \geq c$ we have that $P - c' \geq c - c' > 0$.
    Then we can apply Proposition~\ref{prop:func_calc} to take the
    square root of $P - c'$, i.e.\ we take $f(P - c')$ with a function $f \in
    \CcI(\RR)$ such that $f(t) = \sqrt{t}$ on the spectrum of $P - c'$. 
    The function exists because $\sigma(P - c') \subset [c-c', C]$ for
    some $C > 0$. We then have that
    \begin{align*}
        \tilde{P} \coloneqq (P - c')^{1/2} \in \Psitsc^{0,0}(X)\,.
    \end{align*}
    We choose a $\psi_1 \in \CcI(\RR)$ with $\psi_1 \equiv 1$ on $\supp \phi$ and $\psi_1 \equiv 0$ on $\supp (1 - \psi)$. We calculate that
    \begin{align*}
        \psi_1(A) \tilde{P}^2 \psi_1(A) &= \psi_1(A) (P - c') \psi_1(A) \\
        &= \psi_1(A) (A - c') \psi_1(A)\,.
    \end{align*}
    Let $B = \tilde{P} \psi_1(A)$, then multiplying the previous
    equation yields the equation \eqref{eq:square root bulk}.

We now have the non-sharp G\aa{}rding inequality which operates under the assumption that the principal symbol is strictly positive.
\begin{proposition}\label{thm:garding}
    Let $A, Q, C \in \Psitsc^{0,0}(X)$ be self-adjoint and assume
    $\ffsymbz(A) \in \Norop^{-\infty, 0,0}$.

    Suppose that the principal symbol of $C$ satisfies
    \begin{align*}
        \fibsymbz(C) &= c_{\fib} \cdot \psi_0(\fibsymbz(A))^2\,,\\
        \mfsymbz(C) &= c_{\mf} \cdot \psi_0(\mfsymbz(A))^2\,,\\
        \ffsymbz(C) &= c_{\ff} \cdot \psi_0(\ffsymbz(A))^2\,,
    \end{align*}
    where $\psi_0 \in \CcI(\RR)$, $c_{\fib} \in \CI(\Ssc^*_{X \setminus C} X)$, $c_{\mf} \in \CI(\Tsc^*_{\pa X \setminus C} X)$ and $c_{\ff} \in \CI(\Wperpo)$.
    We assume that
    \begin{enumerate}
        \item $c_{0}\le c_{\bullet} \le c_{0}'$ for $\bullet \in \{\fib, \mf,
          \ff\}$ and some $c_{0}', c_{0} > 0$,
        \item $\psi_0(x) = 1$ for $\abs{x} < \delta_0$,
        \item there exists $\psi \in \CcI(\RR)$ with $\psi(x) = 1$ for $\abs{x} \leq \delta_1$ and $\supp \psi \cap \supp (1 - \psi_0) = \varnothing$ such that
            \begin{equation} \label{eq:Garding_prinsymb_ineq}
            \begin{aligned}
                \psi(\fibsymbz(A)) \fibsymbz(Q) \psi(\fibsymbz(A)) &\geq c_{\fib} \psi(\fibsymbz(A))^2\,,\\
                \psi(\mfsymbz(A)) \mfsymbz(Q) \psi(\mfsymbz(A)) &\geq c_{\mf} \psi(\mfsymbz(A))^2\,,\\
                \psi(\ffsymbz(A)) \ffsymbz(Q) \psi(\ffsymbz(A)) &\geq c_{\ff} \psi(\ffsymbz(A))^2\,.
            \end{aligned}
            \end{equation}
    \end{enumerate}

    Then for any $\eps \in (0,1)$ and $\phi \in \CcI(\RR)$ with $\supp \phi \cap \supp (1 - \psi) = \varnothing$,
    there exists $R \in \Psitsc^{-1,-1}(X)$ such that
    \begin{align*}
        \phi(A) Q \phi(A) \geq (1 - \eps) \phi(A) C \phi(A) + R\,.
    \end{align*}
\end{proposition}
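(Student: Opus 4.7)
The plan is to reduce the inequality to the existence of a symbolic square root modulo one order lower. Since $\phi\psi = \phi = \psi\phi$, Proposition~\ref{prop:func_calc} gives $\phi(A)\psi(A) = \phi(A) = \psi(A)\phi(A)$, hence
\begin{equation*}
    \phi(A)(Q - (1-\eps) C) \phi(A) = \phi(A)\bigl[\psi(A)(Q-(1-\eps)C)\psi(A)\bigr]\phi(A).
\end{equation*}
It therefore suffices to produce $\tilde B \in \Psitsc^{0,0}$ and $R_0 \in \Psitsc^{-1,-1}$ with $\psi(A)(Q - (1-\eps)C)\psi(A) = \tilde B^*\tilde B + R_0$, since sandwiching gives $\phi(A)(Q - (1-\eps)C)\phi(A) = (\tilde B \phi(A))^*(\tilde B \phi(A)) + \phi(A) R_0 \phi(A) \geq R$ with $R = \phi(A) R_0 \phi(A) \in \Psitsc^{-1,-1}$.

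To construct $\tilde B$, I first check positivity at the principal symbol level. Since $\supp \psi \cap \supp(1-\psi_0) = \varnothing$, one has $\psi_0 \equiv 1$ on $\supp \psi$, so $\sigma_\bullet(C) = c_\bullet$ on $\supp \psi(\sigma_\bullet(A))$. Hypothesis~(3) combined with $c_\bullet \geq c_0$ then yields
\begin{equation*}
    \sigma_\bullet\bigl(\psi(A)(Q-(1-\eps)C)\psi(A)\bigr) \geq \eps c_0 \, \psi(\sigma_\bullet(A))^2
\end{equation*}
pointwise on the scalar components $\bullet \in \{\fib, \mf\}$ and as a positive semidefinite operator inequality on the front face component. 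Following the square root scheme recalled immediately before the proposition statement, I would then set
\begin{equation*}
    P = \psi(A)(Q-(1-\eps)C)\psi(A) + C_0(\Id - \psi(A)^2)
\end{equation*}
for $C_0 > 0$ large enough that $\sigma_\bullet(P) \geq \min(\eps c_0, C_0) > 0$ on every boundary component; apply Proposition~\ref{prop:func_calc} to a compactly supported smooth function that equals $\sqrt{t-c'}$ on the relevant spectral range to define $\widetilde P = (P-c')^{1/2} \in \Psitsc^{0,0}$ for some small $c' \in (0, \min(\eps c_0, C_0))$; and set $\tilde B = \widetilde P\, \psi_1(A)$ for a nested cutoff $\psi_1 \in \CcI(\RR)$ with $\phi\psi_1 = \phi$ and $\psi_1 \psi = \psi_1$. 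A direct computation of $\prinsymbz(\tilde B^*\tilde B)$ on each of the three symbol components then matches that of $\psi(A)(Q-(1-\eps)C)\psi(A)$, giving $R_0 \in \Psitsc^{-1,-1}$.

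The main obstacle is the application of Proposition~\ref{prop:func_calc} to (an extension of) the square-root function, which requires $P - c'$ to be nonnegative as an operator, not merely to have positive principal symbol. One must therefore upgrade the principal-symbol lower bound to the operator inequality $P \geq c' - R'$ with $R' \in \Psitsc^{-1,-1}$ --- that is, a sharp G\aa{}rding inequality in the $\tsc$-calculus. On the scalar components $\fib$ and $\mf$, this is a routine extension of the easy G\aa{}rding of Lemma~\ref{lemma:scattering-easy-garding}, since the positive principal symbol factors as a square after a small shift. On the front face, however, $\ffsymbz(P)(\tau)$ is operator-valued and one needs a uniform-in-$\tau$ inequality $\ffsymbz(P)(\tau) \geq (c'/2)\Id$; here the semiclassical scattering structure provided by Lemma~\ref{thm:normal is sclsc}, combined with the functional calculus for scattering operators from Proposition~\ref{prop:func_calc_sc}, allows one to take a fiberwise scattering square root with uniform semiclassical error bounds, and Proposition~\ref{prop:3sc_quantization} then assembles the three symbolic square roots into the desired element of $\Psitsc^{0,0}$, the matching conditions being automatic since all three components arise as restrictions of symbols of the single operator $P - c'$.
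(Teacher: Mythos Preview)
Your final paragraph lands on exactly the paper's argument: take square roots of the three principal-symbol components separately (pointwise for the scalar components $\fib$ and $\mf$, and via the fiberwise operator square root in $\Norop$ for the $\ff$ component, where hypothesis~\eqref{eq:Garding_prinsymb_ineq} is a genuine operator inequality on $L^{2}(\ff)$ for each $\tau$), check the matching conditions, and assemble via Proposition~\ref{prop:3sc_quantization} into a single $B \in \Psitsc^{0,0}$ whose $B^{*}B$ agrees with $\phi(A)(Q-(1-\eps)C)\phi(A)$ at the level of $\prinsymbz$. That is the whole of the paper's proof; it never builds the global operator $P$ or attempts to invoke Proposition~\ref{prop:func_calc} on it.

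The detour in your middle two paragraphs is where the exposition becomes confused. You correctly observe that applying Proposition~\ref{prop:func_calc} to $(P-c')^{1/2}$ needs operator positivity, but your proposed fix---upgrade to $P \geq c' - R'$ with $R' \in \Psitsc^{-1,-1}$---does not actually restore it: $R'$ need not be nonnegative, so $P-c'$ is still not $\geq 0$ and the operator square root remains unavailable. What actually happens (and what your last paragraph in effect does) is that one bypasses the global operator square root entirely and works at the symbol level; the square-root scheme recalled just before the proposition is applied only on the front-face fibers, where the needed operator positivity $\ffsymbz(P)(\tau) \geq \eps c_{0}\,\Id$ really does hold by hypothesis. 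So the ``sharp G\aa{}rding'' framing is a red herring---no G\aa{}rding input is required, only the componentwise square root plus quantization.
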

\begin{proof}
    The idea is to construct a square-root of $Q - (1 - \eps) C$
    modulo lower order terms.  We follow the methodology described
    before the proof to take square roots first of each of the symbols
    individually.  That is, we write, for $\bullet \in \{ \mf, \ff\}$,
    \begin{align*}
        P_{\bullet}(\tau) = \hat N_{\bullet}(\psi(A) Q \psi(A)) +
      c_{\bullet} (\id - \hat N_{\bullet}(\psi(A))^2) 
    \end{align*}
    and
        \begin{align*}
        P_{\fib}(\tau) = \fibsymbz(\psi(A) Q \psi(A)) +
      c_{\fib} (\id - \fibsymbz(\psi(A))^2).
        \end{align*}
        Then, with $f \in \CcI(\RR)$ with $f(x) = \sqrt{x}$ for
        $\epsilon \le x \le c$ for $c$ sufficiently large, we let
        \begin{align*}
          B_{\bullet} &= f(P_{\bullet} - (1 - \epsilon) c_{\bullet})
                        \psi_{1}(A),\quad \bullet \in \{ \fib, \mf, \ff\}\,.
        \end{align*}

    The symbols $B_{\fib}, B_{\ff}, B_{\mf}$ satisfy the conditions of
    Proposition~\ref{prop:3sc_quantization}; the matching conditions
    follow easily, and the fact that $B_{\ff}$ lies $\Norop^{0,0,0}$ and
    satisfies the appropriate smoothness condition follows from the
    fact that $P_{\ff}$ is semiclassically scattering elliptic and
    $\ffsymbz(\psi_{1}(A)) \in \Norop^{-\infty, 0 , 0}$.

    Therefore we find a $B \in \Psitsc^{0,0}(X)$
    with $\fibsymbz(B) = B_{\fib}, \ffsymbz(B) = B_{\ff}$ and
    $\mfsymbz(B) = B_{\mf}$.
    Hence there is a $R \in \Psitsc^{-1,-1}(X)$ such that
    \begin{align*}
        \phi(A) \left( Q - (1 - \eps) C \right) \phi(A) &= \phi(A) B^* B \phi(A) + R\,.
    \end{align*}
    Since $\phi(A) B^* B \phi(A) \geq 0$ this proves the proposition.
\end{proof}

Lastly, we have the sharp G\aa{}rding inequality that only assumes that the principal symbol is non-negative.
\begin{proposition}\label{prop:sharp_Garding}
    Let $A, Q, C \in \Psitsc^{0,0}(X)$ be self-adjoint and assume
    $\ffsymbz(A) \in \Norop^{-\infty, 0,0}$.

    Suppose that the principal symbol of $C$ satisfies
    \begin{align*}
        \fibsymbz(C) &= c_{\fib} \cdot \psi_0(\fibsymbz(A))^2\,,\\
        \mfsymbz(C) &= c_{\mf} \cdot \psi_0(\mfsymbz(A))^2\,,\\
        \ffsymbz(C) &= c_{\ff} \cdot \psi_0(\ffsymbz(A))^2\,,
    \end{align*}
    where $\psi_0 \in \CcI(\RR)$, $c_{\fib} \in \CI(\Ssc^*_{X \setminus C} X)$, $c_{\mf} \in \CI(\Tsc^*_{\pa X \setminus C} X)$ and $c_{\ff} \in \CI(\Wperpo)$.

    Assume that
    \begin{enumerate}
        \item $c_{\bullet} \geq 0$ for $\bullet \in \{\fib, \mf,
          \ff\}$,
          \item either $c_{\ff}$ vanishes in a neighborhood of $\pm
            \infty$ or $c_{\ff}(\pm \infty) > 0$,
        \item if $c_{\fib}(\xi) = 0$ for $\xi \in \Ssc^*_{X \setminus C} X$, then $\fibsymbz(Q)(\xi) = 0$ and the analogous condition for $c_{\mf}$ and $c_{\ff}$,
        \item $\sqrt{c_{\bullet}} \in \CI$ and vanishes to infinite order at points $\xi$ where $c_{\bullet}(\xi) = 0$,
        \item \label{it:assump_bounded}  the symbols
          \[
            c_{\ff}^{-1} \ffsymbz(Q),\quad c_{\mf}^{-1} \mfsymbz(Q), \quad
            c_{\fib}^{-1} \fibsymbz(Q)
          \]
          are bounded together with all their
derivatives on $\Wperpo, \Ttsco^{*}_{\mf}[X;C], \Stsc^{*}[X;C]$, respectively,

        \item $\psi_0(x) = 1$ for $\abs{x} < \delta_0$,
        \item there exists $\psi \in \CcI(\RR)$ with $\psi(x) = 1$ for $\abs{x} \leq \delta_1$ and $\supp \psi \cap \supp (1 - \psi_0) = \varnothing$ such that
            \begin{equation} \label{eq:sharp_Garding_prinsymb_ineq}
            \begin{aligned}
                \psi(\fibsymbz(A)) \fibsymbz(Q) \psi(\fibsymbz(A)) &\geq c_{\fib} \psi(\fibsymbz(A))^2\,,\\
                \psi(\mfsymbz(A)) \mfsymbz(Q) \psi(\mfsymbz(A)) &\geq c_{\mf} \psi(\mfsymbz(A))^2\,,\\
                \psi(\ffsymbz(A)) \ffsymbz(Q) \psi(\ffsymbz(A)) &\geq c_{\ff} \psi(\ffsymbz(A))^2\,.
            \end{aligned}
            \end{equation}
    \end{enumerate}

    Then for any $\eps \in (0,1)$ and $\phi \in \CcI(\RR)$ with $\supp \phi \cap \supp (1 - \psi) = \varnothing$,
    there exists $R \in \Psitsc^{-1,-1}(X)$ such that
    \begin{align*}
        \phi(A) Q \phi(A) \geq (1 - \eps) \phi(A) C \phi(A) + R\,.
    \end{align*}
\end{proposition}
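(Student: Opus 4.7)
The approach is to reduce to the non-sharp G\aa{}rding inequality (Proposition \ref{thm:garding}) by first extracting the possibly degenerate factor $c_\bullet$ from $Q$ via its smooth square root, then applying the non-sharp version to a strictly positive remainder. The crucial observation is that assumptions (3)--(5) together imply that the ratio
\begin{equation*}
r_\bullet := c_\bullet^{-1}\,\bullet\text{-symb}(Q)
\end{equation*}
extends to a smooth, uniformly bounded symbol on each of the three symbolic faces $\Stsc^*[X;C]$, $\Ttsco^*_{\mf}[X;C]$, and $\Wperpo$: away from $\{c_\bullet=0\}$ this is immediate from assumption (5), while assumption (3) combined with the infinite-order vanishing in (4) provides a smooth extension across the zero set. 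The same inequality \eqref{eq:sharp_Garding_prinsymb_ineq}, divided by $c_\bullet$ on $\{c_\bullet>0\}$ and extended by continuity, shows that $r_\bullet \geq 1$ on a neighborhood of the support of $\psi(\bullet\text{-symb}(A))$.

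First I would use Proposition \ref{prop:3sc_quantization} to quantize the triple $\bigl(\sqrt{c_\fib}\,\psi_0(\fibsymbz(A)),\ \sqrt{c_\mf}\,\psi_0(\mfsymbz(A)),\ \sqrt{c_\ff}\,\psi_0(\ffsymbz(A))\bigr)$ to an operator $S \in \Psitsc^{0,0}(X)$; by construction the principal symbol of $S^*S$ agrees with that of $C$, so $S^*S - C \in \Psitsc^{-1,-1}(X)$. Next I would quantize the smooth symbols $r_\bullet - (1-\epsilon)$ to a self-adjoint $\tilde Q_\epsilon \in \Psitsc^{0,0}(X)$ whose principal symbol is bounded below by $\epsilon$ on a neighborhood of $\supp\psi_0(\bullet\text{-symb}(A))$; this puts us in the strict positivity regime of Proposition \ref{thm:garding}, applied with $\tilde Q_\epsilon$ in place of $Q$ and reference operator having symbols $\psi_0(\bullet\text{-symb}(A))^2$. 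That proposition produces $B_0 \in \Psitsc^{0,0}(X)$ with
\begin{equation*}
\phi(A)\,\tilde Q_\epsilon\,\phi(A) \geq (1-\epsilon')\,\phi(A) B_0^* B_0 \phi(A) + R_0, \qquad R_0 \in \Psitsc^{-1,-1}(X).
\end{equation*}
Composing with $S$ on both sides, using the multiplicativity of the principal symbol from Proposition \ref{prop:principal_symbol_map}, and absorbing commutator errors $[\phi(A),S]$, $[\tilde Q_\epsilon,S]$, and $S^*S - C$ into the $\Psitsc^{-1,-1}$ remainder, yields
\begin{equation*}
\phi(A)\bigl(Q - (1-\epsilon)C\bigr)\phi(A) \geq R, \qquad R \in \Psitsc^{-1,-1}(X),
\end{equation*}
as desired.

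The main obstacle will be the construction of $S$ near $\pm\infty \in \Wperpo$: to invoke Proposition \ref{prop:3sc_quantization}, the indicial-operator component $\sqrt{c_\ff}\,\psi_0(\ffsymbz(A))$ must lie in $\Norop^{-\infty,0,0}$ and its semiclassical principal symbol at $UH_\pm$ must match the fiber principal symbol of $S$ restricted there. The dichotomy in assumption (2)---that either $c_\ff$ vanishes identically near $\pm\infty$ or $c_\ff(\pm\infty)>0$---is precisely what guarantees $\sqrt{c_\ff}$ extends smoothly to $\pm\infty\in\overline{W^\perp}$; combined with $\ffsymbz(A)\in\Norop^{-\infty,0,0}$ and the semiclassical functional calculus underlying Proposition \ref{prop:Gpsi_indicial}, this places $\sqrt{c_\ff}\,\psi_0(\ffsymbz(A))$ in the correct symbol class. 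The matching at $\fibeq$ reduces to checking that $c_\ff|_{\fibeq} = c_\fib|_{\ff\times\fibeq}$ and $\sqrt{c_\ff}|_{\ff\cap\mf} = \sqrt{c_\mf}|_{\ff\cap\mf}$, both of which follow from the matching conditions already satisfied by $C$ together with the smoothness of the square root from assumption (4).
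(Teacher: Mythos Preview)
Your factorization strategy---extracting $S$ with principal symbol $\sqrt{c_\bullet}\,\psi_0(\bullet\text{-symb}(A))$ and reducing to a strictly positive remainder---is the right idea and is essentially what underlies Vasy's Proposition~C.3, to which the paper defers. The paper's own proof is just two sentences: when $c_{\ff}$ vanishes near $\pm\infty$ the argument is identical to \cite[Prop.~C.3]{V2000}, and when $c_{\ff}(\pm\infty)>0$ one applies the non-sharp version (Proposition~\ref{thm:garding}) microlocally near $\pm\infty$ and then Vasy's argument elsewhere. So your treatment of the $\pm\infty$ dichotomy via assumption~(2) is spot on.

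There is, however, a genuine gap in the step where you claim $r_\bullet \geq 1$ on a neighborhood of $\supp\psi(\bullet\text{-symb}(A))$. Dividing \eqref{eq:sharp_Garding_prinsymb_ineq} by $c_\bullet$ gives $r_\bullet \geq 1$ only on $\{c_\bullet>0\}$; continuity extends this to $\overline{\{c_\bullet>0\}}$, but on the \emph{interior} of $\{c_\bullet=0\}$ the smooth extension of $r_\bullet$ is unconstrained by the hypothesis and may well dip below $1$. This set is nonempty in the paper's applications (e.g.\ in Lemma~\ref{lem:commutator_ineq}, $c_{\ff}(\rho)=2(1-M\delta\varrho)f(\rho)f^\flat(\rho)$ vanishes on the open set $\{\varrho\leq 0\}$). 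Consequently $\tilde Q_\epsilon$ need not satisfy the strict lower bound required by Proposition~\ref{thm:garding}, and you cannot simply ``choose'' a better extension: any smooth modification of $r_\bullet$ supported in $\{c_\bullet=0\}$ must vanish to infinite order at $\partial\{c_\bullet>0\}$, which is incompatible with raising $r_\bullet$ up to $1$ if $r_\bullet$ approaches $1$ only to finite order from the interior.

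The repair is to bypass the intermediate application of Proposition~\ref{thm:garding} and build the square root of $Q-(1-\eps)C$ directly. On $\{c_\bullet>0\}\cap\supp\psi$ one has $r_\bullet-(1-\eps)\geq\eps>0$, so $\sqrt{r_\bullet-(1-\eps)}$ is smooth with bounded derivatives there; the product $\sqrt{c_\bullet}\,\psi_0\,\sqrt{r_\bullet-(1-\eps)}$ then extends smoothly by zero across $\{c_\bullet=0\}$ precisely because of the infinite-order vanishing in assumption~(4). Quantizing this (with an additional cutoff $\chi$ satisfying $\phi\chi=\phi$ and $\supp\chi\subset\supp\psi$) gives $B$ with $\phi(A)B^*B\phi(A)=\phi(A)(Q-(1-\eps)C)\phi(A)$ modulo $\Psitsc^{-1,-1}$, exactly as in the square-root construction \eqref{eq:square root bulk} preceding Proposition~\ref{thm:garding}. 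The operator-valued $\ff$ component requires the functional-calculus square root of $\psi(\ffsymbz(A))(r_{\ff}-(1-\eps))\psi(\ffsymbz(A))+\eps(\Id-\psi(\ffsymbz(A))^2)$ on $\{c_{\ff}>0\}$, then the same infinite-order-vanishing extension; this is the content of \cite[Prop.~C.3]{V2000}.
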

\begin{proof}
In the case that $c_{\ff}(\tau) = 0$ for $\tau \gg 0$, the argument is
identical to that in \cite[Proposition C.3]{V2000}.  In the case
$c_{\ff}(+ \infty) \neq 0$, we put the previous lemma microlocally
near $+ \infty \in \Wperpo$ and then the same argument again from \cite{V2000}.
\end{proof}

\section{Propagation estimates over \texorpdfstring{$\poles$}{C}}\label{sec:3sc_propagation}

We now prove propagation of singularities estimates over $\poles$.  As
elsewhere, since the arguments are identical at the two points in $\poles$, we focus on $\NP$.
Our commutant construction follows \cite[Chap.\ 14]{V2000} closely,
and as such we attempt to be faithful to the notation there for ease
of comparison, although we make some changes in order to decrease the
overall amount of notation, which is substantial.  

Note that $\tau$ is preserved along the flow at $\NP$; as described above, the global nature of the operator above $\ff$
leads to propagation phenomenon analogous to diffraction, namely that
singularities entering at a given $\tau$ level in the characteristic
set at $\NP$ may emerge, still at level $\tau$, in any direction.
Thus, if we wish to control a distribution $u$ at a specific $\tau_0$
in $\Wperpo$ over $\NP$, we must assume a priori control of $u$
along all bicharacteristics which enter $\NP$ at that $\tau_0$ level.

To formulate this rigorously, recall $\gamma_{\tsc} : \Ctscd \lra
\mathcal{P}( \pa \Ttsco^*[X;C])$ defined in \eqref{eq:gamma tsc easy},
which in particular associates to each $\tau_0 \in \Wperpo$ the full
$\{ \tau = \tau_0 \}$ slice above $\ff$.
We also define the projection onto $\tau$ levels as follows.
Recalling that $\Char(P_0) \subset \Tsco^* X$, i.e.\ that we include fiber
infinity in the characteristic set of $P_0$, we define the map that
records both the spacetime location and the $\tau$ level of a point in
the characteristic set of $P_0$, (possibly
$\pm \infty$),
\begin{equation} \label{eq:pi X tau}
\begin{aligned}
    \pi_{X, \tau} : \Char(P_0) &\lra X \times \overline{\RR}\,,\\
    (x,y,\tau, \zeta) &\mapsto (x, y, \tau)\,.
\end{aligned}
\end{equation}
This map is well-defined up to the fiber boundary since $\Char(P_0)$
has empty intersection with the fiber equator.

We introduce the notion of control under the Hamilton flow that arises
in the propagation estimates.
\begin{definition}
    Let $U, V, W \subset \Ctscd$. We say that
    $U$ is (backward) controlled by $V$ through $W$ if
    for all $\alpha \in \Char(P_0)$ that are incoming to $U$, in sense that
    \begin{align*}
        \pi_{X, \tau}(\alpha) \in \pi_{X,\tau}( \gamma_{\tsc}(U) \cap \Char(P_0))\,,
    \end{align*}
    there exists $s_{\alpha} <0$ such that
    \begin{align*}
        \exp(s_{\alpha} \Hamsc_p)(\alpha) \in V
    \end{align*}
    and for all $s \in [s_{\alpha}, 0]$,
    \begin{align*}
        \exp(s \Hamsc_p)(\alpha) \in W\,.
    \end{align*}
\end{definition}

The following is the main propagation estimate we use near $\poles$ and
away from the radial sets.
\begin{proposition}\label{prop:propagation_localized}
    Let $V \in \rho_{\mf} \Psitsc^{1,0}$ be asymptotically static of order $r \geq 1$ and
    \begin{align*}
        V - V^* \in \Psitsc^{0,-2}\,.
    \end{align*}
    Moreover, we assume that $H_{V_{\pm}}$ have purely absolutely continuous spectrum in $[m^2, \infty)$.
    
    Let $\delta > 0$ sufficiently small, $\varphi, \psi_1, \psi_2 \in \CcI(\RR)$ with $\supp \varphi \subset \ioo{-\delta, \delta}$ and $\psi_j|_{\ioo{-\delta, \delta}} \equiv 1$,
    and $B, E, G, G', B' \in \Psitsc^{0,0}$ such that
    \begin{enumerate}
        \item $\WF_{\ff}'(E) = \varnothing$,
        \item $\Hamsc_p(\alpha) \not = 0$ for all $\alpha \in \gamma_{\tsc}(\Elltsc(G))$,
        \item $\WFtsc'(B G_\varphi)$ is controlled by $\Elltsc(E)$ through $\Elltsc(G)$,
        \item $\WFtsc'(B) \subset \Elltsc(G') \cap \Elltsc(B')$.
    \end{enumerate}

    For all $M, N, s, \ell \in \RR$ and $u \in \Sobres$ with
    $E G_{\psi_1} u \in \Sobsc^{s,\ell}$,
    $G G_{\psi_2} P_V u \in \Sobsc^{s-1, \ell+1}$,
    $G' P_V u \in \Sobsc^{s-2, \ell}$,
    and $B' u \in \Sobsc^{s-1, \ell-r}$,
    it follows that $Bu \in \Sobsc^{s,\ell}$ and
    \begin{align*}
        \norm{B u}_{s,\ell} &\leq C \big( \norm{E G_{\psi_1} u}_{s,\ell} + \norm{G G_{\psi_2} P_V u}_{s-1, \ell+1} + \norm{G' P_V u}_{s-2, \ell} + \norm{B' u}_{s-1, \ell-r} \\
        &\phantom{\leq C \big(} + \normres{u} \big)\,.
    \end{align*}
\end{proposition}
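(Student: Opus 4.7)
The plan is to mimic the scattering propagation estimate via a positive commutator argument, but with a commutant adapted to the $\tsc$-calculus by composing a symbolic commutant $Q$ with the functional-calculus localizer $G_\psi$. As usual we will first prove the estimate assuming $u$ is sufficiently regular (e.g.\ Schwartz, or at least in the domain of the commutator pairing), and recover the full statement at the end via a standard regularization argument together with an induction that raises the differential order by $1/2$ at a time. Throughout I assume we work microlocally near a point $\alpha\in \WFtsc'(BG_\varphi)\cap\Chartsc(P_V)$ lying over $\NP$ at some $\tau_0\in\RR$ with $\Hamsc_p(\alpha)\neq 0$; the case over $\SP$ is analogous, and points away from $\poles$ are already handled by the scattering estimates of Section~\ref{sec:model-case}.

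Following Vasy, construct the commutant of the form $Q G_\psi$ with $Q = \Op_L(x^{-(\ellvar+1/2)}\rho^{-(s-1/2)}q)$ where $q\in\CI([\Tsco^*X;\fibeq])$ satisfies the centrality condition \eqref{eq:q_central}, namely $q|_{\Tsc^*_{\NP}X}=f(\tau)$ with $f\in\CcI$ supported in $(\tau_0-\epsilon,\tau_0+\epsilon)$. The symbol $q$ itself is built exactly as in the proof sketch of Lemma~\ref{lemma:weaker-scattering-prop-estimate-schwartz}: a propagation-type bump in the flow parameter times spatial cutoffs, designed so that $\Hamscp(a_{s-1/2,\ellvar+1/2}^2q_0^2)$ is a negative multiple of $a_{s,\ellvar}^2q_0^2$ plus a non-negative "source" term supported in the elliptic set of $E$ (see equation \eqref{eq:scattering-nonneg-quant}). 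By Proposition~\ref{prop:Q_Gpsi}, $QG_\psi\in\Psitsc^{s-1/2,\ellvar+1/2}$, and Lemma~\ref{lem:QGpsi_eq_OpqGpsi} lets us realize it as a genuine $\tsc$-operator rather than relying on the non-symbolic quantization of $q$.

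Now pair $\langle i[P_V,(QG_\psi)^*QG_\psi]u,u\rangle = 2\operatorname{Im}\langle QG_\psi u,QG_\psi P_V u\rangle$. The right side is bounded above using Cauchy--Schwarz and the mapping hypothesis $GG_{\psi_2}P_Vu\in\Sobsc^{s-1,\ellvar+1}$, since $\WF'(QG_\psi)\subset\Elltsc(G)\cap\Elltsc(G_{\psi_2})$ by choosing the supports of $f$ and $\psi$ inside those of $G$ and $\psi_2$; the contribution from $V-V^*\in\Psitsc^{0,-2}$ is absorbed by virtue of the decay order $-2$ and contributes only a term bounded by $\|B'u\|_{s-1,\ellvar-r}$. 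For the left side, the symbol of the commutator is computed via Corollary~\ref{cor:commutator_symbol}, which splits it into the scattering commutator of $P_0$ with $QG_{\psi,0}$ (which has exactly the positive structure of the scattering case over $\poles$) plus an error $R$ living in $\Psisclsc^{-\infty,-1,0}$. The sharp G\aa{}rding inequality of Proposition~\ref{prop:sharp_Garding}, applied to the three components (fiber, $\mf$, $\ff$) of the resulting principal symbol, converts the positivity of the main term into
\[
c\|BG_\varphi u\|_{s,\ellvar}^2 \;\le\; \langle i[P_V,(QG_\psi)^*QG_\psi]u,u\rangle + C\|EG_{\psi_1}u\|_{s,\ellvar}^2 + \text{lower order},
\]
modulo the error terms arising from $R$ and from the subprincipal contributions.

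The hard part is controlling the error $R$ uniformly at the indicial level over $\NP$, because $R$ is a genuinely operator-valued error that does not decay in $\tau$ -- only in semiclassical order. This is precisely where Lemma~\ref{lem:shrinking_window} enters: by choosing the support of $\psi$ (in $G_\psi$) sufficiently small around $0$, the operator norm of $\ffsymbz(G_\psi)(\tau)$ as a map $\Sobscl^{1,1}\to L^2$ can be made arbitrarily small for $|\tau|\ge m-\delta$, which is where the characteristic set lives. Combined with the estimate in Corollary~\ref{cor:commutator_symbol} bounding $\|R(\tau)\|_{\LinOp(L^2,\Sobscl^{1,1})}$ by derivatives of $q$, this allows the error to be absorbed into the positive main term once $\psi$ has sufficiently small support. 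Once the estimate $\|BG_\varphi u\|_{s,\ellvar}\lesssim \|EG_{\psi_1}u\|_{s,\ellvar}+\|GG_{\psi_2}P_Vu\|_{s-1,\ellvar+1}+(\text{lower order})$ is in hand, Proposition~\ref{prop:elliptic_Gpsi_estimate} converts it into the desired bound on $\|Bu\|_{s,\ellvar}$, picking up the $\|G'P_Vu\|_{s-2,\ellvar}$ and $\|B'u\|_{s-1,\ellvar-r}$ terms in the statement. The regularization to remove the a priori assumption $Bu\in\Sobsc^{s,\ellvar}$ follows the $\Psitsc$-analogue of the scattering regularization in Vasy~\cite{V18}, replacing $Q$ by $\Lambda_\delta Q$ for a family $\Lambda_\delta$ of order-lowering cutoffs approaching the identity, and passing to the limit using uniformity of the constants.
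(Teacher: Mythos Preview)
Your proposal is correct and follows essentially the same approach as the paper: a positive commutator with a commutant of the form $QG_\psi$ satisfying the centrality condition, comparison of the indicial commutator to the free one via Corollary~\ref{cor:commutator_symbol}, absorption of the error by the shrinking-window Lemma~\ref{lem:shrinking_window}, sharp G\aa{}rding (Proposition~\ref{prop:sharp_Garding}) to turn symbolic positivity into an estimate, and finally Proposition~\ref{prop:elliptic_Gpsi_estimate} to pass from $BG_\varphi u$ to $Bu$. Two minor points of alignment: the paper's explicit commutant symbol $\tilde q_0$ is built from a flow parameter $N$ (solving $\Hamscp N=1$, $N|_\Sigma=0$) and a flow-invariant transverse function $\omega$, rather than the straightened coordinates of Lemma~\ref{lemma:weaker-scattering-prop-estimate-schwartz}, and the paper carries out the construction at $\tau_0=\pm\infty\in\Wperpo$ (using $\omega|_\Sigma=|y|^2+\rho^2$) as well as finite $\tau_0$---you only mention $\tau_0\in\RR$, but the infinite-$\tau$ case is where the semiclassical structure of the indicial operator is actually used.
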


The proof of this proposition comes at the end of this section.

\begin{remark}
In words, the proposition states that, to obtain estimates at a given
$\tau$ level over $\NP$, we must control the backward flow out of
the entire sphere $|\zeta|^2 = \tau^2 + m^2$ in a neighborhood of
$\NP$.  The elliptic set of $E_{0}$ must control this set in the sense
that it must contain a transversal of the sphere earlier along the
flow.  In particular, the elliptic set of $G$ over $\ff$ must contain the whole of
$\WF'_{\ff}(B G_{\varphi})$.
\end{remark}

We have two types of estimates in this section, one
microlocalized near $\tau \in W^\perp$ with $m < |\tau|
< \infty$, for which the arguments follow most closely those in
\cite{V2000}, and the other microlocalized at $\tau = \pm \infty
\in \Wperpo$, which requires more substantial modifications.

The proofs in these two settings are similar, not only in their
overall structure, but in the specific functions which define the
commutators and the proofs of the various properties of the attendant
operators.  We thus focus on the case $\pm \infty \in \Wperpo$, and in
fact to $\tau = + \infty$, and the argument for finite $\tau$ (as for
$\tau = - \infty)$ is a
straightforward adaptation.

Both cases involve the consideration of the set $\Sigma$ (not the
characteristic set!) which is the closure in $\Tsco^*X$ of the set
$\zeta \cdot \ y = 0$ in a region $|y| < c$ of $\NP$.  In the
coordinates $(x, y, \rho, \mu)$ above in equation \eqref{eq:great
  coords}, $\Sigma$ is
\begin{equation}
  \label{eq:Sigma}
 \Sigma = \{ (x, y, \rho, \mu) : \mu \cdot y = 0, \mu
  \neq 0 , |y| < c\},
\end{equation}
and the value of $c$ is irrelevant below as we will localize our
estimates in small neighborhoods of $\NP$. In these coordinates it is
clear that $\Sigma$ is smooth up to fiber infinity.  (This same
$\Sigma$ is used in Vasy, but there only its finite $\xi, \zeta$
points are relevant; we use it out to infinity.)

\begin{remark}
    Since the estimates are microlocal, it suffices to prove the case $V_{\pm} = V_0$ for some $V_0 \in \Psitsc^{1,0}$ and
    \begin{align*}
        V - V_0 \in \Psitsc^{1, -r}\,.
    \end{align*}

    We also assume for simplicity for most of this section that
    \begin{align*}
    V - V^{*}= 0.
    \end{align*}
    Indeed, without this assumption the commutators which arise below
    involve $P_{(V + V^{*})/2}$ and the $V - V^{*}$ appears as an error, but it is
    clearer to make this realness assumption and use commutators with
    $P_V$, and then discuss the generalization of $V$ later under the
    assumption in \eqref{eq:sub prin assump}.
\end{remark}

We first work with the free Klein-Gordon operator $P_0$ and then
relate its commutators to those of $P_V$. The commutators of $P_0$
can be analyzed directly using its Hamilton vector field, and we
need in particular to analyze this vector field's behavior at $\Sigma$, both
near and away from fiber infinity over $\NP$.  
\begin{lemma}
    The set $\Sigma$ is a smooth submanifold of
    $\Tsco^*X$.  Moreover,
    the Hamilton vector field $\Hamscp$
    is transversal to $\Sigma \cap \Tsco^*_{\NP} X$ in a neighborhood
    of $\Char(P_0)$.  
\end{lemma}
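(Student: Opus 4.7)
My plan is to work in the local coordinates $(x, y, \rho, \mu)$ from \eqref{eq:great coords}, which, together with Remark \ref{rem:positive prefactor}, cover a neighborhood in $\Tsco^*X$ of $\Tsco^*_\NP X$ in the region $\tau > 0$, including its intersection with fiber infinity $\{\rho = 0\}$. The description of $\Sigma$ in \eqref{eq:Sigma} shows that in this chart, $\Sigma$ is cut out by the single defining function $f \coloneqq \mu \cdot y$ together with the open condition $\mu \neq 0$. To establish smoothness of $\Sigma$, I compute $df = \mu \cdot dy + y \cdot d\mu$, and note that the $dy$-component $\mu \cdot dy$ is a nonzero covector as soon as $\mu \neq 0$. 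Thus $df$ is nonvanishing on the locus $\{f = 0,\ \mu \neq 0\}$, which exhibits $\Sigma$ as a smooth embedded hypersurface of $\Tsco^*X$ in this chart.

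For the transversality of $\Hamscp$ to $\Sigma \cap \Tsco^*_\NP X$, I plug the explicit rescaled expression \eqref{eq:hammy scat},
\begin{equation*}
\frac{1}{2}\Hamscp = -x\pa_x - (\mu + y)\cdot \pa_y,
\end{equation*}
into the defining function $f$. A direct computation, treating $\mu$ and $y$ as independent coordinates, gives
\begin{equation*}
\frac{1}{2}\Hamscp(f) = -(\mu + y)\cdot \mu = -|\mu|^2 - \mu\cdot y.
\end{equation*}
Restricting to $\Sigma \cap \Tsco^*_\NP X = \{x = 0,\ \mu\cdot y = 0,\ \mu \neq 0\}$ yields $\frac{1}{2}\Hamscp(f) = -|\mu|^2$, which is strictly negative since $\mu \neq 0$ on $\Sigma$ by definition. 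In particular, $\Hamscp(f) \neq 0$ along $\Sigma \cap \Tsco^*_\NP X$ in the chart, which is transversality.

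It remains to match the ``neighborhood of $\Char(P_0)$'' qualifier with the domain of validity of the coordinates. The coordinates \eqref{eq:great coords} and the compact form \eqref{eq:hammy scat} apply on $\{|\mu|, \rho \le C\}$, and this region contains an open neighborhood of $\Char(P_0)$ near $\NP$ on the $\tau > 0$ side, since the characteristic equation in these coordinates reads $|\mu|^2 + m^2\rho^2 = 1$, forcing both $|\mu|$ and $\rho$ to be bounded there (cf.\ Proposition \ref{thm:global char set scattering}). The $\SP$ component and the $\tau < 0$ half of $\Tsco^*_\NP X$ are handled by the analogous coordinate charts with the obvious sign conventions, producing an identical computation. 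The step I expect to be the only real nuisance, rather than a genuine obstacle, is verifying that the chart and the expression \eqref{eq:hammy scat} for $\Hamscp$ extend smoothly across fiber infinity $\rho = 0$ along $\Char(P_0)$, which is precisely the content of Remark \ref{rem:positive prefactor} applied in these coordinates.
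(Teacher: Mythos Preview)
Your proof is correct and follows essentially the same approach as the paper: compute $\Hamscp$ applied to the defining function $\mu \cdot y$ using the explicit form \eqref{eq:hammy scat}, obtaining $-|\mu|^2$ on $\Sigma \cap \Tsco^*_{\NP}X$, which is nonzero near $\Char(P_0)$. You add the explicit check that $d(\mu \cdot y) \neq 0$ for smoothness of $\Sigma$, which the paper leaves implicit; note also that $\Tsco^*_{\NP}X$ is the fiber over the single point $\NP = \{x=0,\, y=0\}$, so $y=0$ there automatically (your restriction is still correct since $\mu\cdot y = 0$ follows either way).
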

\begin{proof}
  At $y = 0$, away from fiber infinity, $\Hamf  = - 2x (\zeta \cdot
  \partial_y)$, so the rescaled Hamilton vector field $\Hamscp$ is
  $\Hamscp = -  \mu \cdot \partial_y$ at $\NP$   \eqref{eq:hammy scat}.  Since
  the condition defining $\Sigma$ is $\mu \cdot y = 0$, thus
  \begin{equation}
      \Hamscp (\mu \cdot \eta) = -  |\mu|^2\label{eq:transversal to Sigma}
  \end{equation}
on $\NP$ and is thus non-zero near the
  characteristic set near $\NP$, which is what we wanted.  
\end{proof}

\begin{remark}\label{rem:solvability_hamf}
    Consequently, there exists a neighborhood $U' \subset \Tsco^* X$ of $\Char(P_0) \cap \Tsco^*_{\NP} X$ on which we can solve the Cauchy problem
    \begin{align*}
        \Hamf f = 0\,,\quad f|_{\Sigma} &= f_0\,.
    \end{align*}
\end{remark}

The proposition will follow from estimates localized near points in
$\Wperpo$, and the positive commutator argument we use to establish
such estimates is accomplished using commutators localized near $\NP$
and near $\tau$.
Formally speaking, we will use a positive commutator argument
analogous to that used in the scattering setting discussed in Section
\ref{sec:model-case}.  We use the commutators constructed for this
purpose in Section \ref{sec:tsc localizers} of the form
\begin{equation}\label{eq:meet the commutator}
i [P_V, G_\psi Q^* Q G_\psi]
\end{equation}
where $\psi \in \CcI$ and $Q$ is constructed  analogously to the
corresponding commutant in \cite{V2000}.  In particular, we will take $q \in \CI([\Tsco^* X;
\fibeq])$ satisfying the centrality condition $q \rvert_{\NP} = f(\rho)$ where $f
\in \CI(\Wperpo)$.  Concretely,
\begin{equation}
    Q = \Op_L(x^{-(\ell+1/2)} \rho^{-(s-1/2)} q) \,, \quad q = \chi_\partial(x) \tilde q\,,\label{eq:q notation def}
\end{equation}
for a cutoff function $\chi_\partial$ supported near $0$ and $\tilde q$ a
function on $\Tsc^*_{\partial X} X$, and $\tilde q$ essentially given
by \cite[Eq.\ 14.20]{V2000}, with modification that we clarify below.
Furthermore, $\tilde q$ itself is defined first near the
characteristic set (in a neighborhood of the sort depicted by Figure~\ref{fig:localization-near-charset}), this is the function $\tilde q_0$ in \cite[Eq.\
14.11]{V2000}, and then on a complement of a neighborhood of the
characteristic set using a partition of unity.  We describe this in
detail below.

First, we make the following remarks about this commutator and its
important features, both of which appear in the proof in Vasy \cite{V2000}.
\begin{itemize}
\item As discussed in Section \ref{sec:commutators}, $Q G_\psi \in
  \Psitsc^{s - 1/2, \ell + 1/2}$ satisfies
  \begin{equation*}
  [P_V, G_\psi Q^* Q G_\psi] \in \Psitsc^{2s,
    2 \ell},
\end{equation*}
and, by choosing the support of $f = q \rvert_{\NP}$
  localized around a given $\tau_0 \in \Wperpo$, we will obtain
  estimates localized near that $\tau_0$.  (Note we want more than
  simply localization as we need a positive commutator.)
\item We do not use (or more accurately we do not attempt to define)
  the Hamilton vector field of $P_V$ directly, and thus we do not
  directly compute the principal symbol of $i[P_V, G_\psi Q^* Q
  G_\psi]$ in terms of some action on $q$.  Instead, we \emph{compare} $i[P_V, G_\psi Q^* Q
  G_\psi]$ to an operator whose principal symbol we know explicitly.
  (See just below these remarks for an elaboration on this comparison.)
\end{itemize}

It is instructive to consider first the case $V = 0$ and the commutator
$[P_0, G_{\psi,0} Q^* Q G_{\psi,0}]$, where $G_{\psi,0}$ is the
corresponding function of the free Klein-Gordon operator $P_0$ in
equation \eqref{eq:def_Gpsi0}.  In this case compute $\ffsymbz(i[P_0,
G_{\psi,0} Q^* Q G_{\psi,0}]) $ in terms of
\begin{align*}
f(\rho) \coloneqq \tilde q \rvert_{\NP},
\end{align*}
and $\Hamscp \tilde q$.  Recalling, from \eqref{eq:hammy
  3}-\eqref{eq:hammy scat}, that away from the characteristic set we
use the rescaling $\Hamscp = (\rho / x) H_p$, we have 
\begin{equation}\label{eq:free commutator}
  \ffsymb{2 \ell}(i[P_0, G_{\psi,0} Q^* Q G_{\psi,0}])(\tau) = \rho^{-2s}(\widehat{G_{\psi,0}})_{\ff}\left( 2 \left( \Hamscp
  \tilde q   \right) f(\tau) + 2 (2\ell + 1)
f(\tau)^2 \right) (\widehat{G_{\psi,0}})_{\ff}.
\end{equation}
(This is an easy consequence of $\ffsymb{\ell}(A)(\tau) =
\scnormsymbz(A)(0,0,\tau,D_z)$ for $A \in \Psisc^{m,\ell}$.)
Thus, we seek a $\tilde q$ which gives positivity when differentiated
by the Hamilton vector field, and we proceed to the construction of
$\tilde{q}$ now.

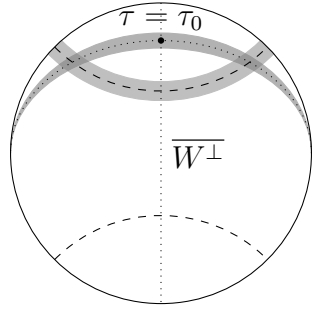
\begin{figure}
    \centering
    \begin{tikzpicture}[scale=2]
        \filldraw[gray, opacity=0.6] (1,0) arc [start angle=0, end angle=180, x radius=1cm, y radius=0.8cm] arc [start angle=180, end angle=0, x radius=1cm, y radius=0.7cm];
        \filldraw[gray,opacity=0.5] (-0.75,0.661) arc (-135:-45:1.06cm) --
        (0.661,0.75) arc (-45:-135:0.935cm) -- (-0.75,0.661);

        \draw[dotted] (1,0) arc [start angle=0, end angle=180, x radius=1cm, y radius=0.75cm];
        
        \draw (0,0) circle (1cm);
        \draw[dashed] (-0.707, 0.707) arc (-135:-45:1cm);
        \draw[dashed] (-0.707, -0.707) arc (135:45:1cm);
        
        \filldraw (0,0.75) circle (0.5pt) node [above] {$\tau = \tau_0$};
        \draw[dotted] (0,1) -- (0,-1);
        \draw (0,0) node [right] {$\Wperpo$};
    \end{tikzpicture}
    \caption{Localization near the characteristic set and at $\tau_0 \in \Wperpo$.}
    \label{fig:localization-near-charset}
\end{figure}

Let $\chi_0 \in \CI(\RR)$ given by
\begin{align}\label{eq:chi 0}
    \chi_0(t) = \begin{cases} e^{-1/t} & t \geq 0\,,\\
        0 & t \leq 0\,.
    \end{cases}
\end{align}
A key feature of $\chi_0$ is that
\begin{align*}
  \chi_0  (t) &= t^2 \chi_0' (t) ,
\end{align*}
Choose $\chi_1 \in \CI(\RR, [0,1])$ such that
\begin{align*}
    \chi_1(t) &= 0\quad \text{ for } t \leq 0\,,\\ 
    \chi_1(t) &= 1\quad \text{ for } t \geq 1\,,\\ 
    \chi_1'(t) &\geq 0\,. 
\end{align*}

Vasy's construction of $\tilde q_0$ uses functions $N$ and $\omega$, and we retain
this notation with appropriate modifications of their definitions.
We choose a neighborhood
\[
  U' \subset \Char(P_0) \cap \Tsco^*_{\NP} X
\]
as in Remark~\ref{rem:solvability_hamf}
and we define a function $N \in \CI(U')$, which will act as our flow parameter
from $\Sigma$, by
\begin{align*}
    \Hamscp N = 1\,,\quad N|_{\Sigma} = 0\,,
\end{align*}
and by the transversality of $\Hamscp$ to $\Sigma$ in
\eqref{eq:transversal to Sigma}, we see that away from $\mu = 0$ and
near $\Sigma$, i.e.\ on sets of the form $|\mu| \ge c > 0$, $| \mu
\cdot y | \le c$, $|y| < c$, we have
\begin{align*}
    c_1 (\mu \cdot y) \leq N \leq c_2 (\mu \cdot y)\,.
\end{align*}
for some $c_1 < c_2$.  Thus $N$ has the dual features that it is
commensurable with $\mu \cdot y$ (near $\Sigma$ and away from $\mu =
0$) and parallel along the flow.  This $N$ is \emph{identical} to that
in \cite{V2000}, we merely use that it is smooth up to the fiber
boundary $\rho = 0$ near the characteristic set.

In proving estimates at $+ \infty \in \Wperpo$, we define an $\omega$
which differs from the one in Vasy, namely we let $\omega \in \CI(U')$ be given by
the solution of 
\begin{equation}
  \label{eq:omega infinity}
  \Hamscp \omega = 0\,,\quad  \omega \rvert_{\Sigma} = \abs{y}^2 + \rho^2.
\end{equation}
This $\omega$ is used to localize near $\omega = 0$, which here is the
set $\rho = 0, y = 0$, i.e.\ fiber infinity over $\NP$.
Note that at finite $\tau_0$ levels one can use, exactly as in Vasy,
$\omega \rvert_{\Sigma} = \abs{y}^2 +  \abs{\tau - \tau_0}^2$.

Thus exactly as in \cite[Eq.\ 14.10-14.11]{V2000}, for $\epsilon, \delta, \beta > 0$, we set
\begin{align*}
    \phi = N+ \omega / \epsilon,
\end{align*}
and define the function
\begin{align*}
    \tilde{q}_0(x,y,\rho,\theta) \coloneqq \chi_0(\beta^{-1}(2 -
  \phi/\delta)) \chi_1(N/\delta + 2)\, ,
\end{align*}
depicted in Figure~\ref{fig:pic-of-q0}.
In particular the support of $\tilde{q}_0$ is contained in $\phi \leq
2 \delta$ and $N \geq - 2\delta$.
Thus, we have the bounds
\begin{align}\label{eq:N omega bound}
    \abs{N} \leq 2\delta\,,\quad \abs{\omega} \leq 4 \eps \delta\,,\quad \abs{\phi} \leq 6 \delta
\end{align}
on the support of $\tilde{q}_0$.
In particular, the choice of $\delta$ determines how far from $\NP$ we have to control $u$ with $E$ and $\epsilon$ determines the interval around $+ \infty \in \Wperpo$ we want to control.

\begin{figure}
    \centering
    \includegraphics{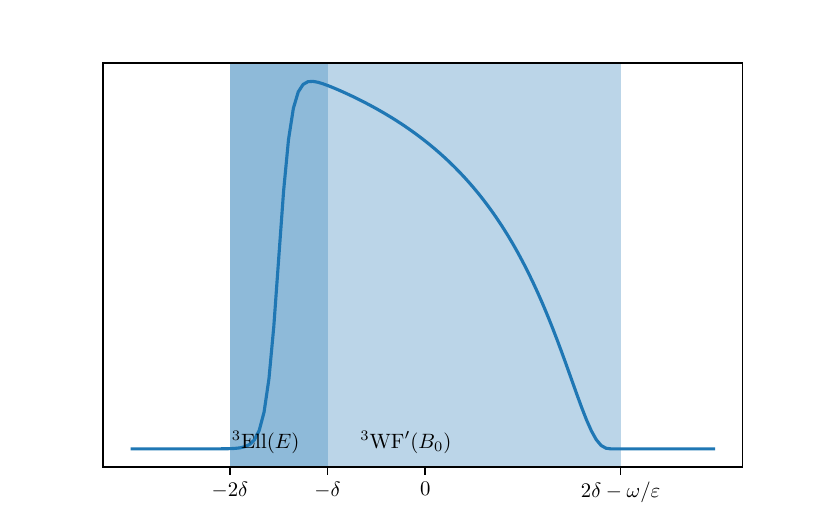}
    \caption{The function $\tilde{q}_0$ in the direction of the flow
      measured by $N$. The north pole is at $N = 0$.}
    \label{fig:pic-of-q0}
\end{figure}

We now wish to extend $\tilde{q}_{0}$ to a function $\tilde{q}$
defined on the whole of $\Tsco^{*}X$, i.e.\ also away from the
characteristic set; since $\tilde{q}_{0}$ is defined on the
characteristic set we can do this easily enough with a bump function
that is $1$ near $\Char(P_{0})$ and in fact we will choose such a
function that is smooth on $\Tsco^{*}X$.

Such cutoffs to the characteristic set were discussed in Section \ref{sec:full
  func calc sec}, and we recall that smooth functions of $p /
(\tau^{2} + |\zeta|^{2} + 1)$ are smooth on the whole of $\Tsco^{*} X$.
Thus we define, for $\chi \in \CcI(\RR)$ with $\chi$
identically $1$ near zero,
\begin{equation}
\chi_{\Char} = \chi \left( \frac{\tau^2 - |\zeta|^2 - m^2}{\tau^2 +
    |\zeta|^2 + 1} \right) = \chi \left( \frac{p}{\tau^2 +
    |\zeta|^2 + 1} \right).\label{eq:def of chichar}
\end{equation}
We choose $\chi$ such that $\supp \chi_{\Char} \subset U'$ in order that $\chi_{\Char} \tilde{q}_0$ is well-defined.
The choice of a function $\chi$ as opposed to the $\psi$ used in
Section~\ref{sec:full func calc sec} is deliberate,
as we hope to avoid confusion in the
use of $\chi_{\Char}$ as a cutoff to the characteristic set and $\psi$
which is used to take functions of an operator. For the same reason
the denominator has $1$ instead of $m^2 + E$, as the symbol of
$\chi_{\Char}$ is irrelevant; it is just a cutoff to the
characteristic set.  We note that $\chi_{\Char}$ differs from Vasy's cutoff
$\rho(g)$ \cite[Eq.\ 14.20]{V2000}; in the setting of that paper, the
characteristic set is compactly contained in the interior of
$\Tsc^{*}X$ so the behavior near fiber infinity is irrelevant.

Define $\tilde q$ by
\begin{equation}
  \label{eq:def of qtilde}
  \tilde q = \chi_{\Char} \tilde q_0 + (1 - \chi_{\Char}) \chi_0(\beta^{-1}(2 - \omega_0/(\tilde \epsilon \delta))),
\end{equation}
Note that $\tilde{q}$ is constant in $\zeta$ on $\NP$.
We have that
\begin{align*}
    N|_{\NP} = 0\,,\quad \omega|_{\NP} = \rho^2\,,
\end{align*}
therefore we may define $f \in \CI(\Wperpo)$ by
\begin{align}\label{eq:def f}
f(\rho) = \tilde q \rvert_{\NP} = \chi_0(\varrho(\rho) )\,,
\end{align}
where
\begin{align*}
    \varrho(\rho) \coloneqq \beta^{-1} \left(2 - \frac{\rho^2}{\epsilon \delta} \right)
\end{align*}
and thus $\tilde q$ satisfies the centrality condition
\eqref{eq:centrality}.

We now compute the Hamilton vector field applied to $\tilde q_0$:
\begin{align*}
    \Hamscp(\tilde{q}_0) &= - \beta^{-1} \delta^{-1} \chi_0'(\beta^{-1} (2 - \phi/\delta)) \chi_1(N/\delta + 2) + \delta^{-1} \chi_0(\beta^{-1} (2 - \phi/\delta)) \chi_1'(N/\delta + 2)\,.
\end{align*}
We have used here that $\Hamscp N = 1$ and $\Hamscp \omega =
0$.  Note that $\chi_1(N/\delta + 2)$ is constantly $1$ in a
neighborhood of $\ff$.  Thus we now draw the important conclusion that over $\NP$ this
expression simplifies to
\begin{equation}
  \label{eq:def of f flat}
  f^\flat(\rho) \coloneqq - \Hamscp(\tilde{q}_0)|_{\NP} = \beta^{-1}
  \delta^{-1} \chi_0'(\varrho) \in
  C^\infty(\Wperpo),
\end{equation}
the minus sign being included as $f^{\flat} \ge 0$ will be used as
an upper bound below.    We will use below that we can bound $f$ in
terms of $f^{\flat}$,
\begin{equation}
  \label{eq:fflat to f}
  f(\rho) \le \frac{4\delta}{\beta} f^{\flat}(\rho).
\end{equation}

We may define $g_0$ and $\tilde e_{0}$ by
\begin{equation*}
  g_0^2 \coloneqq  2 \beta^{-1}\delta^{-1}\left( \chi_0(\beta^{-1}(2 - \phi/\delta)) \chi_0 '(\beta^{-1}(2 - \phi/\delta)) \right)
  \chi_1(N/\delta + 2)^2, 
\end{equation*}
and
\begin{equation}\label{eq:tilde_e_0}
  \tilde e_{0}^2 \coloneqq 2 \delta^{-1} \chi_0^2 \chi_1\chi_1'.
\end{equation}
Setting
\begin{align*}
    r &\coloneqq \frac{M\delta}{2 \beta} \left(2 - \phi/\delta\right)^2\,,
\end{align*}
we have the following.
\begin{lemma}\label{lem:hamf_q_leq}
    For any $\eps, \delta, M > 0$ there exist $\beta > 0$ large such that $r \in [0,1)$ and
    \begin{align}\label{eq:q0 M decomp}
        \Hamscp(\tilde{q}_0^2) + M \tilde{q}_0^2 = - (1 - r)g_0^2 + \tilde{e}_{0}^2
    \end{align}
    on $U'$.
\end{lemma}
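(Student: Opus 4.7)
The proof will be essentially a direct computation, with the only clever ingredient being the distinguished identity $\chi_0(t) = t^2 \chi_0'(t)$ for the particular choice of $\chi_0$ in \eqref{eq:chi 0}. First I would apply $\Hamscp$ to $\tilde q_0$ via the chain rule, using that $\Hamscp N = 1$ and $\Hamscp \omega = 0$ (so $\Hamscp \phi = 1$). Writing $\varrho_\phi \coloneqq \beta^{-1}(2 - \phi/\delta)$ and $\Xi \coloneqq N/\delta + 2$ for brevity, this gives
\begin{equation*}
\Hamscp(\tilde q_0) = -\beta^{-1}\delta^{-1}\chi_0'(\varrho_\phi)\chi_1(\Xi) + \delta^{-1}\chi_0(\varrho_\phi)\chi_1'(\Xi).
\end{equation*}
Multiplying by $2\tilde q_0 = 2\chi_0(\varrho_\phi)\chi_1(\Xi)$ and collecting terms yields
\begin{equation*}
\Hamscp(\tilde q_0^2) = -2\beta^{-1}\delta^{-1}\chi_0\chi_0'\chi_1^2 + 2\delta^{-1}\chi_0^2\chi_1\chi_1' = -g_0^2 + \tilde e_0^2,
\end{equation*}
where each factor $\chi_0,\chi_0'$ is evaluated at $\varrho_\phi$ and each $\chi_1, \chi_1'$ at $\Xi$.

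It then remains to show that $M\tilde q_0^2 = r g_0^2$ pointwise. This is where the identity $\chi_0(t) = t^2\chi_0'(t)$ enters: evaluating at $t = \varrho_\phi$ gives $\chi_0 = \beta^{-2}(2-\phi/\delta)^2 \chi_0'$, and hence $\chi_0^2 = \beta^{-2}(2-\phi/\delta)^2\, \chi_0\chi_0'$. Substituting,
\begin{equation*}
M\tilde q_0^2 = M\chi_0^2\chi_1^2 = \frac{M}{\beta^2}(2-\phi/\delta)^2 \chi_0\chi_0'\chi_1^2 = \frac{M\delta}{2\beta}(2-\phi/\delta)^2 \cdot g_0^2 = r g_0^2,
\end{equation*}
which, combined with the computation above, gives precisely \eqref{eq:q0 M decomp}.

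Finally, I would verify $r \in [0,1)$ on $U'$ for $\beta$ sufficiently large. On the support of $\tilde q_0$, the factor $\chi_0(\varrho_\phi)$ forces $\varrho_\phi > 0$, i.e.\ $\phi < 2\delta$; on the other hand $\omega \ge 0$ (since $\omega|_\Sigma = |y|^2 + \rho^2 \ge 0$ and $\Hamscp\omega = 0$), and the factor $\chi_1(\Xi)$ forces $N \ge -2\delta$, so $\phi = N + \omega/\eps \ge -2\delta$. Thus $(2-\phi/\delta) \in (0,4]$ on $\supp \tilde q_0$, and consequently $r \le 8M\delta/\beta$, which is strictly less than $1$ provided $\beta > 8M\delta$. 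I expect no real obstacles here: the identity $\chi_0 = t^2\chi_0'$ was engineered precisely to make the quadratic absorption of $M\tilde q_0^2$ work, and the only subtlety is carefully tracking the lower bound $\omega \ge 0$ on $U'$, which follows from the Cauchy problem \eqref{eq:omega infinity} and the fact that $\omega$ is constant along the flow.
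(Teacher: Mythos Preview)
Your proposal is correct and follows essentially the same approach as the paper: compute $\Hamscp(\tilde q_0^2) = -g_0^2 + \tilde e_0^2$ via the chain rule, then use $\chi_0(t) = t^2\chi_0'(t)$ to identify $M\tilde q_0^2 = r g_0^2$. Your bound on $r$ is in fact slightly sharper than the paper's (you use $\omega \ge 0$ to get $(2-\phi/\delta) \le 4$, whereas the paper uses $|\phi| \le 6\delta$ from \eqref{eq:N omega bound} to get $|2-\phi/\delta| \le 8$, requiring $\beta > 64 M\delta$), but the argument is the same.
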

\begin{proof}
    We start by observing that from the bound of $N$ and $\omega$
    in \eqref{eq:N omega bound}, we have that \[\abs{2 - \phi/\delta} \leq 8.\]
    Therefore, if we choose $\beta > 0$ such that
    \begin{align*}
        \beta > 8^2 \delta M\,,
    \end{align*}
    then $r < 1$.

    To establish \eqref{eq:q0 M decomp}, we first observe that
    \begin{align*}
        \Hamscp(\tilde{q}_0^2) = - g_0^2 + \tilde e_{0}^2\,.
    \end{align*}
    Moreover, we calculate 
    \begin{align*}
      M\tilde{q}_0^2 &= M \chi_0^2 \left( \beta^{-1}(2-\phi /
                      \delta)\right) \chi_1^2  \\
      &= M\frac{(2-\phi / \delta)^2}{\beta^2}\left( \chi_0
        \chi_0 ' \right) \left(\beta^{-1}(2-\phi / \delta)\right)
        \chi_1^2 \\
      &= r g_0^2\,,
    \end{align*}
    where we used the explicit relationship between $\chi_{0}(t) =
    t^{2}\chi_{0}'(t)$.
  \end{proof}

Taking $\beta > 0$ as in the previous lemma, we define
\begin{align*}
    \tilde{b}_0 &\coloneqq (1 - r)^{1/2} g_0\,.
\end{align*}
Noting that $\tilde b_0$ is defined only on $U'$, we can extend to
a function $b$ as with $\tilde q$ and $\tilde q_0$ above, namely by
writing
\begin{align*}
b \coloneqq \chi_\pa(x) \tilde b
\end{align*}
with
\begin{align*}
    \tilde b &\coloneqq \chi_{\Char} \tilde b_0 + (1 - \chi_{\Char}) \left( (1 - M \delta \varrho ) \frac{2}{\beta \delta} \chi_0(\varrho) \chi_0'(\varrho) \right)^{1/2}\,.
\end{align*}
Here the parenthetical term on the right is equal to $\tilde b_{0}$
over $\NP$, and thus $\tilde b$ gives a globally defined function
which restricts to
\begin{equation}
  \label{eq:tilde b front fact symbol}
  \tilde b \rvert_{\NP}(\rho) = \left( 2 (1 - M\delta \varrho) f(\rho) f^{\flat}(\rho) \right)^{1/2}\,.
\end{equation}

Similarly, we set
\begin{align*}
  \tilde e = \chi_{\Char} \tilde e_0, \quad e = \chi_{\pa}(x) \tilde{e}_{0}, 
\end{align*}
We note that
\begin{align*}
  \left( \supp e \right) \cap \Tsco^{*}_{\NP} X = \varnothing,
\end{align*}
and thus $e$ is in fact a standard scattering symbol.
From \eqref{eq:tilde_e_0}, we see that
 \begin{equation}
   \label{eq:support e}
   \supp e \subset \{ N + \omega / \epsilon \le 2 \delta \} \cap \{ -
   2 \delta \le N \le - \delta \},
 \end{equation}
and in addition is supported near the characteristic set thanks to the
$\chi_{\Char}$.  Thus $e$ \emph{is supported near the bicharacteristic
rays flowing into $\rho = 0$ over $\NP$ as $N$ increases, i.e.\ along the
direction of the Hamiltonian flow.}

To utilize $q, b$ and $e$ in an expression related to commutator
$i [P_V, G_\psi Q^* Q G_\psi]$, we will choose a $\psi \in \CcI(\RR)$
with sufficiently small support that, away from the front face, only
the function $\tilde q_{0}$ appears in the principal symbol of $Q G_{\psi}$.
Indeed, recall that for arbitrary $\psi \in \CcI(\RR)$, by Proposition
\ref{prop:Q_Gpsi} we have
\begin{align*}
\fibsymb{s - 1,\ell+1}(Q G_\psi)  &= (q |_{\Ssc^{*} X}) \cdot
  \fibsymbz(G_{\psi})\\
 \mfsymb{s - 1,\ell+1}(Q G_\psi)  &= (q |_{\Tsco^{*}_{\pa X} X}) \cdot
   \mfsymbz(G_{\psi})\\
   \ffsymb{\ell + 1}(Q G_{\psi})&=  \rho^{-(s-1/2)} f \cdot \ffsymbz(G_{\psi}).
\end{align*}
Thus, if $\psi$ is chosen with sufficiently small support such that
\begin{align*}
  \chi_{\Char} \cdot \psi \left( (\tau^{2} + |\zeta|^{2} +
  m^2 + E )^{-1}p \right) = \psi\left( (\tau^{2} + |\zeta|^{2} + 
  m^2 + E )^{-1}p \right),
\end{align*}
We have that
\begin{align*}
\left( q \psi \left( (\tau^{2} + |\zeta|^{2} +
  m^2 + E )^{-1}p \right) \right|_{\pa X}= \tilde q_{0} \left. \psi
  \left( (\tau^{2} + |\zeta|^{2} +
  m^2 + E )^{-1}p \right) \right|_{\pa X},
\end{align*}
so over the boundary away from $\ff$ only $\tilde q_{0}$ appears in
the symbol of $Q G_\psi$.

We thus obtain our desired positivity at the level of the principal
symbol in the first two components, i.e.\ at fiber infinity and over
$\mf$.  Indeed, directly from Lemma~\ref{lem:hamf_q_leq}, we have
\begin{align}
    \label{eq:commutator_fibsymb}
    \fibsymb{2s, 2\ell}(i[P_V, G_\psi Q^* Q G_\psi]) &=
        \left( - \tilde b_{0}^{2} + \tilde e_{0}^{2} + ( 2(2\ell + 1) - M) \tilde q_{0} ^2 \right)  \fibsymbz(G_{\psi})^{2}\\
    \label{eq:commutator_mfsymb}
    \mfsymb{2s, 2\ell}(i[P_V, G_\psi Q^* Q G_\psi]) &=
        \left( - \tilde b_{0}^{2} + \tilde e_{0}^{2} + ( 2(2\ell + 1) - M) \tilde q_{0} ^2 \right)  \mfsymbz(G_{\psi})^{2}.
\end{align}
In order to use the sharp G\aa{}rding type theorem in
Proposition \ref{prop:sharp_Garding}, we must have a similar
inequality over $\ff$.  This can be done by again possibly reducing
the size of the support of $\psi$, and using operator norm bounds to
compare the indicial operator of $[P_V, G_\psi Q^* Q G_\psi]$ to that
of $[P_0, G_{\psi,0} Q^* Q G_{\psi,0}]$.  We note that we have a formula
for the free indicial operator in terms of the functions defined
above, namely, from \eqref{eq:free commutator},
\begin{equation}
  \label{eq:free indicial commutator}
  \ffsymb{2\ell}\left( i [P_0, G_{\psi,0} Q^* Q G_{\psi,0}] \right) = \left( - 2 (1 - M\delta \varrho) f f^\flat + (2 (2\ell+1) - M)f^2 \right) \rho^{-2s} \ffsymbz(G_{\psi,0})^2\,,
\end{equation}

Thus, for $P_0$, the indicial operator has similar structure to the
other symbol components, and $f$ can be bounded in terms of $f^\flat$.
However, we cannot easily calculate the indicial operator of the perturbed commutator.
Following Vasy, we will use a ``window shrinking'' argument to prove that the
difference of the free and perturbed commutators are small provided that the cutoff function $\psi$ in the
localizer $G_\psi$ is supported sufficiently close to $0$.
\begin{lemma}\label{lem:commutator_ffsymb}
    For every $\eps' > 0$ there exists $\psi \in \CcI$ with $\psi = 1$ on $(-\delta, \delta)$ where $\delta = \delta(\eps')$ such that
    \begin{gather*}
        \rho^{2s} \ffsymb{2\ell}(i [P_V, G_\psi Q^* QG_\psi]) - \left(
      2(2\ell + 1) - M \right) f^2 \cdot \ffsymbz(G_\psi)^2 \\ \leq  - (2 -
      \eps') (1 - M \delta \varrho) f^\flat f \cdot  \ffsymbz(G_\psi)^2\,.
    \end{gather*}
\end{lemma}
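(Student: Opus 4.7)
The plan is to compare the perturbed indicial operator with its free analogue, for which the explicit formula \eqref{eq:free indicial commutator} is available, and then absorb the resulting error using the shrinking window Lemma \ref{lem:shrinking_window}. Since $(D_t^2 + H_{V_0} + E)^{-1}P_{V_0}$ is self-adjoint by functional calculus, $G_\psi$ is self-adjoint and so $G_\psi Q^*Q G_\psi = (QG_\psi)^*(QG_\psi)$. The Leibniz identity
\[
i[P_V,(QG_\psi)^*(QG_\psi)] = i[P_V,(QG_\psi)^*](QG_\psi) + (QG_\psi)^*\,i[P_V,QG_\psi]
\]
(together with its analogue for $P_0$ and $G_{\psi,0}$) reduces the analysis to the one-sided commutators, to which Corollary~\ref{cor:commutator_symbol} applies; the discrepancy from $V \ne V^*$ is of order $(0,-2)$ by hypothesis and contributes a manifestly lower order term absorbed at the end.

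Applying Corollary~\ref{cor:commutator_symbol} to each one-sided factor, taking indicial operators, and using Lemma~\ref{lem:diff_Gpsi_Gpsi0} to trade $(\widehat{G_{\psi,0}})_\ff$ for $(\widehat{G_\psi})_\ff$ at the price of a $\Psisclsc^{-\infty,-1,-1}$ error, I would express
\[
R(\tau) := \rho^{2s}\bigl(\ffsymb{2\ell}(i[P_V, G_\psi Q^*Q G_\psi])(\tau) - \ffsymb{2\ell}(i[P_0, G_{\psi,0} Q^*Q G_{\psi,0}])(\tau)\bigr)
\]
as $R(\tau) = (\widehat{G_\psi})_\ff(\tau)\,B(\tau)\,(\widehat{G_\psi})_\ff(\tau)$ with $B(\tau)\colon L^2 \to \Sobscl^{1,1}$ bounded uniformly in $\tau$ and in $\psi$ by finitely many seminorms of $q$ near $\NP$ (depending on $\delta,\eps,\beta,M,\ell$ but \emph{not} on the support of $\psi$). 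Combining this with the free formula \eqref{eq:free indicial commutator} and using the same argument to replace $\ffsymbz(G_{\psi,0})^2$ by $\ffsymbz(G_\psi)^2$, we obtain
\[
\rho^{2s}\ffsymb{2\ell}(i[P_V, G_\psi Q^*Q G_\psi]) = \bigl(-2(1-M\delta\varrho)ff^\flat + (2(2\ell+1)-M)f^2\bigr)\ffsymbz(G_\psi)^2 + (\widehat{G_\psi})_\ff\,\tilde B\,(\widehat{G_\psi})_\ff,
\]
with $\tilde B$ bounded uniformly $L^2 \to \Sobscl^{1,1}$ by a constant $C$ independent of $\psi$.

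For any $\eps'' > 0$, Lemma~\ref{lem:shrinking_window} provides $\sigma = \sigma(\eps'')$ and $\psi \in \CcI((-\sigma,\sigma))$ equal to $1$ on some $(-\delta,\delta)$ such that $\|(\widehat{G_\psi})_\ff(\tau)\circ\iota\|_{\Sobscl^{1,1}\to L^2} < \eps''$ uniformly for $\tau$ in the range relevant to $\supp f$ (which is in a neighborhood of $\pm m$ inside the characteristic set, where the absolutely continuous spectrum assumption applies). This yields the quadratic form bound
\[
\pm (\widehat{G_\psi})_\ff\,\tilde B\,(\widehat{G_\psi})_\ff \;\leq\; C\eps''^{2}\,\ffsymbz(G_\psi)^{2}.
\]
Because $\chi_0(t) = t^{2}\chi_0'(t)$, the product $ff^\flat$ is bounded below on $\supp f$ by a positive quantity of the same weight as $\ffsymbz(G_\psi)^{2}$, and $1 - M\delta\varrho$ stays bounded below by a positive constant for $\delta$ sufficiently small. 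Choosing $\eps'' = \eps''(\eps')$ so that $C\eps''^{2} \leq \tfrac12\eps'\,(1 - M\delta\varrho)ff^\flat$ on $\supp f$ absorbs the error into the leading $-2(1 - M\delta\varrho)ff^\flat\,\ffsymbz(G_\psi)^{2}$ term, producing the stated inequality with $(2-\eps')$ on the right.

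The main obstacle is the bookkeeping in the first step: every term in the expansion of the two-sided commutator must be organized so that it retains factors of $(\widehat{G_\psi})_\ff$ on both left and right, since this is what makes the shrinking window mechanism applicable. This in turn relies on the centrality condition $q|_\NP = f(\tau)\,\mathrm{Id}$, which guarantees the expected order drop of $[Q, P_V]$ at the front face, and on the identity $[P_{V_0}, G_\psi] = 0$ from \eqref{eq:Gpsi_PV0_commute}, which keeps $[P_V, G_\psi] = [V - V_0, G_\psi]$ strictly lower order so that the Leibniz expansion does not generate uncontrolled terms.
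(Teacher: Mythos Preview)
Your overall architecture matches the paper's: compare to the free commutator via Corollary~\ref{cor:commutator_symbol} and \eqref{eq:free indicial commutator}, swap $G_{\psi,0}$ for $G_\psi$ via Lemma~\ref{lem:diff_Gpsi_Gpsi0}, then invoke the shrinking window Lemma~\ref{lem:shrinking_window}. But the absorption step at the end is broken.

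The claim that ``$ff^\flat$ is bounded below on $\supp f$ by a positive quantity'' is false. Since $f(\rho)=\chi_0(\varrho(\rho))$ and $f^\flat(\rho)=(\beta\delta)^{-1}\chi_0'(\varrho(\rho))$ with $\chi_0(0)=\chi_0'(0)=0$, the product $ff^\flat$ vanishes at the boundary $\varrho=0$ of $\supp f$. Consequently no choice of $\eps''>0$ can achieve $C\eps''^{2}\le \tfrac12\eps'(1-M\delta\varrho)ff^\flat$ pointwise on $\supp f$; your error term, as you have stated it, simply cannot be absorbed.

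What repairs this is precisely the step you skipped when you recorded the bound on $\tilde B$ only as ``a constant $C$''. The seminorms of $q$ appearing in Corollary~\ref{cor:commutator_symbol} are pointwise in $\tau$ and, because $q|_{\NP}=f(\tau)$ is built from $\chi_0(\varrho)$, all first derivatives $D^\alpha_{x,y,\tau}q(0,0,\tau,\zeta)$ are bounded by a constant times $f^\flat(\tau)$ (using also \eqref{eq:fflat to f} to dominate $f$ by $f^\flat$). Combined with the factor $f(\tau)$ coming from $\ffsymbz(QG_\psi)$ on the other side of the Leibniz expansion, the error satisfies the \emph{weighted} bound
\[
\bigl\|R(\tau)\bigr\|_{\LinOp(L^2,\Sobscl^{1,1})}\le C\,f(\tau)\,f^\flat(\tau),
\]
with $C$ independent of $\tau$ and of the window $\psi$ (this is the paper's \eqref{eq:fantastic estimate}). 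Now one application of Lemma~\ref{lem:shrinking_window} gives $\ffsymbz(G_\psi)R\,\ffsymbz(G_\psi)\ge -\eps' f f^\flat$, which is absorbed directly into the leading term without any lower bound on $ff^\flat$. (A final sandwich by a narrower $G_{\tilde\psi}$ restores the $\ffsymbz(G_\psi)^2$ on the right, as in the paper.)

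A minor side remark: in this section $\supp f$ is a neighborhood of $\rho=0$, i.e.\ of $\tau=+\infty\in\Wperpo$, not of $\tau=\pm m$; the latter is the radial-point case in Section~\ref{sec:3sc_radial}. Also the shrinking window yields one factor of $\eps''$, not $\eps''^{2}$.
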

\begin{proof}
    Let $\varphi \in \CcI(\RR)$ with $\varphi(s) = 1$ for $\abs{s} \leq 1$
    to be chosen later.
    We consider the operator defined on $\ff$ which is the difference
    of the commutator and the local part of the free commutator in
    \eqref{eq:free indicial commutator} but with the $G_{\varphi}$ localizer:
    \begin{align*}
        R_{\varphi} \coloneqq{}& \rho^{2s} \ffsymb{2\ell}(i [P_V, G_\varphi
                         Q^* Q G_\varphi]) - \left(-2(1 - M \delta
                         \varrho) f f^\flat + \left( 2(2\ell + 1) -
                         M\right) f^2\right) \ffsymbz(G_\varphi)^2.
    \end{align*}
    Note that, if $\psi \in \CcI(\mathbb{R})$ has $\psi \phi = \psi$ then
    \begin{align*}
      R_{\psi} = G_{\psi} R_{\varphi} G_{\psi}.
    \end{align*}
    We have that
    \begin{align*}
   R_{\varphi}     ={}& \rho^{2s} \left( \ffsymb{2\ell}\{ i [P_V, G_\varphi Q^* Q G_\varphi] - i [P_0, G_{\varphi,0} Q^* Q G_{\varphi,0}] \}\right) \\
        &+ \rho^{2s} \ffsymb{2\ell}\left(i [P_0, G_{\varphi,0} Q^* Q G_{\varphi,0}] \right) - \left(-2(1 - M \delta \varrho) f f^\flat + \left( 2(2\ell + 1) - M\right) f^2\right) \ffsymbz(G_{\varphi,0})^2 \\
        &+ \left(-2(1 - M \delta \varrho) f f^\flat + \left( 2(2\ell + 1) - M\right) f^2 \right) \left( \ffsymbz(G_{\varphi,0})^2 - \ffsymbz(G_\varphi)^2 \right)\,.
    \end{align*}
    We claim that
        \begin{align*}
        R_{\varphi} \in \Psisclsc^{-\infty, -1, 0}
        \end{align*}
        and that there is a $C > 0$ independent of $\rho = 1/\tau$ such
that
    \begin{align}\label{eq:fantastic estimate}
        \norm{R_{\varphi}(\rho)}_{\LinOp(L^2, \Sobscl^{1,1})} \leq C f^\flat(\rho) f(\rho)\,.
    \end{align}
To see this, note that in the expression for $R_{\varphi}(\rho)$, line two
vanishes by \eqref{eq:free indicial commutator}.  For the third line
in that expression, one obtains \eqref{eq:fantastic estimate} from
Lemma~\ref{lem:diff_Gpsi_Gpsi0} (and Proposition
\ref{prop:Gpsi_indicial}) and estimating $f$ by $f^{\flat}$ as in
\eqref{eq:fflat to f}.  Finally, for the first line, we use
Corollary~\ref{cor:commutator_symbol} (and Lemma
\ref{lem:diff_Gpsi_Gpsi0} and Proposition
\ref{prop:Gpsi_indicial} again).  
    
Taking $\eps = \eps' / C$ and $\psi$ as in
Lemma~\ref{lem:shrinking_window}, we obtain 
    \begin{equation}\label{eq:ffsymb_commutator_aux}
        \begin{aligned}
        \rho^{2s}\ffsymb{2s,2\ell }(&i [P_V, G_{\psi} Q^*
        QG_{\psi}]) - (2(2\ell + 1) - M) f^2 \ffsymbz(G_{\psi})^2 \\
        & = 2(1 - M \delta \varrho) f^\flat f \cdot \ffsymbz(G_{\psi})^2
        + \ffsymbz(G_{\psi}) R_{\varphi} \ffsymbz(G_{\psi})   \\
        &\leq 2 (1 - M \delta \varrho) f^\flat f \cdot
        \ffsymbz(G_{\psi})^2 - \eps'\cdot f^{\flat} f\,.
        \end{aligned}
    \end{equation}
This is nearly what the lemma claims, only missing the
$\ffsymbz(G_{\psi})^2$ on the $\epsilon'$ term, but is obtained by
multiplying both sides by yet another $\ffsymbz(G_{\tilde\psi})$ for
yet another $\tilde \psi$ with $\tilde \psi \psi = \tilde \psi.$
\end{proof}

Now we can fix $\eps' \in (0, 1/4)$ and $\psi \in \CcI$ as in the lemma above. Choose $\phi \in \CcI$ with $\phi(s) = 1$ for $\abs{s} \leq \delta(\eps')/2$ and $\supp \phi \subset (-\delta(\eps'), \delta(\eps'))$.
We define
\begin{align} \label{eq:B final on Gpsi}
    B_0 &= \Op_L(x^{-\ell} \rho^{-s} b) G_\phi\,,\\ \label{eq:E final on Gpsi}
    E_0 &= \Op_L(x^{-\ell} \rho^{-s} e) G_\phi\,,\\
    Q_0 &= \Op_L(x^{-(\ell + 1/2)} \rho^{-(s-1/2)} q) G_\phi\,,\\
    Q_0' &= \Op_L(x^{-\ell} \rho^{-s} q) G_\phi\,.
\end{align}

\begin{lemma}\label{lem:commutator_ineq}
    Let $O \subset \Ctscd$ with $\WFtsc'(B) \subset O$.
    There exists $F \in \Psitsc^{2s-1, 2\ell-1}$ and $\eps \in (0,1)$ such that $\WFtsc'(F) \subset O$ and
    \begin{align*}
        i[P_V, Q_0^* Q_0] + (M - 2 (2\ell + 1)) (Q_0')^* Q_0' \leq -(1 - \eps) B^* B + E^* E + F\,.
    \end{align*}
\end{lemma}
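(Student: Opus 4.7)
The claimed inequality is equivalent to showing that the operator
\[
T \coloneqq -(1-\eps)B_0^*B_0 + E_0^*E_0 - i[P_V, Q_0^*Q_0] - (M - 2(2\ell+1))(Q_0')^*Q_0'
\]
is nonnegative modulo an element $F \in \Psitsc^{2s-1, 2\ell-1}$ with $\WFtsc'(F) \subset O$. The strategy is a positive commutator argument: compute the $\tsc$-principal symbol of $T$ on each of the three boundary components of $\Ttsco^*[X;\poles]$, verify nonnegativity for a suitable $\eps$, and then invoke the sharp G\aa{}rding inequality (Proposition~\ref{prop:sharp_Garding}) with $A = G_\phi$ to pass to an operator inequality. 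The wavefront constraint on $F$ will follow automatically from the microsupports of $b, e, q$, which are concentrated near $\NP$ and contained in $O$ by construction.

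On fiber infinity and on $\Tsco^*_{\mf \setminus \poles}$, formulas \eqref{eq:commutator_fibsymb}--\eqref{eq:commutator_mfsymb} give the principal symbol of $-i[P_V, Q_0^*Q_0]$ as $(\tilde b_0^2 - \tilde e_0^2 - (2(2\ell+1)-M)\tilde q_0^2)\fibsymbz(G_\phi)^2$ (and analogously on $\mf$). The $(M-2(2\ell+1))(Q_0')^*Q_0'$ term cancels the $\tilde q_0^2$ contribution while $E_0^*E_0$ cancels $\tilde e_0^2$; together with the contribution $-(1-\eps)\tilde b_0^2 \fibsymbz(G_\phi)^2$ from $-(1-\eps)B_0^*B_0$ (using $\tilde b_0^2 = (1-r)g_0^2$ from Lemma~\ref{lem:hamf_q_leq}), this yields $\fibsymbz(T) = \eps \tilde b_0^2 \fibsymbz(G_\phi)^2 \geq 0$ and similarly $\mfsymbz(T) = \eps \tilde b_0^2 \mfsymbz(G_\phi)^2 \geq 0$.

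The main obstacle is the front face, where the indicial symbol is operator-valued and the pointwise cancellations above are unavailable. Here I would invoke Lemma~\ref{lem:commutator_ffsymb}, whose window-shrinking argument compares $\ffsymb{2\ell}(i[P_V, Q_0^*Q_0])$ with its free counterpart. Combining the bound in that lemma with \eqref{eq:tilde b front fact symbol}, which identifies $\tilde b|_{\NP}^2 = 2(1-M\delta\varrho) f f^\flat$, and with $\ffsymbz(E_0) = 0$ (since $e$ vanishes near $\NP$ by \eqref{eq:support e}), one obtains
\[
\rho^{2s}\ffsymb{2\ell}(T) \geq (2\eps - \eps')(1-M\delta\varrho) f f^\flat \cdot \ffsymbz(G_\phi)^2,
\]
which is nonnegative provided $\eps \geq \eps'/2$. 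Fixing $\eps' \in (0, 1/4)$ as arranged in Lemma~\ref{lem:commutator_ffsymb} and taking $\eps = \eps'/2 \in (0, 1/8)$ gives the required $\eps \in (0,1)$.

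To pass from symbolic nonnegativity to the operator inequality, I would apply Proposition~\ref{prop:sharp_Garding} with $A = G_\phi$ (which satisfies $\ffsymbz(A) \in \Norop^{-\infty,0,0}$ by Proposition~\ref{prop:Gpsi_indicial}), with $Q$ a weight-normalized version of $T$, and with $C$ chosen so that its components $c_{\bullet}$ reproduce the nonnegative factorizations $\eps \tilde b_0^2$, $\eps \tilde b_0^2$, and $(2\eps-\eps')(1-M\delta\varrho) f f^\flat$ of $\fibsymbz(T), \mfsymbz(T), \rho^{2s}\ffsymb{2\ell}(T)$ respectively. The smoothness and infinite-order-vanishing requirements on $\sqrt{c_{\bullet}}$ follow from the construction of $\tilde b_0, f, f^\flat$ out of $\chi_0$ in \eqref{eq:chi 0}, which vanishes with all its derivatives at the boundary of its support; hypothesis \eqref{it:assump_bounded} follows from the bound $r < 1$ established in Lemma~\ref{lem:hamf_q_leq}. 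The residual $R \in \Psitsc^{-1,-1}$ returned by the G\aa{}rding proposition, after undoing the weight rescaling, becomes the desired $F \in \Psitsc^{2s-1,2\ell-1}$; its wavefront set lies in $O$ because all of the operators entering $T$ are microsupported in $O$.
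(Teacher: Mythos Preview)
Your proposal is essentially correct and follows the same line as the paper: verify the principal-symbol inequalities on each of the three boundary faces via \eqref{eq:commutator_fibsymb}, \eqref{eq:commutator_mfsymb}, and Lemma~\ref{lem:commutator_ffsymb}, then apply Proposition~\ref{prop:sharp_Garding}. Your symbol computations on $\fib$, $\mf$, and $\ff$ (including the choice $\eps \ge \eps'/2$) are correct.

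There is one technical point you gloss over that the paper handles carefully. In Proposition~\ref{prop:sharp_Garding}, $A$ is the underlying self-adjoint $\Psitsc^{0,0}$ operator whose functional calculus produces the localizers; here $A = (D_t^2 + H_{V_0} + E)^{-1}P_{V_0}$ and $G_\phi = \phi(A)$, so your phrase ``$A = G_\phi$'' is a slip. More importantly, the conclusion of that proposition is an inequality between $\phi(A)\,Q\,\phi(A)$ and $\phi(A)\,C\,\phi(A)$, not between $Q$ and $C$ themselves. Since $G_\phi^2 \neq G_\phi$, sandwiching your $T$ by $G_\phi$ does not reproduce $T$. The paper resolves this by defining the comparison operators with the \emph{outer} window $G_\psi$ and inserting an intermediate $\psi_1$ with $\phi \prec \psi_1 \prec \psi$; then $G_\phi G_\psi = G_\phi$ ensures that $G_\phi(\cdot)G_\phi$ applied to those auxiliary operators returns exactly $i[P_V,Q_0^*Q_0] + (M-2(2\ell+1))(Q_0')^*Q_0' - E^*E$ and $-B^*B$. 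Your argument implicitly needs this same nested-window device to match the hypotheses of Proposition~\ref{prop:sharp_Garding}; once that is made explicit, the two proofs are the same.
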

\begin{proof}
    We will prove the lemma by using Proposition~\ref{prop:sharp_Garding}.
    By the definition of $\phi$, there exists $\psi_1 \in \CcI$ such that
    \begin{align*}
        \supp \phi \cap \supp (1 - \psi_1) = \varnothing\,,\\
        \supp \psi_1 \cap \supp (1 - \psi) = \varnothing\,.
    \end{align*}
    We set
    \begin{align*}
        A &\coloneqq i[P_V, G_\psi Q^* Q G_\psi] + (M - 2(2\ell+1)) G_\psi \Op_L( x^{-\ell} \rho^{-s} q)^* \Op_L(x^{-\ell} \rho^{-s} q) G_\psi \\
        &\phantom{=} - G_\psi \Op_L(x^{-\ell} \rho^{-s} e)^* \Op_L(x^{-\ell} \rho^{-s} e) G_\psi\,,\\
        C &\coloneqq - G_\psi \Op_L( x^{-\ell} \rho^{-s} b)^* \Op_L(x^{-\ell} \rho^{-s} b) G_\psi\,.
    \end{align*}
    We have that
    \begin{align*}
        G_\phi A G_\phi &= i[P_V, Q_0^* Q_0] + (M - 2(2\ell + 1)) (Q_0')^* Q_0' - E^* E\,,\\
        G_\phi C G_\phi &= -B^* B\,.
    \end{align*}
    Hence, to prove the claim, we can apply Proposition~\ref{prop:sharp_Garding} with $\psi_1$.
    It remains to verify the inequality of the principal symbols.
    For the main face and fiber symbol, this is \eqref{eq:commutator_fibsymb} and \eqref{eq:commutator_mfsymb}.
    For $\ffsymbz$, we observe that
    \begin{align*}
        \rho^{2s} \ffsymb{2\ell}(A) &= \rho^{2s} \ffsymb{2\ell}(i[P_V, G_\psi Q^* Q G_\psi]) + (M - 2(2\ell + 1)) f^2 \ffsymbz(G_\psi)^2\,,\\
        \rho^{2s} \ffsymb{2\ell}(C) &= -2 (1 - M \delta \varrho) f(\rho) f^\flat(\rho) \ffsymbz(G_\psi)^2
    \end{align*}
    and by
    Lemma~\ref{lem:commutator_ffsymb}, we have that
    \begin{align*}
        \ffsymb{2\ell}(A) \leq (1 - \eps'/2) \ffsymb{2\ell}(C)\,.
    \end{align*}
\end{proof}
\begin{remark}
    As opposed to the case of scattering operators in Section~\ref{sec:model-case}, we cannot use a simplified version of the G\aa{}rding inequality, because we truly only have an
    inequality of principal symbols.
\end{remark}

For $r \in (0,1)$ and $\delta_1, \delta_2 \in (0,\infty)$, we define
\begin{align*}
    J_{r,\delta_1, \delta_2} \coloneqq (1 + r/x)^{-\delta_2} (1 + r/\rho)^{-\delta_1}\,.
\end{align*}

We have that
\begin{align*}
    \Op_L(J_{r,\delta_1, \delta_2}) \to \Id \text{ as } r \to 0
\end{align*}
strongly in $\LinOp(\Sobsc^{s', \ell'})$ for every $s' > 0, \ell' > 0$ (cf. Vasy~\cite[p. 408]{Vasy13}).

Using \eqref{eq:hammy 3}, we calculate
\begin{align*}
    \Hamf J_{r,\delta_1, \delta_2} = - 2 \tau x \delta_2 \frac{r}{x + r} J_{r,\delta_1, \delta_2}\,.
\end{align*}
Hence, we can choose $\tilde{M} > 0$ independent of $r$, such that for all $\tau, x$, we have
\begin{align}\label{eq:bound_regularizer}
    \tilde{M} J_{r,\delta_1, \delta_2} \geq \frac{1}{\tau x} \Hamf J_{r,\delta_1,\delta_2}\,.
\end{align}
Taking $M > \tilde{M} + 2(2\ell + 1)$ and using Lemma~\ref{lem:hamf_q_leq}, we have that
\begin{align}\label{eq:hamf_Jq_leq}
    \Hamf( x^{-(2\ell+1)} \rho^{-(2s - 1)} J_{r,\delta_1, \delta_2}^2 q^2) \leq x^{-2\ell} \rho^{-2s} J_{r,\delta_1, \delta_2}^2 (- b^2 + e^2 + O(\rho \cdot x))
\end{align}
on $U'$.

\begin{align*}
    Q_r &\coloneqq \Op_L(x^{-(\ell + 1/2)} \rho^{-(s - 1/2)} J_{r,\delta_1, \delta_2} q) G_\phi\,,\\
    B_r &\coloneqq \Op_L(x^{-\ell} \rho^{-s} J_{r,\delta_1, \delta_2} b) G_\phi\,,\\
    E_r &\coloneqq \Op_L(x^{-\ell} \rho^{-s} J_{r,\delta_1, \delta_2} e) G_\phi\,.
\end{align*}
We have that
\begin{align*}
    Q_r \in \Psitsc^{s-1/2-\delta_1, \ell+1/2 - \delta_2}\text{ and }
    B_r, E_r \in \Psitsc^{s- \delta_1, \ell- \delta_2}\,.
\end{align*}

Since
\begin{align*}
    J_{r,\delta_1, \delta_2} = \rho^{\delta_1} x^{\delta_2} (\rho + r)^{-\delta_1} (x + r)^{-\delta_2}\,,
\end{align*}
we have that
\begin{align*}
    \ffsymb{\ell+1/2 - \delta_2}(Q_r) &= \rho^{\delta_1} \ffsymb{\ell+1/2}(Q_0)\,,\\
    \mfsymbz(Q_r) &= \rho^{\delta_1} x^{\delta_2} \mfsymbz(Q_0)\,,\\
    \fibsymbz(Q_r) &= \rho^{\delta_1} x^{\delta_2} \fibsymbz(Q_0)
\end{align*}
and similarly for $B_r$ and $E_r$.

\begin{lemma}\label{lem:commutator_ineq_reg}
    Let $O \subset \Ctscd$ with $\WFtsc'(B) \subset O$.
    There exists $F_r' \in \Psitsc^{2s-1-2\delta_1, 2\ell-1-2\delta_2}$ such that $\WFtsc'(F_r') \subset O$ and
    \begin{align*}
        i[P_V, Q_r^* Q_r] \leq -(1 - \eps) B_r^* B_r + E_r^* E_r + F_r'\,,
    \end{align*}
    and $F_r' \in \LinOp(\Sobsc^{2s-1, 2\ell-1})$ is uniformly bounded as $r \to 0$.
\end{lemma}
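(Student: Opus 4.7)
The plan is to follow the same positive commutator scheme used for Lemma \ref{lem:commutator_ineq}, but with the cutoff $J_{r, \delta_1, \delta_2}$ inserted throughout, and then to verify that the extra terms generated when $\Hamf$ hits $J_{r,\delta_1,\delta_2}$ can be absorbed into the good terms via the bound \eqref{eq:bound_regularizer}, while being careful that the resulting remainder has bounds which are uniform in $r \to 0$.

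First I would compute the three symbolic components of $i[P_V, Q_r^* Q_r]$. Since $J_{r,\delta_1,\delta_2}$ is smooth in the interior and only its behaviour with respect to $\Hamf$ matters to leading order, the fiber and $\mf$ principal symbols of $i[P_V, Q_r^* Q_r]$ are controlled by $\Hamf(x^{-(2\ell+1)}\rho^{-(2s-1)} J_{r,\delta_1,\delta_2}^2 q^2)$, and the inequality \eqref{eq:hamf_Jq_leq}, obtained by combining Lemma \ref{lem:hamf_q_leq} with the choice $M > \tilde M + 2(2\ell+1)$ dictated by \eqref{eq:bound_regularizer}, gives exactly the symbolic bound
\begin{equation*}
    \fibsymbz(i[P_V, Q_r^*Q_r]) \le \rho^{2\delta_1} x^{2\delta_2} (-\fibsymbz(B_0)^2 + \fibsymbz(E_0)^2 + O(\rho x))\,,
\end{equation*}
with the analogous statement at $\mf$. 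The factor $\rho^{2\delta_1}x^{2\delta_2}$ is precisely $\fibsymbz(J_r)^2$, so these bounds are the required ones with $B_r, E_r$ on the right.

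Next I would handle the indicial operator at $\NP$. Here the regularizer introduces no new complication since $J_{r,\delta_1,\delta_2}$ restricts to $\rho^{\delta_1}(\rho+r)^{-\delta_1}$ over $\ff$, which is central in $\zeta$, so it passes through $\ffsymbz(G_\psi)$ cleanly. Repeating the shrinking-window argument of Lemma \ref{lem:commutator_ffsymb} with the extra commutant $\rho^{\delta_1}(\rho+r)^{-\delta_1}$ present (and using that the derivative terms $\tau \pa_\tau$ applied to this cutoff are uniformly bounded in $r$) yields the analogue of Lemma \ref{lem:commutator_ffsymb} with the regularized operators, after possibly further shrinking the support of $\psi$.

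Now with the symbolic inequality in hand for all three principal symbol components, I would invoke Proposition \ref{prop:sharp_Garding} exactly as in the proof of Lemma \ref{lem:commutator_ineq}. The output is
\begin{equation*}
    i[P_V, Q_r^*Q_r] \le -(1-\eps)B_r^*B_r + E_r^*E_r + F_r'\,,
\end{equation*}
with $F_r' \in \Psitsc^{2s-1-2\delta_1, 2\ell-1-2\delta_2}$ and $\WFtsc'(F_r') \subset O$.

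The main obstacle, and the step requiring the most care, is the claim that $F_r'$ is uniformly bounded as a map $\Sobsc^{2s-1, 2\ell-1} \to (\Sobsc^{2s-1,2\ell-1})^*$ as $r \to 0$. This requires uniform (in $r$) symbol bounds on all the pieces that enter the G\aa{}rding construction — the square-root operator produced in the proof of Proposition \ref{prop:sharp_Garding} and the error term that arises from Corollary \ref{cor:commutator_symbol} in the shrinking-window step. The point is that $J_{r,\delta_1,\delta_2}$ and $(x+r)^{-1}(x/(x+r))$-type factors are uniformly bounded by $1$ in $r$, and all derivatives of $J_{r,\delta_1,\delta_2}$ that appear are either controlled by $\tilde M J_{r,\delta_1,\delta_2}$ (which is why the bound \eqref{eq:bound_regularizer} was engineered) or else produce a factor of $J_{r,\delta_1,\delta_2}$ with a lower-order smooth symbol. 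Tracking these through the sharp G\aa{}rding construction, each seminorm of the remainder $F_r'$ of the regularized operators is bounded by a seminorm of the corresponding symbol evaluated at $r=0$, giving uniform operator bounds on the appropriate (non-weight-shifted) Sobolev space $\Sobsc^{2s-1,2\ell-1}$.
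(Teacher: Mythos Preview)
Your proposal is correct and follows essentially the same route as the paper, whose proof is the single sentence ``same as Lemma~\ref{lem:commutator_ineq} but using \eqref{eq:hamf_Jq_leq} to obtain the symbol inequalities.'' You have correctly unpacked this: the regularizer contributes only a central factor at $\ff$, the extra $\Hamf J_r$ term is absorbed via \eqref{eq:bound_regularizer} by the choice $M>\tilde M+2(2\ell+1)$, and the uniform-in-$r$ bound on $F_r'$ follows because all $J_{r,\delta_1,\delta_2}$-dependent factors entering the G\aa{}rding construction are uniformly bounded symbols.
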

\begin{proof}
    The argument is essentially the same as the proof of Lemma~\ref{lem:commutator_ineq}, but using \eqref{eq:hamf_Jq_leq} to obtain the
    symbol inequalities.
\end{proof}

We now relax the condition on $V - V^*$. For
this we write
\begin{align*}
    \tilde{V} \coloneqq (V + V^*)/2\,,\quad V'' \coloneqq (V - V^*)/(2i)\,,
\end{align*}
and we have that
\begin{align*}
    P_V = P_{\tilde{V}} - i V''\,.
\end{align*}
We have that both $P_{\tilde{V}}$ and $V''$ are formally self-adjoint.
The assumption of Proposition~\ref{prop:propagation_localized} implies that
\begin{align}\label{eq:sub prin assump}
    V'' \in \Psitsc^{0,-2}\,.
\end{align}

We first prove a variant of the propagation estimate with the specific
$B_0$, $Q_0$, and $E_0$.
\begin{lemma}\label{lem:propagation_small}
    With $B_0$, $Q_0$ and $E_0$ defined as above,
    and $B_1 \in \Psitsc^{0,0}$ such that $\WFtsc'(Q_0) \subset \Elltsc(B_1)$.
    If $u \in \Sobres$ with $B_1 u \in \Sobsc^{s-1/2, \ell-1/2}$, $Q_0 P_V u \in \Sobsc^{-1/2, 1/2}$, and $E_0 u \in L^2$, then
    $B_0 u \in L^2$ and
    \begin{align*}
        \norm{B_0 u} \leq C \left( \norm{Q_0 P_V u}_{-1/2, 1/2} + \norm{E_0 u} + \norm{B_1 u}_{s-1/2, \ell-1/2} + \normres{u}\right)\,.
    \end{align*}
\end{lemma}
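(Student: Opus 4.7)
The plan is a positive commutator argument combining Lemma~\ref{lem:commutator_ineq_reg} with duality and Cauchy--Schwarz, followed by a limit $r\to 0$ in the regularization. I specialize to $\delta_1=\delta_2=1/2$, so that $Q_r\in \Psitsc^{s-1,\ell}$ and $B_r,E_r\in\Psitsc^{s-1/2,\ell-1/2}$. With these orders, $Q_r$ maps $\Sobsc^{s-1/2,\ell-1/2}$ to $\Sobsc^{1/2,-1/2}$, and since $\WFtsc'(Q_r)\subset\WFtsc'(Q_0)\subset\Elltsc(B_1)$, Theorem~\ref{thm:elliptic regularity theorem} gives
\[
    \norm{Q_r u}_{1/2,-1/2}\leq C\left(\norm{B_1 u}_{s-1/2,\ell-1/2} + \normres{u}\right)
\]
uniformly in $r\in(0,1)$.

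Writing $P_V=P_{\tilde V}-iV''$ with $P_{\tilde V}$ formally self-adjoint and $V''\in\Psitsc^{0,-2}$, expanding the pairing gives
\[
    \langle i[P_V,Q_r^*Q_r]u,u\rangle = 2\Im\langle Q_r u, Q_r P_V u\rangle + 2\Re\langle Q_r u,Q_r V'' u\rangle + \langle[V'',Q_r^*Q_r]u,u\rangle.
\]
Lemma~\ref{lem:commutator_ineq_reg} bounds the left side above by $-(1-\eps)\norm{B_r u}^2 + \norm{E_r u}^2 + \langle F_r' u,u\rangle$. Applying Cauchy--Schwarz,
\[
    2|\Im\langle Q_r u,Q_r P_V u\rangle| \leq \eps''\norm{Q_r u}_{1/2,-1/2}^2 + C_{\eps''}\norm{Q_r P_V u}_{-1/2,1/2}^2,
\]
the first term is absorbed using the elliptic bound above, while the second satisfies $\norm{Q_r P_V u}_{-1/2,1/2}\leq C\norm{Q_0 P_V u}_{-1/2,1/2}+$ lower order, uniformly in $r$, since the symbol of $Q_r$ differs from that of $Q_0$ by the bounded factor $J_{r,1/2,1/2}$. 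The two $V''$ error terms, together with $\langle F_r' u,u\rangle$, are bounded by $C(\norm{B_1 u}_{s-1/2,\ell-1/2}^2+\normres{u}^2)$ using that $[V'',Q_r^*Q_r]\in\Psitsc^{2s-2,2\ell-2}$, $Q_r V''\in\Psitsc^{s-1,\ell-2}$, and $F_r'\in\Psitsc^{2s-2,2\ell-2}$ uniformly in $r$, combined with the elliptic control of $u$ microlocally on $\Elltsc(B_1)$. Analogously $\norm{E_r u}\leq C\norm{E_0 u}+$ lower order, uniformly in $r$. Choosing $\eps,\eps''$ small and rearranging yields
\[
    \norm{B_r u}^2 \leq C\left(\norm{Q_0 P_V u}_{-1/2,1/2}^2 + \norm{E_0 u}^2 + \norm{B_1 u}_{s-1/2,\ell-1/2}^2 + \normres{u}^2\right)
\]
uniformly in $r\in(0,1)$.

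Finally, $B_r u\to B_0 u$ distributionally as $r\to 0$ while $\norm{B_r u}$ remains uniformly bounded, so weak $L^2$-compactness and lower semicontinuity of the norm give $B_0 u\in L^2$ with the stated estimate. The main obstacle is justifying the pairings $\langle i[P_V,Q_r^*Q_r]u,u\rangle$ and $\langle Q_r u,Q_r P_V u\rangle$ rigorously for $u\in\Sobres$: this is handled by a preliminary approximation $u_\nu=\Op_L(J_{\nu,\delta_1',\delta_2'})u$ with $\nu\to 0$, proving the estimate for the smooth $u_\nu$ and then passing to the limit $\nu\to 0$ before $r\to 0$, analogous to the standard regularization arguments for radial/propagation estimates. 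A secondary subtlety is that $V''$ is not central in the $\tsc$-sense, so $[V'',Q_r^*Q_r]$ does not drop a full fiber order; nonetheless the weight-order decay $V''\in\Psitsc^{0,-2}$ is enough for the commutator to have operator order $(2s-2,2\ell-2)$ and thus be absorbed into the $B_1$-controlled right-hand side.
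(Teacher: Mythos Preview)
Your overall structure is correct, but there is a genuine gap in the uniformity claim. You assert that elliptic regularity gives
\[
    \norm{Q_r u}_{1/2,-1/2}\leq C\left(\norm{B_1 u}_{s-1/2,\ell-1/2} + \normres{u}\right)
\]
\emph{uniformly} in $r\in(0,1)$. This fails. With $\delta_1=\delta_2=1/2$, the family $Q_r$ is uniformly bounded only in $\Psitsc^{s-1/2,\ell+1/2}$ (the unregularized order of $Q_0$), not in $\Psitsc^{s-1,\ell}$: the symbol $x^{-(\ell+1/2)}\rho^{-(s-1/2)}J_{r,1/2,1/2}q$ has seminorms in $S^{s-1,\ell}$ of size $\sim r^{-1}$. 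So while the stated bound holds for each fixed $r>0$, the constant blows up as $r\to 0$, and the absorption of $\eps''\norm{Q_r u}_{1/2,-1/2}^2$ into the $B_1$-controlled right-hand side does not survive the limit. Your secondary approximation $u_\nu$ does not fix this: it addresses well-definedness of the pairings, not the uniformity of this particular bound.

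The paper handles this differently in two ways. First, it takes $\delta_1=s+N$, $\delta_2=\ell+M$, so that $Q_r\in\Psitsc^{-N-1/2,-M+1/2}$ and all pairings are automatically well-defined for $u\in\Sobres$; no secondary regularization is needed. Second, and crucially, instead of bounding $\norm{Q_r u}_{1/2,-1/2}$ by a fixed operator $B_1$, the paper absorbs it into $\norm{B_r u}$ on the left-hand side, using that $\Lamsc^{1/2,-1/2}Q_r$ and $B_r$ have the same principal symbol up to a bounded factor (Lemma~\ref{lem:QGpsi_eq_OpqGpsi} and Proposition~\ref{prop:elliptic_regularity_Gpsi}). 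This absorption is uniform in $r$ precisely because \emph{both} operators carry the same factor $J_{r,\delta_1,\delta_2}$, so the ratio of their symbols is $r$-independent. That is the mechanism you are missing: the Cauchy--Schwarz term must be absorbed into the very quantity $\norm{B_r u}^2$ you are trying to bound, not into an $r$-independent a priori term.
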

\begin{proof}
    Using the operators and parameters defined above, we take $\delta_1 = s + N$ and $\delta_2 = \ell + M$. Then for $r \in (0,1)$, we have that
    \begin{align*}
        Q_r \in \Psitsc^{-N-1/2, -M+1/2}\,.
    \end{align*}
    Therefore, $Q_r u \in \Sobsc^{-1/2,1/2}$ and $Q_r P_V u \in \Sobsc^{1/2, -1/2}$ and the pairing of $Q_r u$ and $Q_r P_V u$ is well-defined.
    We have that
    \begin{align}\label{eq:Im_PV}
        2 \Im \ang{Q_r P_V u, Q_r u} &= \ang{i [P_{\tilde{V}}, Q_r^* Q_r] u, u} - \ang{(Q_r^* Q_r V'' + V'' Q_r^* Q_r) u, u}\,.
    \end{align}
    Since $V'' \in \Psitsc^{0,-2}$, we have that
    \begin{align*}
        F_r'' \coloneqq F_r' -(Q_r^* Q_r V'' + V'' Q_r^* Q_r) \in \Psitsc^{-2N - 1, -2M - 1}
    \end{align*}
    and using Lemma~\ref{lem:commutator_ineq_reg} with $P_{\tilde{V}}$, we obtain that
    \begin{align*}
        \norm{B_r u}^2 \lesssim \norm{E_r u}^2 - 2 \Im \ang{Q_r P_V u, Q_r u} + \ang{ F_r'' u, u}\,.
    \end{align*}
    We have
    \begin{align*}
        \abs{\Im \ang{Q_r P_V u, Q_r u}} \geq -\frac{1}{4\mu} \norm{Q_r P_V u}_{-1/2, 1/2}^2 - \mu \norm{Q_r u}_{1/2, -1/2}^2\,.
    \end{align*}
    Since $Q_r$ and $B_r$ have the same principal symbol up to a constant factor, by Lemma~\ref{lem:QGpsi_eq_OpqGpsi} and Proposition~\ref{prop:elliptic_regularity_Gpsi}, we have that
    \begin{align*}
        \norm{Q_r u}_{1/2,-1/2}^2 \lesssim \norm{B_r u}^2 + \norm{\tilde{F}_r G_\psi u}^2 + \normres{u}
    \end{align*}
    for some $\tilde{F}_r \in \Psitsc^{-N-1,-M-1}$ with $\WFtsc'(Q_r) \subset \Elltsc(\tilde{F}_r)$ and $\WFtsc'(\tilde{F}_r G_\psi) \subset \Elltsc(B_1)$ and $\tilde{F}_r$ being uniformly bounded in $\Psitsc^{s-1, \ell-1}$.
    Hence, choosing $\mu > 0$ sufficiently small, we can absorb $Q_r u$ into $B_r u$, and arrive at the estimate
    \begin{align*}
        \norm{B_r u}^2 \lesssim \norm{Q_r P_V u}_{-1/2, 1/2}^2 + \norm{E_r u}^2 + \norm{\tilde{F}_r G_\psi u}^2 + \abs{\ang{F''_r u, u}}\,.
    \end{align*}
    The right-hand side is bounded as $r \to 0$ and since $B_r \to B_0$ as $r \to 0$, we conclude that $B_0 u \in L^2$ and
    \begin{align*}
        \norm{B_0 u}^2 \lesssim \norm{Q_0 P_V u}_{-1/2, 1/2}^2 + \norm{E_0 u}^2 + \norm{B_1 u}_{s-1/2, \ell-1/2} + \normres{u}
    \end{align*}
    using that by elliptic regularity, Theorem~\ref{thm:elliptic regularity theorem}, we have that
    \begin{align*}
        \norm{\tilde{F}_r G_\psi u}^2 + \abs{\ang{F''_r u, u}} &\lesssim \norm{B_1 u}_{s-1/2, \ell-1/2}^2 + \normres{u}^2\,.
    \end{align*}
\end{proof}

Now we can prove Proposition \ref{prop:propagation_localized}.
\begin{proof}[Proof of Proposition \ref{prop:propagation_localized}]
    By using a partition of unity and the propagation estimates in the scattering calculus, it suffices to prove the
    estimate for $B, E \in \Psitsc^{0,0}$ such that there exist $\tau_0 \in \Wperpo$ with $\abs{\tau_0} > m$ and $U \subset \Elltsc(G)$ an arbitrary small neighborhood of $\tau_0$
    with $\WFtsc'(BG_{\varphi})$ being controlled by $\Elltsc(E)$ through $U$ and $\tau_0 \in \Elltsc(B)$.

    We take the $B_0, E_0$, and $Q_0$ as defined above with respect to the particular $\tau_0$. Since we can take $\WFtsc'(B G_{\varphi})$ to be arbitrary small provided that $\tau_0 \in \Elltsc(B)$, we have that
    \begin{align*}
        \norm{B G_{\varphi} u}_{s,\ell} \lesssim \norm{B_0 u} + \normres{u}
    \end{align*}
    by Lemma~\ref{lem:QGpsi_eq_OpqGpsi} and Proposition~\ref{prop:elliptic_regularity_Gpsi}.

    In particular this means that $\WFtsc'(Q_0) \subset \Elltsc(G)$ and therefore, again by Lemma~\ref{lem:QGpsi_eq_OpqGpsi} and Proposition~\ref{prop:elliptic_regularity_Gpsi}, we obtain
    \begin{align*}
        \norm{\Lamsc_{-1/2, 1/2} Q_0 P_V u} &\lesssim \norm{G G_{\psi_{2}}P_V u}_{s-1, \ell + 1} + \normres{u}
    \end{align*}
    and similarly,
    \begin{align*}
        \norm{E_0 u} &\lesssim \norm{E G_{\psi_1} u}_{s,\ell} + \normres{u}\,.
    \end{align*}
    
    We choose $B_1 \in \Psitsc^{0,0}$ with
    \begin{enumerate}[(i)]
        \item $\WFtsc'(B G_{\varphi}) \cap \WFtsc'(Q_0) \subset \Elltsc(B_1) \subset U$,
        \item $\WFtsc'(B_1 G_\varphi) \subset U$,
        \item $\WFtsc'(B_1) \subset \Elltsc(G') \cap \Elltsc(B')$.
    \end{enumerate}
    Note that we can arrange the first two conditions with a construction as in Section~\ref{sec:tsc localizers} and the third condition is trivial near $\NP$.

    We obtain the estimate
    \begin{align*}
        \norm{B G_{\varphi} u}_{s,\ell} \lesssim \norm{E G_{\psi_1} u}_{s,\ell} + \norm{G G_{\psi_2} P_V u}_{s-1, \ell + 1} + \norm{B_1 u}_{s-1/2,\ell-1/2} + \normres{u}\,.
    \end{align*}
    The proof is finished by applying Proposition~\ref{prop:elliptic_Gpsi_estimate} to $B u$ and $B_1 u$,
    \begin{align*}
        \norm{B u}_{s,\ell} &\lesssim \norm{B G_{\varphi} u}_{s,\ell} + \norm{G' P_V u}_{s-2, \ell} + \norm{B' u}_{s-1, \ell-r} + \normres{u}\,,\\
        \norm{B_1 u}_{s-1/2,\ell-1/2} &\lesssim \norm{B_1 G_{\varphi} u}_{s-1/2,\ell-1/2} + \norm{G' P_V u}_{s-5/2, \ell} + \norm{B' u}_{s-3/2,\ell-r-1/2} + \normres{u}
    \end{align*}
    and using an standard inductive argument to absorb the term $\norm{B_1 G_{\varphi} u}_{s-1/2,\ell-1/2}$ into $\normres{u}$.
\end{proof}

\section{Radial point estimates over \texorpdfstring{$\poles$}{C}}\label{sec:3sc_radial}

In the previous section, we had to exclude the points
$\tau_0 \in \{\pm m\} \subset \Wperpo$ because the Hamilton vector
field vanishes there.  Indeed, by \eqref{eq:rad_future}, we have that
$\Tsco^*_{\NP} X \cap \Rad^f = \{x = 0, y = 0, \zeta = 0, \tau = \pm
m\}$.  In Section~\ref{sec:localized rad est}, we proved localized
radial point estimates away from the poles.  Hence it remains to prove
radial point estimates near $\poles$ and $\tau_0 \in \{\pm m\}$.  These
estimates take a similar form as the scattering radial point
estimates.
As usual, we work only near $\NP$.

The propositions in this section are conceptually a combination of the
radial points estimates reviewed in Section~\ref{sec:localized rad
  est} and the commutator construction with operators of the form
$Q G_\psi$ used in Section~\ref{sec:3sc_propagation}.  As in the
scattering case, the form of the symbol $q$ used in the radial points
commutator is simpler than that of the standard propagation, in that
it is essentially a bump function localizing to the radial set
multiplied by a
weight; for us it is a bump function localizing to
$\tau = \pm m \in W^\perp$, see \eqref{eq:radial com symb}.  Since the
commutators $Q G_\psi$ are localized in bounded $\tau$, they are
smoothing, and thus the only relevant weight is the spacetime weight.

As in Section \ref{sec:3sc_propagation}, the theorem is microlocal only in
the $\tsc$-sense near $\pm m \in W^\perp$, meaning only in the inverse
image of $\pm m$ under the $\pi^{\perp}$ map in \eqref{eq:pi
  perp}. For example, the above threshold
estimates imply that if a distribution $u$ lies in the
above threshold space $\Sobsc^{s, -1/2 + \epsilon}$ near
$(\pi^{\perp})^{-1}(m)$, and $P_V u$ lies in $\Sobsc^{s - 1, 1/2 +
  \epsilon'}$ there, then $u$ lies in $\Sobsc^{s, -1/2 + \epsilon'}$ in a
smaller neighborhood of $(\pi^{\perp})^{-1}(m)$.

As in the previous section we assume that $V \in \rho_{\mf} \Psitsc^{1, 0}$ is asymptotically static of order $r \geq 1$ and
the Hamiltonian of its static part $V_{\pm} \in S^{-1}(\RR^n_z)$, $H_{V_\pm} = \Delta + m^2 + V_\pm$ have purely absolutely continuous spectrum near $\ico{m^2, \infty}$.
Moreover, we assume that
\begin{align*}
    V - V^* \in \Psitsc^{0,-2}\,.
\end{align*}

\begin{proposition}[Above threshold radial point estimate]
    \label{prop:tsc_radial_above}
    Let $\delta > 0$ sufficiently small, $\varphi, \psi_1, \psi_2 \in \CcI(\RR)$ with $\supp \varphi \subset \ioo{-\delta, \delta}$ and $\psi_j|_{\ioo{-\delta, \delta}} \equiv 1$,
    $\tau_0 \in \set{\pm m} \subset W^\perp$,
    and $B, B_1, G, G', B' \in \Psitsc^{0,0}$ such that
    \begin{enumerate}
        \item $\WFtsc'(B G_{\varphi})$ is contained in a sufficiently small neighborhood of $\tau_0 \in \Ctscd$,
        \item $\tau_0 \in \Ell_{\ff}(B) \cap \Ell_{\ff}(B_1)$,
        \item $\WFtsc'(B G_{\varphi}) \subset \Elltsc(B_1) \cap \Elltsc(G)$,
        \item $\WFtsc'(B) \subset \Elltsc(B') \cap \Elltsc(G')$.
    \end{enumerate}

    For all $M, N, s, s', \ell, \ell' \in \RR$ with $\ell > \ell' > -1/2$, $s > s'$,
    and $u \in \Sobres$ with
    $B_1 G_{\phi} u \in \Sobsc^{s', \ell'}$,
    $G G_{\psi} P_V u \in \Sobsc^{s-1, \ell+1}$,
    $G' P_V u \in \Sobsc^{s-2, \ell}$,
    and $B' u \in \Sobsc^{s- 1, \ell - r}$,
    it follows that $Bu \in \Sobsc^{s, \ell}$ and
    \begin{align*}
        \norm{B u}_{s,\ell} &\leq C \big( \norm{B_1 G_{\psi_1} u}_{s',\ell'} + \norm{G G_{\psi_2} P_V u}_{s-1, \ell+1} + \norm{G' P_V u}_{s-2, \ell} + \norm{B' u}_{s-1, \ell-r} \\
        &\phantom{\leq C \big(} + \normres{u} \big)\,.
    \end{align*}
\end{proposition}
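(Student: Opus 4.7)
The plan is to prove the above threshold estimate by a positive commutator argument parallel to Section~\ref{sec:3sc_propagation}, with the commutant simplified as in the scattering radial points estimate of Section~\ref{sec:localized rad est}. Specifically, I will take a commutant of the form $QG_\psi$ with
\begin{equation*}
Q = \Op_L\!\bigl(x^{-(\ell+1/2)}\rho^{-(s-1/2)} q\bigr),\qquad q = \chi_\partial(x)\,\chi_\tau(\tau)\,\chi_{\mathrm{Char}},
\end{equation*}
where $\chi_\tau$ is a bump function supported in a small neighborhood of $\tau_0\in\{\pm m\}$ in $\Wperpo$ and equal to $1$ closer to $\tau_0$, so that $f(\tau)=q|_{\NP}$ satisfies the centrality condition~\eqref{eq:q_central}. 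Because $\tau_0$ is finite and we localize strictly in $\tau$, the fiber weight factor $\rho^{-(s-1/2)}$ is bounded on the support of $q$, and the only nontrivial positivity must come from $\Hamscp$ applied to the spacetime weight $x^{-(2\ell+1)}$.

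The key symbol computation, using the radial form~\eqref{eq:hamf_simple} of $\Hamscp$, gives
\begin{equation*}
\Hamscp\!\bigl(x^{-(2\ell+1)} q^2\bigr)\big|_{\NP} = (2\ell+1)\,\tfrac{x}{\rho}\,x^{-(2\ell+1)} f(\tau)^2 \;+\; \text{(terms from }\Hamscp q\text{)},
\end{equation*}
so that the coefficient $2\ell+1>0$ furnishes a definite sign (without needing an $E$-term in the estimate, just as in~\eqref{eq:radial-hamvf-calc-scattering}). For the perturbed operator $P_V$ I will apply Corollary~\ref{cor:commutator_symbol} and the shrinking window Lemma~\ref{lem:shrinking_window} to choose $\psi\in\CcI(\RR)$ with $\psi\varphi=\varphi$ and sufficiently small support so that $\ffsymb{2\ell}\!\bigl(i[P_V,G_\psi Q^*QG_\psi]\bigr)$ differs from the free expression by an operator whose norm on the relevant semiclassical Sobolev spaces is small compared to the leading positive term, exactly as in Lemma~\ref{lem:commutator_ffsymb}. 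Together with the analogous direct computation for $\fibsymbz$ and $\mfsymbz$ (where the potential $V$ is absent to leading order by the asymptotically static hypothesis), this yields a sum of squares inequality that satisfies the hypotheses of the sharp G\aa{}rding inequality, Proposition~\ref{prop:sharp_Garding}.

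Pairing the resulting operator inequality with $u$ and applying Cauchy--Schwarz to the pairing $\Im\langle Q_rP_Vu,Q_ru\rangle$ as in Lemma~\ref{lem:propagation_small}, together with the regularization $J_{r,\delta_1,\delta_2}$ to control operators in $\Psitsc^{-N-1/2,-M+1/2}$ uniformly as $r\to 0$, yields the a priori estimate
\begin{equation*}
\|B_0 u\| \lesssim \|Q_0 P_V u\|_{-1/2,1/2} + \|\tilde B_1 G_\varphi u\|_{s-1/2,\ell-1/2} + \normres{u},
\end{equation*}
where $\tilde B_1$ is microsupported in the elliptic set of $B_1$. Converting between $B_0$, $BG_\varphi$, and $B$ via Proposition~\ref{prop:elliptic_Gpsi_estimate} (using the hypotheses $\WFtsc'(B)\subset\Elltsc(B')\cap\Elltsc(G')$ and the terms $\|G'P_Vu\|_{s-2,\ell}$, $\|B'u\|_{s-1,\ell-r}$ as in the end of Section~\ref{sec:3sc_propagation}) gives the desired estimate with error $\|\tilde B_1 G_\varphi u\|_{s-1/2,\ell-1/2}$. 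A standard inductive argument, raising $\ell$ in steps of $1/2$ starting from the a priori space $\Sobsc^{s',\ell'}$ with $\ell'>-1/2$, then absorbs this error into the $B_1G_{\psi_1}$ term and completes the proof.

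The main obstacle, and the point that distinguishes the above-threshold estimate from the propagation estimate of Proposition~\ref{prop:propagation_localized}, is that there is no flow-based control from an operator $E$: the positivity of the commutator must be carried entirely by the weight at every step of the inductive improvement. Consequently, when implementing the regularization $J_{r,\delta_1,\delta_2}$, I must choose $M>0$ in the bound~\eqref{eq:bound_regularizer} so that the induced perturbation $M J_{r,\delta_1,\delta_2}$ does not overwhelm the positive weight term $(2\ell+1)$, which forces $\ell>-1/2$ strictly and restricts the allowable $\delta_2$. Verifying uniform-in-$r$ positivity of the symbol inequality and ensuring that the hypotheses of Proposition~\ref{prop:sharp_Garding} (in particular the infinite-order vanishing of $\sqrt{c_\bullet}$ at its zeros) hold for this regularized commutant is the most delicate part of the argument, but it is a direct adaptation of the construction in Lemmas~\ref{lem:hamf_q_leq}--\ref{lem:commutator_ineq_reg}.
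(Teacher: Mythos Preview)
Your overall strategy matches the paper: positive commutator with $QG_\psi$, positivity from the weight factor $(2\ell+1)$, the shrinking-window Lemma~\ref{lem:shrinking_window} and Corollary~\ref{cor:commutator_symbol} to absorb the $V$-perturbation, sharp G\aa{}rding, regularization, and bootstrap via Proposition~\ref{prop:elliptic_Gpsi_estimate}. However, your commutant symbol $q = \chi_\partial(x)\,\chi_\tau(\tau)\,\chi_{\mathrm{Char}}$ has a genuine defect that would break the argument as written.

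The factor $\chi_{\mathrm{Char}}$ depends on $\zeta$ at $\NP$, so $q|_{\NP}$ is not a function of $\tau$ alone and the centrality condition~\eqref{eq:q_central} fails; without it the commutator with $P_V$ need not drop in spacetime order and Corollary~\ref{cor:commutator_symbol} does not apply. Moreover, your $q$ carries no localization in $y$, so it is not microsupported near $\NP$ and the $\mfsymbz$ inequality cannot be confined to a neighborhood of the radial point. The paper instead takes $q(x,y,\tau)=\chi_2(|y|^2)\chi_1(x)\chi_1(\tau-m)$ (equation~\eqref{eq:radial com symb}): the $y$-cutoff both restores centrality (now $q|_{\NP}=\chi_1(\tau-m)$) and localizes near $\NP$, at the cost of producing an extra term $e^2$ when $\Hamscp$ hits $\chi_2(|y|^2)$ (see~\eqref{eq:qhamf}). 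That term is supported on $|y|\sim\delta_2>0$, has $\ffsymbz(E_r)\equiv 0$, and in the above-threshold case ($\kappa=+1$) enters with the same sign as $b_r^2$, so $-\|E_r u\|^2$ is simply dropped in Lemma~\ref{lem:above_threshold_v1}. A minor secondary point: since this $q$ is compactly supported in $\tau$, the paper regularizes only in $x$ via $J_{r,\delta}=(1+r/x)^{-\delta}$ with $\delta=\ell-\ell'$, not the two-parameter $J_{r,\delta_1,\delta_2}$ from Section~\ref{sec:3sc_propagation}; the constraint you worry about then reduces cleanly to $\ell-\delta=\ell'>-1/2$.
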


\begin{proposition}[Below threshold radial point estimate]
    \label{prop:tsc_radial_below}
    Let $\delta > 0$ sufficiently small, $\varphi, \psi_1, \psi_2 \in \CcI(\RR)$ with $\supp \varphi \subset \ioo{-\delta, \delta}$ and $\psi_j|_{\ioo{-\delta, \delta}} \equiv 1$,
    and $B, E, G, G', B' \in \Psitsc^{0,0}$ such that

    \begin{enumerate}
        \item $\WFtsc'_{\ff}(E) = \varnothing$,
        \item $\WFtsc'(B G_{\varphi}) \cup \Elltsc(E) \subset \Elltsc(G)$,
        \item $\Elltsc(G) \cap \Radtsc_{\sources} = \varnothing$,
        \item $\WFtsc'(B) \subset \Elltsc(B') \cap \Elltsc(G')$,
        \item $\WFtsc'(B G_{\varphi}) \setminus \Radtsc_{\sinks}$ is backward controlled by $\Elltsc(E)$ through $\Elltsc(G)$.
    \end{enumerate}

    For all $M, N, s, \ell \in \RR$ with $\ell < -1/2$ and $u \in \Sobres$ with
    $E G_{\psi_1} u \in \Sobsc^{s,\ell}$,
    $G G_{\psi_2} P_V u \in \Sobsc^{s-1, \ell+1}$,
    $G' P_V u \in \Sobsc^{s-2, \ell}$,
    and $B' u \in \Sobsc^{s-1, \ell-r}$,
    it follows that $B u \in \Sobsc^{s,\ell}$ and
    \begin{align*}
        \norm{B u}_{s,\ell} &\leq C \big( \norm{E G_{\psi_1} u}_{s,\ell} + \norm{G G_{\psi_2} P_V u}_{s-1, \ell+1} + \norm{G' P_V u}_{s-2, \ell} + \norm{B' u}_{s-1, \ell-r} \\
        &\phantom{\leq C \big(} + \normres{u} \big)\,.
    \end{align*}

    The same statement holds if the roles of $\Radtsc_{\sources}$ and $\Radtsc_{\sinks}$ are interchanged and forward control is used instead of backward control.
\end{proposition}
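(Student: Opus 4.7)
The plan is to prove Proposition~\ref{prop:tsc_radial_below} by a positive commutator argument that combines the $\tsc$-commutant construction of Section~\ref{sec:3sc_propagation} with the structure of the scattering below-threshold radial estimate recalled in Proposition~\ref{thm:vasy below thresh} and Proposition~\ref{prop:localized below thresh}. Working near $\NP$ and, without loss of generality, near the sink radial point at $\tau_0 = m \in W^\perp$, I will use a commutant $Q G_\psi$ with
\[
Q = \Op_L\!\bigl(x^{-(\ell+1/2)}\rho^{-(s-1/2)}q\bigr),
\]
where $q \in C^\infty([\Tsco^*X;\fibeq])$ is built as in~\eqref{eq:def of qtilde} from the radial-point cutoff
\[
\tilde q_0 = \chi_0\!\bigl(\beta^{-1}(2-\omega/\delta)\bigr)\,\chi_1(-N/\delta + 2),
\]
in which $\omega$ is a smooth defining function of the sink component $\Rad^f_+\cap\Tsco^*_{\NP}X$ with $\Hamscp\omega = 0$ to leading order at the radial set, and $N$ is a parameter along the flow. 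Centrality at $\NP$ forces $q\rvert_{\NP}$ to depend on $\tau$ only, and the support of $\chi_1'$ is arranged to lie inside $\Elltsc(E)$, so that the $\chi_1'$-contributions to the commutator are controlled by the $Eu$ hypothesis.

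The key symbolic computation, analogous to the calculation preceding Proposition~\ref{thm:vasy below thresh} in the scattering setting and to the indicial formula~\eqref{eq:free indicial commutator} in the $\tsc$ setting, produces a principal symbol for $i[P_0, G_{\psi,0}Q^*QG_{\psi,0}]$ whose dominant term is proportional to $2(2\ell+1)\tilde q_0^2\,\prinsymbz(G_{\psi,0})^2$. Since $\ell < -1/2$ this dominant term is strictly negative, and one obtains a decomposition
\[
\prinsymbz\bigl(i[P_0, G_{\psi,0}Q^*QG_{\psi,0}]\bigr) = -\tilde b_0^2 + \tilde e_0^2,
\]
where $\tilde b_0^2$ absorbs the $-2(2\ell+1)$ main term together with the same-sign contributions from $\chi_0'$ (which vanish at the radial component and can be made negligible by choice of $\delta$), while $\tilde e_0^2$ captures the $\chi_1'$-contribution supported in $\Elltsc(E)$. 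From here the argument runs in complete parallel with Section~\ref{sec:3sc_propagation}: the shrinking-window estimate of Lemma~\ref{lem:shrinking_window} and Corollary~\ref{cor:commutator_symbol} ensures that the perturbed indicial commutator retains the same sign for $\psi$ of sufficiently small support; Proposition~\ref{prop:sharp_Garding} converts this symbolic inequality into the operator inequality $i[P_V,Q_0^*Q_0] \le -(1-\varepsilon)B_0^*B_0 + E_0^*E_0 + F$ as in Lemma~\ref{lem:commutator_ineq}; the regularization with $J_{r,\delta_1,\delta_2}$ as in Lemma~\ref{lem:commutator_ineq_reg} together with the pairing argument of Lemma~\ref{lem:propagation_small}, using $V-V^*\in\Psitsc^{0,-2}$ to handle the subprincipal contribution, yields
\[
\norm{B_0 u} \le C\bigl(\norm{E_0 u} + \norm{Q_0 P_V u}_{-1/2,1/2} + \text{lower order}\bigr);
\]
finally, Proposition~\ref{prop:elliptic_Gpsi_estimate} converts this into the stated estimate on $Bu$, introducing the $G'P_V u$ and $B'u$ error terms. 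The symmetric variant with $\Radtsc_{\sources}$ and $\Radtsc_{\sinks}$ interchanged follows by reversing the sign of $N$ in the definition of $\tilde q_0$, i.e.\ by orienting the cutoff $\chi_1$ along the opposite direction of the flow.

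The principal obstacle is the sign analysis at the radial point. In contrast to the principal-type propagation of Section~\ref{sec:3sc_propagation}, where the dominant commutator term arises from $\Hamscp$ acting on the cutoff $\chi_0'$ and is therefore controllable in size (via the parameter $\beta$), at the radial point the main positive quantity $2\abs{2\ell+1}$ arises from $\Hamscp$ applied to the spacetime weight $x^{-(2\ell+1)}$ itself, and is therefore fixed; the margin available for absorbing the perturbation $V - V_\pm$ via Lemma~\ref{lem:shrinking_window} is determined by this quantity. The strict absolute continuity of the spectrum of $H_{V_\pm}$ near $[m^2,\infty)$ is precisely what enables Lemma~\ref{lem:shrinking_window} to apply at the threshold $\tau_0^2 = m^2$; without this hypothesis, the semiclassical operator-norm smallness of $\ffsymbz(G_\psi)(\tau_0)$ as a map $\Sobscl^{1,1}\to L^2$ could fail, and the argument would break down. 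A secondary technical point is the construction of $\omega$ as a defining function of the radial component extending smoothly to $[\Tsco^*X;\fibeq]$ that is compatible with the centrality condition at $\NP$; this is available because the sink radial set reduces to the single point $\{\tau = m,\,\zeta = 0\}$ in $\Tsco^*_{\NP}X$, but care is required in verifying the matching conditions \eqref{eq:matching conditions} when invoking Proposition~\ref{prop:sharp_Garding}.
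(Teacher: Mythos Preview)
Your surrounding machinery is correct and matches the paper: the positive commutator framework, the shrinking-window control of the perturbation via Lemma~\ref{lem:shrinking_window} and Corollary~\ref{cor:commutator_symbol}, the sharp G\aa{}rding step (Proposition~\ref{prop:sharp_Garding}), the regularization, and the final passage from $BG_\varphi u$ to $Bu$ via Proposition~\ref{prop:elliptic_Gpsi_estimate}. The gap is in the commutant itself.

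You import the propagation commutant of Section~\ref{sec:3sc_propagation} wholesale, in particular the flow parameter $N$ with $\Hamscp N = 1$. At the radial point $\{\tau=m,\ \zeta=0\}$ over $\NP$ the rescaled Hamilton vector field $\Hamscp$ \emph{vanishes}, so no smooth $N$ with $\Hamscp N = 1$ exists in any neighborhood of it; the construction of $N$ in Section~\ref{sec:3sc_propagation} rests on transversality of $\Hamscp$ to $\Sigma$, which fails precisely here. There is a second, related obstruction that you flag but do not resolve: a function $\omega$ that simultaneously defines the radial point in $\Tsco^*_{\NP}X$ and satisfies the centrality condition $\omega|_{\NP}=f(\tau)$ cannot exist, since centrality forbids $\zeta$-dependence over $\NP$ while the radial set is $\{\zeta=0\}$. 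Consequently your $\chi_0'$-term, once centrality is enforced, localizes only in $\tau$ and does not produce an error with $\WF_{\ff}'=\varnothing$.

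The paper's commutant is much simpler and avoids both issues. One takes
\[
q(x,y,\tau)=\chi_2(|y|^2)\,\chi_1(x)\,\chi_1(\tau-m),
\]
a plain product of bump functions with no flow parameter and no $\chi_0$. Centrality is automatic since $q|_{y=0}=\chi_1(\tau-m)$. Differentiating the weight $x^{-(2\ell+1)}$ under $\Hamf$ produces the main term $(2\ell+1)\tau\,q^2$ times a positive factor, and for $\ell<-1/2$, $\tau$ near $m$, this has the sign $\kappa=-1$. The $E$-contribution comes from $\chi_2'(|y|^2)$, which is supported where $|y|$ is bounded away from $0$ and hence away from $\NP$; this gives $\WF_{\ff}'(E_0)=\varnothing$ without any further arrangement. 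From here the paper proves the analogues of your steps (Lemmas~\ref{lem:ffsymb_ineq_radial} and~\ref{lem:below_threshold_v1}) and closes exactly as in the proof of Proposition~\ref{prop:propagation_localized}.
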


Again for the construction of the commutator, we start by assuming that $V - V^{*} = 0$
and then later we can run the argument under the assumption \eqref{eq:sub prin assump}.

We proceed to the commutator construction.
We will work near $\tau_0 = m$, as the result near $\tau_0 = -m$ follows
via identical arguments.  To avoid cumbersome distinction between the
above and below threshold cases, we introduce a constant $\kappa \in
\{\pm 1\}$ in the arguments below. The case that $\kappa = 1$ is used
in the above threshold estimate and the case that $\kappa = -1$ is
used in the below threshold estimate.

Let $\delta_1, \delta_2 > 0$ to be chosen later.
Let $\chi_1, \chi_2 \in \CI(\RR)$ be non-negative cutoff functions  with $\supp \chi_i \subset [-\delta_i, \delta_i]$
and $\chi_i(s) = 1$ for $s \in [-\delta_i / 2, \delta_i / 2]$ , $i = 1,2$.
Below, $\delta_1$ will be taken sufficiently small with respect to $\delta_2$ so that an error
term arising on $\mf$ has a definite sign.

Consider the function
\begin{equation} \label{eq:radial com symb}
    q(x, y, \tau) = \chi_2(\abs{y}^2) \chi_1(x) \chi_1(\tau - m),
\end{equation}
which is identically $1$ on a small neighborhood of the
set $\tau = m$ over $\poles$.

As opposed to the previous section, we directly calculate the commutator with the regularized symbols.
Since $q$ is compactly supported in $\tau$, we only need a regularization
for the decay order. Hence, for $r \in (0,1)$, we set
\begin{align}
    J_{r,\delta} \coloneqq (1 + r/x)^{-\delta}\,.
\end{align}
We have that $J_{r, \delta} = x^\delta (x + r)^{-\delta}$
and therefore
\begin{align*}
    \ffsymb{-\delta}(J_{r,\delta}) = r^{-\delta}\,.
\end{align*}

As before, by the form of the Hamilton vector field~\eqref{eq:hammy 3}, we have
\begin{align*}
    \Hamf J_{r,\delta} = -2 \tau x \delta \frac{r}{x + r} J_{r,\delta}
\end{align*}
Setting
\begin{align*}
    q_r^\flat(x, y, \tau, \zeta) = \chi_1(x) \chi_1(\tau - m) \left( \left(2\ell + 1 - 2 \delta \frac{r}{x + r}\right) \tau \chi_2(\abs{y}^2) - 4 (\zeta + \tau y) \cdot y \, \chi_2'(\abs{y}^2)\right)
\end{align*}
we have that
\begin{align*}
    \Hamf (x^{-(\ell +1/2)} J_{r,\delta} q) &= x^{-(\ell - 1/2)} J_{r,\delta} \cdot \left( q_r^\flat + O(x) \right)\,.
\end{align*}
Since $x \geq 0$, we have that
\begin{align*}
    2\ell + 1 - 2 \delta \frac{r}{x + r} \geq 2 (\ell + 1/2 - \delta)
\end{align*}
and thus for $\delta > \ell - 1/2$, we have $2\ell + 1 - 2r/(x + r) > 0$.

Let $\ell \in \RR$, $\kappa (\ell + 1/2) > 0$, and define
\begin{align}
    \label{eq:b above thresh 1}
    b_r(x, y, \tau) &= \sqrt{2 \tau \kappa \left(2\ell + 1 - 2\delta\frac{r}{x + r}\right)} \chi_2(\abs{y}^2) \chi_1(x) \chi_1(\tau - m) \\
    &= \sqrt{2 \tau \kappa \left(2\ell + 1 - 2 \delta \frac{r}{x + r}\right)} \cdot q(x, y, \tau) \,, \nonumber\\
    \nonumber
    e(x, y, \tau, \zeta) &=  \sqrt{ 8 (\zeta + \tau y) \cdot y \chi_2(\abs{y}^2) (-\chi_2'(\abs{y}^2))} \chi_1(x) \chi_1(\tau - m)
\end{align}
and we can write
\begin{align*}
    2 q_{r}^\flat q &= \kappa b_r^2 + e^2
\end{align*}
and consequently,
\begin{align}\label{eq:qhamf}
    \Hamf(x^{-(2\ell+1)} J_{r,\delta}^2 q^2) &= x^{-2\ell} J_{r,\delta}^2 \left( \kappa b_r^2 + e^2 + O(x) \right)\,.
\end{align}

We then define
\begin{align*}
    f(\tau) &\coloneqq q \rvert_{C} = \chi_1(\tau - m)\,, \\
    f^\flat(\tau) &\coloneqq q_r^\flat \rvert_{C} = (2\ell+1) \tau \chi_1(\tau - m)
\end{align*}
and observe that
\begin{align*}
    2 f f^\flat = \kappa \, b_r\rvert_{C}^2\,.
\end{align*}

We define the microlocal commutant as
\begin{align*}
    \tilde{Q}_r \coloneqq \Op_L(x^{-(\ell + 1/2)} J_{r,\delta} q)\,.
\end{align*}

\begin{lemma}\label{lem:ffsymb_ineq_radial}
    For every $\eps' > 0$ there exist $\delta = \delta(\eps')$ and $\psi \in \CcI$ with $\psi = 1$ on $(-\delta, \delta)$ such that
    \begin{align*}
        \kappa \ffsymb{2\ell}(i [P_V, G_\psi \tilde{Q}_r^* \tilde{Q}_r G_\psi]) &\geq (2 - \eps') r^{-2\delta} f f^\flat \ffsymbz(G_\psi)^2\,.
    \end{align*}
\end{lemma}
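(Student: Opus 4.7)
The proof follows the template of Lemma~\ref{lem:commutator_ffsymb}, adapted to the radial points commutant. The plan is to split $\ffsymb{2\ell}(i[P_V, G_\psi \tilde Q_r^* \tilde Q_r G_\psi])$ into an explicitly computable ``free'' part and a remainder, then choose $\psi$ with sufficiently small support (via the shrinking window Lemma~\ref{lem:shrinking_window}) to absorb the remainder.

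First I would compute the free commutator $\ffsymb{2\ell}(i[P_0, G_{\psi,0} \tilde Q_r^* \tilde Q_r G_{\psi,0}])$ explicitly, using \eqref{eq:qhamf}: at the front face $x = 0$ the $O(x)$ term vanishes, and since $\chi_2$ is constant near $0$ we have $e|_{\NP} = 0$. Therefore only the $\kappa b_r^2$ contribution survives, and using $\ffsymb{-\delta}(J_{r,\delta}) = r^{-\delta}$ together with the standard principal symbol calculus for commutators (cf.\ \eqref{eq:free commutator}) yields
\begin{equation*}
    \ffsymb{2\ell}(i[P_0, G_{\psi,0} \tilde Q_r^* \tilde Q_r G_{\psi,0}])(\tau) = r^{-2\delta} \ffsymbz(G_{\psi,0}) \cdot \kappa b_r^2|_{\NP} \cdot \ffsymbz(G_{\psi,0})\,,
\end{equation*}
where $\kappa b_r^2|_{\NP} = 2\tau(2\ell+1 - 2\delta) f(\tau)^2 = 2\bigl(1 - \tfrac{2\delta}{2\ell+1}\bigr) f(\tau) f^\flat(\tau)$ on the support of $f$ (using $\kappa(2\ell+1) > 0$).

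Next I would define the remainder
\begin{equation*}
    R_\varphi(\tau) \coloneqq \ffsymb{2\ell}\bigl(i[P_V, G_\varphi \tilde Q_r^* \tilde Q_r G_\varphi] - i[P_0, G_{\varphi,0} \tilde Q_r^* \tilde Q_r G_{\varphi,0}]\bigr)(\tau)
\end{equation*}
for a cutoff $\varphi$ satisfying $\varphi \psi = \psi$, and show that $R_\varphi \in r^{-2\delta}\Psisclsc^{-\infty,-1,0}$ with
\begin{equation*}
    \|R_\varphi(\tau)\|_{\LinOp(L^2, \Sobscl^{1,1})} \leq C r^{-2\delta} f(\tau) f^\flat(\tau)
\end{equation*}
uniformly in $r \in (0,1)$ and $\tau$. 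The factor $r^{-2\delta}$ is common to all contributions because it is shared by the indicial weights of both the $V$- and $0$-commutators. The argument here mirrors Corollary~\ref{cor:commutator_symbol}, using Lemma~\ref{lem:diff_Gpsi_Gpsi0} to compare $(\widehat{G_\psi})_{\ff}$ and $(\widehat{G_{\psi,0}})_{\ff}$, the decay of $V - V_+ \in \rho_{\mf}\Psitsc^{1,-r}$ and the subprincipal assumption $V - V^* \in \Psitsc^{0,-2}$, and the fact that the derivatives of $q$ entering the seminorms are bounded pointwise by $C f^\flat/\tau$ on the support of $q$.

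Given $\eps' > 0$, I would now fix $\delta > 0$ small enough that $2(1 - 2\delta/(2\ell+1)) \geq 2 - \eps'/2$, and then apply Lemma~\ref{lem:shrinking_window} to choose $\psi \in \CcI(\RR)$ supported in $(-\delta, \delta)$ (shrinking $\delta$ further if necessary) so that $\|\ffsymbz(G_\psi)\|_{\LinOp(\Sobscl^{1,1}, L^2)} < \eps'/(2C)$ on the relevant $\tau$-range. Sandwiching $R_\varphi$ between $\ffsymbz(G_\psi)$'s then bounds this error by $(\eps'/2) r^{-2\delta} f f^\flat$, which together with the explicit free computation and positivity (as in the derivation of \eqref{eq:ffsymb_commutator_aux}) yields the stated lower bound.

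The main obstacle is verifying that $R_\varphi$ carries exactly the factor $r^{-2\delta}$ uniformly in $r$, so that the shrinking window argument absorbs it into $\eps' r^{-2\delta} f f^\flat$ rather than just $\eps' f f^\flat$. This requires careful tracking of the regularizer $J_{r,\delta}$ through the commutator expansion, but it ultimately reduces to the observation that all seminorms of $x^{-(\ell+1/2)} J_{r,\delta} q$ and its $z,\tau$ derivatives at the front face differ from those of $x^{-(\ell+1/2)} q$ only by the bounded factor $r^{-\delta}(x+r)^{-\delta}x^\delta|_{\ff} = r^{-\delta} \cdot r^{-\delta} \cdot 0$-order corrections, so the entire analysis scales uniformly by $r^{-2\delta}$.
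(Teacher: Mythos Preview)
Your approach is essentially the paper's: decompose into the free commutator plus a remainder of class $\Psisclsc^{-\infty,-1,0}$, bound the remainder via Corollary~\ref{cor:commutator_symbol} and Lemma~\ref{lem:diff_Gpsi_Gpsi0}, then shrink $\psi$ using Lemma~\ref{lem:shrinking_window}. Your attention to the uniform $r^{-2\delta}$ scaling in the remainder is correct and in fact more careful than the paper's write-up.

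There is one genuine confusion. The symbol $\delta$ is overloaded: in $J_{r,\delta}$ it is the \emph{regularization exponent}, fixed externally (e.g.\ $\delta=\ell-\ell'$ in Lemma~\ref{lem:above_threshold_v1}), while the $\delta$ in the lemma statement is the \emph{width parameter} for $\psi$. Your step ``fix $\delta>0$ small enough that $2(1-2\delta/(2\ell+1))\geq 2-\eps'/2$'' adjusts the regularization exponent, which you are not free to do. The correct resolution is that the paper's displayed formula $f^\flat = q_r^\flat|_C = (2\ell+1)\tau\chi_1(\tau-m)$ is a slip: restricting $q_r^\flat$ to $x=0,\,y=0$ actually gives $(2\ell+1-2\delta)\tau\chi_1(\tau-m)$. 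With $f^\flat$ so corrected (equivalently, with the factor absorbed into $b_r|_C$), the free computation yields exactly $2r^{-2\delta} f f^\flat \ffsymbz(G_{\varphi,0})^2$, the second line of $R_1$ vanishes as in Lemma~\ref{lem:commutator_ffsymb}, and no smallness of the regularization exponent is required. This matters downstream because the next lemma compares with $\ffsymb{2(\ell-\delta)}(B_r^*B_r)$, which carries the \emph{same} factor, so it cancels there rather than needing to be made close to $1$ here.
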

\begin{proof}
    The proof is almost identical to the proof of Lemma~\ref{lem:commutator_ffsymb}.
    We write
    \begin{align*}
        R_1 \coloneqq{}& \kappa \ffsymb{2\ell}(i [P_V, G_\varphi \tilde{Q}_r^* \tilde{Q}_r G_\varphi]) - 2 r^{-2 \delta} f f^\flat \ffsymbz(G_\varphi)^2 \\
        ={}& \ffsymb{2\ell}( i [P_V, G_\varphi \tilde{Q}_r^* \tilde{Q}_r G_\varphi] - i [P_0, G_{\varphi,0} \tilde{Q}_r^* \tilde{Q}_r G_{\varphi,0}] ) \\
        &+ \ffsymb{2\ell}\left(i [P_0, G_{\varphi,0} \tilde{Q}_r^* \tilde{Q}_r G_{\varphi,0}] \right) - 2 r^{-2\delta} f f^\flat \ffsymbz(G_{\varphi,0})^2 \\
        &+ 2 r^{-2\delta} f f^\flat \left( \ffsymbz(G_{\varphi,0})^2 - \ffsymbz(G_\varphi)^2 \right)\,.
    \end{align*}
    We use Corollary~\ref{cor:commutator_symbol}, equation~\eqref{eq:qhamf}, and Lemma~\ref{lem:diff_Gpsi_Gpsi0} to obtain that
    $R_1 \in \Psisclsc^{-\infty,-1,0}$ and
    \begin{align*}
        \norm{R_1(\tau)}_{\LinOp(L^2, \Sobscl^{1,1})} \leq C f(\tau) f^\flat(\tau)\,.
    \end{align*}
    Since we assumed that $H_{V_0}$ has purely absolutely continuous spectrum near $[m^2, \infty)$, we can apply Lemma~\ref{lem:shrinking_window} to obtain the claimed inequality.
\end{proof}

Again we choose $\eps' \in (0, 1/4)$ and $\psi$ in the lemma above and we choose $\phi \in \CcI$ with $\phi(s) = 1$ for $\abs{s} \leq \delta(\eps')/2$ and $\supp \phi \subset (-\delta(\eps'), \delta(\eps'))$.
Define
\begin{align*}
    Q_r &\coloneqq \Op_{L}(x^{-(\ell + 1/2)} J_{r,\delta} q) G_{\phi}\,,\\
    B_r &\coloneqq \Op_{L}(x^{-\ell} J_{r,\delta} b_r) G_{\phi}\,,\\
    E_r &\coloneqq \Op_{L}(x^{-\ell} J_{r,\delta} e) G_{\phi}\,.
\end{align*}

We have that $Q_r \in \Psitsc^{-\infty, \ell+1/2 - \delta}(X)$ and $B_r, E_r \in \Psitsc^{-\infty, \ell - \delta}(X)$.
The indicial operators are given by
\begin{equation}
\begin{split}
    \ffsymb{\ell+1/2 - \delta}(Q_r)(\tau) &= r^{-\delta} \chi_1(\tau - m) \ffsymbz(G_{\phi})(\tau)\,,\\
    \ffsymb{\ell - \delta}(B_r)(\tau) &= 2 r^{-\delta} \sqrt{\tau \kappa (\ell + 1/2)} \chi_1(\tau - m) \ffsymbz(G_{\phi})(\tau)\,,\\
    \ffsymb{\ell - \delta}(E_r)(\tau) &\equiv 0\,.
\end{split}\label{eq:principal_symbols_radial_pt}
\end{equation}

\begin{lemma}
    There exists $F_r \in \Psitsc^{-\infty, 2(\ell-\delta)-1}$ such that $\WFtsc'(F_r) \subset \WFtsc'(B_r)$ and
    \begin{align*}
        (1 - \eps) B_r^* B_r &\leq \kappa (i[P_V, Q_r^* Q_r] - E_r^* E_r) + F_r
    \end{align*}
    and
    \begin{align*}
        F_r \in \LinOp(\Sobsc^{s,2\ell-1}, L^2)
    \end{align*}
    is uniformly bounded as $r \to 0$.
\end{lemma}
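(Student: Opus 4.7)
The plan is to apply the sharp G\aa{}rding inequality, Proposition~\ref{prop:sharp_Garding}, to the operator whose principal symbol encodes the commutator identity. Set
\begin{align*}
    A &\coloneqq \kappa \, i[P_V, G_\psi \tilde Q_r^* \tilde Q_r G_\psi] - \kappa G_\psi \Op_L(x^{-\ell} J_{r,\delta} e)^* \Op_L(x^{-\ell} J_{r,\delta} e) G_\psi, \\
    C &\coloneqq G_\psi \Op_L(x^{-\ell} J_{r,\delta} b_r)^* \Op_L(x^{-\ell} J_{r,\delta} b_r) G_\psi,
\end{align*}
and observe that $G_\phi A G_\phi = \kappa (i[P_V, Q_r^* Q_r] - E_r^* E_r)$ (up to an operator in $\Psitsc^{-\infty,2(\ell-\delta)-1}$ absorbed into $F_r$) while $G_\phi C G_\phi = B_r^* B_r$. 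The result then follows once we verify, for $\psi, \phi$ as in Lemma~\ref{lem:ffsymb_ineq_radial}, that $\prinsymbz(A) \ge (1-\eps) \prinsymbz(C)$ componentwise on the ambient symbol level required to apply Proposition~\ref{prop:sharp_Garding}.

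For the fiber and $\mf$ components, the principal symbol of $\kappa\, i[P_V, \tilde Q_r^* \tilde Q_r]$ is computed directly from the Hamilton vector field of $P_0$ (since $V$ contributes only lower order there) via equation~\eqref{eq:qhamf}, which gives
\begin{align*}
    \kappa \, H_p (x^{-(2\ell+1)} J_{r,\delta}^2 q^2) = x^{-2\ell} J_{r,\delta}^2 \bigl(\kappa^2 b_r^2 + \kappa e^2 + O(x)\bigr).
\end{align*}
Multiplying by $\fibsymbz(G_\psi)^2$ and $\mfsymbz(G_\psi)^2$ respectively, the squares $b_r^2$ match $\prinsymbz(C)$ exactly, while the $\kappa e^2$ piece is precisely what is subtracted off in the definition of $A$. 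The $O(x)$ remainder at $\mf$ is controlled by choosing $\delta_1$ (the support width in $x$) sufficiently small relative to the strictly positive main term on the support of $q$, so that $\prinsymbz(A) - (1-\eps)\prinsymbz(C) \ge 0$ at fiber infinity and at $\mf$.

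For the indicial (front face) component, Lemma~\ref{lem:ffsymb_ineq_radial} furnishes exactly the required inequality: after the window-shrinking choice of $\psi$, we have
\begin{align*}
    \kappa \, \ffsymb{2\ell}(i[P_V, G_\psi \tilde Q_r^* \tilde Q_r G_\psi]) \ge (2-\eps')\, r^{-2\delta} f f^\flat \ffsymbz(G_\psi)^2,
\end{align*}
and $2 f f^\flat = \kappa\, b_r|_C^2$, so the right side matches $\ffsymb{2(\ell-\delta)}(C)$ up to the factor $(1-\eps)$ (for $\eps > \eps'/2$, say), while the $E_r^*E_r$ term contributes zero at $\ff$ by~\eqref{eq:principal_symbols_radial_pt}. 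The hypotheses of Proposition~\ref{prop:sharp_Garding} about smooth square roots and boundedness of $c_\bullet^{-1}\sigma(Q)$ are easy to verify from the explicit form \eqref{eq:b above thresh 1} of $b_r$ and from the structure of $G_\psi$ via Proposition~\ref{prop:Gpsi_indicial}.

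Applying Proposition~\ref{prop:sharp_Garding} with $\phi$ and conjugating by $G_\phi$ then gives $(1-\eps) B_r^* B_r \le G_\phi A G_\phi + R$ with $R \in \Psitsc^{-1,-1}$, from which the stated inequality follows with $F_r$ defined as the sum of $R$, the $V - V^*$ contribution controlled via \eqref{eq:sub prin assump}, and the $O(x)$ error from~\eqref{eq:qhamf}. The uniform boundedness of $F_r : \Sobsc^{s, 2\ell-1} \to L^2$ as $r \to 0$ follows because $J_{r,\delta}$ is uniformly bounded as a symbol in the larger class $\Psitsc^{0,0}$ (this is the same uniformity used for $F_r'$ in Lemma~\ref{lem:commutator_ineq_reg}). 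The main obstacle---and the reason we cannot simply quote the propagation commutator lemma---is the $O(x)$ term at $\mf$ combined with the need to simultaneously control the indicial symbol via Lemma~\ref{lem:ffsymb_ineq_radial}; the balancing of $\delta_1, \delta_2$ and the support size of $\psi$ must be done in this order (choose $\eps'$, hence $\psi$, then choose $\delta_1$ small enough so that the $O(x)$ error is dominated on the support of $q$).
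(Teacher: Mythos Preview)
Your approach is essentially the same as the paper's: apply the sharp G\aa{}rding inequality (Proposition~\ref{prop:sharp_Garding}) and verify the required symbol inequalities at each face, using \eqref{eq:qhamf} for $\mf$ and Lemma~\ref{lem:ffsymb_ineq_radial} for $\ff$. A couple of minor points of overcomplication: first, at $\mf$ one has $x=0$, so the $O(x)$ term in \eqref{eq:qhamf} vanishes identically and you get an \emph{equality} $\mfsymb{2\ell}(B_r^*B_r) = \kappa\,\mfsymb{2\ell}(i[P_V,Q_r^*Q_r]-E_r^*E_r)$ rather than merely an inequality, so no balancing of $\delta_1$ against the main term is needed for this step; second, since $Q_r,B_r,E_r\in\Psitsc^{-\infty,\bullet}$ the fiber symbol is trivially zero and need not be discussed; third, the $V-V^*$ contribution is not part of this lemma (the paper assumes $V=V^*$ here and handles the imaginary part separately in Lemmas~\ref{lem:above_threshold_v1}--\ref{lem:below_threshold_v1}).
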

\begin{proof}
    Using the sharp G\aa{}rding type theorem, Proposition~\ref{prop:sharp_Garding}, we only have to prove that
    \begin{align*}
        \mfsymb{2\ell}(B_r^* B_r) &= \kappa \mfsymb{2\ell}(i[P_V, Q_r^* Q_r] - E_r^* E_r)\,,\\
        (1 - \eps') \ffsymb{2\ell}(B_r^* B_r) &\leq \kappa \ffsymb{2\ell}(i[P_V, Q_r^* Q_r] - E_r^* E_r) \,.
    \end{align*}
    The first claim follows from \eqref{eq:qhamf} and the fact that
    \begin{align*}
        \mfsymb{2\ell}(i [P_V, Q_r^* Q_r]) &= H_p( x^{-(2\ell+1)} q^2) \mfsymbz(G_\phi^2)\,.
    \end{align*}

    For the second equality, we observe that
    \begin{align*}
        \ffsymb{2\ell}(E_r^* E_r) = 0
    \end{align*}
    together with Lemma~\ref{lem:ffsymb_ineq_radial} implies the claimed inequality.
\end{proof}

We define
\begin{align*}
    Q_0 &\coloneqq \Op_L(x^{-(\ell + 1/2)} q) G_{\phi}\,,\\
    B_0 &\coloneqq \Op_L(x^{-\ell} b_0) G_{\phi}\,,\\
    E_0 &\coloneqq \Op_L(x^{-\ell} e) G_{\phi}\,.
\end{align*}

We now only assume that
\begin{align*}
    (V - V^*)/(2i) \in \Psitsc^{0,-2}\,.
\end{align*}

\begin{lemma}[Above threshold estimate]
    \label{lem:above_threshold_v1}
    Let $\ell > -1/2$.
    Let $B_0$ and $Q_0$ defined be as above
    and $B_1 \in \Psitsc^{s-1/2, \ell-1/2}$ such that
    \begin{align*}
        \WFtsc'(Q_0) \subset \Elltsc(B_1)\,.
    \end{align*}
    If $u \in \Sobres$ with $Q_0 P_V \in \Sobsc^{-1/2, 1/2}$, and $B_1 u \in \Sobsc^{s-1/2, \ell-1/2}$, then $B_0 u \in L^2$ and
    \begin{align*}
        \norm{B_0 u} \leq C \left( \norm{Q_0 P_V u}_{-1/2, 1/2} + \norm{B_1 u}_{s-1/2, \ell-1/2} + \normres{u} \right)
    \end{align*}
\end{lemma}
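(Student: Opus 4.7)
\medskip

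\noindent\textbf{Proof plan.} The approach mirrors the positive commutator argument used for Lemma~\ref{lem:propagation_small} in the previous section, with two key differences: the a priori ``escape'' term $\norm{E u}$ is replaced by the sub-threshold a priori term $\norm{B_1 u}_{s-1/2, \ell-1/2}$, and the positivity comes entirely from the weight (via the sign $\kappa = +1$ in the previous lemma) rather than from the Hamiltonian flow. Specifically, I would first choose $\delta > \ell - 1/2$ sufficiently large and $N, M$ so that $Q_r \in \Psitsc^{-\infty, \ell + 1/2 - \delta}$ satisfies $Q_r u \in \Sobsc^{-1/2, 1/2}$ and $Q_r P_V u \in \Sobsc^{1/2, -1/2}$ for $u \in \Sobres$; this makes the pairing $\ang{Q_r P_V u, Q_r u}$ well-defined for every $r \in (0,1)$.

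Next I would apply the preceding commutator inequality in the case $\kappa = +1$, which gives
\begin{equation*}
    (1-\eps)\, B_r^* B_r + E_r^* E_r \;\leq\; i[P_V, Q_r^* Q_r] + F_r
\end{equation*}
(since $E_r^* E_r \ge 0$ we may move it to the left). Writing $V = \tilde V - i V''$ with $V'' \in \Psitsc^{0,-2}$ self-adjoint and pairing with $u$ as in the proof of Lemma~\ref{lem:propagation_small}, this yields
\begin{equation*}
    (1-\eps)\norm{B_r u}^2 + \norm{E_r u}^2 \;\leq\; 2\bigl|\Im \ang{Q_r P_V u, Q_r u}\bigr| + \bigl|\ang{F''_r u, u}\bigr|,
\end{equation*}
where $F''_r = F_r - (Q_r^* Q_r V'' + V'' Q_r^* Q_r)$ is uniformly bounded in $\Psitsc^{-\infty, 2(\ell-\delta)-1}$ as $r \to 0$.

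The third step is to absorb the Cauchy--Schwarz term. Bounding
\begin{equation*}
    \bigl|\Im \ang{Q_r P_V u, Q_r u}\bigr| \;\leq\; \tfrac{1}{4\mu}\norm{Q_r P_V u}_{-1/2, 1/2}^2 + \mu\,\norm{Q_r u}_{1/2, -1/2}^2,
\end{equation*}
and using that by \eqref{eq:principal_symbols_radial_pt} the operators $Q_r G_\phi$ and $B_r G_\phi$ have proportional $\ffsymbz$ (with a factor independent of $r$ after multiplication by $\sqrt{\kappa \tau(\ell+1/2)}$), Lemma~\ref{lem:QGpsi_eq_OpqGpsi} together with Proposition~\ref{prop:elliptic_Gpsi_estimate} gives
\begin{equation*}
    \norm{Q_r u}_{1/2, -1/2}^2 \;\lesssim\; \norm{B_r u}^2 + \norm{B_1 u}_{s-1/2, \ell-1/2}^2 + \normres{u}^2,
\end{equation*}
uniformly in $r$; the $B_1 u$ term absorbs the elliptic error on $\WFtsc'(Q_r) \subset \Elltsc(B_1)$. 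Choosing $\mu > 0$ small absorbs $\mu\norm{B_r u}^2$ into the left-hand side. The term $\ang{F''_r u, u}$ is likewise controlled by $\norm{B_1 u}_{s-1/2, \ell-1/2}^2 + \normres{u}^2$ via elliptic regularity, using the wavefront containment and the uniform bound on $F''_r$.

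Finally I would pass to the limit $r \to 0$. Since the right-hand side is bounded uniformly in $r$, $\norm{B_r u}$ is uniformly bounded in $L^2$; because $B_r \to B_0$ strongly on $\Sobres$ (as $\Op_L(J_{r,\delta}) \to \Id$), weak compactness yields $B_0 u \in L^2$ together with the stated estimate. The main technical obstacle is the careful tracking of regularizer-uniform bounds on the various error operators, in particular ensuring that the elliptic error incurred by replacing $Q_r$ with $B_r$ in step three is uniform in $r$ and can be absorbed into the $B_1 u$ a priori term; this requires applying Proposition~\ref{prop:elliptic_Gpsi_estimate} in a form that is insensitive to the finite order shift produced by $J_{r,\delta}$, which follows from the fact that $J_{r,\delta}$ and its derivatives are controlled independently of $r$ on any fixed order of the symbol calculus.
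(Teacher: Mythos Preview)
Your overall scheme—commutator inequality with $\kappa=+1$, Cauchy--Schwarz, absorption of $Q_r u$ into $B_r u$, and passage to the limit—matches the paper's proof. But the first step has a genuine gap: you cannot take $\delta$ ``sufficiently large'' in the above-threshold case.

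The symbol $b_r$ in \eqref{eq:b above thresh 1} involves $\sqrt{2\tau\kappa\bigl(2\ell+1 - 2\delta\,r/(x+r)\bigr)}$. With $\kappa=+1$ and $\tau$ near $m>0$, the argument of the square root must remain nonnegative for all $x\ge 0$ and $r\in(0,1)$; evaluating at $x=0$ forces $\delta \le \ell+1/2$. If you violate this (as you would by choosing $\delta$ large enough that $Q_r$ maps $\Sobres$ into $\Sobsc^{-1/2,1/2}$, which requires roughly $\delta \ge \ell+1+M$), then $b_r$ is not a real symbol and the preceding commutator lemma does not apply. This is precisely what distinguishes the above-threshold radial estimate from the propagation and below-threshold cases: there you may regularize as strongly as you like, here you may not.

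The paper instead takes $\delta = \ell-\ell'$ for some $\ell'\in(-1/2,\ell)$, so $Q_r\in\Psitsc^{-\infty,\ell'+1/2}$ with $\delta<\ell+1/2$ preserved. The well-definedness and uniform boundedness of the pairings then rely on the a priori microlocal regularity $B_1 u\in\Sobsc^{s-1/2,\ell-1/2}$ combined with $\WFtsc'(Q_r)\subset\Elltsc(B_1)$, not on global regularity of $u$. The absorption step is also simpler than you indicate: since $b_r = \sqrt{2\tau\kappa(2\ell+1-2\delta r/(x+r))}\,q$, the operator $x^{1/2}Q_r$ is a zeroth-order multiple of $B_r$ directly at the symbol level, so no appeal to Lemma~\ref{lem:QGpsi_eq_OpqGpsi} or Proposition~\ref{prop:elliptic_Gpsi_estimate} is needed there.
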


\begin{lemma}[Below threshold estimate]
    \label{lem:below_threshold_v1}
    Let $\ell < -1/2$.
    Let $B_0, E_0$ and $Q_0$ defined be as above
    and $B_1 \in \Psitsc^{s-1/2, \ell-1/2}$ such that
    \begin{align*}
        \WFtsc'(Q_0) \subset \Elltsc(B_1)\,.
    \end{align*}
    If $u \in \Sobres$ with $E_0 u \in L^2$, $Q_0 P_V \in \Sobsc^{-1/2, 1/2}$, and $B_1 u \in \Sobsc^{s-1/2, \ell-1/2}$, then $B_0 u \in L^2$ and
    \begin{align*}
        \norm{B_0 u} \leq C \left( \norm{E_0 u} + \norm{Q_0 P_V u}_{-1/2, 1/2} + \norm{B_1 u}_{s-1/2, \ell-1/2} + \normres{u} \right)
    \end{align*}
\end{lemma}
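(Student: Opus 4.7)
The plan is to adapt the positive commutator/regularization scheme from the proof of Lemma~\ref{lem:propagation_small} to the radial-points below-threshold setting, using the commutant $Q_r, B_r, E_r$ and the symbol-level inequality
\begin{equation*}
(1-\eps) B_r^* B_r \le -i[P_V, Q_r^* Q_r] + E_r^* E_r + F_r
\end{equation*}
already established above, in which $\kappa = -1$ and $\ell+1/2<0$ together force $\kappa(\ell+1/2) > 0$, so that $B_r$ in \eqref{eq:b above thresh 1} is well-defined and indicially elliptic near $\tau = m$. Because $Q_r \in \Psitsc^{-\infty,\ell+1/2-\delta}$ with $\delta>\ell-1/2$, the pairings $\langle Q_r P_V u, Q_r u\rangle$ make sense for $u \in \Sobres$ uniformly in $r \in (0,1]$.

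First, I would compute
\begin{equation*}
-2\Im\langle Q_r P_V u, Q_r u\rangle = -\langle i[P_{\tilde V}, Q_r^* Q_r] u, u\rangle + \langle(Q_r^* Q_r V'' + V'' Q_r^* Q_r)u, u\rangle,
\end{equation*}
and, since $V'' = (V-V^*)/(2i) \in \Psitsc^{0,-2}$, absorb the last term into a new residual $F_r''\in \Psitsc^{-\infty,2(\ell-\delta)-1}$ uniformly bounded as $r\to 0$. The symbol-level inequality then gives
\begin{equation*}
(1-\eps)\norm{B_r u}^2 \le \norm{E_r u}^2 - 2\Im\langle Q_r P_V u, Q_r u\rangle + \langle F_r'' u, u\rangle.
\end{equation*}
Second, I would split the inner product by Cauchy--Schwarz with a free parameter $\mu>0$ to produce a term $\mu\norm{Q_r u}_{1/2,-1/2}^2$ on the right, and then use that by \eqref{eq:principal_symbols_radial_pt} the indicial operators of $B_r$ and $Q_r$ differ by the non-vanishing factor $2\sqrt{\tau\kappa(\ell+1/2)}\chi_1(\tau-m)$; combined with the matching fiber and $\mf$ symbols, Lemma~\ref{lem:QGpsi_eq_OpqGpsi} and Proposition~\ref{prop:elliptic_Gpsi_estimate} give
\begin{equation*}
\norm{Q_r u}_{1/2,-1/2}^2 \lesssim \norm{B_r u}^2 + \norm{\tilde F_r G_\psi u}^2 + \normres{u}^2
\end{equation*}
with $\tilde F_r$ uniformly bounded in $\Psitsc^{s-1,\ell-1}$ and $\WFtsc'(\tilde F_r G_\psi)\subset \Elltsc(B_1)$. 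Choosing $\mu$ sufficiently small (independently of $r$) lets me absorb $\mu\norm{B_r u}^2$ into the left side, yielding
\begin{equation*}
\norm{B_r u}^2 \lesssim \norm{E_r u}^2 + \norm{Q_r P_V u}_{-1/2,1/2}^2 + \norm{B_1 u}_{s-1/2,\ell-1/2}^2 + \normres{u}^2,
\end{equation*}
uniformly in $r$.

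Finally, because $\Op_L(J_{r,\delta})\to \Id$ strongly on the relevant scattering Sobolev spaces and $F_r''$, $\tilde F_r$, $E_r$, $Q_r P_V$ converge appropriately to their $r=0$ counterparts on $u$, the right-hand side stays uniformly bounded while $B_r u$ is uniformly bounded in $L^2$. Passing to a weak limit along a subsequence and using weak lower semicontinuity identifies the limit as $B_0 u$, yielding $B_0 u \in L^2$ with the claimed estimate.

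The main obstacle will be the absorption step: one must verify simultaneously that the Cauchy--Schwarz parameter $\mu$ can be chosen independently of $r$, that the error operator $F_r''$ is genuinely uniformly bounded in $\mathcal{B}(\Sobsc^{s,2\ell-1},L^2)$ (so $\langle F_r''u,u\rangle$ is controlled by $\norm{B_1 u}_{s-1/2,\ell-1/2}^2 + \normres{u}^2$), and that the elliptic comparison of $Q_r$ to $B_r$ survives uniformly -- which it does only because the $r^{-\delta}$ prefactor in \eqref{eq:principal_symbols_radial_pt} appears identically on both sides and cancels. The delicate point specific to the below-threshold regime (as opposed to above) is that the $E$-term is genuinely present and contributes at fiber/$\mf$ level but not at the front face, so positivity at $\ff$ must come entirely from the shrinking-window estimate of Lemma~\ref{lem:ffsymb_ineq_radial}.
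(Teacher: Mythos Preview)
Your proposal is correct and follows essentially the same approach as the paper: the paper's proof simply says to repeat the argument of Lemma~\ref{lem:above_threshold_v1} with the regularization parameter $\delta$ chosen large enough that $Q_r$ acts on $u\in\Sobres$, and since $\kappa=-1$ the term $\norm{E_r u}^2$ must be retained on the right-hand side. Your write-up spells out these steps in more detail, but the structure (regularized commutator, Cauchy--Schwarz absorption of $Q_r u$ into $B_r u$, uniform bounds and passage to the limit $r\to 0$) is identical.
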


\begin{proof}[Proof of Lemma~\ref{lem:above_threshold_v1}]
    We take $\delta = \ell - \ell'$, so that $Q_r \in \Psitsc^{-\infty, \ell' + 1/2}$ and since we are above threshold, we have that $\kappa = 1$.
    As in \eqref{eq:Im_PV}, we have that
    \begin{align*}
        2 \Im \ang{Q_r P_V u, Q_r u} &= \ang{i [P_{\tilde{V}}, Q_r^* Q_r] u, u} - \ang{(Q_r^* Q_r V'' + V'' Q_r^* Q_r) u, u}\,,
    \end{align*}
    where $\tilde{V} \coloneqq (V + V^*)/2$ and $V'' \coloneqq (V - V^*)/(2i)$ and $P_V = P_{\tilde{V}} - i V''$.
    Since $V'' \in \Psitsc^{0,-2}$, we have that
    \begin{align*}
        F_r' \coloneqq F_r  + Q_r^* Q_r V'' + V'' Q_r^* Q_r \in \Psitsc^{- \infty, 2\ell' - 1}
    \end{align*}
    and
    \begin{align*}
        \norm{B_r u}^2 &\lesssim \ang{ i [\PtV, Q_r^* Q_r] u, u} - \norm{E_r u}^2 + \ang{ F_r u, u} \\
        &= 2 \Im \ang{Q_r P_V u, Q_r u} - \norm{E_r u}^2 + \ang{F_r' u, u} \\
        &\leq \mu \norm{x^{1/2} Q_r u}^2 + \frac{1}{4\mu} \norm{ x^{-1/2} Q_r P_V u}^2 + \abs{\ang{F_r' u, u}} \\
    \end{align*}
    Taking $\mu \in (0,1)$ small enough we can absorb $x^{1/2} Q_r u$ into $B_r u$ since $x^{1/2} Q_r$ is a 0th order multiple of $B_r$.
    Hence, we have that
    \begin{align*}
        \norm{B_r u}^2 \lesssim \norm{ x^{-1/2} Q_r P_V u}^2 + \abs{\ang{F_r' u,u}}\,.
    \end{align*}
    By assumption the right-hand side is bounded uniformly as $r \to 0$, and since $B_r \to B_0$, we have that $B_0 u \in L^2$.
    The claimed estimate follows from a standard elliptic estimate for $F_r'$:
    \begin{align*}
        \abs{ \ang{ F_r' u, u}} \lesssim \norm{B_1 u}_{s-1/2, \ell-1/2}^2 + \normres{u}^2\,.
    \end{align*}
\end{proof}

\begin{proof}[Proof of Lemma~\ref{lem:below_threshold_v1}]
    We use the same argument, but start with $u \in \Sobres$ and use $\delta = \ell - M$. Moreover, since $\kappa = -1$ we have to keep the term
    $\norm{E_r u}^2$.
\end{proof}

\begin{proof}[Proof of Proposition~\ref{prop:tsc_radial_below}]
    The proof follows from exactly the same arguments as the propagation of singularities result, Proposition~\ref{prop:propagation_localized} follows from the
    explicit construction in Lemma~\ref{lem:propagation_small}.
\end{proof}

\begin{proof}[Proof of Proposition~\ref{prop:tsc_radial_above}]
    Again the argument is very similar, but we can decrease the decay order $\ell - k/2$ in $\norm{B_1 u}_{s-k/2,\ell-k/2}$ only to $\ell'$ for $\ell' > -1/2$ and then we use
    Proposition~\ref{prop:elliptic_Gpsi_estimate} to estimate
    \begin{align*}
        \norm{B_1 u}_{s',\ell'} \lesssim \norm{B_1 G_{\phi_1} u}_{s',\ell'} + \norm{G' P_V u}_{s-2, \ell} + \norm{B' u}_{s-r_1-1, \ell-r_2-1} + \normres{u}\,.
    \end{align*}
\end{proof}

\section{Construction of the causal propagators}\label{sec:3sc_propagators}

Using the estimates above, we can now complete the construction of the
causal propagators.  The argument is similar in structure to the
construction done for scattering perturbations $V$ in
Section~\ref{sec:fred for scat} and Section~\ref{sec:scat perturb}.
We will now state the precise assumptions for a $\tsc$ perturbation $V$.

Let $V \in \rho_{\mf} \Psitsc^{1,0}$ such that
\begin{enumerate}
    \item \label{it:lead}
        $V$ is asymptotically static of order $1$ in the sense of Definition~\ref{def:asymptotic_static}.
    \item \label{it:imag}
        The imaginary part $(V - V^*)/(2i)$ satisfies
        \begin{align*}
            (V - V^*)/(2i) \in \Psitsc^{0,-2}\,.
        \end{align*}
    \item \label{it:spec}
        The Hamiltonians $H_{V_\pm} = \Delta + m^2 + V_{\pm}$ have purely absolutely continuous spectrum near $[m^2, \infty)$
        and finitely many eigenvalues in $(-\infty, m^2)$.
\end{enumerate}
If the Hamiltonians $H_{V_{\pm}}$ have bound states we need more decay for $V - V_{\pm}$.
In that case we additionally have to assume positivity of the Hamiltonians to conclude invertibility of $P_V$.

We construct the causal propagators, working in $\cX$ spaces based on
exactly the same scattering Sobolev spaces used in the scattering
setting in Section \ref{sec:fred for scat}.  Recall the
spacetime-dependent forward and backward weight functions
$\ellvar_\pm$ from Definition \ref{def:forward backward weights}.  In
addition to being monotone along the flow, we must have that the
weight functions $\ellvar_\pm$ are constant in an open neighborhood of
$\Tsco^{*}_{\poles} X$.  Indeed, all of our estimates over the poles
are proven with constant weights near $\poles$, and in fact we do not
even define the indicial operator in the presence of variable
weights.  Note that we can take
$\ellvar_{\pm} \in \CI(X)$ i.e.\ a function on spacetime, which is
constant in neighborhoods of both past and future causal infinity, as
discussed in the proof of Proposition \ref{thm:global char set scattering}.
Note that this cannot be achieved for the Feynman weights, since they have to satisfy $\ellvar > -1/2$ at $\NP \cap \{\tau = -m\}$ and $\ellvar < - 1/2$ on $\NP \cap \{\tau = m\}$.

We fix $s \in \RR$ and admissible forward and backward weights $\ellvar_+$, $\ellvar_-$.
Define
\begin{align*}
    \cX^{s,\ellvar_{\pm}} &\coloneqq \{u \in \Sobsc^{s,\ellvar_{\pm}} \colon P_V u \in \Sobsc^{s-1, \ellvar_{\pm}+1}\}\,,\\
    \cY^{s,\ellvar_{\pm}} &\coloneqq \Sobsc^{s,\ellvar_{\pm}} \,.
\end{align*}
\begin{remark}
    Note that these $\cX^{s,\ellvar_{\pm}}$ spaces depend on $V$, but we do
    not include this in the notation. However, if $W \in \Psitsc^{1, -1}$, then
    \begin{align*}
        W : \Sobsc^{s, \ellvar_{\pm}} \lra \Sobsc^{s-1, \ellvar_{\pm} + 1}\,,
    \end{align*}
    hence $\cX^{s,\ellvar_{\pm}}$ only depends on $V_{\pm}$.
\end{remark}

The first theorem is for the case that $H_{V_\pm}$ have no eigenvalues.
\begin{theorem}\label{thm:fredholm causal 3sc}
    Let $V \in \rho_{\mf} \Psitsc^{1,0}$ satisfying \eqref{it:lead}, \eqref{it:imag}, and \eqref{it:spec} above.
    If the Hamiltonians $H_{V_{\pm}}$ have no discrete spectrum, then
    the mapping
    \begin{equation*}
        P_V \colon \cX^{s, \ellvar_+} \lra \cY^{s - 1, \ellvar_+ + 1}
    \end{equation*}
    is invertible
    and its inverse is the forward propagator in the sense of \eqref{eq:forward solution}.
    The same is true for the backward propagator with $\ellvar_+$ replaced by $\ellvar_-$.
\end{theorem}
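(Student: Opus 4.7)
The plan is to follow the three-step structure of the scattering proof of Theorem~\ref{thm:sec2thm}, but now using the $\tsc$-microlocal estimates from Sections~\ref{sec:3sc_propagation} and~\ref{sec:3sc_radial} over $\poles$ in place of the scattering propagation estimates near the poles. First, I would establish the Fredholm estimates
\begin{align*}
    \norm{u}_{s,\ellvar_+} &\leq C\left(\norm{P_V u}_{s-1,\ellvar_++1} + \normres{u}\right), \\
    \norm{u}_{1-s,-1-\ellvar_+} &\leq C\left(\norm{P_V^* u}_{-s,-\ellvar_+} + \normres{u}\right),
\end{align*}
for sufficiently large $M,N$ making the inclusions into $\Sobres$ compact. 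The second estimate follows from the first applied to $P_V^*$, which by Lemma~\ref{lem:adjoint_3sc} has the same principal and indicial symbols (up to complex conjugation), with forward weight $\ellvar_+$ replaced by backward weight $-1-\ellvar_+$.

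For the first estimate, I would take a microlocal partition of unity on $\Ctscd$ subordinate to an open cover by sets $O_1, \ldots, O_k$, where each $O_i$ is (i) a neighborhood of a source component of $\Rad \cup \Radtsc$ on which $\ellvar_+ > -1/2$ (handled by above threshold estimates: Proposition~\ref{prop:localized above thresh} away from $\poles$ and Proposition~\ref{prop:tsc_radial_above} over $\poles$), (ii) a neighborhood of a sink component on which $\ellvar_+ < -1/2$ (handled by the below threshold estimates Propositions~\ref{prop:localized below thresh} and~\ref{prop:tsc_radial_below}), (iii) a non-radial neighborhood in $\Char(P_V)$ controlled by the source neighborhood (handled by Proposition~\ref{prop:scattering-propagation-variable} away from $\poles$ and Proposition~\ref{prop:propagation_localized} over $\poles$), or (iv) in $\Elltsc(P_V)$ (handled by Proposition~\ref{thm:elliptic regularity theorem}). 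Chaining these estimates along the Hamiltonian flow using that $\ellvar_+$ is monotone decreasing on $\Char(P_V)$ and lies above/below $-1/2$ at the appropriate radial components, we obtain
\[
\norm{u}_{s,\ellvar_+} \leq C\left(\norm{P_V u}_{s-1,\ellvar_++1} + \norm{u}_{s-1/2,\ellvar'} + \normres{u}\right)
\]
with $\ellvar' < \ellvar_+$ satisfying the appropriate threshold conditions. The extra lower-order error term $\norm{B'u}_{s-1,\ell-r}$ appearing in the $\tsc$ estimates (due to the perturbation $V-V_\pm$ having order $-r$ with $r\geq 1$) can be absorbed into the compact error using Young's inequality~\eqref{eq:youngs inequality}. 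Standard induction in $s$ and absorption then gives the clean Fredholm estimate, yielding Fredholm-ness of $P_V: \cX^{s,\ellvar_+} \to \cY^{s-1,\ellvar_++1}$.

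Next, I would establish invertibility by showing that the kernel and cokernel are trivial. For the kernel: if $u \in \cX^{s,\ellvar_+}$ satisfies $P_V u = 0$, then $u$ is above threshold at the past radial sets $\Rad^p_\pm \cup \Radtsc^p$, and by propagating regularity/decay from the past radial sets forward along the flow (using Proposition~\ref{prop:tsc_radial_above} and the propagation estimates), $u$ becomes arbitrarily decaying near past causal infinity. Here the critical input is the assumption \eqref{it:spec} of purely absolutely continuous spectrum near $[m^2,\infty)$ and no eigenvalues (so no exponentially decaying or linearly growing modes obstruct the propagation), which we exploit via Lemma~\ref{lem:shrinking_window}. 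The hard step is then an energy/Gr\"onwall argument: using the positivity of $H_{V_-}$ near the spectrum of interest (encoded in the assumption that there are no discrete eigenvalues, hence no negative spectrum, combined with mass $m>0$), a standard energy estimate on truncated time slabs $t \in [-T, -T+1]$ gives that $u$ vanishes in $t \leq T_0$ for all $T_0$. By propagating this vanishing forward, $u \equiv 0$. The cokernel is identified with the kernel of $P_V^*$ acting on the backward weighted spaces, which is handled by an identical argument with time reversed.

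Finally, for the forward propagator property, given $f \in \Sobsc^{s-1,\ellvar_++1}$ supported in $\{t \geq T\}$, the unique solution $u = P_V^{-1} f$ lies in $\Sobsc^{s,\ellvar_+}$ and satisfies $P_V u = 0$ on $\{t < T\}$; the same energy/Gr\"onwall argument (now applied on $\{t < T\}$ where $u$ is above threshold at past infinity) gives $\supp u \subset \{t \geq T\}$, establishing \eqref{eq:forward solution}. The main obstacle I expect is verifying that the chain of microlocal estimates closes correctly over $\poles$: the $\tsc$-estimates carry the residual term $\norm{B' u}_{s-1,\ell-r}$, and one must check that this can be threaded through the propagation chain without creating a cycle that prevents absorption into the compact remainder. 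The energy argument in the final step also requires care since $P_V$ is not self-adjoint in general (only modulo $\Psitsc^{0,-2}$), but the $t^{-2}$ decay of the imaginary part of $V$ is integrable on half-lines and hence compatible with the Gr\"onwall scheme.
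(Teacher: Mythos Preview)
Your proposal is correct and follows essentially the same approach as the paper's proof in Section~\ref{sec:no-bound-states}: the Fredholm estimates are obtained by covering $\Ctscd$ with open sets adapted to the source/sink/propagation/elliptic regions, chaining the $\tsc$-radial and propagation estimates (Propositions~\ref{prop:tsc_radial_above}, \ref{prop:tsc_radial_below}, \ref{prop:propagation_localized}) over $\poles$ together with the scattering estimates elsewhere, and then kernel and cokernel triviality plus the forward support property are established by the energy/Gr\"onwall argument you describe. One technical point the paper handles that you gloss over: because $\Ell_{\ff}(B) \neq \varnothing$ forces $\WF'_{\ff}(B) = \Wperpo$, one cannot literally take $B_i$ with $\WFtsc'(B_i) \subset O_i$; instead the paper requires $\Elltsc(B_i) \subset O_i$ together with $\WF'_{\ff}(B_i G_\psi) \subset O_i$ for $\psi$ of small support, and replaces the partition-of-unity identity by the weaker condition $\Ctscd \subset \bigcup_i \Elltsc(B_i)$, which still yields control via global ellipticity of $\sum B_i^* B_i$.
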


In the case that the Hamiltonians $H_{V_\pm}$ have discrete spectrum we have to strengthen the assumptions on the decay of $V - V_{\pm}$,
namely we assume to have an additional order of spacetime decay.
\begin{theorem}\label{thm:result with bound states}
    Let $V \in \rho_{\mf} \Psitsc^{1,0}$ be asymptotically static of order $2$ and satisfies \eqref{it:imag} and \eqref{it:spec}.
    Then the mapping
    \begin{equation} \label{eq:fredholm 3scat causal true}
        P_V \colon \cX^{s, \ellvar_+} \lra \cY^{s - 1, \ellvar_+ + 1}
    \end{equation}
    is Fredholm.

    Moreover,
    \begin{enumerate}
    \item if $H_{V_-}$ is positive, then \eqref{eq:fredholm 3scat causal true} is injective and
    \item if $H_{V_+}$ is positive, then \eqref{eq:fredholm 3scat causal true} is surjective.
    \end{enumerate}
    If $H_{V_{\pm}}$ are both positive, then $P_V$ is invertible and its inverse is the forward propagator in the sense of \eqref{eq:forward solution}.
    The same is true for the backward propagator with $\ellvar_+$ replaced by $\ellvar_-$.
\end{theorem}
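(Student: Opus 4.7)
The plan is to adapt the Fredholm framework of Section~\ref{sec:fred for scat} to the $\tsc$-setting using the estimates of Sections~\ref{sec:3sc_propagation} and \ref{sec:3sc_radial}. First I would prove the twin Fredholm estimates
\begin{align*}
\norm{u}_{s,\ellvar_+} &\leq C\bigl(\norm{P_V u}_{s-1,\ellvar_+ + 1} + \norm{u}_{-N,-M}\bigr), \\
\norm{v}_{1-s,-1-\ellvar_+} &\leq C\bigl(\norm{P_V^* v}_{-s,-\ellvar_+} + \norm{v}_{-N',-M'}\bigr),
\end{align*}
for sufficiently large $N, M, N', M'$ that the embeddings into the error spaces are compact. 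As in the scattering case, these follow from a microlocal partition of unity $\Id = \sum_i B_i$, $B_i \in \Psitsc^{0,0}$, subordinate to a cover of $\Ctscd$ by: a neighborhood of $\Rad^p_\pm$, one of $\Rad^f_\pm$, neighborhoods in $\Char(P_V) \setminus \Rad$ that are backward-controlled by the past-radial neighborhoods along the Hamilton flow, and the elliptic set. On the past-radial piece I apply the above threshold estimate (Proposition~\ref{prop:tsc_radial_above} near $\poles$, Proposition~\ref{prop:localized above thresh} away), using that $\ellvar_+ > -1/2$ there; on the intermediate pieces I apply the propagation estimates (Proposition~\ref{prop:propagation_localized} and Proposition~\ref{prop:scattering-propagation-variable}); on the future-radial piece I apply the below threshold estimates (Proposition~\ref{prop:tsc_radial_below} and Proposition~\ref{prop:localized below thresh}); on the elliptic set I apply Proposition~\ref{thm:elliptic regularity theorem}. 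The lower-order auxiliary terms $\norm{B'u}_{s-1,\ellvar_+ - 2}$ that appear in the propagation and radial-point propositions (with $r = 2$ here, by the order-$2$ asymptotic staticity of $V$) are absorbed by an inductive reduction in the differential order together with the Young-type bound \eqref{eq:youngs inequality}. The second estimate is proven symmetrically with the flow reversed, since $-1-\ellvar_+$ is a backward weight. Together, the two estimates yield the Fredholm property via the standard compactness/closed range argument, with the cokernel of $P_V$ on the forward spaces identified with the kernel of $P_V^*$ on the corresponding backward spaces.

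For the injectivity claim under $H_{V_-} \geq c > 0$: any $u \in \ker P_V \subset \cX^{s,\ellvar_+}$ satisfies the above threshold condition at $\Rad^p_\pm$, and iterating Proposition~\ref{prop:tsc_radial_above} with progressively larger spatial decay weights (allowed because $\ellvar_+$ may be taken arbitrarily large on regions bounded away from $\iota^+$) upgrades $u$ to a function that is Schwartz in an open neighborhood of $\iota^-$. I would then consider the energy
\[
E_-(t) \coloneqq \norm{\partial_t u(t,\cdot)}_{L^2(\RR^n_z)}^2 + \ang{H_{V_-} u(t,\cdot), u(t,\cdot)}_{L^2(\RR^n_z)},
\]
which by positivity of $H_{V_-}$ controls $\norm{u(t,\cdot)}_{H^1(\RR^n_z)}^2$. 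Schwartz decay near $\iota^-$ gives $\lim_{t\to-\infty} E_-(t) = 0$. Using $P_V u = 0$, the time derivative $\tfrac{d}{dt}E_-(t)$ is a pairing involving $(V - V_-) u$ and its derivatives; the order-$2$ asymptotic staticity gives $V - V_-$ decaying like $\ang{t}^{-2}$ in the appropriate norms, making $\int_{-\infty}^{T}\abs{\tfrac{d}{dt}E_-}\,dt$ finite. Gr\"onwall then forces $E_- \equiv 0$ on some $(-\infty, T]$, hence $u \equiv 0$ in a past region, and classical uniqueness for $P_V$ (finite speed of propagation of the principal part) extends $u \equiv 0$ to all of $\RR^{n+1}$. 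Surjectivity under $H_{V_+} \geq c > 0$ is the same argument applied to $\ker P_V^*$ on the backward-weighted space, whose elements are Schwartz near $\iota^+$. When both positivity conditions hold, the resulting invertibility and the forward-propagator property $\supp f \subset \{t\geq T\} \Rightarrow \supp G_+ f \subset \{t \geq T\}$ are proven by applying the identical energy/Gr\"onwall argument to $u = G_+ f$ on $\{t < T\}$.

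The main obstacle will be closing the energy/Gr\"onwall step in the presence of bound states and genuine time-dependence of $V$. Two subtle points deserve particular care: first, upgrading $u \in \cX^{s,\ellvar_+}$ to Schwartz decay near $\iota^-$ through iterated above-threshold estimates must be compatible with the nontrivial operator structure of the indicial family $\ffsymbz(P_V)(\tau)$ at $\tau = \pm m$, exploiting the semiclassical picture of Section~\ref{sec:norm fam} together with the absence of $H_{V_\pm}$-eigenvalues inside $[m^2,\infty)$; second, the positivity assumption $H_{V_\pm} > 0$ (as opposed to mere absence of zero eigenvalues) is essential, because negative bound states would produce exponentially time-growing modes that the energy cannot exclude, and the order-$2$ asymptotic staticity is precisely what supplies integrable $t^{-2}$ decay of $V - V_\pm$ needed to absorb the time-dependent coupling in Gr\"onwall against the $\ang{t,z}^{-\ellvar_+}$ growth allowed at the far end.
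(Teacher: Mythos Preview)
Your Fredholm argument has a genuine gap: it does not address the $\tau$-levels where bound states of $H_{V_\pm}$ live. If $H_{V_\pm}$ has an eigenvalue $\lambda \in (0,m^2)$---which is allowed even under the positivity hypothesis $H_{V_\pm} > 0$---then $\ffsymbz(P_V)(\tau_0) = \tau_0^2 - H_{V_\pm}$ has nontrivial kernel at $\tau_0 = \pm\sqrt{m^2-\lambda} \in (-m,m)$, so $\tau_0 \notin \Ell_{\ff}(P_V)$. These points are not covered by your four cases: they are not in the elliptic set, not near $\tau = \pm m$ where the radial-point propositions apply, and not in the $|\tau| > m$ regime where Proposition~\ref{prop:propagation_localized} applies. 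Proposition~\ref{thm:elliptic regularity theorem} simply fails there.

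The paper spends all of Section~\ref{sec:3sc_bound} on this. For each $\tau_0 \in \BSspm$ it constructs an approximate projection $K_{\tau_0} \in \Psitsc^{-\infty,0}$ onto the corresponding eigenspace, observes that $P_V + K_{\tau_0}$ \emph{is} $\tsc$-elliptic at $\tau_0$, and then controls $K_{\tau_0}u$ via the ODE reduction $P_{V_+} K_{\tau_0} u = (D_t^2 - \tau_0^2) K_{\tau_0} u + Ru$. This yields threshold-type estimates at $\tau_0$ (Proposition~\ref{thm:elliptic region TRUE}) with the same $-1/2$ threshold as at the radial set, and the partition of unity needs an extra piece $B_5$ elliptic on $[-m+\epsilon, m-\epsilon] \subset W^\perp_\pm$ controlled by these. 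This is also where the order-$2$ asymptotic staticity is really used: in the proof of Lemma~\ref{thm:elliptic non elliptic}, the term $K_{\tau_0}(V - V_+)u$ must gain two orders of decay so that the resulting error $\norm{u}_{-N,\ell-1}$ can be iterated away; order~$1$ would not suffice here. Your remark that $r=2$ enters only through the $\norm{B'u}_{s-1,\ellvar_+-2}$ terms in the propagation estimates misses this.

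Your energy/Gr\"onwall argument for injectivity and surjectivity under positivity of $H_{V_\mp}$ matches the paper's and is fine.
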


Finally, if $V$ is static we can drop the assumption of no decaying modes.
\begin{theorem}\label{thm:result for static}
    Let $V = V(z) \in S^{-2}(\RR^n_z)$ with \eqref{it:imag} and \eqref{it:spec}, then
    \begin{align*}
        P_V \colon \cX^{s, \ellvar_+} \lra \cY^{s - 1, \ellvar_+ + 1}
    \end{align*}
    is invertible.
\end{theorem}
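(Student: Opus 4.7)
The plan is to promote the Fredholm property of $P_V$ to full invertibility by showing that both its kernel and cokernel vanish, using the spectral decomposition afforded by the staticity of $V$. The Fredholm property itself is free: a static potential $V = V(z) \in S^{-2}(\RR^n_z)$ is vacuously asymptotically static of any order with $V_\pm = V$, so Theorem~\ref{thm:result with bound states} immediately gives that
\begin{equation*}
    P_V \colon \cX^{s,\ellvar_+} \lra \cY^{s-1, \ellvar_+ + 1}
\end{equation*}
is Fredholm. What is new here is the removal of the positivity hypothesis on $H_{V_\pm}$ required by Theorem~\ref{thm:result with bound states}.

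Because $V$ is static, $H_V = \Delta + m^2 + V$ is self-adjoint on $L^2(\RR^n_z)$ and commutes with $D_t$. By \eqref{it:spec}, $\sigma(H_V) = \sigma_{\mathrm{ac}}(H_V) \sqcup \{E_1,\dots, E_N\}$ with $\sigma_{\mathrm{ac}}(H_V) \subseteq [m^2, \infty)$ and $E_j < m^2$; the eigenfunctions $\phi_j$ are Schwartz on $\RR^n_z$ since $H_V - E_j$ is scattering elliptic there and $\phi_j \in L^2$. Any $u \in \cX^{s,\ellvar_+}$ therefore decomposes as
\begin{equation*}
    u(t,z) = u_{\mathrm{ac}}(t,z) + \sum_{j=1}^N u_j(t)\, \phi_j(z),\qquad u_j(t) = \langle u(t,\cdot), \phi_j \rangle_{L^2(\RR^n_z)},
\end{equation*}
and, since $H_V$ commutes with $D_t^2$, this decomposition is preserved by $P_V$; on the bound-state sector, $P_V u = 0$ reduces to the decoupled ODEs $(-\partial_t^2 - E_j) u_j = 0$.

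For $u \in \ker P_V$, each $u_j$ solves a constant-coefficient ODE whose two linearly independent solutions are bounded oscillations $e^{\pm i\sqrt{E_j} t}$ (when $E_j > 0$) or real exponentials $e^{\pm\sqrt{-E_j}\, t}$ (when $E_j < 0$). The forward weight $\ellvar_+$ is constant in neighborhoods of $\NP$ and $\SP$ with $\ellvar_+|_{\SP} > -1/2$ and $\ellvar_+|_{\NP} < -1/2$: near $\SP$ the weighted space demands strict $L^2$-decay as $t \to -\infty$, ruling out bounded oscillations and the exponential $e^{-\sqrt{-E_j} t}$, while near $\NP$ it allows at most polynomial growth as $t \to +\infty$, ruling out $e^{+\sqrt{-E_j} t}$. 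The borderline $E_j = 0$, whose homogeneous solutions are $1$ and $t$, is excluded by the introduction's residual hypothesis that $0$ is not an eigenvalue of $H_V$. Hence $u_j \equiv 0$ for every $j$, so $u = u_{\mathrm{ac}}$; on the absolutely continuous subspace $H_V \ge m^2 > 0$, so the Klein--Gordon energy is strictly positive and conserved. Upgrading $u_{\mathrm{ac}}$ to Schwartz behavior near $\iota^-$ via Proposition~\ref{prop:tsc_radial_above} (the past radial set is a source for the forward weight) produces Cauchy data on any $\{t = T\}$ slice that decay as $T \to -\infty$, and the standard energy/Gr\"onwall argument, as in the proof of Theorem~\ref{thm:fredholm causal 3sc}, forces $u_{\mathrm{ac}} \equiv 0$.

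Triviality of $\mathrm{coker}\,P_V$ is obtained by the mirror argument applied to $\ker P_{\bar V}$ on the dual space $\Sobsc^{1-s, -1-\ellvar_+}$: the weight $-1-\ellvar_+$ is backward, $H_{\bar V}$ has the conjugate spectral structure of $H_V$, and the roles of $\iota^\pm$ swap. The main technical obstacle in executing this plan is justifying that the spectral decomposition interacts cleanly with the spacetime-dependent weight $\ellvar_+$; this is handled by exploiting the rapid $z$-decay of each $\phi_j$ together with the constancy of $\ellvar_+$ near $\NP$ and $\SP$, which effectively reduces the analysis of each $u_j$ to ordinary one-dimensional weighted Sobolev spaces on $\RR_t$ where the ODE arguments above apply directly.
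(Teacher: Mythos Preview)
Your proposal is correct and follows essentially the same approach as the paper's proof: invoke the Fredholm property (which for static $V$ comes for free from the estimates of Section~\ref{sec:3sc_bound}, since $V - V_\pm \equiv 0$ is asymptotically static of any order), then kill the kernel and cokernel by separation of variables---explicit ODE analysis on the finitely many eigenmodes of $H_V$ rules those out via the weight conditions, while on the orthogonal (absolutely continuous) part $H_V \ge m^2 > 0$ restores the positive conserved energy so that the Gr\"onwall argument from the proof of Theorem~\ref{thm:fredholm causal 3sc} applies verbatim. The paper's proof states exactly this in a single sentence; your version simply fills in the details.
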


\begin{remark}
    The conclusions of the three theorems remain true if $D_t^2 -
    \Delta_z$ is replaced by $\square_g$ for a non-trapping, asymptotically
    Minkowski perturbation $g$.
\end{remark}

\subsection{Assuming no bound states}
\label{sec:no-bound-states}

In this section, we prove Theorem~\ref{thm:fredholm causal 3sc}.
The assumption that there are no eigenvalues in $(-\infty, m^2)$ implies that
\begin{align}
  \label{eq:no bound assump}
  (-m , m ) \subset \Ell_{\ff}(P_V).
\end{align}
Indeed, the assumption implies directly that
$\hat{N}_{\ff_{\pm}}(P_V)(\tau) = \tau^2 - (\Delta_z + m^2 + V_{\pm}(z))$
is invertible for $\abs{\tau} < m$, which implies the
ellipticity statement.  (See Section \ref{sec:reduced
  elliptic}.)  Thus in this section we will have
\eqref{eq:no bound assump} at both $\NP$ and $\SP$.

We first prove that $P_V : \cX^{s, \ellvar_+} \to \cY^{s-1, \ellvar_+ + 1}$ is a
Fredholm mapping.  This reduces to showing the analogue of
\eqref{eq:global scat fredholm}, namely that for any $N, M, s \in\RR$, there is $C > 0$ such that, provided all quantities
are finite,
\begin{align}
  \label{eq:global scat fredholm perturb}
  \norm{u}_{s, \ellvar_{+}} &\leq C \left( \norm{P_V u}_{s-1,\ellvar_{+}+1} + \norm{u}_{-N, -M} \right), \\
  \norm{u}_{1-s, -1 - \ellvar_{+}} &\leq C\left( \norm{P_V u}_{-s, -\ellvar_{+}} + \norm{u}_{-N,-M} \right). \notag
\end{align}

To obtain these estimates we again argue as in Section
  \ref{sec:fred for scat}.  We choose
an open cover $O_{1}, O_2, O_3, O_4$ of the compressed cotangent
bundle  $\Tdoto ^{*} X$.
Note here that, due to our assumption of no bound states, $\Chartsc
P_V = \Chartsc P_0$.  With $c_1, c_2 > 0$, 
\begin{enumerate}
\item $\Rad_{\pm}^{p} \subset O_{1} \subset \{ \ellvar_{+} = -1/2 +
 c_1\}$, in particular $\pm m \in O_{1} \cap W^{\perp}$ over $\SP$.
\item $\Rad_{\pm}^{f}\subset O_{2}\subset \{ \ellvar_{+}=
  -1/2-c_2\}$, in particular $\pm m \in O_{2} \cap W^{\perp}$ over $\NP$.
\item $\Chartsc P_V \subset O_{1}\cup O_{2}\cup O_{3}$.  
\item $O_{3} \cap \Chartsc P_V$ is controlled along $\Hamscp$ by $O_{1}$, 
\item $O_{2}\setminus \Rad^{f}$ is controlled along $\Hamscp$ by $O_{3}$,
  and 
\item $O_{4} \subset \Elltsc(P_{V})$.
 \end{enumerate}

In this context, the meaning of item (3) is that, away from $\poles$,
the standard characteristic set $\tau^{2} - |\zeta|^{2} - m^{2}$ lies
in $O_{1} \cup O_{2} \cup O_{3}$, while over $\poles$, 
\begin{equation*}
  [- \infty, - m] \cup [m, + \infty] \subset
  O_{1} \cup O_{2} \cup O_{3} \cap \Wperpo
\end{equation*}
In fact, we will choose $O_{1}, O_{2}, O_{3}$ such that, for some
$\epsilon > 0$,  
\begin{align}
  (-m - \epsilon, -m + \epsilon) \cup   (m - \epsilon, m +
  \epsilon) &= O_{1} \cap W^{\perp} \mbox{ over } \SP \\
  (-m - \epsilon, -m + \epsilon) \cup   (m - \epsilon, m +
  \epsilon) &= O_{2} \cap W^{\perp} \mbox{ over } \NP \\
  [- \infty, -m - \epsilon/2) \cup   (m + \epsilon/2, + \infty] &=
  O_{3} \cap \Wperpo \mbox{ over both } \SP \mbox{ and } \NP.  \label{eq:O3 over Wperpo}
\end{align}
In particular, $O_1$ and $O_2$ are neighborhoods of $\Rad^p$ and
$\Rad^f$ respecitively, but over $\NP$ and $\SP$ we restrict their
size for convenience.

The meaning of (4) must now be understood in $\Tdoto X$, in the sense
that any point $\tau_0 \in \Wperpo \cap O_3$ over, say, $\SP$,
corresponds to all points $\tau_0^2 - |\zeta|^2 - m^2 = 0$ over $\SP$,
and the control assumption implies that for any such $\zeta$, there
exists some point $q \in \Tsco^* X$ such that for some $s \in
\RR$: (1) $\exp_{\Hamscp}(sq) = (\SP, \tau_0, \zeta)$ and (2)
either $q \in O_1 \cap (\Tsco^* X \setminus \Tsco_{\SP}^* X)$ or $q
\in \Tsco_{\SP}^* X$ and $\tau(q) \in O_1$.  Similarly for $O_2$ being
controlled by $O_3$.

Such a collection $O_{1}, O_2, O_3, O_4$ of open sets can be
constructed as follows.  Our $O_1$ must be a set which contains the
$\SP$ radial sets and $\pm m \in W^\perp$ over $\SP$.  For this we can
take, near $\SP$, a set of the form $(-m - \epsilon, -m + \epsilon)
\cup (m - \epsilon, m + \epsilon) \subset W^\perp$ over $\SP$,
union with the set in $\Tsco^* X \setminus \Tsco^*_{\SP} X$:
\begin{equation}
\begin{gathered}
  \bigcup_{\pm} \left( \left\{ |\tau \pm m| < \epsilon \right\}\right) \bigcup \left( \left\{ \frac{|\tau \pm m|}{\langle \tau,
      \zeta \rangle} < \epsilon \right\} \bigcap \{ \langle \tau,
  \zeta \rangle   > 1/\epsilon \} \right) \\
  \bigcap \{ t < - 
  1/\epsilon, 0 < |y| < \epsilon  \}.
\end{gathered}\label{eq:explicit set}
\end{equation}
This is a union of a basic neighborhood around $\tau = \pm m$ in the
uncompactified $\dot T^*X$ union with an open set around the limit
points of $\tau \pm m$ on the fiber boundary, all localized near $\SP$
by the intersection in the second line.  We take the union of this
with an open set containing $\Rad^p$ away from $\SP$, for example with
the coordinate function $w$ which defines the radial set (see
\eqref{eq:great coords}), simply taking
\begin{equation}
\left\{ |w| < \epsilon  \right\}  \bigcap \{ t < -
1/\epsilon,  |y| > \epsilon / 2 \}\label{eq:near rad away poles}
\end{equation}
works.   For
$O_2$ we define the set in the exact same way but near $\NP$.

For $O_3$ we will take an open neighborhood of $\Chartsc(P_V)$ in the
complement of $O_1 \cup O_2$.  Again we can be explicit.  We take
$O_3$ to be as in \eqref{eq:O3 over Wperpo} on $\Wperpo$ over both
$\poles$, thus nearby $O_3$ we take as in \eqref{eq:explicit set},
namely with identical to \eqref{eq:explicit set} except with $\left\{
  |\tau \pm m| >  \epsilon/2 \right\}$ in the first term.  Doing the
exact same over $\NP$, we take the union of these with a small
neighborhood of $\Chartsc(P_V)$ from the radial sets and the poles,
for example, with 
\[
\left\{ |w| > \epsilon/2  \right\}  \bigcap \{  |y| > \epsilon / 2 \}
\bigcap \{  |\sigma_{\sc, 2, 0}(P_V)| < \epsilon \}.
\]
Then $O_4$ we take any open neighborhood of the closure of the
complement $O_1 \cap O_2 \cap O_3$.  Necessarily $O_4 \subset
\Elltsc(P_V)$.

Note that, due to the nature of $\Tdoto^*X$ and, correspondingly,
$\tsc$-ellipticity, the sets $O_1$ and $O_3$, for example, necessarily
overlap at the intersection of the closures of $\{ \tau = \pm m \}$
with the fiber boundary. This is simply because all closures of sets
of the form $\{ \tau = c \}$ intersect over the fiber boundary at the
``fiber equator''.

Keeping this in mind, we now choose a collection $B_{1}, B_{2},
B_{3}, B_{4} \in \Psitsc^{0,0}$ (in fact in $\Psisc^{0,0}$) analogous
to the $B_{i}$ in the scattering construction in Section \ref{sec:fred
  for scat}.  However, recalling that
\begin{equation*}
\Ell_{\ff}(B) \neq \varnothing \implies \Wperpo = \WF'_{\ff}(B),
\end{equation*}
we cannot have the
$\WFtsc'$ subordination condition \eqref{eq:same old WF condition} on
the nose.  In fact, this condition is only a convenient means of
relating the elliptics sets of the $B_i$ to the assumed Hamiltonian
dynamics.  Here we instead take the approach that the $B_i$ will have ellip

Due to the overlap just
described, the conditions $\WFtsc' (B_{i})\subset
O_{i}$ are ambiguous, in that they do not indicate the behavior of the
$B_i$ at the fiber equator above $\poles$.  Here we
instead require that
\begin{equation}\label{eq:elliptic B assump tsc}
  \Ctscd \subset \bigcup_{i = 1}^4 \Elltsc(B_i),
\end{equation}
meaning each point in $\Ctscd$ is in the elliptic set of some
$B_i$ \eqref{eq:square analogue dot}.
Any collection $B_i$ satisfying
\eqref{eq:elliptic B assump tsc} gives the ``same control'' as a
partition of unity, namely, for any $N, M \in \RR$, and any $s, \ell$,
\begin{equation} \label{eq:Bi tsc bound}
    \norm{u}_{s, \ell} \lesssim
    \norm{B_1 u}_{s, \ell} + \norm{B_2 u}_{s, \ell} + \norm{B_3 u}_{s, \ell} + \norm{B_4 u}_{s, \ell} + \normres{u}
\end{equation}
(Indeed, in that case $\sum_{i = 1}^4 B_i^* B_i$ is globally $\tsc$-elliptic and
\eqref{eq:Bi tsc bound} follows from the Fredholm property for
globally $\tsc$-elliptic operators and boundedness of $B_i^*$.)

As a
replacement for the condition $\WFtsc' (B_{i})\subset
O_{i}$, we assume that, for some $\delta > 0$
\begin{align*}
  \Elltsc(B_i) &\subset O_i,  \\
  \WF_{\ff}'(B_i G_\psi) &\subset O_i \mbox{ for } \psi \in
                           \CcI(\mathbb{R}) \mbox{ with } \supp \psi
                           \subset (-\delta,\delta), \mbox { and }\\
  \WF_{\bullet}'(B_i) \cap \Chartsc(P_V) &\subset O_i \mbox{ for } \bullet
                                       \in \{ \fib, \mf \}.
\end{align*}
To construct such $B_i$ which satisfy both these conditions and
\eqref{eq:elliptic B assump tsc} we can use cutoff
functions and the expressions for the $O_i$ above.  For example, we
can take $B_1 = \Op_L(\chi_1)$ with $\chi_1 \colon \Tsco^* X \lra
\RR$ identically $1$ on sets of the forms \eqref{eq:explicit
  set} and \eqref{eq:near rad away poles} with the $\epsilon$ replace
by a smaller $\epsilon' > 0$, and with support in the union of
\eqref{eq:same old WF condition}.  Such $B_1$ has $\ffsymbz(B_1)(\tau)
= \id$ for $\tau \in [- m - \epsilon', -m + \epsilon'] \cup [m -
\epsilon', m + \epsilon']$ and indeed $\Rad^p \subset \Elltsc(B_1)$.
We define $B_2$ and $B_3$ similarly.

We now follow the process in Section \ref{sec:fred for scat}, meaning
we begin with above threshold estimates to bound $B_{1} u$,
propagation estimates to bound $B_{3} u$ (in terms of $B_{1} u$), below
threshold estimates to bound $B_{2} u$ (in terms of $B_{3} u$) and
then elliptic estimates for $B_{4} u$.
For example, for $B_{1} u$, Proposition \ref{prop:tsc_radial_above}
implies that, for any $\ell'$ with $-1/2 < \ell' < \ell$,
\begin{align}\label{eq:above thresh in Fredholm}
        \norm{B_{1} u}_{s,\ell} \leq C \left( \norm{\tilde B  u}_{s-1,
  \ell'} + \norm{P_V u}_{s-1,\ell+1} + \normres{u} \right)\,.
\end{align}
where $B\in \Psitsc^{0,0}$, $\WFtsc'(B_{1} G_{\psi}) \subset
\Elltsc(\tilde B)$.  To see that this follows from the
proposition, note first that all the terms in the above threshold
estimate involving $P_{V} u$ can be bounded by the global norm of $P_{V}
u$.  Similarly, the terms on the right hand side involving
$G_{\phi} u $ and $B' u$ are both bounded by the $\tilde B u$ term in
\eqref{eq:above thresh in Fredholm}.  Similarly,
\begin{align}\label{eq:propagation in Fredholm}
        \norm{B_{3} u}_{s,\ell} \leq C \left( \norm{B_{1} u}_{s,\ell}
   + \norm{P_V u}_{s-1,\ell+1} + \norm{\tilde B'  u}_{s-1,
  \ell - 1}+ \normres{u} \right)\,,
\end{align}
where $\tilde B'$ comes from the $B'$ term in Proposition \ref{prop:propagation_localized}.
For the below threshold estimate we have
    \begin{align}
        \norm{B_{2} u}_{s,\ell} &\leq C \big( \norm{B_{3}
                                  u}_{s,\ell} + \norm{P_V
                                  u}_{s-1, \ell+1} + \normres{u} \big)\,.
    \end{align}
Note here that the term from Proposition \ref{prop:tsc_radial_below}
with $B' u \in \Sobsc^{s-r_1-1, \ell-r_2 - 1}$ does not appear as it
can be iterated and absorbed into the residual term.

Putting these estimates together with the elliptic estimates
(Proposition \ref{thm:elliptic regularity theorem}) 
and using \eqref{eq:youngs inequality} to remove the $s-1, \ell-1$
term yields the global estimate which in turn gives the Fredholm
result.

To see that $P_V$ is invertible under the given assumptions,
we need only check that its kernel and cokernel are zero.
Distributions
\begin{align*}
u \in  \ker \left(P_V \colon \cX^{s, \ellvar_+} \lra \cY^{s - 1, \ellvar_+ + 1} \right)
\end{align*}
must, thanks to the above threshold radial points estimates, 
be rapidly decaying along with all their derivatives to the past.
This allows us to use a standard argument to show that the energy must
be zero.

Indeed, we use the energy functionsl $E_{u}(t)$ with
\begin{equation}
  \label{eq:ham fun}
  E_{u}(t) \coloneqq \frac 12 \int_{\RR^n} | \pa_t u|^2 + |
  \nabla_z u |^2 + (Vu) u + m^2 u^2 dz \ge c(t) \int_{\RR^{n}} |u(t, z)|^{2} dz,
\end{equation}
where $c(t)$ is the minimum of
$\spec(\Delta + m^2 + V(t,z))$.  If there are no bound
states of $V_{-}$, then
$\spec(\Delta + m^2 + V_{-}) \ge c_{0} > 0$, so by the
Kato--Rellich theorem, there is $t_{0}$ such that for any $t < t_{0}$,
$E_{u}(t) \ge (c - \delta) \| u \|^{2}$.
By a standard   Gr\"{o}nwall argument, this
gives that $u(t, z)
\equiv 0$ for $t \gg 0$.
Indeed, $d E_{u}(t) / dt \le k(t) E_{u}(t)$ where $k(t) = O(1/|t|)^{2}$ as $t \to - \infty$. 
Since $E_u(t) \to 0$ as $t \to -\infty$ the differential Gr\"{o}nwall inequality shows
that $E_{u}(t) \equiv 0$ for $t \ll 0$.
By the positivity of the energy, we obtain that $u(t) = 0$ for $t \ll 0$.
Then uniqueness of solutions to the Cauchy
problem shows $u \equiv 0$ globally.

The cokernel of \eqref{eq:fredholm 3scat causal true} can be
identified with
\begin{align*}
    \ker \left(P_{V^*} \colon \cX^{1-s, -1 - \ellvar_+} \lra \cY^{-s, - \ellvar_+ } \right)
\end{align*}
and the same argument but using the energy estimates at $t \to +
\infty$ shows the cokernel is zero.

We have shown that $P_V : \cX^{s, \ellvar_+} \to \cY^{s-1, \ellvar_+ + 1}$ for any $s \in \RR$ and admissible forward weight $\ellvar_+$
is invertible. Indeed the value of the inverse mapping is independent of the specific choice of $s$ and $\ellvar_+$.
To see this, let $s' \in \RR$ and $\ellvar_+'$ an admissible weight and given
$f \in \Sobsc^{s-1, \ellvar_+ + 1} \cap \Sobsc^{s'-1, \ellvar_+' + 1}$, let $u \in \cX^{s,\ellvar_+}$ and $u' \in \cX^{s', \ellvar_+'}$ with
$P_V u = f = P_V u'$. Then $u - u' \in \cX^{\tilde{s}, \tilde{\ellvar_+}}$ for some $\tilde{s} \in \RR$ and $\tilde{\ellvar_+}$ admissible forward weight.
Then $P_V (u - u') = 0$ and therefore $u = u'$.

Thus, we can unambiguously speak of the inverse of $P_V$, which we denote by $(P_V)^{-1}_{\forw}$. The fact that $(P_V)^{-1}_{\forw}$ satisfies the forward condition,
\eqref{eq:forward solution}, follows again from energy arguments. Indeed if $f \in \Sobsc^{s-1,\ellvar_+ + 1}$ and $\supp f \subset \{t \geq T\}$, then $u = (P_V)^{-1}_{\forw} f$
satisfies the above threshold estimates at $\Rad^p$ and thus is a Schwartz function as $t \to -\infty$ and the same energy argument shows that $\supp u \subset \{t \geq T\}$.

\subsection{With bound states}\label{sec:3sc_bound}
 
In this section, we prove Theorem~\ref{thm:result with bound states} and Theorem~\ref{thm:result for static}.
We make appropriate adjustments to the above
propagator construction to include the possibility that there are
bound states of the Hamiltonian $\Delta + m^2 + V_{\pm}$ with (negative) energy
bigger than $0$.  Such states appear as elements in the kernel of
$\ffsymbz(P_V)(\tau)$ for $\tau \in (-m,m)$.

In discussing bound states, it is useful to distinguish the behavior
of $P_V$ at $\NP$ and $\SP$, so for the remainder of this section we
include the pole in the notation for the indicial operator:
\begin{equation*}
  \ffsymbpz(P_V)(\tau) = \mbox{ indicial operator of $P_V$
    at $\NP$},
\end{equation*}
while $\ffsymbmz(P_V)(\tau)$ is the indicial operator at $\SP$.
Similarly we write
\begin{equation*}
  W^\perp_+ = W^\perp \mbox{ over } \NP
\end{equation*}
and $W^\perp_- = W^\perp \mbox{ over } \SP$.
Recall that, by scattering ellipticity, for $|\tau| < m$,
\[
(\ffsymbpmz(P_V)(\tau) w = 0 \implies w \in \schwartz(\RR^n),
\]
and thus by self-adjointness of $\ffsymbpmz(P_V)(\tau)$,
\begin{equation}\label{eq:elliptic condition PV}
    \tau \in \Ell_{\ff, \pm}(P_V) \iff
    \ker( \ffsymbpmz(P_V)(\tau)) = \set{0}.
\end{equation}

We therefore have the elliptic estimate for $P_V$ over $\ff$.
\begin{lemma}\label{thm:elliptic PV ff}
    Let $K \subset (-m, m) \subset W^\perp_+$ be a compact set such that
    \begin{equation}
        \tau\in K \implies \ker( \ffsymbpz(P_V)(\tau)) = \set{0}.\label{eq:6}
    \end{equation}
    Then there is
    $Q \in \Psitsc^{0,0}(\RR^{n + 1})$ with $K \in \Ell_{\ff}(Q)$ such that for any
    $M, N \in \RR$, $s, \ell \in \RR$, and any $Q' \in
    \Psitsc^{0,0}(\RR^{n + 1})$ with $\WFtsc'(Q) \subset \Elltsc(Q') \cap \Elltsc(P_V)$, there is $C > 0$
    such that,
    \begin{align*}
        \norm{Q u}_{s, \ell} \le C \left( \norm{Q' P_V u}_{s-2, \ell} + \normres{u}\right) .
    \end{align*}
    The same goes near $\SP$ with the relevant $+$'s replaced by $-$'s.
\end{lemma}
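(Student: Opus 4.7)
The plan is to show that the hypotheses imply $K \subset \Ell_{\ff}(P_V)$, and then to invoke the $\tsc$-elliptic regularity theorem Proposition~\ref{thm:elliptic regularity theorem}. For $\tau \in K \subset (-m,m)$, the indicial operator $\ffsymbpz(P_V)(\tau) = \tau^2 - H_{V_+}$ has scattering principal symbol $\tau^2 - |\zeta|^2 - m^2$, which is bounded above by $\tau^2 - m^2 < 0$ uniformly for $\tau \in K$. Hence $\ffsymbpz(P_V)(\tau)$ is uniformly scattering elliptic on $K$. Combined with the self-adjointness of $H_{V_+}$ (which follows from assumption~\eqref{it:imag} applied to the static limit $V_+$) and the triviality of the kernel assumed in \eqref{eq:6}, each $\ffsymbpz(P_V)(\tau)$ is invertible as a scattering Fredholm mapping between weighted Sobolev spaces. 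By Definition~\ref{def:elliptic set}, this gives $K \subset \Ell_{\ff}(P_V)$.

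Next, I would verify that $\Ell_{\ff}(P_V) \cap W^\perp_+$ is open. The family $\tau \mapsto \ffsymbpz(P_V)(\tau)$ is polynomial in $\tau$, and its inverse is uniformly bounded on $K$. A standard Neumann series argument then shows invertibility persists under small perturbation of $\tau$, so compactness of $K$ yields an open neighborhood $U \subset W^\perp_+$ of $K$ with $\overline{U}$ compact and $\overline{U} \subset \Ell_{\ff}(P_V)$. Choose $\chi \in \CcI(W^\perp_+)$ with $\chi \equiv 1$ on $K$ and $\supp \chi \subset U$. I would then construct $Q \in \Psitsc^{0,0}$ microsupported near $\NP$ with $\ffsymbz(Q)(\tau) = \chi(\tau) \cdot \Id$ on $W^\perp_+$, vanishing indicial operator at $\SP$, and vanishing fiber and main-face symbols. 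Such a $Q$ exists by Proposition~\ref{prop:3sc_quantization}: the scalar family $\chi(\tau) \cdot \Id$ satisfies the centrality condition~\eqref{eq:centrality}, and since $\chi$ is compactly supported away from $\pm \infty \in \pa \Wperpo$, the matching conditions~\eqref{eq:matching conditions} with vanishing fiber and $\mf$ symbols are automatic. By construction $K \subset \Ell_{\ff}(Q)$ and $\WFtsc'(Q) \subset \overline{U} \subset \Ell_{\ff}(P_V)$.

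The hypothesis $\WFtsc'(Q) \subset \Elltsc(Q') \cap \Elltsc(P_V)$ is therefore met, and Proposition~\ref{thm:elliptic regularity theorem} applied with $A = P_V$ and $B = Q$ delivers the estimate
\begin{equation*}
\norm{Q u}_{s,\ell} \leq C \left( \norm{Q' P_V u}_{s-2, \ell} + \normres{u} \right)
\end{equation*}
as required. The $\SP$ case is identical with the obvious changes of sign and pole. The main technical point is the openness argument in step two; beyond that, the proof is a direct application of the microlocal $\tsc$-elliptic regularity theorem developed in Section~\ref{sec:reduced elliptic}, once one matches the kernel-triviality assumption \eqref{eq:6} with the definition of the $\tsc$-elliptic set at $\ff$.
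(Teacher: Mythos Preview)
Your overall strategy---verify $K \subset \Ell_{\ff}(P_V)$ using scattering ellipticity of $\ffsymbpz(P_V)(\tau)$, self-adjointness, and the kernel hypothesis, then invoke Proposition~\ref{thm:elliptic regularity theorem}---is exactly the paper's one-line proof, which reads: ``This follows immediately from Proposition~\ref{thm:elliptic regularity theorem} and the fact that $K \subset \Ell_{\ff}(P_V)$.'' The equivalence you spell out is recorded just before the lemma as~\eqref{eq:elliptic condition PV}.

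Your explicit construction of $Q$ via Proposition~\ref{prop:3sc_quantization}, however, has a gap. You propose $\ffsymbz(Q)(\tau) = \chi(\tau)\cdot\Id$ together with vanishing fiber and $\mf$ symbols. But by Remark~\ref{rem:scat symb of indicial}, the scattering fiber symbol of the indicial operator equals $\fibsymbz(Q)\rvert_{\ff\times\fibeq}$; if $\fibsymbz(Q)=0$ then $\ffsymbz(Q)(\tau)$ has vanishing scattering fiber symbol, hence is never scattering elliptic, forcing $\Ell_{\ff}(Q)=\varnothing$. Relatedly, the left symbol $a_0(z,\tau,\zeta)=\chi(\tau)$ is \emph{not} smooth on $\ff\times\overline{\RR^{n+1}_{\tau,\zeta}}$: compact support in $\tau$ alone does not make $\chi(\tau)$ a classical symbol on $\RR^{n+1}_{\tau,\zeta}$, and this is precisely the obstruction discussed around~\eqref{eq:q in blow up}. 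So the smoothness hypothesis of Proposition~\ref{prop:3sc_quantization} fails. The paper sidesteps this by not constructing $Q$ at all; indeed, as noted in Section~\ref{sec:no-bound-states}, any $Q$ with $\Ell_{\ff}(Q)\neq\varnothing$ necessarily has $\WF'_{\ff}(Q)=\Wperpo$, so the wavefront containment as literally stated is already delicate and the lemma is best read as a direct corollary of the microlocal elliptic estimate once $K\subset\Ell_{\ff}(P_V)$ is established.
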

\begin{proof}
    This follows immediately from Proposition
    \ref{thm:elliptic regularity theorem} and the fact that
    $K \subset \Ell_{\ff}(P_V)$.
\end{proof}

It is possible that
there are finitely many points $\tau_0 \in (-m, m)$ such that
$\ffsymbpmz(P_V)(\tau_0)$ is \textit{not} invertible, and the
remainder of this section proves estimates near such $\tau_0 \in
W^\perp$.

First we establish some notation.  Let $\lambda(\tau) \coloneqq \sqrt{m^2 - \tau^2}$, define the eigenspace
(set of bound states) of $\Delta_z + V_\pm(z)$ at frequency $\lambda$:
\begin{align*}
    E_\pm(\lambda(\tau_0)) \coloneqq \ker\left( \ffsymbpmz(P_V)(\tau_0) \colon \Sobsc^{s,\ell}(\ff) \longrightarrow
    \Sobsc^{s-2, \ell}(\ff)\right).
\end{align*}
Thus $E_+(\lambda(\tau_0))$ is the collection of bound
states of $\Delta_z + V_+$ with frequency $\lambda(\tau)$.
Again, $E_\pm(\lambda(\tau_0)) \subset \mathcal{S}(\RR^n)$, so the
eigenspace is independent of $s, \ell$.  We define the set of $\tau$
values of bound states:
\begin{equation}
  \label{eq:bound states}
  \BSspm \subset \set{ \tau \in (-m, m) \colon E_\pm(\lambda(\tau)) \neq \set{ 0 } }
\end{equation}
As discussed above, we assume that $\BSspm$ is \emph{finite}, and
\begin{equation} \label{eq:no zero tau state}
    0 \not \in \BSspm,
\end{equation}
i.e.\ $\Delta_z + m^2 + V_+(z)$ has no eigenvalue at zero.

Our theorem for the causal propagators in the presence of bound states
will follow the statement and proof of Theorem  \ref{thm:fredholm
  causal 3sc} closely; we show that \emph{the same mapping}
\eqref{eq:fredholm 3scat causal true} is Fredholm, and then prove
under additional assumptions on decaying modes that it is invertible.

The theorem is proven at the end of this section using the additional
estimates proven near $\BSspm$ in Proposition \ref{thm:elliptic region
  TRUE}.

We now fix $\tau_0 \in \BSsp$. There is therefore $w \in
E(\lambda(\tau_0))$ with $w \not \equiv 0$ and we therefore have the
oscillatory solution
\begin{equation*}
P_{V_+} (e^{\pm i \tau_0 t} w) \equiv 0.
\end{equation*}
Since $w \subset \mathcal{S}(\RR^n_z)$, this implies
\[
e^{\pm i \tau_0 t} w(z) \in \Sobsc^{\infty, -1/2 - \epsilon}(\RR^{n + 1}).
\]
for any $\epsilon > 0$ but not for $\epsilon = 0$. We therefore expect a spacetime weight threshold of $-1/2$ also for
estimates localized near $\BSsp$.

Near $\tau_0 \in W^\perp$, we will prove estimates with a
spacetime weight loss identical to those in the radial points
estimates above.  We will see that this is a consequence of the fact
that our approximate projection onto the solutions $e^{i t \tau_0}
w(z)$ intertwine $P_V$ with $D_t^2 - \tau_0^2$ to leading order.  Our
main result will be the following.

\begin{proposition}\label{thm:elliptic region TRUE}
    Assume that $V$ is asymptotically static of order $0,1$
    \eqref{eq:no zero tau state} holds.
    Then for any $\epsilon >
    0$, there exists $Q \in \Psitsc^{0,0}(\RR^{n + 1})$ such that
    \begin{equation}
    \label{eq:elliptic region TRUE}
        [- m + \epsilon, m - \epsilon] \subset \Ell_{\ff}(Q),
    \end{equation}
    and for any $G \in \Psitsc^{0,0}$ with $\WFtsc'(Q) \subset \Elltsc(G)$,
    then for $s, \ell, N, M \in \RR$, we have the following estimates.

    If $\ell < -1/2$, there is $C > 0$ such that, if
    $Q u \in \Sobsc^{s,\ell}(\RR^{n + 1})$ and
    $G P_V u \in \Sobsc^{s - 2,\ell + 1}(\RR^{n + 1})$, then
    \begin{equation} \label{eq:elliptic region below}
        \norm{Q u}_{s, \ell} \le C(\norm{G P_V u}_{s - 2,\ell + 1} +  \normres{u}).
    \end{equation}

    If $\ell > -1/2$, and $\ell' \in \RR$ has $\ell > \ell' > -1/2$, then
    there is $C > 0$ such that, if $G u \in \Sobsc^{-N,\ell'}(\RR^{n + 1})$
    and $G P_V u \in \Sobsc^{s - 2,\ell + 1}(\RR^{n + 1})$, then $Qu \in
    \Sobsc^{s, \ell}$ and
    \begin{equation}
        \norm{Q u}_{s, \ell} \le C( \norm{G P_V u}_{s - 2,\ell + 1} + \norm{G u}_{s - 2,\ell'} + \normres{u}).\label{eq:elliptic region above}
    \end{equation}
\end{proposition}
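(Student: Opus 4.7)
The plan is to reduce the estimate near a bound state frequency $\tau_0 \in \BSsp$ to an analogue of the radial point estimates of Section~\ref{sec:3sc_radial}, using the finite-dimensional nature of the bound state eigenspace as the obstruction to full ellipticity. First, since $\BSsp \cup \BSsm$ is finite and disjoint from the endpoints $\pm m$ by assumption, a partition of unity in $\tau \in \Wperpo$ reduces the problem to proving the estimate with $Q$ microsupported in an arbitrarily small neighborhood of a fixed $\tau_0 \in \BSsp$ (the $\SP$ case is identical, and on compact subsets of $(-m+\epsilon, m-\epsilon) \setminus (\BSsp \cup \BSsm)$ one applies Lemma~\ref{thm:elliptic PV ff} directly).

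Near $\tau_0$, I would set up a Grushin-type decomposition. Let $\Pi_0$ denote the orthogonal projection of $L^2(\RR^n_z)$ onto $E_+(\lambda(\tau_0))$; by scattering ellipticity, $\Pi_0$ is a finite-rank smoothing operator with Schwartz kernel. Since $\Pi_0$ commutes with $H_{V_+}$, and $H_{V_+}|_{\mathrm{ran}\,\Pi_0} = (m^2 - \tau_0^2)\,\mathrm{Id}$, we have the decomposition
\begin{equation*}
\ffsymbpz(P_V)(\tau) = (\tau^2 - \tau_0^2)\Pi_0 \;+\; (\tau^2 - H_{V_+})(I - \Pi_0).
\end{equation*}
The second summand is scattering elliptic in $z$ and invertible on $\mathrm{ran}(I-\Pi_0)$ for $\tau$ near $\tau_0$, which yields the elliptic-type contribution. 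The first summand is essentially a scalar multiplication operator in $\tau$ whose zero locus is $\{\tau = \pm\tau_0\}$; up to composition with the finite rank $\Pi_0$, this is exactly the model treated by the radial points arguments of Section~\ref{sec:3sc_radial}, with the role of $\pm m$ played by $\pm\tau_0$.

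The main step is then a positive commutator argument with commutant of the form $Q_0 \widetilde{\Pi} G_\psi$, where $Q_0 = \Op_L(x^{-(\ell+1/2)} q)$ for $q$ a product of bump functions cutting off near $\tau_0$, $x=0$, and $y = 0$ (exactly as in \eqref{eq:radial com symb}), and $\widetilde{\Pi} \in \Psitsc^{0,0}$ is chosen to have $\ffsymbpz(\widetilde{\Pi})(\tau)=\Pi_0$ for $\tau$ near $\tau_0$; by Proposition~\ref{prop:3sc_quantization} such a quantization exists since $\Pi_0$ is smoothing and $\ffsymbz$ is multiplicative. The indicial operator of $i[P_{V_+}, (Q_0\widetilde{\Pi})^*(Q_0\widetilde{\Pi})]$ at $\NP$ then reduces, using $[H_{V_+},\Pi_0]=0$ and the analogue of Proposition~\ref{prop:commutator_indicial}, to a scalar expression on $\mathrm{ran}\,\Pi_0$ whose leading term is $-(2\ell+1)(2\tau_0)f(\tau)^2\Pi_0$ plus square terms, exactly the sign structure appearing in the radial point calculation \eqref{eq:principal_symbols_radial_pt}. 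This yields the threshold at $\ell = -1/2$ and the two cases $\kappa = \pm 1$ of the above/below threshold estimates. The complementary operator $(I-\widetilde{\Pi})$ part is controlled by the genuine ellipticity of $\ffsymbpz(P_V)(\tau)(I - \Pi_0)$ via Lemma~\ref{thm:elliptic PV ff}.

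The main obstacle will be passing from the model (static) operator $P_{V_+}$ to the actual $P_V$, since $\widetilde{\Pi}$ is constructed from the spectral data of $H_{V_+}$ only and need not commute with $P_V$ to leading order. Here I would exploit the shrinking window argument of Lemma~\ref{lem:shrinking_window} and the symbol comparison in Corollary~\ref{cor:commutator_symbol}, composing $\widetilde{\Pi}$ with a further localizer $G_\psi$ whose support can be shrunk to make the operator norm of the error on $\Sobscl^{1,1} \to L^2$ arbitrarily small; the assumption that $V$ is asymptotically static of order at least one, together with $V - V^* \in \Psitsc^{0,-2}$, guarantees that the commutator error $[P_V - P_{V_+}, (Q_0\widetilde{\Pi})^*(Q_0\widetilde{\Pi})]$ and the skew-adjoint correction $Q^*Q V'' + V'' Q^*Q$ lie in $\Psitsc^{-\infty,2\ell-1}$ (a regularization with $J_{r,\delta}$ as in \eqref{eq:bound_regularizer} is used as in the proof of Lemma~\ref{lem:propagation_small} to justify the pairings). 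After applying a sharp G\aa{}rding inequality (Proposition~\ref{prop:sharp_Garding}) and, in the above threshold case, using the a priori control $Gu \in \Sobsc^{-N,\ell'}$ with $\ell' \in (-1/2, \ell)$ to absorb the otherwise uncontrollable remainder exactly as in Proposition~\ref{prop:tsc_radial_above}, one obtains the two estimates \eqref{eq:elliptic region below} and \eqref{eq:elliptic region above}. A standard inductive iteration in half-integer steps of the differential order $s$, combined with Proposition~\ref{prop:elliptic_Gpsi_estimate} to replace the $G_\varphi$-cut norms by full norms, completes the proof.
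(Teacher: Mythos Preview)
Your overall plan—split into the elliptic piece on $\operatorname{ran}(I-\Pi_0)$ and a threshold-type estimate on $\operatorname{ran}\Pi_0$—is reasonable, but it diverges substantially from the paper's argument and contains a genuine gap in the way you propose to handle the perturbative error.

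The paper does \emph{not} run a positive commutator at $\tau_0$. Instead it constructs a $\tsc$-operator $K_{\tau_0}\in\Psitsc^{-\infty,0}$ with $\ffsymbz(K_{\tau_0})(\tau)=\chi_{\tau_0}(\tau)\Pi_{\tau_0}$, observes that $P_V+K_{\tau_0}$ is genuinely $\tsc$-elliptic at $\tau_0$, and applies the elliptic estimate to bound $\|Qu\|_{s,\ell}$ by $\|GP_Vu\|$ plus $\|GK_{\tau_0}u\|$. The latter is then handled by the key identity $P_{V_+}K_{\tau_0}u=(D_t^2-\tau_0^2)K_{\tau_0}u+Ru$ with $R$ residual, which reduces matters to an ODE-in-$t$ estimate for $D_t^2-\tau_0^2$ (this is where the threshold $\ell=-1/2$ enters, via the explicit oscillatory solutions $e^{\pm i\tau_0 t}$). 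Commuting $K_{\tau_0}$ past $P_{V_+}$ is free since $[P_{V_+},K_{\tau_0}]$ is residual, and the passage from $P_{V_+}$ to $P_V$ costs exactly the decay order of $V-V_+$, which is why the bound-state theorem requires asymptotically static of order $2$.

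Your proposal invokes Lemma~\ref{lem:shrinking_window} to absorb the $(P_V-P_{V_+})$-error by shrinking the support of $\psi$ in $G_\psi$. This does not work here: the shrinking-window lemma relies on the spectrum of $H_{V_+}$ being purely absolutely continuous in the relevant window, whereas at a bound-state frequency $\tau_0$ the point $0$ is an \emph{eigenvalue} of $(\tau_0^2+H_{V_+}+E)^{-1}(\tau_0^2-H_{V_+})$ with eigenspace $E_+(\lambda(\tau_0))$. Hence $\ffsymbz(G_\psi)(\tau_0)\Pi_0=\psi(0)\Pi_0=\Pi_0$ regardless of how small $\operatorname{supp}\psi$ is, and the operator norm does not shrink. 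The only way to control the $(V-V_+)$-error in this region is through the extra spacetime decay of $V-V_+$ itself—precisely the order-$2$ hypothesis that the paper imposes in Theorem~\ref{thm:result with bound states}. If you repair this (drop the $G_\psi$, which is superfluous once you have $\widetilde\Pi$, and use the decay of $V-V_+$ directly to make the error one order better in the weight), your Grushin/commutator route can be made to work, though the bookkeeping with a non-central commutant $\widetilde\Pi$ via Proposition~\ref{prop:commutator_indicial} is heavier than the paper's ODE reduction.
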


The proposition follows from the elliptic regularity estimates in Lemma
\ref{thm:elliptic PV ff}, and the following lemma, which is simply
Proposition \ref{thm:elliptic region TRUE} microlocalized near a fixed $\tau_0
\in \BSsp$.  
\begin{lemma}\label{thm:elliptic non elliptic}
    Let $\tau_0 \in \BSsp$, i.e.\ let $\tau_0 \in (-m,m)$, $\tau_0 \neq 0$
    and $E_+(\lambda(\tau_0)) \neq \{ 0 \} $.  For any $Q \in
    \Psitsc^{0,0}(\RR^{n + 1})$ with $\tau_0 \subset \Ell_{\ff}(Q)$ and
    $\WFtsc'(Q)$ sufficiently small, for any, $G \in \Psitsc^{0,0}(\RR^{n+1})$ with $\WFtsc'(Q) \subset \Elltsc(G)$,
    then for $s, \ell, N, M \in \RR$, the estimates in
    Proposition \ref{thm:elliptic region TRUE} hold.
\end{lemma}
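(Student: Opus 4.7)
I would reduce the estimate to two pieces via a spectral projection onto the bound-state eigenspace $E_+(\lambda(\tau_0))$: an elliptic estimate on its orthogonal complement, and a one-dimensional scattering radial point estimate for the reduced Klein--Gordon operator $D_t^2 - \tau_0^2$ acting on the finite-dimensional eigenspace. Since $\tau_0^2$ is an isolated eigenvalue of the self-adjoint scattering-elliptic operator $H_{V_+}$ below its essential spectrum $[m^2, \infty)$, I would choose $\chi \in \CcI(\RR)$ equal to $1$ near $\tau_0^2$ and vanishing elsewhere on $\spec(H_{V_+})$; by Proposition~\ref{prop:func_calc_sc}, $\Pi_0 \coloneqq \chi(H_{V_+}) \in \Psisc^{-\infty, 0}(\RR^n_z)$ is then the finite-rank orthogonal projection onto $E_+(\lambda(\tau_0))$. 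Lift $\Pi_0$ to spacetime as a $t$-translation-invariant $z$-smoothing operator and compose with a $\tau$-cutoff $\phi(D_t)$, for $\phi \in \CcI(\RR)$ identically $1$ on a small neighborhood of $\tau_0$, to obtain $\tilde\Pi \coloneqq \phi(D_t) \Pi_0 \in \Psitsc^{-\infty, 0}$ with $\ffsymbpz(\tilde\Pi)(\tau) = \phi(\tau) \Pi_0$; exact commutation with $P_{V_+}$ and the smoothing action of $\Pi_0$ yield $[P_V, \tilde\Pi] = [V - V_+, \tilde\Pi]$, which gains spacetime decay thanks to the order-$r$ asymptotic staticness of $V$.

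Decompose $u = u_1 + u_2$ with $u_1 = \tilde\Pi u$ and $u_2 = (I - \tilde\Pi) u$. For $u_2$, the indicial operator $(\tau^2 - H_{V_+})|_{(I - \Pi_0) L^2}$ is invertible for $\tau$ in a $W^\perp_+$-neighborhood $U$ of $\tau_0$ by the spectral gap of $H_{V_+}$ at $\tau_0^2$, so $P_V$ restricted to the range of $I - \tilde\Pi$ is $\tsc$-elliptic microlocally near $\tau_0$. For $\WFtsc'(Q) \subset U \subset \Elltsc(G)$, the microlocal $\tsc$-elliptic estimate Proposition~\ref{thm:elliptic regularity theorem}, combined with the identity $P_V u_2 = (I - \tilde\Pi) P_V u - [V - V_+, \tilde\Pi] u$, bounds $\norm{Q u_2}_{s, \ell}$ by $\norm{G P_V u}_{s-2, \ell+1}$, a commutator error of order $\ell - r$, and a residual term; the commutator error is absorbed into $\normres{u}$ iteratively via~\eqref{eq:youngs inequality}. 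For $u_1$, pick an $L^2_z$-orthonormal basis $\{w_j\}_{j=1}^k$ of $E_+(\lambda(\tau_0))$. Using $H_{V_+} w_j = \tau_0^2 w_j$, the coefficients $c_j(t) \coloneqq \ang{u(t,\cdot), w_j}_z$ satisfy, modulo $\phi(D_t)$-commutator errors of lower order, the decoupled system
\begin{equation*}
    (D_t^2 - \tau_0^2) c_j(t) = \ang{f(t,\cdot), w_j}_z - \ang{(V - V_+) u(t,\cdot), w_j}_z, \qquad j = 1, \dots, k.
\end{equation*}

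The scalar operator $D_t^2 - \tau_0^2$ on $\RR_t$ is a standard scattering differential operator with radial points at $(t = \pm\infty, \tau = \pm\tau_0)$, and the scattering radial point estimates of Propositions~\ref{thm:vasy above prop} and~\ref{thm:vasy below thresh}, localized at the radial point $(t = +\infty, \tau = \tau_0)$ to match the support of $\phi$, produce exactly the claimed above- and below-threshold bounds for $c_j$, with the $-1/2$ weight threshold and input norm of the form $\norm{\cdot}_{s-2, \ell+1}$. The propagation $E$-term that appears below threshold in the one-dimensional estimate is supported in $t$-regions bounded away from $\pm\infty$ and hence is absorbed into $\normres{u}$, since smoothing operators with spacetime support bounded away from $\NP$ and $\SP$ map $\Sobres$ into any weighted scattering Sobolev space. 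The main technical obstacle is the perturbative inner-product term $\ang{(V - V_+) u, w_j}_z$, which loses only $r$ orders of spacetime decay against $u$: it is precisely the asymptotic staticness of order $r = 2$ assumed in Theorem~\ref{thm:result with bound states} that provides enough room to iterate the one-dimensional radial point estimate and absorb these errors, which explains why bound states cannot be handled under the weaker $r = 1$ hypothesis of Theorem~\ref{thm:fredholm causal 3sc}.
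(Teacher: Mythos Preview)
Your overall strategy matches the paper's: project onto the bound-state eigenspace, handle the complement via ellipticity, and reduce the projected piece to the one-dimensional radial-point problem for $D_t^2 - \tau_0^2$. Your identification of why the order-$2$ asymptotic staticness is needed is also correct. But your $u_2$ estimate has a real gap. You invoke Proposition~\ref{thm:elliptic regularity theorem} to bound $\norm{Q u_2}_{s,\ell}$ in terms of $P_V u_2$, claiming that ``$P_V$ restricted to the range of $I - \tilde\Pi$ is $\tsc$-elliptic microlocally near $\tau_0$.'' That notion is not available in the $\tsc$-calculus as developed: ellipticity at $\tau_0 \in W^\perp_+$ means invertibility of the full indicial operator $\ffsymbpz(P_V)(\tau_0) = \tau_0^2 - H_{V_+}$ on all of $L^2(\ff)$, which fails precisely because $\tau_0 \in \BSsp$, and the parametrix construction of Proposition~\ref{prop:micro ell parametrix} does not restrict to a subspace. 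So Proposition~\ref{thm:elliptic regularity theorem} does not apply to $P_V$ and $u_2$ as written.

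The paper repairs this not by decomposing $u$ but by modifying the operator. Since $\ffsymbpz(P_V + K_{\tau_0})(\tau_0) = (\tau_0^2 - H_{V_+}) + \Pi_{\tau_0}$ is invertible (the projection exactly fills the kernel of the self-adjoint indicial operator), $P_V + K_{\tau_0}$ is genuinely $\tsc$-elliptic at $\tau_0$, and Proposition~\ref{thm:elliptic regularity theorem} gives $\norm{Qu}_{s,\ell} \lesssim \norm{G P_V u}_{s-2,\ell} + \norm{G K_{\tau_0} u}_{s-2,\ell} + \normres{u}$. One then bounds $K_{\tau_0} u$ by the one-dimensional estimate (your $u_1$ argument, formalized as Lemma~\ref{thm:elliptic on kernel}), commutes $K_{\tau_0}$ through $P_{V_+}$ via~\eqref{eq:commutator P K}, and absorbs the $(V - V_+)$ error by iteration. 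The paper's $K_{\tau_0}$ also carries time cutoffs $\chi_{\ge t_0}(t)$ that your $\tilde\Pi = \phi(D_t)\Pi_0$ lacks; these localize $K_{\tau_0} u$ to $t \ge t_0 - 2$, so that in the below-threshold one-dimensional estimate the control term along the backward flow lands at finite $t$ and is automatically residual --- making rigorous the claim you phrase as ``the $E$-term is supported in $t$-regions bounded away from $\pm\infty$.''
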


The lemma is proven at the end of this section.

The proof proceeds by approximating projection onto the solutions $e^{
  i \tau_0 t} E_+(\lambda(\tau_0))$.  If
$\Pi_{\tau_0} = \Pi_{\tau_0}(z, z')$ is the integral kernel of
orthogonal projection in $L^2(\RR^n_z)$ onto
$E_+(\lambda(\tau_0))$, i.e. for some orthonormal bases $\{ w_j \}_{j
  = 1}^N$ of $E_+(\lambda(\tau_0))$,
\begin{align*}
  \Pi_{\tau_0}(z, z') \coloneqq \sum_{j = 1}^N w_j(z) \cdot w_j(z') \in
  \mathcal{S}(\RR^n \times \RR^n),
\end{align*}
then $e^{i (t - t') \tau_0} \Pi_{\tau_0}(z, z')$ is the integral kernel
of this projection.  However,  as we will see below, $e^{i (t - t') \tau_0}\Pi_{\tau_0}(z, z')$ is not the integral kernel of a
$\tsc$-PsiDO.

We proceed by smoothing in $t$ in addition to projecting onto
$E_+(\lambda(\tau_0))$; that is, we project onto a small $\tau$-window around
$\tau_0$ on the $t$-Fourier transform side.  Let $\chi_{\ge t_0} \in
C^\infty(\RR)$ be a bump function supported near $+ \infty$
with $\chi_{\ge t_0}(t) = 1$ for $t \ge t_0 - 1$ and
$\chi_{\ge t_0}(t) = 0$ for $t \le t_0 - 2$, and let
$\chi_{\tau_0} \in C^\infty(\RR_{\tau})$ be a bump function supported
near $\tau_0$, so $\chi_{\tau_0}(\tau) = 1$ for $|\tau - \tau_0| <
\delta$ and $\chi_{\tau_0}(\tau) = 0$ for $|\tau - \tau_0| \ge 2 \delta$.  Consider the operator defined by the integral kernel which:
(1) cuts off to large time, (2) localizes in the $t$-momentum variable
around $\tau_0$, (3) projects onto $E_+(\lambda(\tau_0))$:
\begin{align}
  \label{eq:normal kernel cutoffs}
  K_{\tau_0}(t,z, t', z') &\coloneqq  \chi_{\ge t_0}(t) \cdot \Pi_{\tau_0} \circ
                            \mathcal{F}^{-1}_{\tau \to t} \circ 
                            \chi_{\tau_0}(\tau) \cdot  \mathcal{F}_{t' \to
                            \tau} \circ \chi_{\ge t_0}(t')  \\
  &=  \frac{1}{2 \pi} \int_{- \infty}^\infty e^{i(t - t') \tau}\chi_{\ge t_0}(t)
    \cdot \chi_{\tau_0}(\tau) \cdot \Pi_{\tau_0}(z, z') \cdot
    \chi_{\ge t_0}(t')  \  d\tau
\end{align}
This $K_{\tau_0}$ will be used as a stand-in for projection onto $e^{i \tau_0 t} E_+(\lambda(\tau_0))$ near $t = + \infty$.
\begin{lemma}
  For $\delta > 0$ sufficiently small in the definition of
  $\chi_{\tau_0}$, 
  \begin{align}
    K_{\tau_0}  \in \Psitsc^{-\infty,0}, \text{ and }
    \ffsymbz(K_{\tau_0})(\tau) = \chi_{\tau_0}(\tau)
    \Pi_{\tau_0},\label{eq:projected guy}
  \end{align}
  and
\begin{equation}
\WFtsc'[P_{V_+}, K_{\tau_0}] = \varnothing \label{eq:commutator P K}.
\end{equation}
\end{lemma}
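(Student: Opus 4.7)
The plan is to exploit the tensor product structure of $K_{\tau_{0}}$, writing $K_{\tau_{0}} = M_{\chi}\,A\,M_{\chi}$ where $M_{\chi}$ denotes multiplication by $\chi_{\geq t_{0}}(t)$ and
\[
    A \coloneqq \frac{1}{2\pi}\int e^{i(t-t')\tau}\,\chi_{\tau_{0}}(\tau)\,\Pi_{\tau_{0}}(z,z')\,d\tau.
\]
The structural observations driving the proof are: (i) $M_{\chi}$ is multiplication by a function that is smooth on the resolved space $[X;\poles]$ (near $\ff_{+}$, in coordinates $x=1/t$ the cutoff becomes $\chi_{\geq t_{0}}(1/x)\equiv 1$ for $x$ small; near $\ff_{-}$ it vanishes identically; and in corner coordinates $t = 1/(\hat x y_{k})\to+\infty$ near $\ff_{+}\cap\mf$, so constancy propagates across the corners), hence $M_{\chi}\in\Psitsc^{0,0}$; (ii) $A$ is translation invariant in $t$, so commutes with $D_{t}$; (iii) $\Pi_{\tau_{0}}$ is the orthogonal projection onto $\ker(H_{V_{+}}-\tau_{0}^{2})$, so $[H_{V_{+}},\Pi_{\tau_{0}}]=0$, and together these give $[P_{V_{+}},A]=0$.

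For the symbol class membership, I would compute the left symbol of $K_{\tau_{0}}$ directly. The finite-rank projection $\Pi_{\tau_{0}} = \sum_{j} w_{j}(z)\bar{w}_{j}(z')$ has Schwartz Schwartz-kernel, and its left scattering symbol
\[
    p(z,\zeta) = e^{-iz\zeta}\sum_{j} w_{j}(z)\,\overline{\widehat{w}_{j}(\zeta)}
\]
is Schwartz on $\RR^{n}_{z}\times\RR^{n}_{\zeta}$: every seminorm $\sup|z^{\alpha}\zeta^{\beta}\partial_{z}^{\gamma}\partial_{\zeta}^{\delta}p|$ is controlled by a finite sum of products of Schwartz seminorms of $w_{j}$ and $\widehat{w}_{j}$, since differentiating $e^{-iz\zeta}$ only multiplies by polynomials in $z$ and $\zeta$. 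The leading left symbol of $K_{\tau_{0}}$ is $\chi_{\geq t_{0}}(t)^{2}\,\chi_{\tau_{0}}(\tau)\,p(z,\zeta)$, with lower-order corrections from the standard left-reduction expansion. Compact support of $\chi_{\tau_{0}}$ in $\tau$ together with Schwartz decay of $p$ in $\zeta$ gives differential order $-\infty$; Schwartz decay of $p$ in $z$ forces the symbol to vanish to infinite order at $\mf$ and at the corners $\ff_{\pm}\cap\mf$ (so the matching conditions of Proposition~\ref{prop:3sc_quantization} hold trivially); smoothness of $\chi_{\geq t_{0}}$ on $[X;\poles]$ gives spacetime order $0$. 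Hence $K_{\tau_{0}}\in\Psitsc^{-\infty,0}$. Lemma~\ref{thm:indicial family explicit} then yields the indicial family by restriction of the symbol to $\ff_{+}$, where $\chi_{\geq t_{0}}\equiv 1$, giving
\[
    \ffsymbz(K_{\tau_{0}})(\tau) = \Op_{L,z}\bigl(\chi_{\tau_{0}}(\tau)\,p(z,\zeta)\bigr) = \chi_{\tau_{0}}(\tau)\,\Pi_{\tau_{0}}.
\]

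For the commutator, the Leibniz rule together with $[P_{V_{+}},A]=0$ and $[H_{V_{+}},M_{\chi}]=0$ reduces the expansion to
\[
    [P_{V_{+}},K_{\tau_{0}}] = [D_{t}^{2},M_{\chi}]\,A M_{\chi} + M_{\chi}A\,[D_{t}^{2},M_{\chi}].
\]
The coefficients of $[D_{t}^{2},M_{\chi}] = (D_{t}^{2}\chi_{\geq t_{0}}) + 2(D_{t}\chi_{\geq t_{0}})D_{t}$ are compactly supported in $t$ on $\supp d\chi_{\geq t_{0}} \subset [t_{0}-2, t_{0}-1]$, while the spatial factor of each term inherits the Schwartz behavior of $\Pi_{\tau_{0}}$ in $(z,z')$ and the compact $\tau$-support makes the $t$-part smoothing in $(t,t')$. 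Consequently the Schwartz kernel of $[P_{V_{+}},K_{\tau_{0}}]$ is Schwartz on $\RR^{n+1}\times\RR^{n+1}$, placing the commutator in the residual class $\Psitsc^{-\infty,-\infty}$ with empty $\tsc$-wavefront set. The main obstacle is bookkeeping rather than substance: verifying that $\chi_{\geq t_{0}}(t)$ extends smoothly across the corners of $[X;\poles]$, which is precisely why we require the cutoff to depend only on $t$ (so that its support in $t$ is constant near both $\ff_{\pm}$ and it exactly commutes with $H_{V_{+}}$, leaving only $[D_{t}^{2},M_{\chi}]$ to analyze).
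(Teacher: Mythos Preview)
Your overall approach is close to the paper's, but claim (i) contains an error: $\chi_{\geq t_0}(t)$ is \emph{not} smooth on $[X;\poles]$. You check smoothness near $\ff_{\pm}$ and at the corners $\ff_{\pm}\cap\mf$, but miss the interior of $\mf$ near spacelike infinity, where $t/\abs{z}\to 0$. In coordinates $\hat x=1/z_{k}$ (a bdf of $\mf$) and $\hat Y_{0}=t/z_{k}$ there, one has $\chi_{\geq t_0}(t)=\chi_{\geq t_0}(\hat Y_{0}/\hat x)$, which fails even to be continuous at $\hat x=\hat Y_{0}=0$: approaching along $\hat Y_{0}=c\hat x$ gives $\chi_{\geq t_0}(c)$, which varies with $c$. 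So $M_{\chi}\notin\Psitsc^{0,0}$, and the three-factor decomposition $M_{\chi}AM_{\chi}$ cannot be assembled inside the calculus as stated.

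This is repairable, and your direct symbol computation in the second paragraph essentially contains the fix: the \emph{product} $\chi_{\geq t_0}(t)\cdot p(z,\zeta)$ is a $\tsc$-symbol because the Schwartz decay of $p$ in $z$ forces infinite-order vanishing at $\mf$, absorbing the bad behaviour of $\chi_{\geq t_0}$ there; near $\ff_{\pm}$ the cutoff is constant, as you note. However, the phrase ``lower-order corrections from the standard left-reduction expansion'' is not justified without first knowing the factors lie in a calculus, so the right-multiplication by $\chi_{\geq t_0}(t')$ still needs work. The paper sidesteps this via a different factorization: it writes $K_{\tau_0}=(\chi_{\geq t_0}\tilde K_{\chi_{1}})\circ(\tilde K_{\chi_{2}}\chi_{\geq t_0})$ with $\chi_{1}=\chi_{\tau_0}$ and $\chi_{2}\chi_{\tau_0}=\chi_{\tau_0}$, where $\tilde K_{\chi}$ is the operator with no time cutoffs. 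Each factor $\chi_{\geq t_0}\tilde K_{\chi}=\Op_{L}\bigl(\chi_{\geq t_0}(t)\chi(\tau)a(z,\zeta)\bigr)$ with $a\in\schwartz$ lies directly in $\Psitsc^{-\infty,0}$ (the Schwartz decay in $z$ being precisely what makes this work), and the second factor is the adjoint of such an operator. Composition in the calculus then gives $K_{\tau_0}\in\Psitsc^{-\infty,0}$ and the indicial operator formula from the composition/adjunction rules for $\ffsymbz$. Your commutator argument is correct and essentially identical to the paper's.
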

\begin{proof}
Proving that an operator lies in $\Psitsc$ can be done using the
double space characterization in Section 3 of \cite{V2000}, but we
argue directly using our work above.
First we note that, for any $\chi \in \CcI(\RR)$, the operator without the time cutoffs:
\begin{align}\label{eq:no time cutoff}
\tilde K_{\chi} \coloneqq \int_{- \infty}^\infty e^{i(t - t') \tau}
    \cdot \chi(\tau) \cdot \Pi_{\tau_0}(z, z') \, d\tau
\end{align}
is a $\tsc$-operator since $\Pi_{\tau_0}(z, z') \in \Psisc^{-\infty, - \infty}(\RR^n)$,
in fact $\tilde K_{\chi} = \Op_L(k)$ for $k = \chi(\tau) a(z, \zeta)$ with $a \in
\schwartz(\RR^n_z \times \RR^n_{\zeta})$. This shows directly that $\tilde K_{\chi}$ is in $\Psitsc^{0,0}$ and has
indicial operator as in \eqref{eq:projected guy}.  Then multiplying on
the left by $\chi_{\ge t_0}(t)$ gives an operator $\Op_L(\chi_{\ge
  t_0}(t) k)$ which remains in $\Psitsc^{0,0}$ since $k$ is rapidly
decaying in $z$.  The adjoint of that operator $\tilde K_{\chi}
\chi_{\ge t_0}$ is thus also in $\Psitsc^{0,0}$.  Then $K_{\tau_0}$
itself can be expressed as a composition $\chi_{\ge t_0} \tilde
K_{\chi_1} \circ \tilde
K_{\chi_2} \chi_{\ge t_0}$ where $\chi_1 = \chi_{\tau_0}$ and $\chi_2
\chi_{\tau_0} = \chi_{\tau_0}$, and is thus in $\Psitsc^{0,0}$.  The indicial operator statement
follows from the composition and adjunction properties of indicial
operators.

Finally, $[P_{V^+}, \tilde K_{\chi} ] = 0$ for any $\chi$.  Since
$[P_{V_+} , K_{\tau_0}]$ differs from $[P_{V^+}, \tilde
K_{\chi_{\tau_0}} ]$ only by terms with derivatives falling on
$\chi_{\ge t_0}(t)$, the commutator $[P_{V_+} , K_{\tau_0}]$ is
microsupported on
\[\chi_{\ge t_0}'(t) \subset \{ t \in [t_0 - 2, t_0] \},\]
but then $\chi_{\ge t_0}'(t)\Pi_{\tau_0}$ has symbol which is rapidly
decaying to the spacetime boundary, so \eqref{eq:commutator P K} holds.
\end{proof}

\bigskip

We will use the approximate projection $K_{\tau_0}$ mainly by
exploiting the following feature.
\begin{corollary}
  $\tau_0 \in \Ell_{\ff}(P_V + K_{\tau_0})$.
\end{corollary}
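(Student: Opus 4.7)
The plan is to unpack what $\tau_0 \in \Ell_{\ff}(P_V + K_{\tau_0})$ means via Definition~\ref{def:elliptic set}: I must verify that the indicial operator $\ffsymbpz(P_V + K_{\tau_0})(\tau_0)$ is both scattering elliptic on $\ff \simeq \RR^n_z$ and invertible between the appropriate scattering Sobolev spaces on $\RR^n$.

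First I would compute the indicial operator explicitly. By the linearity/multiplicativity of the principal symbol map (Proposition~\ref{prop:principal_symbol_map}) together with the identification~\eqref{eq:projected guy} and $\chi_{\tau_0}(\tau_0) = 1$,
\begin{equation*}
  \ffsymbpz(P_V + K_{\tau_0})(\tau_0) = \ffsymbpz(P_V)(\tau_0) + \chi_{\tau_0}(\tau_0)\Pi_{\tau_0} = (\tau_0^2 - H_{V_+}) + \Pi_{\tau_0}.
\end{equation*}
Since $K_{\tau_0} \in \Psitsc^{-\infty, 0}$, its scattering fiber and main-face symbols vanish, so those components of the scattering principal symbol of the indicial operator agree with those of $\tau_0^2 - H_{V_+}$. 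The fiber symbol is $-|\zeta|^2$, nonvanishing at $|\zeta| = \infty$, and the symbol at $\partial \overline{\RR^n_z}$ is $\tau_0^2 - |\zeta|^2 - m^2$, which is strictly negative for $|\tau_0| < m$. Hence $(\tau_0^2 - H_{V_+}) + \Pi_{\tau_0}$ is scattering elliptic of order $(2, 0)$ on $\RR^n_z$, and in particular Fredholm of index zero as a map $\Sobsc^{s,\ell}(\RR^n_z) \to \Sobsc^{s-2, \ell}(\RR^n_z)$.

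For invertibility I would exploit the $L^2$-orthogonal decomposition
\begin{equation*}
  L^2(\RR^n_z) = E_+(\lambda(\tau_0)) \oplus E_+(\lambda(\tau_0))^\perp.
\end{equation*}
Because $E_+(\lambda(\tau_0)) \subset \schwartz(\RR^n_z)$ is precisely the $\tau_0^2$-eigenspace of the self-adjoint operator $H_{V_+}$, both summands are invariant under $\tau_0^2 - H_{V_+}$, while the orthogonal projection $\Pi_{\tau_0}$ is the identity on the first summand and zero on the second. Therefore $(\tau_0^2 - H_{V_+}) + \Pi_{\tau_0}$ acts as $\Id$ on $E_+(\lambda(\tau_0))$ and as the restriction of $\tau_0^2 - H_{V_+}$ on $E_+(\lambda(\tau_0))^\perp$. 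The restriction is self-adjoint, scattering elliptic, and has trivial kernel on this invariant subspace by construction of $E_+$, so by the Fredholm-of-index-zero property it is a bijection there. Piecing the two summands together yields bijectivity of $(\tau_0^2 - H_{V_+}) + \Pi_{\tau_0}$ on $L^2(\RR^n_z)$; scattering elliptic regularity (Proposition~\ref{prop:scat elliptic}) then upgrades this to an isomorphism $\Sobsc^{s,\ell}(\RR^n_z) \to \Sobsc^{s-2, \ell}(\RR^n_z)$ for all $s, \ell$, which is the required invertibility.

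No serious obstacle arises: this is essentially the Riesz-projector trick adapted to the scattering calculus, with the smoothing perturbation $\Pi_{\tau_0}$ precisely cancelling the only failure of invertibility of $\tau_0^2 - H_{V_+}$. The one mildly delicate point is ensuring $H_{V_+}$-invariance of $E_+(\lambda(\tau_0))^\perp$, which is immediate from self-adjointness of $H_{V_+}$.
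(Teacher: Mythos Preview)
Your proof is correct and follows the same approach as the paper: compute the indicial operator as $\ffsymbz(P_V)(\tau_0) + \Pi_{\tau_0}$ and observe that adding the orthogonal projection onto the kernel of the self-adjoint operator $\ffsymbz(P_V)(\tau_0)$ makes it invertible. The paper states this in one sentence; you have usefully unpacked the scattering ellipticity check and the index-zero Fredholm argument underlying it.
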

\begin{proof}
This follows directly from
$\ffsymbz(P_V + K_{\tau_0})(\tau_0) = \ffsymbz(P_V)(\tau_0) + \Pi_{\tau_0}$,
since $\Pi_{\tau_0}$ is
exactly projection onto the kernel of the self-adjoint operator $\ffsymbz(P_V)(\tau_0)$.
\end{proof}

To obtain estimates for $P_V u$ near $\tau_0 \in W^\perp$, we first
use the elliptic estimates for $P_V + K_{\tau_0}$ near $\tau_0 \in
W^\perp$, namely, that for any $Q, G \in \Psitsc^{0,0}$ with $\tau_0 \in
\Ell_{\ff}(Q)$ and $\WFtsc'(Q) \subset \Elltsc(G) \cap \Elltsc(P_V +
K_{\tau_0})$, for any $M, N \in \RR$ there is $C > 0$ such that
\begin{equation}
\begin{split}
    \norm{Q u}_{s + 2, \ell} &\le C \left( \norm{G (P_V + K_{\tau_0}) u}_{s, \ell} + \normres{u} \right) \\
    &\le C (\norm{GP_V u}_{s, \ell} + \norm{G K_{\tau_0} u}_{s, \ell} + \normres{u}).
\end{split}\label{eq:elliptic with K}
\end{equation}
What we will show below is that, off the spacetime weight $\ell = -1/2$,
for $Q$ with sufficiently small support around $\tau_0 \in W^\perp$,
that the $K_0 u$ term on the RHS can be bounded by $P_V u$.  This will
follow by applying ODE techniques to the easily verified formula
\begin{equation}
  \label{eq:ODE reduction}
P_{V_+} K_{\tau_0} u = (D_t^2 - \tau_0^2) K_{\tau_0} u  + R u 
\end{equation}
where $R \in \Psitsc^{-\infty, - \infty}$, to obtain estimates for
$K_{\tau_0} u$ in terms of $P_{V_+} K_{\tau_0} u$.  Then using
\eqref{eq:commutator P K} we will remove the $K_{\tau_0} u$ from
\eqref{eq:elliptic with K} entirely.  Then applying ODE methods to the
first term on the RHS, or using a positive commutator argument akin to
that of Section \ref{sec:3sc_radial}, we obtain the following lemma.
\begin{lemma}\label{thm:elliptic on kernel}
  Let $\ell \in \RR$.  Provided $\ell < -1/2$, for any $s_0, M, N
  \in \RR$ there is $C$ such that, if $K_{\tau_0}u \in \Sobsc^{s,
    \ell}$ and $ P_{V_+}K_{\tau_0}u \in \Sobsc^{s, \ell + 1}$, then for $Q, Q' \in
  \Psitsc^{0,0}$ with $\tau_0 \in \Ell_{\ff}(Q)$ and $\WFtsc'(Q) \subset
  \Elltsc(Q')$, we have $K_{\tau_0}u \in \Sobsc^{s_0,
    r}$ and $ P_{V_+}K_{\tau_0}u \in \Sobsc^{s_0, \ell + 1}$, and
  \begin{equation}
      \norm{Q  K_{\tau_0}u}_{s_0, \ell} \le
      C( \norm{Q' P_{V_+}K_{\tau_0}u}_{s,\ell + 1} +  \normres{u}).\label{eq:ODE estimate}
  \end{equation}
  The same is true if, in the definition of $K_{\tau_0}$, the
  projection $\Pi_{\tau_0}$ is replaced by orthogonal projection onto
  any subspace of $E_+(\lambda(\tau_0))$.

  If instead $\ell > -1/2$, for any $s_0, M, N \in \RR$ there is $C$ such that, if $K_{\tau_0}u \in \Sobsc^{s,
    \ell}$ and $ P_{V_+}K_{\tau_0}u \in \Sobsc^{s, \ell + 1}$, possibly after
  taking $\chi_{\tau_0}$ in $K_{\tau_0}$ with smaller support, we have
  $K_{\tau_0}u \in \Sobsc^{s_0, \ell}$ and $ P_{V_+}K_{\tau_0}u \in \Sobsc^{s_0, \ell + 1}$,
  and $\ell > \ell' > -1/2$,
  \begin{equation}
      \norm{Q  K_{\tau_0}u}_{s_0, \ell} \le C(
      \norm{Q' P_{V_+}K_{\tau_0}u}_{s,\ell + 1} +
      \norm{u}_{-N,\ell'}).\label{eq:ODE estimate2}
  \end{equation}
  The same is true if, in the definition of $K_{\tau_0}$, the
  projection $\Pi_{\tau_0}$ is replaced by orthogonal projection onto
  any subspace of $E_+(\lambda(\tau_0))$.
\end{lemma}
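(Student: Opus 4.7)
The plan is to reduce the estimate to a finite family of one‐dimensional scalar ODE problems on $\RR_t$ for the operator $D_t^2 - \tau_0^2$, and then invoke one‐dimensional scattering–calculus estimates of the type collected in Section~\ref{sec:model-case}. Fix an orthonormal basis $\{w_j\}_{j=1}^N$ of $E_+(\lambda(\tau_0))\subset\schwartz(\RR^n_z)$, so that $\Pi_{\tau_0}(z,z')=\sum_j w_j(z)w_j(z')$. Then the decomposition
\begin{equation*}
  K_{\tau_0} u(t,z) = \sum_{j=1}^N c_j(t)\, w_j(z), \qquad c_j(t) = \int K_{\tau_0} u(t,z')\, w_j(z')\, dz',
\end{equation*}
is exact, and the $c_j$ inherit Fourier localization $\supp\widehat{c_j}\subset\{|\tau-\tau_0|\le 2\delta\}$ from the $\chi_{\tau_0}$ cutoff as well as forward support $\supp c_j\subset\{t\ge t_0-2\}$ from the $\chi_{\ge t_0}$ cutoffs. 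Using $H_{V_+} w_j = \tau_0^2 w_j$ gives $P_{V_+}(c_j w_j)=(D_t^2-\tau_0^2)c_j\cdot w_j$, so \eqref{eq:ODE reduction} reduces (modulo the smoothing $Ru$, absorbed into $\normres{u}$) to the family of scalar ODEs $(D_t^2-\tau_0^2)c_j = g_j$ on $\RR_t$, with $c_j\in L^2(\ang{t}^{-2\ell}dt)$ and $g_j\in L^2(\ang{t}^{-2(\ell+1)}dt)$ as inputs.

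The gain in differential order is then free from the Fourier localization of the $c_j$: writing $c_j=\check\psi * c_j$ for a Schwartz $\psi$ identically $1$ on $\supp\widehat{c_j}$, one has $D_t^k c_j=(D_t^k\check\psi)*c_j$, and convolution with a Schwartz function preserves $L^2(\ang{t}^{-2\ell}dt)$ by Young's inequality combined with Peetre's inequality $\ang{s}^\ell\le C\ang{t-s}^{|\ell|}\ang{t}^\ell$. Hence $\|D_t^k c_j\|_{L^2(\ang{t}^{-2\ell}dt)}\le C_k\|c_j\|_{L^2(\ang{t}^{-2\ell}dt)}$ uniformly, and combined with the Schwartz nature of the $w_j$, the norms $\|K_{\tau_0}u\|_{\Sobsc^{s,\ell}}$ and $\|K_{\tau_0}u\|_{\Sobsc^{s_0,\ell}}$ are uniformly equivalent on the range of $K_{\tau_0}$. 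The remaining content is therefore to bound $\|c_j\|_{L^2(\ang{t}^{-2\ell}dt)}$ by $\|g_j\|_{L^2(\ang{t}^{-2(\ell+1)}dt)}$ plus residuals.

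For this one‐dimensional estimate I would apply the scattering propagation and radial point results of Section~\ref{sec:model-case}, specialized to $D_t^2-\tau_0^2\in\Diffsc^2(\overline{\RR_t})$. Its characteristic set is $\{\tau=\pm\tau_0\}$; the Fourier localization of $c_j$ selects only $\tau=\tau_0$, where the Hamilton flow is $2\tau_0\partial_t$, so (for $\tau_0>0$; reversed if $\tau_0<0$) the past radial point is $t=-\infty$ and the future radial point is $t=+\infty$. For $\ell<-1/2$, the forward support furnishes trivial Schwartz control at the past radial point, and successive applications of propagation of singularities (Proposition~\ref{prop:scattering-propagation-variable}) followed by the below‐threshold radial point estimate (Proposition~\ref{thm:vasy below thresh}) at the future radial point yield the bound \eqref{eq:ODE estimate}. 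For $\ell>-1/2$, the above‐threshold radial point estimate (Proposition~\ref{thm:vasy above prop}) at the future radial point applies, requiring a priori control in $L^2(\ang{t}^{-2\ell'}dt)$ for $\ell'\in(-1/2,\ell)$, which is precisely what the $\|u\|_{-N,\ell'}$ term in \eqref{eq:ODE estimate2} supplies.

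The main obstacle is the sharp threshold behavior at $\ell=-1/2$: the homogeneous solutions $e^{\pm i\tau_0 t}$ of $D_t^2-\tau_0^2$ sit exactly at this threshold, so the two cases of the lemma are dictated by whether these modes are ruled out by forward support (below threshold) or by the a priori assumption above threshold. Technical corrections arising from derivatives falling on the $\chi_{\ge t_0}$ cutoffs are compactly supported in $t$ with Schwartz profile in $z$ and are absorbed into $\normres{u}$; the statement that both $K_{\tau_0}u$ and $P_{V_+}K_{\tau_0}u$ lie in their improved regularity spaces follows because, once $c_j\in L^2(\ang{t}^{-2\ell}dt)$ is upgraded to all differential orders, applying $D_t^2-\tau_0^2$ produces $g_j$ with the asserted regularity, and the subspace variant (replacing $\Pi_{\tau_0}$ by orthogonal projection onto a subspace of $E_+(\lambda(\tau_0))$) requires no change since only the finite basis of surviving $w_j$ enters.
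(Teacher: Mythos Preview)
Your approach—reducing via the finite decomposition $K_{\tau_0}u = \sum_j c_j(t) w_j(z)$ to one-dimensional scattering radial-point estimates for $D_t^2 - \tau_0^2$ on $\RR_t$—is exactly the ``ODE methods'' route the paper indicates (without detailed proof) just above the lemma; the alternative it mentions is a direct positive-commutator argument in the style of Section~\ref{sec:3sc_radial}. So the overall strategy matches.

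There is, however, a genuine error in your Fourier-localization step. You assert that $c_j$ has simultaneously $\supp\widehat{c_j}\subset\{|\tau-\tau_0|\le 2\delta\}$ and $\supp c_j\subset\{t\ge t_0-2\}$; these cannot both hold for a nonzero function (compactly supported Fourier transform forces real-analyticity, hence no vanishing on a half-line), so the identity $c_j=\check\psi*c_j$ and the ensuing derivative bound fail as written. What is actually true is $c_j = \chi_{\ge t_0}(t)\,\tilde{c}_j$, where $\tilde{c}_j(t) = \int w_j(z')\,[\chi_{\tau_0}(D_t)\chi_{\ge t_0}u](t,z')\,dz'$ has genuinely compact $\tau$-support; the outer cutoff destroys this. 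The repair is to run your convolution/Peetre--Young argument on $\tilde{c}_j$: one has $c_j=\tilde{c}_j$ for $t\ge t_0-1$ (where the radial-point analysis at $t=+\infty$ lives), while for $t\le t_0-1$ the function $\tilde{c}_j$ is rapidly decaying (it is a Schwartz-in-$t$ convolution against something supported in $t'\ge t_0-2$) with bounds controlled by $\normres{u}$. This recovers $\|D_t^k c_j\|_{L^2(\ang{t}^{-2\ell}dt)}\lesssim \|c_j\|_{L^2(\ang{t}^{-2\ell}dt)}+\normres{u}$, and the remainder of your outline (exact forward support giving trivial control at the past radial point; below/above threshold estimates at the future one) then goes through.
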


With the lemma, we can now conclude the proof.
\begin{proof}[Proof of Lemma \ref{thm:elliptic non elliptic}]
    From \eqref{eq:elliptic with K} and Lemma \ref{thm:elliptic on
    kernel}, treating $\ell <  -1/2$ first, we take $Q, G$ and $G'$ with $\WFtsc'(G)
    \subset \Elltsc(G')$ to obtain
    \begin{align*}
        \norm{Q u}_{s + 2, \ell}   &\lesssim  \norm{GP_V u}_{s, \ell} + \norm{G K_{\tau_0} u}_{s, \ell} + \normres{u} \\
        &\lesssim \norm{GP_V u}_{s, \ell} + \norm{G' P_{V_+} K_{\tau_0} u}_{s, \ell + 1} + \normres{u} \\
        &\lesssim \norm{GP_V u}_{s, \ell} + \norm{G' K_{\tau_0} P_{V_+} u}_{s, \ell + 1} + \normres{u} \\
        &\lesssim \norm{GP_V u}_{s, \ell} + \norm{G' K_{\tau_0} P_{V} u}_{s, \ell + 1} + \norm{G K_{\tau_0} (P_{V} - P_{V_+}) u}_{s, \ell + 1} + \normres{u}\,,
    \end{align*}
    where we used $[P_{V_+}, K_{\tau_0}] \in \Psitsc^{-\infty, - \infty}$ in the fourth line.

    The first two terms may both be bounded by $G'' P_V u$ for $G''$
    with $\WFtsc'(G) \cup \WFtsc'(G') \subset \Elltsc(G'')$ by choosing
    $\WFtsc'(K_{\tau_0})$ sufficiently small.
    Moreover, $P_V - P_{V_+} = - (V - V_+)$ and since $V$ is asymptotically static of order $0,1$,
    the third term is controlled by $\norm{u}_{-N, \ell - 1}$ for any $N$,
    and we obtain overall that there is
    $C > 0$ such that, if all the terms below are finite, then we have
    an estimate
    \begin{align*}
        \norm{Q u}_{s + 2, \ell} &\le  \norm{G'' P_V u}_{s, \ell} + \norm{G'' u}_{-N,\ell - 1} + \normres{u}\,.
    \end{align*}
    Iterating this estimate allows us to drop the $\ell -1$ term on the
    right.

    The proof for $\ell > -1/2$ is similar.
\end{proof}

We can now use Proposition \ref{thm:elliptic region TRUE} and the
arguments in Section \ref{thm:fredholm causal 3sc} to
prove Theorem \ref{thm:result with bound states}.
\begin{proof}[Proof of Theorem \ref{thm:result with bound states}]
  That $P_V$ acting in \eqref{eq:fredholm 3scat causal true} is
  Fredholm follows exactly as in the proof of Theorem
  \ref{thm:fredholm causal 3sc} using exactly the same methodology,
  adding in $B_{5, \pm}$ elliptic on
  $(-m + \epsilon', m - \epsilon') \subset W^\perp_\pm$ and the
  estimates in Proposition \ref{thm:elliptic region TRUE}.

  The injectivity follows from exactly the same energy estimate
  argument, and the surjectivity is that same energy estimate argument
  applied to the adjoint.

  The property \eqref{eq:forward solution} follows from the same argument as
  in the proof of Theorem~\ref{thm:fredholm causal 3sc}.
\end{proof}

\begin{proof}[Proof of Theorem \ref{thm:result for static}]
  For static $V = V(z)$, the Fredholm statement holds even in the
  presence bound states with energy less than $-m^2$.  The fact that
  there are no elements in the kernel can be concluded directly from
  seperation of variables since on the finite family of
  eigenfunctions $H_V$ the solutions are explicit and orthogonal to
  this family the energy argument holds.
\end{proof}

\section*{Index of Notation}

\begin{itemize}
\item $\schwartz$ is Schwartz functions, $\mathcal{S}'$ tempered distributions
\item $\CcI$ is smooth and compactly supported
\item $\CdI(M)$ for a manifold with corners $M$ is the space of
smooth functions which vanish to infinite order together with
all their derivatives at the boundary
\item $\spec(A)$ the spectrum of an operator $A$
\item $\lesssim$, used in an inequality when an unspecified
positive constant is needed on the right hand side

\item $H_V \coloneqq \Delta + m^2 + V$, the Hamiltonian, for $V$ possibly
time-dependent, page \pageref{eq:def of PV}
\item $P_V \coloneqq D_t^2 - H_V$, the
Klein-Gordon operator, page \pageref{eq:def of PV}
\item $\Sobsc^{s,\ellvar}(\RR^{n + 1}), \cY^{s,\ellvar}, \cX^{s,\ellvar}$ the
weighted $L^2$-based Sobolev spaces and the a priori spaces,
page \pageref{eq:first X spaces}

\item $P_{0}$, the free Klein-Gordon operator, page \pageref{def:P0}

\item $\Diffsc^m$ and $\Diffsc^{m,r}$ the scattering differential
operators, page \pageref{eq:sample-scattering-diff-op}
\item $X$, the radial compactification of $\RR^{n + 1}_{t, z}$, page \pageref{eq:def of X}

\item $\Tsc^*X, \Tsco^*X$ the scattering cotangent and its fiber
compactification, page \pageref{eq:scattering phase space}
\item  $  \rho_{\mathrm{base}}$, $\rho_{\mathrm{fib}}$, boundary
defining functions for $\Tsco^* X$, page \pageref{eq:bhss}
\item $\scprinsymb{m,r}(A)$ the scattering principal symbol, page
\pageref{eq:principal}
\item $\scfibsymb{m,r}(A), \scnormsymb{m,l}(A)$ the fiber and
normal components of the scattering principal symbol,
respectively, page \pageref{eq:scfibsymb}

\item $\Ssc^{m,r}(X)$, scattering symbols, page \pageref{eq:scattering symbols},
\item $\Psisc^{m,r}$ of scattering operators, page \pageref{eq:scattering symbols},
\item $C_{\sc}(X)$ the boundary of $\Tsco^* X$, page \pageref{eq:square}

\item $\WF'(A)$ the scattering operator wavefront set, Section  \ref{sec:elliptic-estimates-scattering}
\item $\Ell(A)$ the scattering elliptic set, Section \ref{sec:elliptic-estimates-scattering}
\item $\Char(A) = C_{\sc}(X) \setminus \Ell(A)$, the characteristic
set of $A$, Section~\ref{sec:elliptic-estimates-scattering}
\item $H_p$ and $\Hamscp$ the Hamilton vector field and its rescaling,
equations~\eqref{eq:hammy 1} and~\eqref{eq:scattering hammy def},
page~\pageref{eq:hammy 1}

\item $\Rad$ the radial set, and $\Rad^f_\pm, \Rad^p_\pm$
its four components, page \pageref{eq:radial set first
def} and below
\item $\ellvar$ variable order spacetime weight, Section \ref{sec:vari-weight-spac}
\item $\ellvar_\pm$ forward and backward weights, Section \ref{sec:fred for scat}

\item $V_+(z) = \lim_{t \to \infty} V(t)$, the limiting potential,
Section \ref{sec:V assump}, similarly for $V_{-}$

\item $\poles \subset X$, the ``poles'', $\poles = \NP \cup \SP$,
page \pageref{eq:def of C}
\item $[X;C]$ the blow up  of the poles in $X$, page \pageref{eq:X blow C def}
\item $\beta_C$ the blow down map, page \pageref{eq:X blow C def}
\item $\Difftsc^{m,r}$ the $\tsc$-differential operators, page \pageref{eq:1}

\item $\Ttsc^* [X;C], \Ttsco^*[X;C]$ the $\tsc$-cotangent
bundle and compactification, page \pageref{eq:three scat cotangent},
\item $\rho_{\ff},\rho_{\mf},\rho_{\fib}\,$, boundary
defining functions for $\Ttsco^*X$, page \pageref{eq:bdfs}
\item  $ W^{\perp}$, the lines of $\tau$ over the poles, page
\pageref{eq:piWperp}

\item $\pi \colon \Ttsc^*_C X \longrightarrow W^\perp$ the projection
  on $W^\perp$, page \pageref{eq:piWperp}

 \item $\Radtsc$ the $\tsc$-radial set, page \pageref{eq:tsc radial set}
\item $\fibeq$ the fiber equator, page \pageref{eq:fibeq def}

\item $\Stsc^{m,r}(X)$,
$\tsc$-symbols, page \pageref{eq:symbol space}
\item $\Psitsc^{m,r}$ the
$\tsc$-operators page \pageref{eq:3}
\item $\ffsymbz(A) = \hat{A}_{\ff}$ and $\ffsymb{\ell}(A)$ the indicial operators, page \pageref{eq:general ff boundary symbol}
\item $\aff$ the weighted front face restriction,
page \pageref{eq:ff restrict}

\item $\prinsymb{m,r}(A)$ the principal symbol,
equation~\eqref{eq:actual symbol},
page~\pageref{eq:actual symbol}
\item $\fibsymb{m,r}(A)$ the fiber symbol,
$\mfsymb{m,r}(A)$ the ``main face'' symbol
(restriction to the spacetime boundary), page
\pageref{eq:actual symbol}

\item $\Norop^{m,r,k}$ the two-sided semiclassical scattering
operators, page \pageref{eq:norop}

\item $UH_+$ the upper half of fiber infinity over $\ff$,
page \pageref{eq:upper half sphere}
\item $\Sobscl^{s,\ell}$ the semiclassical Sobolev spaces
of order $s, \ell$, page \pageref{eq:scl sob spaces}
\item $\Wperpo$ the compactification of $W^\perp$, page \pageref{eq:Wperpo def}

\item  $\Tdot^*X, \Tdoto^*
X$ is the compressed cotangent bundle and its
compactification, page \pageref{eq:compressed cotan}
\item $\pi^\perp$ the projection to $\Tdot^* X$, page \pageref{eq:pi perp}
\item $\Ctsc$, $\Ctscd$, equations~\eqref{eq:square
analogue} and~\eqref{eq:square analogue dot},
page \pageref{eq:square analogue}
\item $\gamma_{\tsc}$, page \pageref{eq:gamma tsc easy}

\item $\WFtsc'(A)$ the $\tsc$-operator wavefront set,
    Definition \ref{def:WFtsc}, page~\pageref{def:WFtsc}

\item $\Elltsc(A)$ the $\tsc$-elliptic set, Definition
    \ref{def:elliptic set}, page~\pageref{def:elliptic set}
\item $\Chartsc(A)$ the $\tsc$-characteristic set, page~\pageref{eq:def_Chartsc}

\item $G_{\psi,0}$, functional localizer, equation~\eqref{eq:def_Gpsi0},
    page~\pageref{eq:def_Gpsi0}
\item $G_\psi$, functional localizer, Definition~\eqref{def:Gpsi},
    page~\pageref{def:Gpsi}

\item $\pi_{X, \tau}$,
    page \pageref{eq:pi X tau}
\item $\Sigma$, equation~\eqref{eq:Sigma},
    page~\pageref{eq:Sigma}
\item $\chi_0, \chi_1$ special cutoff functions,
    page \pageref{eq:chi 0}

\end{itemize}

\newpage

\bibliographystyle{plain}
\bibliography{KGPCbib}

\end{document}